\newtheorem{lemma}{Lemma}[section]
\newtheorem{theorem}{Theorem}
\newtheorem{mcorollary}[theorem]{Corollary}
\newtheorem{othertheorem}[lemma]{Theorem}
\newtheorem{corollary}[lemma]{Corollary}
\newtheorem{proposition}[lemma]{Proposition}
\theoremstyle{definition}
\newtheorem{definition}[lemma]{Definition}
\newtheorem{assump}[lemma]{Assumption}
\newtheorem{claim}[lemma]{Claim}
\newcommand{\blackboard}[1]{\ensuremath{\mathbb{#1}}}
\newcommand{\complexes}{\blackboard{C}}
\newcommand{\hyperbolic}{\blackboard{H}}
\newcommand{\integers}{\blackboard{Z}} %
\newcommand{\reals}{\blackboard{R}}
\newcommand{\naturals}{\blackboard{N}}
\newcommand{\PSL}{\ensuremath{\mathrm{PSL}}}
\newcommand{\PML}{\mathcal{PML}}
\newcommand{\ML}{\mathcal{ML}}
\newcommand{\UML}{\mathcal{UML}}
\newcommand{\teich}{\mathcal{T}}
\newcommand{\EL}{\mathcal{EL}}
\newcommand{\CC}{\mathcal{CC}}
\newcommand{\len}{\mathrm{length}}
\let \length \len
\newcommand{\Fr}{\mathrm{Fr}}
\newcommand{\Int}{\mathrm{Int}}
\newcommand{\abparam}{\mathcal{T}(S) \times \mathcal{T}(\bar{S})}
\newcommand{\subord}{\stackrel{d}{\searrow}}
\newcommand{\supord}{\stackrel{d}{\swarrow}}
\newcommand{\seeq}{\underset{=}{\searrow}}
\newcommand{\sweq}{\underset{=}{\swarrow}}
\newcommand{\pre}{\mathrm{prec}}
\newcommand{\suc}{\mathrm{succ}}
\newcommand{\base}{\mathrm{base}}
\newcommand{\ie}{i.e.\ }
\newcommand{\parreal}{\partial_\mathrm{real}}
\begin{document}

\title[quasi-Fuchsian spaces]{Divergence, exotic convergence and self-bumping in quasi-Fuchsian spaces}
\author{Ken'ichi Ohshika}
\thanks{Partially supported by JSPS Grant-in-Aid Scientific Research 17H02843}
\address{Department of Mathematics, Graduate School of Science, Osaka University, Osaka 560-0043, Japan}
\maketitle
\renewcommand{\abstractname}{Résumé}
\begin{abstract}
Dans cet article, on étudie la topologie des bords des espaces quasi-fuchsiens.
D'abord on montre comment on peut savoir les invariants des bouts du groupe limite pour une suite convergente de groupes quasi-fuchsiens donnée, en utilisant les informations sur le comportement asymptotique des structures conformes à l'infini des groupes dans la suite. 
Ce résultat donne lieu à une condition suffisante pour la divergence des groupes quasi-fuchsiens, laquelle est une généralisation du résultat d'Ito qui n'a traité que le cas des groupes du tore une fois perforé.
On démontre de plus que des groupes quasi-fuchsiens ne peuvent approcher un b-groupe  hors de la tranche de Bers que si la limite admet un locus parabolique isolé.
Ce résultat-ci  permet également de donner une condition nécessaire pour qu'un point au bord de l'espace de déformations soit un point de «\,l'entrechoquement\,».
Pour démontrer ces résultats, on utilise des variétés modèles construites par Minsky et leurs limites géométriques étudiées par Ohshika-Soma.
Pour que les lecteurs n'aient pas besoin de se reporter à l'article d'Ohshika-Soma, le présent article aussi contient les arguments simplifiés mais assez détaillés d'Ohshika-Soma qui sont nécessaires pour les démonstrations des théorèmes principaux.
\end{abstract}
\renewcommand{\abstractname}{Abstract}
\begin{abstract}
In this paper, we study the topology of the boundaries of quasi-Fuchsian spaces.
We  first show for a given convergent sequence of quasi-Fuchsian groups, how we can know the end invariant of the limit group from the information on the behaviour of  conformal structures at infinity of the groups.
This result gives rise to a sufficient condition for divergence of quasi-Fuchsian groups, which generalises Ito's result in the once-punctured torus case to  higher genera.
We further show that quasi-Fuchsian groups can approach a b-group not along Bers slices only when the limit has isolated parabolic loci.
This makes it possible to give a necessary condition for points on the boundaries of  quasi-Fuchsian spaces to be self-bumping points.
We use model manifolds invented by Minsky and their geometric limits studied by Ohshika-Soma to prove these results.
This  paper has been made as self-contained as possible so that the reader does not need to consult the paper of Ohshika-Soma directly.
\end{abstract}
\sloppy
\section{Introduction}
In the theory of Kleinian groups, after the major problems like Marden's tameness conjecture and the ending lamination conjecture were solved, the attention is now focused on studying the topological structure of deformation spaces.
Although we know, by the resolution of the Bers-Sullivan-Thurston density conjecture (\cite{Brom}, \cite{BB}, \cite{OhL}
, \cite{KS}, \cite{L}, \cite{Ohp}, \cite{NS}), that every finitely generated Kleinian group is an algebraic limit of quasi-conformal deformations of a (minimally parabolic) geometrically finite group, the structure of deformation spaces as topological spaces is far from completely understood.
For instance, as was observed by work of Anderson-Canary \cite{AC} and McMullen \cite{Mc}, even in the case of  Kleinian groups isomorphic to closed surface groups, the deformation spaces are fairly complicated, and in particular it is known that they are not manifolds since they have singularities caused by a phenomenon called `bumping'.
Actually, this kind of phenomenon also makes the deformation space not locally connected as was shown by Bromberg \cite{Bromp} and was generalised by Magid \cite{Mg}.

The interior of a deformation space is known to be a disjoint union of quasi-conformal deformation spaces of  minimally parabolic Kleinian groups, which is well understood by work of Ahlfors, Bers, Kra, Maskit, Marden, and Sullivan.
In particular, their theory gives rise to a parametrisation of the quasi-conformal deformation space by the Teichm\"{u}ller space of the boundary at infinity of the corresponding quotient hyperbolic $3$-manifold.
Therefore, to understand the global structure of a deformation space, what we need to know is how the boundary is attached to the quasi-conformal deformation space.
More concretely, we need to determine, for a  sequence of quasi-conformal deformations given as a sequence in the Teichm\"{u}ller space using this parametrisation, first whether it converges or not, and if it does, what is the limit of the sequence.
Also, we need to know in what cases sequences of quasi-conformal deformations can approach the same group from different directions as in the example of Anderson-Canary and McMullen.
This paper tries to answer such problems for the case of Kleinian surface groups using the technique of model manifolds of geometric limits which we developed in Ohshika-Soma \cite{OhSo}.

%To understand such a global structure of such a deformation space, the first step would be to give a criterion for sequences in the deformation space to converge or diverge.
%Let us put it in more concrete terms focusing only on the case of Kleinian groups isomorphic to surface groups.
%Consider a hyperbolic surface $S$ of finite type and the space of faithful discrete representations of $\pi_1(S)$ to $\PSL_2\complexes$ preserving the parabolicity modulo conjugacy (both as elements of  $\PSL_2\complexes$ and complex conjugation), which is usually denoted by $AH(S)$.
%Since the hyperbolic metric of $S$ determines a Fuchsian representation of $\pi_1(S)$ to $\PSL_2 \reals \subset \PSL_2 \complexes$, as the space of quasi-conformal deformations of this representation, we can consider the space of quasi-Fuchsian representations $QF(S)$ embedded  as an open set in $AH(S)$.
%What we are interested in is  the problem to determine in which directions $QF(S)$ has frontier in $AH(S)$ and in which directions it is open-ended.

The interior of a deformation space of Kleinian surface group $AH(S)$ is the quasi-Fuchsian space $QF(S)$.
The parametrisation in this case is the Ahlfors-Bers map  $qf$  from $\abparam$ to $QF(S)$.
% giving the Ahlfors-Bers parametrisation.
The celebrated work of Bers on compactification of Teichm\"{u}ller space (\cite{Be}) shows that in $AH(S)$, the subspace of $QF(S)$ in the form $\mathcal{T}(S) \times \{\mathrm{pt.}\}$ or $\{\mathrm{pt.}\} \times \mathcal{T}(\bar{S})$ is relatively compact.
Considering a direction in $QF(S)$ (with respect to the parametrisation of Ahlfors-Bers) quite different from that of Bers, Thurston proved that a sequence $\{(m_i ,n_i)\} \in \abparam$ converges if $\{m_i\}$ converges to a projective lamination $[\lambda]$ and $\{n_i\}$ converges to another projective lamination $[\mu]$  both in the Thurston compactification of $\teich(S)$ such that the supports of $\lambda$ and $\mu$ are distinct and every component of $S \setminus (\lambda \cup \mu)$ is simply connected (\cite{Th2}).

In contrast to these results on convergence, we showed in \cite{OhG} that if $\{m_i\}$ and $\{n_i\}$ converge to arational laminations with the same support, then $\{qf(m_i,n_i)\}$ always diverges.
Since the Teichm\"{u}ller space of $S$ is properly embedded in $AH(S)$ as a diagonal set of $\teich (S) \times \teich(\bar S)$, this may look very natural.
However, there is an example by Anderson-Canary in \cite{AC} which shows that for a given hyperbolic structure $m_0$, if we consider $(\tau^i(m_0), \tau^{2i}(m_0)) \in \teich(S) \times \teich(\bar S)$, where $\tau$ denotes the deformation of the metric induced by the Dehn twist around a simple closed curve $\gamma$ on $S$, then its image $qf(\tau^i(m_0), \tau^{2i}(m_0))$ in $AH(S)$ converges.
These two results show that the situation is quite different depending on the types of the limit projective laminations.

In the case when $\teich(S)$ has dimension $2$, i.e., if $S$ is either a once-punctured torus or a four-times punctured sphere, a measured lamination is either arational or a weighted simple closed curve, which means that there is nothing between these two situations above.
In this case, Ito has given a complete criterion for convergence/divergence (\cite{Ito}).
In the general case when $\teich(S)$ has dimension more than $2$ however, there is a big room between these two extremes.

Therefore quite naturally, we should ask ourselves what would happen in the cases in between.
One of our main theorems in this paper (Theorem \ref{main}) is an answer to this question.
Consider sequences $\{m_i\}$ in $\teich(S)$ and $\{n_i\}$ in $\teich(\bar S)$ converging to $[\mu^+]$ and $[\mu^-]$ such that their supports  $|\mu^+|$ and $|\mu^-|$ share a component which is not a simple closed curve.
We shall prove that $\{qf(m_i,n_i)\}$ diverges in $AH(S)$ in this setting.
More generally, we shall show that if $\mu^+$ and $\mu^-$ have components $\mu_0^+, \mu_0^-$ whose minimal supporting surfaces share a boundary component up to isotopy, then $\{qf(m_i,n_i)\}$ diverges.

This theorem is derived from another of our main results, Theorem \ref{main limit laminations}, which asserts that if $\{qf(m_i,n_i)\}$ converges, then we can determine the ending  laminations of the limit group by considering the shortest pants decompositions of $(S, m_i)$ and $(S, n_i)$ and their Hausdorff limits.
It also implies Theorem \ref{+-side --side} stating that if we consider the limits of $\{m_i\}$ and $\{n_i\}$ in the Thurston compactification of the Teichm\"{u}ller space, then every non-simple-closed-curve component of the limit of $\{m_i\}$  is an ending lamination of an upper end of the limit, and every non-simple-closed-curve component of the limit of $\{n_i\}$ is that of a lower end.
%Each simple closed curve contained in the limit of $\{m_i\}$ but not contained in that of $\{n_i\}$ is a core curve of the parabolic locus of the limit group.
%The same holds for a simple closed curve in the limit of $\{n_i\}$ not contained in that of $\{m_i\}$.
These combined with Theorem \ref{generalised Ito}  can be regarded as a partial answer to the problem of determining limit groups of sequence of quasi-Fuchsian groups given in terms of the parametrisation by the Teichm\"{u}ller spaces.

In the case when the laminations $|\mu^+|$ and $|\mu^-|$ share only simple closed curves,  the convergence or divergence of the sequence depends on whether there is  a simple closed curve component of  either $|\mu^+|$ or $|\mu^-|$ which is contained in the boundary of  the minimal  supporting surface of a non-simple-closed-curve component of the other of $|\mu^-|$ or $|\mu^+|$.
%If there are no such components, $\{qf(m_i,n_i)\}$ diverges, whereas if there are, we can always find sequences $m_i', n_i'$, which may be different from the original $m_i$ and $n_i$ such that $qf(m_i, n_i)$ converges and $m_i \to [\mu^+]$ and $n_i \to [\mu^-]$.
Theorem \ref{scc case} asserts that if there is such a simple closed curve component, then the sequence diverges.
Even in the case when such a component does not exist, $\{qf(m_i,n_i)\}$ can converge only in a special situation which is analogous to an example of Anderson-Canary \cite{AC}.
Theorem \ref{generalised Ito} describes the situation where the sequence can converge.
We should note that these are the best possible answers for divergence and convergence for sequence of quasi-Fuchsian groups when the asymptotic behaviour  of the corresponding parameters are expressed in terms of the Thurston compactification of Teichm\"{u}ller space.

The same example of Anderson-Canary also shows that there is a point in $AH(S)$ where $QF(S)$ bumps itself as explained above.
In such a point, the self-bumping is caused by what is called the \lq exotic convergence'.
A sequence of quasi-Fuchsian group is said to converge to a b-group exotically when the groups in the sequence are not contained in Bers slices approaching the one containing the limit b-group.
The construction of Anderson-Canary gives a sequence converging exotically to a regular b-group.
We shall prove such a convergence can occur only for b-groups which  have $\integers$-cusps not touching geometrically infinite ends.

As for self-bumping, we conjecture that  such a phenomenon cannot occur for  geometrically infinite b-groups all of whose $\integers$-cusps touch geometrically infinite ends.
What we shall prove in \cref{bumping theorem} is a weaker form of this conjecture:
 if there are two sequences $\{qf(m_i,n_i)\}$ and $\{qf(m_i', n_i')\}$ both converging to the same geometrically infinite group all of whose $\integers$-cusps touch geometrically infinite ends, then for any small neighbourhood $U$ of the   quasi-conformal deformation space of the limit group, if we take large $i$, then $qf(m_i,n_i)$ and $qf(m_i',n_i')$  are connected by an arc in $U$.
In particular this shows that under the same condition on $\integers$-cusps, if the limit group is either quasi-conformally rigid or a b-group whose upper conformal structure at infinity is rigid, it cannot be a self-bumping point.
More generally, even when there is a cusp not touching a geometrically infinite end,  the same argument shows that a Bers slice cannot bump itself at a b-group whose upper conformal structure at infinity is rigid.
This latter result has been obtained independently by Brock-Bromberg-Canary-Minsky \cite{BBCM} by a  different approach.

In Theorems \ref{generalised limit laminations} and \ref{generalised main}, we shall generalise the results for quasi-Fuchsian groups in Theorems \ref{main limit laminations} and \ref{main} to general Kleinian surface groups.

We also note that the results obtained in this paper has an application which has appeared in \cite{OhF} (see also Papadopoulos \cite{Papa2}).
There, we have considered a quotient space of a Bers boundary, which is called the reduced Bers boundary, and have proved its automorphism group coincides with the extended mapping class group.

Our tools for proving all these theorems are model manifolds invented by Minsky \cite{Mi} and their geometric limits studied in Ohshika-Soma \cite{OhSo}.
The technique which we develop in this paper shows that model manifolds are quite useful for studying the asymptotic behaviour of sequence in deformation spaces.
We have tried to make this paper readable independently of \cite{OhSo}:
each time we need arguments in \cite{OhSo}, we provide their sketches or summaries so that the reader does not need to look into details of  \cite{OhSo}.

Recently, after the first version of the present paper had been put on the arXiv, there appeared two papers which made a further progress along the line of our results, using different techniques.
The first is Brock-Bromberg-Canary-Minsky \cite{BBCM} and the other is Brock-Bromberg-Canary-Lecuire \cite{BBCL}.
%more explanation?

The author would like to express his gratitude to the anonymous referee, whose comments and suggestions are very helpful to revise the text.
In particular, the referee's comments have made it possible to substantially shorten the argument in \S 4.
\section{Preliminaries}
\subsection{Generalities}
Kleinian groups are discrete subgroups of $\PSL_2 \complexes$.
In this paper we always assume Kleinian groups to be torsion free.
When we talk about deformation spaces, we only consider finitely generated Kleinian groups.
However, we also need to consider infinitely generated Kleinian groups which will appear as geometric limits.
We refer the reader to Marden \cite{Mar, Mar2} for a general reference for the theory of Kleinian groups.

Let $S$ be an oriented hyperbolic surface of finite area.
In this paper, we focus on Kleinian groups which are isomorphic to $\pi_1(S)$ in such a way that  punctures of $S$ correspond to parabolic elements.
We define the deformation space $AH(S)$ to be the quotient space of 
\begin{eqnarray*}
R(S)=&\{(G, \phi)\mid \phi: \pi_1(S) \rightarrow \PSL_2 \complexes \text{ is a faithful discrete representation} \\ &\text{taking punctures to parabolic elements with }\phi(\pi_1(S))=G\}
\end{eqnarray*}
by conjugacy in $\PSL_2 \complexes$.
The space $R(S)$ has a topology coming from the representation space and we endow $AH(S)$ with its quotient topology.
We denote an element of $AH(S)$ also by $(G, \phi)$ for some representative of the equivalence class.
We call $\phi$ a marking of the Kleinian group $G$.

The set of faithful discrete representations of $\pi_1(S)$ into $\PSL_2 \reals$, which induce the same orientation as the one given on $S$, modulo conjugacy  constitutes the Teichm\"{u}ller space of $S$, which we denote by $\mathcal T(S)$.
Therefore, $\mathcal T(S)$ is naturally contained in $AH(S)$.
More generally, the space of quasi-Fuchsian groups $QF(S)$ lies in $AH(S)$.
A quasi-Fuchsian group is a Kleinian group whose domain of discontinuity is a disjoint union of two simply connected components.
By the theory of Ahlfors-Bers, $QF(S)$ is parametrised by a homeomorphism $qf: \mathcal T(S) \times \mathcal T(\bar S) \rightarrow QF(S)$.
Here both $\mathcal T(S)$ and $\mathcal T(\bar S)$ are the Teichm\"{u}ller space of $S$, but the latter one is identified with $\mathcal T(S)$ by an orientation reversing automorphism of $S$.
For $(m,n) \in \mathcal T(S) \times \mathcal T(\bar S)$, its image $qf(m,n)$ is obtained by starting from a Fuchsian group and solving a Beltrami equation so that the conformal structure on the quotient of  the lower Jordan domain is $m$ and that on the quotient of the upper Jordan domain is $n$.
We call $m$ and $n$  the lower and upper {\em conformal structures at infinity} of $qf(m,n)$ respectively.
The set of Fuchsian groups corresponds to a slice of the form $\{qf(m, m)\}$.
By the theory of Ahlfors-Bers combined with Sullivan's stability theorem \cite{Su2}, we know that $QF(S)$ is the interior of the entire deformation space $AH(S)$.
On the other hand, the Bers-Sullivan-Thurston density conjecture, which was solved by Bromberg \cite{Br}, Brock-Bromberg \cite{BB} in this setting, or is obtained as a corollary of the ending lamination conjecture \cite{BCM} combined with \cite{OhI}, $AH(S)$ is the closure of $QF(S)$.

By Margulis' lemma, there is a positive constant $\epsilon_0$ such that for any hyperbolic $3$-manifold, its $\epsilon_0$-thin part is a disjoint union of cusp neighbourhoods and tubular neighbourhoods of short closed geodesics, which are called Margulis tubes.
For a hyperbolic 3-manifold $M$, we denote by $M_0$ the complement of its cusp neighbourhoods.

Bonahon showed in \cite{BoA} that for any $(G, \phi) \in AH(S)$, the hyperbolic 3-manifold $\hyperbolic^3/G$ is homeomorphic to $S \times (0,1)$.
We denote by $\Phi$ a homeomorphism from $S \times (0,1)$ to $\hyperbolic^3/G$ inducing $\phi$ between the fundamental groups.
In general, for an element in $AH(S)$, we denote a homeomorphism from $S \times (0,1)$ to the quotient hyperbolic 3-manifold by the letter in the upper case corresponding to a Greek letter denoting the marking.
Bonahon also proved the every end of $(\hyperbolic^3/G)_0$ is either geometrically finite or simply degenerate.
Here an end is called {\em geometrically finite} if it has a neighbourhood which is disjoint from any closed geodesic, and {\em simply degenerate} if it has a neighbourhood of the form $\Sigma \times (0,\infty)$ for an incompressible subsurface $\Sigma$ of $S$ (\ie a subsurface each of whose frontier components is non-contractible in $S$) and there are simple closed curves $c_n$ on $\Sigma$ which are homotopic to closed geodesics $c_n^*$ going to the end.
For a simply degenerate end, the {\em ending lamination} is defined to be the support of the projective lamination to which $[c_n]$ converges in the projective lamination space $\PML(\Sigma)$.
We shall explain what  are laminations and the projective lamination space below.

For $(G,\phi) \in AH(S)$, we choose an embedding $f: S \rightarrow \hyperbolic^3/G$ inducing $\phi$ between the fundamental groups.
Such an embedding is unique up to ambient isotopy in $\hyperbolic^3/G$.
An end of $\hyperbolic^3/G$ is called {\em upper} when it lies above $f(S)$ and {\em lower} when it lies below $f(S)$ with respect to the orientations of $\hyperbolic^3/G$ and $f(S)$.
Let $C$ be a compact core of $(\hyperbolic^3/G)_0$ intersecting each component of $\Fr (\hyperbolic^3/G)_0$ by a core annulus.
Let $P$ denote $C \cap \Fr (\hyperbolic^3/G)_0$.
Each component of $P$ is called a {\em parabolic locus} of $\hyperbolic^3/G$ or $C$, and its core curve is called a {\em parabolic curve}.
Parabolic loci corresponding to punctures of $S$ are called {\em peripheral parabolic loci}.
For non-peripheral parabolic loci, those contained in $S \times \{1\}$ are called upper and those contained in $S\times \{0\}$ lower.
The ends of $(\hyperbolic^3/G)_0$ correspond one-to-one to the components of $\Fr C$.
Since $C$ is homeomorphic to $S \times [0,1]$, each upper end faces a subsurface of $S \times \{1\}$, which is a component of $S \times \{1\} \setminus P$.
A non-peripheral parabolic locus $p$ in $P$ is called {\em isolated} when the components of $S \times \{0,1\} \setminus P$ adjacent to $p$ (there are one or two such components) face geometrically finite ends, in other words, no component of $S \times \{0,1\} \setminus P$ facing a simply degenerate end touches $p$ at its frontier.

\subsection{Laminations}
A geodesic lamination on a hyperbolic surface $S$ is a closed subset of $S$ consisting of disjoint simple geodesics, which are called leaves.
For a geodesic lamination $\lambda$, each component of $S \setminus \lambda$ is called a complementary region of $\lambda$.
We say that a geodesic lamination is {\em arational} when every complementary region is either simply connected or in the case when $S$ is not closed, a topologically open annulus whose core curve is homotopic to a puncture or a boundary component of $S$.
A subset $l$ of a geodesic lamination consisting of leaves is called a {\em minimal component} when for each leaf $\ell$ of $l$, its closure $\bar \ell$ coincides with $l$.
Any geodesic lamination is decomposed into finitely many minimal components and finitely many non-compact isolated leaves.

A measured lamination is a (possibly empty) geodesic lamination endowed with a transverse measure which is invariant with respect to homotopies along leaves.
The support of the transverse measure of a measured lamination $\mu$, which is a geodesic lamination, is called the support of $\mu$ and is denoted by $|\mu|$.
When we consider a measured lamination, we always assume that its support is the entire lamination.
The measured lamination space $\ML(S)$ is the set of measured laminations on $S$ endowed with the weak topology.
Thurston proved that $\ML(S)$ is homeomorphic to $\reals^{6g-6+2p}$, where $g$ is the genus and $p$ is the number of punctures.
A weighted disjoint union of closed geodesics can be regarded as a measured lamination.
It was shown by Thurston that the set of weighted disjoint unions of closed geodesics is dense in $\ML(S)$.

The projective lamination space $\PML(S)$ is the space obtained by taking a quotient of $\ML(S) \setminus \{\emptyset\}$ identifying scalar multiples.
Thurston constructed a natural compactification of the Teichm\"{u}ller space whose boundary is $\PML(S)$ in such a way that the mapping class group acts continuously on the compactification.

We need to consider one more space, the unmeasured lamination space.
This space, denoted by $\mathcal{UML}(S)$, is defined to be the quotient space of $\ML(S)$, where two laminations with the same support are identified.
An element in $\mathcal{UML}(S)$ is called an unmeasured lamination.

For a minimal geodesic lamination $\lambda$ on $S$, its {\em minimal supporting surface} is defined to be an incompressible subsurface of $S$  containing $\lambda$ which is minimal up to isotopes among all such surfaces.
When $\lambda$ is a closed geodesic, we define its minimal supporting surface to be an annulus whose core curve is $\lambda$.
It is obvious the minimal supporting surface of $\lambda$ is uniquely determined up to isotopy. 

\subsection{Algebraic convergence and geometric convergence}
When $\{(G_i, \phi_i)\}$ converges to $(\Gamma, \psi) \in AH(S)$, we say that the sequence converges {\em algebraically} to $(\Gamma, \psi)$.
We can choose representatives $(G_i, \phi_i)$ so that $\phi_i$ converges to $\psi$ as representations.
As a convention, when we say that $\{(G_i, \phi_i)\}$ converges to $(\Gamma , \psi)$, we always take representatives so that $\{\phi_i\}$ converges to $\psi$.

We need to consider another kind of convergence: the geometric convergence.
A sequence of Kleinian groups $\{G_i\}$ is said to converge to a Kleinian group $G_\infty$ {\em geometrically} if (1) for any convergent sequence $\{\gamma_{i_j} \in G_{i_j}\}$, its limit lies in $G_\infty$, and (2) any element $\gamma \in G_\infty$ is a limit of some $\{g_i \in G_i\}$.
When $(G_i, \phi_i)$ converges to $(\Gamma, \psi)$ algebraically, the geometric limit $G_\infty$ contains $\Gamma$ as a subgroup.

When $\{G_i\}$ converges to $G_\infty$ geometrically, if we take a basepoint $x$ in $\hyperbolic^3$ and its projections $x_i \in \hyperbolic^3/G_i$ and $x_\infty \in \hyperbolic^3/G_\infty$, then $(\hyperbolic^3/G_i, x_i)$ converges to $(\hyperbolic^3/G_\infty, x_\infty)$ with respect to the pointed Gromov-Hausdorff topology:
that is, there exists a $(K_i, r_i)$-approximate isometry $B_{r_i}(\hyperbolic^3/G_i,x_i) \rightarrow B_{K_i r_i}(\hyperbolic^3/G_\infty, x_\infty)$ with $K_i \rightarrow 1$ and $r_i \rightarrow \infty$, where $B_{r_i}(\hyperbolic^3/G_i,x_i)$ denotes the $r_i$-metric ball centred at $x_i$.
Here a $(K_i, r_i)$-approximate isometry $\rho_i$ is a diffeomorphism from $B_{r_i}(\hyperbolic^3/G_i,x_i)$ to $B_{K_i r_i}(\hyperbolic^3/G_\infty, x_\infty)$ satisfying $K_i^{-1} d_{\hyperbolic^3/G_i}(y,z) \leq d_{\hyperbolic^3/G_\infty}(\rho_i(y), \rho_i(z)) \leq K_i d_{\hyperbolic^3/G_\infty}(y,z)$ for every $y,z \in B_{r_i}(\hyperbolic^3/G_i,x_i)$.

\subsection{Bers slices and b-groups}
We fix a point $m_0 \in \mathcal T(S)$ and consider a subspace $qf(\{m_0\} \times \mathcal T(\bar S))$ in $QF(S)$.
This space is called the {\em Bers slice} over $m_0$.
Kleinian groups lying on its frontier are called {\em b-groups} with lower conformal structure $m_0$.

Anderson-Canary \cite{AC} constructed a sequence of quasi-Fuchsian groups converging to a b-group whereas its coordinates in $QF(S)$ do not approach a Bers slice.
%We call such convergence {\em exotic}.

\begin{definition}
We say a sequence of quasi-Fuchsian groups $\{(G_i, \phi_i) =qf(m_i,n_i)\}$ converges {\em exotically} to a b-group $(\Gamma,\psi) $ if $\{(G_i, \phi_i)\}$ converges to $(\Gamma, \psi)$ algebraically and both $\{m_i\}$ and $\{n_i\}$ go out from any compact set in the Teichm\"{u}ller space.
\end{definition}

The existence of exotic convergence is related to  singularities of $AH(S)$ at its boundary (as a subspace of the representation space modulo conjugacy).
In fact, McMullen showed in \cite{Mc} that $AH(S)$ has a singular point at the boundary where $QF(S)$ bumps itself.
For a Kleinian surface group $(\Gamma, \psi) \in AH(S) \setminus QF(S)$, we say that $QF(S)$ {\em bumps itself} at $(\Gamma, \psi)$, when there is a neighbourhood $V$ of $(\Gamma, \psi)$ such that for any  smaller neighbourhood $U \subset V$, its intersection with the quasi-Fuchsian space, $U \cap QF(S)$ is disconnected.
McMullen showed the existence of points where $QF(S)$ bumps itself, making use of the construction of Anderson-Canary.

We say that a Bers slice $\mathcal B(m_0)=qf(\{m_0\} \times \mathcal T(\bar S))$ bumps itself at $(\Gamma, \psi)$ when there is a neighbourhood $V$ of $(\Gamma, \psi)$ such that for any  smaller neighbourhood $U \subset V$, its intersection with the Bers slice, $U \cap \mathcal B(m_0)$ is disconnected.
Up to today, it is not known whether a Bers slice can bump itself or not.

\subsection{Curve complexes and hierarchies}
\label{hierarchy}
In this subsection, we shall give an explanation of the work of Masur-Minsky on what they called {\em hierarchies of tight geodesics.}
Throughout this subsection, we fix an orientable hyperbolic surface $S$ of finite type, and consider its subsurfaces.
We say that a subsurface is {\em essential} when it is a proper subsurface and each of its frontier component is a non-contractible, non-peripheral curve on $S$.
When we talk about curve complexes, we usually regard essential subsurfaces as  open subsurfaces without boundary, and call them {\em domains} of $S$ following Masur-Minsky.
For a surface $S$ of genus $g$ with $p$ punctures, we define $\xi(S)$ to be $3g+p$.
We shall define curve complexes for orientable surfaces $S$ with $\xi(S) \geq 4$ or $\xi(S)=2$.
The {\em curve complex} $\CC(S)$ of $S$ with $\xi(S) \geq 5$ is defined as follows.
(This notion was first introduced by Harvey \cite{Har}.)
The vertices of $\CC (S)$, whose set we denote by $\CC_0(S)$, are the homotopy classes of essential simple closed curves on $S$.
A collection of $n+1$ vertices $\{v_0, \dots , v_n\}$ spans an $n$-simplex if and only if they can be represented as pairwise disjoint simple closed curves on $S$.

In the case when $\xi(S)=4$, the curve complex is a $1$-dimensional simplicial complex.
The vertices are the homotopy classes of essential simple closed curves as in the case when $\xi(S) \geq 5$.
Two vertices $v_0, v_1$ are connected by an edge if their intersection number is $1$ when $S$ is a once-punctured torus, and $2$ when $S$ is a four-times-punctured sphere.

In the case when $\xi(S)=2$, we consider a compactification of $S$ to an annulus.
The curve complex is a $1$-dimensional and the vertices are the homotopy classes (relative to the endpoints) of essential arcs with both endpoints on the boundary.
Two curves are connected by an edge if they can be made disjoint in their interiors.

A {\em tight sequence} in $\CC (S)$ with $\xi(S) \geq 5$ is a sequence of simplices $\{s_0, \dots, s_n \}$ with the first one and the last one being vertices
such that for any vertices $v_i \in s_i$ and $v_j \in s_j$, we have $d_{\CC(S)}(v_i, v_j)=|j-i|$, and $s_{j+1}$ is homotopic to the union of essential boundary components of a regular neighbourhood of $s_j \cup s_{j+2}$.
We also consider an infinite tight sequence such as $\{s_0, \dots\}$ or $\{\dots, s_0\}$ or $\{\dots, s_0, \dots\}$.
In the case of the surface with $\xi(S)=2\text{ or } 4$, we consider a sequence of vertices and ignore the second condition.

For an essential simple closed curve $c$ of $S$, we consider the covering $A_c$ of $S$ associated to the image of $\pi_1(c)$, which is an open annulus.
If we fix a hyperbolic metric on $S$, we can compactify the hyperbolic annulus $A_c$ to an annulus $\bar A_c$ by regarding  $\pi_1(c)$ as acting on $\hyperbolic^2$ and considering the quotient of $\hyperbolic^2 \cup \Omega_{\pi_1(c)}$ by $\pi_1(c)$, where $\Omega_{\pi_1(c)}$ denotes the region of discontinuity of the action of $\pi_1(c)$ on the circle at infinity of $\hyperbolic^2$.
We call an essential simple arc with endpoints on $\partial \bar A_c$ a {\em transversal} of $c$.
A simple closed curve $d$ intersecting $c$ essentially induces $i(c, d)$-many transversals of $c$.
Any two transversals induced from $d$ are within the distance $1$ in $\CC(A_c)$.

For a non-annular domain $\Sigma$ in $S$, we define $\pi_\Sigma : \CC_0(S) \rightarrow \frak P(\CC(\Sigma)) \cup \{\emptyset\}$ to be a map sending $c \in \CC_0(S)$ to the set of essential simple closed curves obtained by connecting the endpoints of each component of $c \cap \Sigma$ by arcs on $\Fr \Sigma$ in a consistent way if $c$ intersects $\Sigma$ essentially.
We define $\pi_\Sigma(c)$ to be $\emptyset$ when no essential simple closed curves are obtained from $c\cap \Sigma$.
%Otherwise we define $\pi_\Sigma(c)$ to be $\emptyset$.
When $\Sigma$ is an annulus we define $\pi_\Sigma(c)$ to be a transversal of the core curve of $\Sigma$ induced from  $c$  if $c$ intersects $\Sigma$ essentially.
(See \S2.3 of Masur-Minsky \cite{MaMi} for details.)

A {\em marking} $\mu$ on a surface $S$ consists of a simplex in $\CC(S)$ and transversals on some of its vertices (at most one for each).
The vertices of the simplex are called the {\em base curves} of $\mu$, and their union is denoted by $\base(\mu)$.
A marking $\mu$ is said to be {\em clean} if every component $c$ of  $\base(\mu)$ has a transversal and it is induced by a simple closed curve with intersection number $1$ when $c$ is non-separating and with intersection number $2$ if $c$ is separating, which is disjoint from the other components of $\base(\mu)$. 
A clean marking $\mu'$ is said to be compatible with a marking $\mu$ when $\base(\mu)=\base(\mu')$, and every transversal of a component $c$ in $\mu$ is within the distance $2$ from the transversal of $c$ in $\mu'$ as vertices in the curve complex of an annulus with core curve $c$.
A marking is called {\em complete} if its base curves constitute a pants decomposition of $S$ and every base curve has a transversal.

For a marking $\mu$ and a non-annular domain $\Sigma$ of $S$ whose frontier does not intersect $\base(\mu)$ transversely, we define $\mu|\Sigma$ to be a marking on $\Sigma$ whose base curves are those in $\base(\mu) \cap \Sigma$ and whose transversals are those induced from the transversals of $\base(\mu)$.
When $\Sigma$ is an annulus, there are two cases where $\mu|\Sigma$ is defined. 
One is when  $\mu$ intersects the core curve of $\Sigma$ transversely, in which case we define $\mu|\Sigma$ to be $\pi_\Sigma(\mu)$.
The other is when there is a component $b$ of $\base(\mu)$ which is a core curve of $\Sigma$, in which case we define $\mu|\Sigma$ to be the transversal of $b$. 

To deal with the case of geometrically infinite groups, we need a notion of generalised markings.
A {\em generalised marking} consists of an unmeasured lamination on $S$ and transversals on some of its components which are simple closed curves.
Also for a generalised marking $\mu$, we denote the unmeasured  lamination by $\base(\mu)$ and call it the {\em base lamination}.
We say that a generalised marking $\mu$ is {\em complete}, if its base lamination is maximal, \ie it is not a proper sublamination of another unmeasured lamination, and every simple closed curve in $\base(\mu)$ has a transversal.
From now on, we always assume {\em markings and generalised markings to be complete}.

A  finite {\em tight geodesic} on a surface $\Sigma$ is a triple $(g, I(g), T(g))$, where $g$ is a tight sequence, and $I(g)$ and $T(g)$ are generalised markings on $\Sigma$ whose base laminations have at least one simple closed curve component, such that the first vertex is a simple closed curve component of the $\base (I(g))$ and the last vertex is that of $\base (T(g))$.
The surface $\Sigma$ is called the {\em support} of $g$ and we write $\Sigma =D(g)$.
An infinite tight geodesic is defined similarly just letting $T(g)$ be  an arational unmeasured lamination to which $s_i$ converges as $i \rightarrow \infty$ (in the quotient topology of the unmeasured lamination space induced from the measured lamination space) when $g$ is in the form of $\{s_0, \dots\}$.
Similarly, by letting $I(g)$ be an arational unmeasured lamination to which $s_i$ converges as $i \rightarrow -\infty$ when $g$ is in the form of $\{\dots , s_0\}$, and by letting both  $I(g)$ and $T(g)$ be arational unmeasured laminations which are limits of $\{s_i\}$ as $i \rightarrow -\infty$ and as $i \rightarrow \infty$ respectively when $g$ is in the form of $\{\dots, s_0, \dots\}$. 
Refer to \S\ref{at infinity} for more explanations on the boundary of $\CC(\Sigma)$.

%We call an open subsurface of $S$ whose frontier is non-contractible and non-peripheral, a domain.
Let $\Sigma$ be a domain of $S$.
For a simplex $s$ in $\CC(\Sigma)$, a {\em component domain } of $s$ is defined to be either a component  of $\Sigma \setminus s$ or an annulus whose core curve is a  component of $s$.
We consider only one annulus for each component of $s$.
Let $g$ be a tight geodesic in $\CC(\Sigma)$, and suppose that $s$ is a simplex on $g$.
Let $\Sigma'$ be a component domain of $s$.
(Such a domain is also said to be a component domain of $g$.)
Then, following Masur-Minsky \cite{MaMi}, we define $T(\Sigma', g)$ to be $\suc(s)|\Sigma'$ if $s$ is not the last vertex of $g$, and to be $T(g)|\Sigma'$ if $s$ is, where $\suc(s)$ denotes the simplex of $g$ following $s$.
Similarly, we define $I(\Sigma', g)$ to be $\pre(s)|\Sigma'$ if $s$ is not the first vertex of $g$, and to be $I(g)|\Sigma'$ if $s$ is, where $\pre(s)$ denotes the simplex of $g$ preceding $s$.
We write $\Sigma' \subord g$ or $\Sigma' \subord (g, s)$, if $T(\Sigma', g)$ is non-empty, and $g \supord \Sigma'$ or $(g,s) \supord \Sigma'$ if $I(\Sigma', g)$ is non-empty.
If a geodesic $k$ is a tight geodesic supported on $\Sigma'$ with $(g,s) \supord \Sigma'$ and $I(k)=I(\Sigma', g)$, then we write $g \supord k$ or $(g,s) \supord k$, and say that $k$ is {\em directly backward subordinate} to $g$ at $s$.
Similarly, if $\Sigma' \subord (g,s)$ and $T(k)=T(\Sigma', g)$, we write $k \subord g$ or $k \subord (g,s)$, and say that $k$ is {\em directly forward subordinate} to $g$ at $s$.

A {\em hierarchy} $h$ on $S$, which was introduced by Masur-Minsky \cite{MaMi}, is a family of tight geodesics supported on domains in $S$ having the following properties.
\begin{enumerate}
\item
There is a unique geodesic $g_h$ supported on $S$.
\item
For any $g \in h$ other than $g_h$, there are geodesics $b,f \in h$ with $b \supord g \subord f$.
\item
For any $b,f \in h$ and a component domain $\Sigma$ of $b, f$ with $b \supord \Sigma \subord f$ ($b$ and $f$ may coincide), there is a unique geodesic $k$ supported on $\Sigma$ with $b \supord k \subord f$.
\end{enumerate}
A hierarchy $h$  is said to be {\em complete} if every component domain of geodesics in $h$ supports a geodesic in $h$, and {\em $4$-complete} if every non-annular component domain of geodesics in $h$ supports a geodesic in $h$.

%Generalised hierarchy必要なら回復せよ．
%In our argument, it will be necessary to loose a condition imposed on hierarchies slightly.
%In the definition above, for any geodesic $g \in h$, its first and last simplices are assumed to be vertices.
%We loose this condition for the main geodesic and geodesics subordinate to the first or the last simplex of another geodesic.
%To be more precise, a geodesic $g$ in $h$ can have the first simplex which is not  a vertex if either $g$ is the main geodesic or $(g', v) \supord g$ with the first simplex $v$ of $g'$, and $g$ can have the last simplex which is not a vertex if either $g'$ is the main geodesic or $g \subord (g',v)$ with the last simplex $v$ of $g'$.
%We call such $h$ (satisfying all the other conditions for a hierarchy) a {\em generalised hierarchy}.
%Note that we are {\em not} saying that there is a generalised hierarchy connecting any given pairs of simplices.
%(It is easy to construct a counter-example.)
%In the situation where we need to consider a generalised hierarchy, its existence is guaranteed by construction.
%Once a generalised hierarchy exits, it is easy to check that all the constructions of slices, resolutions, and model manifolds work in the same way as in the case of a hierarchy. 

We write $g \searrow (f,v)$ if there is a sequence of geodesics in $h$ such that $g=f_0 \subord f_1 \subord \dots \subord (f_n,v)=(f,v)$, and say that $g$ is forward subordinate to $f$.
Similarly, we write $(b,u) \swarrow g$ if there is a sequence in $h$ such that $(b,u)=(b_m,u) \supord \dots \supord b_1 \supord b_0=g$, and say that $g$ is backward subordinate to $b$.
We use symbol $\seeq$ to mean either $\searrow$ or $=$ and $\sweq$ to mean either $\swarrow$ or $=$.

In \S\ref{sec:self-bumping}, we shall use the notions of slices and resolutions of hierarchies invented by Masur-Minsky \cite{MaMi}.
We shall review them briefly here.
Let $h$ be a complete or $4$-complete hierarchy.
A {\em slice} $\sigma$ of $h$ is a set of pairs $(g,v)$, where $g$ is a geodesic in $h$ and $v$ is a simplex on $g$ satisfying the following conditions.
(Masur and Minsky call  $\sigma$ satisfying the first three conditions a slice, and call it a complete slice  if it also satisfies the fourth condition.)
\begin{enumerate}
\item A geodesic can appear at most in one pair of $\sigma$.
\item There is a pair whose first entry is the main geodesic of $h$.
\item For each pair $(g,v)$ in $\sigma$ such that $g$ is not the main geodesic, $D(g)$ is a component domain of   a simplex $v'$ for some $(g',v') \in \sigma$.
\item For each component domain $D$ of $v$ for $(g,v) \in \sigma$ with $\xi(D) \neq 3$ if $h$ is complete and $\xi(D)>3$ if $h$ is $4$-complete, there is a pair $(g',v')\in \sigma$ with $D(g')=D$.
\end{enumerate}

Masur and Minsky introduced two kinds of order, $\prec_p$ between pairs of geodesics and simplices in $h$  and $\prec_s$ between slices.
For two pairs $(g,v)$ and $(g',v')$ of a $4$-complete hierarchy, we write $(g,v) \prec_p (g',v')$ if either $g=g'$ and $v'$ comes after $v$, or there is a geodesic $g''$ with $(g,v) \seeq (g'',w)$ and $(g'', w') \sweq (g',v')$ such that $w'$ is a simplex coming after $w$.
For two distinct slices $\sigma$ and $\tau$, we write $\sigma \prec_s \tau$ if for any $(g,v) \in \sigma$, either $(g,v) \in \tau$ or there is $(g',v') \in \tau$ with $(g,v) \prec_p (g',v')$.

A {\em resolution} $\tau=\{\sigma_i\}$ of a $4$-complete hierarchy $h$ is an ordered sequence of slices of $h$ such that $\sigma_{i+1}$ is obtained from $\sigma_i$ by an elementary forward move.
Here an elementary forward move is a change of pairs in $\sigma_i$ as follows:
We advance $(g,v) \in \sigma_i$ to $(g, \suc(v))$ under the condition that for every pair $(g',v')$ supported on a component domain of $v$ into which $\suc(v)$ is projected to an essential curve, the simplex $v'$ is the last vertex, and after removing all such $(g',v')$ we add pairs $(g'',v'')$ such that  $g''$ is supported on a component domain of $\suc(v)$ into which $v$ is projected to an essential curve and $v''$ is the first vertex of $g''$.

\subsection{Model manifolds}
\label{model}
A model manifold for a Kleinian surface group was constructed in Minsky \cite{Mi} as follows.
Let $G$ be a Kleinian surface group with $M=\hyperbolic^3/G$.
From an end invariant of $G$, we shall construct a hierarchy of tight geodesics, which we denote by $h_G$.
%A clean marking is called shortest with respect to a conformal structure if its base curve is a shortest pants decomposition and the transversals are chosen to be shortest among all clean markings having the same base curves, where we consider the lengths to be the hyperbolic ones.
In the case when $G$ is quasi-Fuchsian, we construct a hierarchy by defining the initial and terminal markings to be the shortest clean markings with respect to the upper and lower conformal structures at infinity.
When $M$ has a totally degenerate end without accidental parabolics, we define the initial or terminal generalised marking  to be its ending lamination.
In general, we consider the union of ending laminations of $M_0$, parabolic loci for upper or lower ends, and shortest clean markings on the remaining geometrically finite upper or lower ends, and let them be terminal or initial generalised markings.
Here, we say that  a clean marking $\mu$ on a hyperbolic surface $S$ is shortest if the base curves of $\mu$ form a shortest pants decomposition of $S$, and transversals are chosen so that their total length is the smallest among the transversals obtained from them by performing Dehn twists around the base curves.
Note that we are {\em not} assuming the transversals of $\mu$ are really shortest among all transversals.

Having defined the hierarchy $h_G$, we construct a resolution $\{\tau_i(G)\}$ of $h_G$.\label{boundary block}
In the resolution, we look at each step $\tau_i(G) \rightarrow \tau_{i+1}(G)$ that advances a vertex on a $4$-geodesic, from $w_i$ to $w_{i+1}$.
For such a step, we provide an {\em internal block}, which is topologically homeomorphic to $\Sigma \times [0,1]$, where $\Sigma$ is either a sphere with four holes or a torus with one hole.
The block has two ditches, one on the top and the other on the bottom, corresponding to the two vertices $w_i$ and $w_{i+1}$.
To be more precise, we take annular neighbourhoods $N_i, N_{i+1}$ of $w_i, w_{i+1}$ on $\Sigma$, and set
 $B=\Sigma \times [0,1] \setminus (N_i \times [0,1/4] \cup N_{i+1} \times [3/4, 1])$.
The top and bottom boundary of a block consists of pairs of sphere with three holes.
We fix some constant $\epsilon_1$ less then the Bers constant for $S$ throughout the construction.
We put a hyperbolic metric on $\Sigma$ so that the lengths of the boundary components, $w_i$ and $w_{i+1}$ are all equal to  $\epsilon_0$.
We also assume that  $N_i$ and $N_{i+1}$ are regular neighbourhoods whose boundaries have length $\epsilon_1$ and deform their metrics to flat ones keeping their boundaries fixed.
(Here $\epsilon_2$ is chosen so that for pants decomposition of $S$ by simple closed geodesics with length less than $\epsilon_1$,  their annular neighbourhoods whose boundaries have length $\epsilon_2$ are pairwise disjoint.)
We define the metric on $B$ to be the one induced by the product of the hyperbolic metric on $\Sigma$ as above and the ordinary metric on $[0,1]$.
We note that the isometry type of $B$ depends only on whether $\Sigma$ is a sphere with four holes or a torus with one hole.

The model manifold for $G$ is constructed by piling up such blocks by pasting a top component of one block to a bottom component of another, according to the information given by the resolution $\{\tau_i(G)\}$,  attaching {\em boundary blocks} to the top and the bottom of the piled up blocks if there are geometrically finite ends of $M_0$, which have special forms and are constructed according to conformal structures at infinity of $G$, and then finally filling in \lq Margulis tubes'.
(Here we abuse the term \lq Margulis tube', which was defined before using the Margulis constant $\epsilon_0$.
Our tubes here may have larger injectivity radii, but still are isometric to tubular neighbourhoods of closed geodesics.)
In this paper, we define a boundary block to have topologically a form $\Sigma \times [s,t)$ or $\Sigma \times (t,s]$ for some incompressible subsurface $\Sigma$ of $S$ (\ie a subsurface $\Sigma$ such that every component of $\Fr \Sigma$ is non-contractible in $S$), and do not put extra-annuli as in Minsky's definition.

To be more precise, a boundary block has the following form.
We describe it here only when the block corresponds to an upper end.
We can deal with the case when the end is lower just by turning everything upside down.
Let $n_0$ be a point in $\teich(\Sigma)$, where each component of $\Fr \Sigma$ is assumed to be a puncture, and regard it as a hyperbolic metric on $\Int \Sigma$ with each component of $\Fr \Sigma$ assumed to be the boundary of an $\epsilon_0$-cusp neighbourhood.
%cuspについてこれでよいか？
Take a shortest pants decomposition of $(\Sigma, n_0)$ and denote its components by $c_1, \dots , c_p$.
A boundary block $B$ is  constructed  by defining $B=\Sigma \times [-1,\infty) \setminus (\cup_{k=1}^p A(c_k) \times [-1,0))$, where $A(c_k)$ is an open annular neighbourhood  of $c_k$ whose boundaries have length $\epsilon_1$.
We now put a Riemannian metric on $B$ as follows.
Since each component of $\Sigma \times \{0\} \setminus (\cup_{k=1}^p A(c_k) \times \{0\})$ is a pair of pants, we put a standard hyperbolic metric metric $n_0$ so that each boundary component becomes a closed geodesic of length $\epsilon_0$.
Now, as was shown in \S 3.4 of Minsky \cite{Mi}, there is a metric $n_0'$ on $\Sigma$ conformal to $n_0$ in which the $A(c_k)$ are flat annuli and whose restriction to each component of $\Sigma \setminus (\cup_{k=1}^p A(c_i))$ coincides with the hyperbolic metric $n_0$.
We put this metric $n_0'$ on $\Sigma \times \{0\}$, and $n_0|(\Sigma\setminus (\cup_{k=1}^p A(c_i)))$  on $\Sigma \times \{-1\} \setminus (\cup_{k=1}^p A(c_k) \times \{-1\})$.
On $\Sigma \times [-1,0) \setminus (\cup_{k=1}^p A(c_k) \times [-1,0))$, we put the product of the metric of $dn_0$ and $dt$.
Now, on $\Sigma \times [0,1]$, we put a metric defined by $e^{2t} d(n_0')^2+ dt^2$.

We do not put extra-annuli which appeared in Minsky's construction because we are constructing a model manifold of the {\em non-cuspidal part}, not of the entire manifold.
By the same reason, in contrast to Minsky's original construction, we  do not fill in cusp neighbourhoods.
Each slice in $\{\tau_i(G)\}$ corresponds to a {\em split level surface} in the model manifold which is a disjoint union of horizontal surfaces in blocks which are spheres with three holes.
Taking split level surfaces into pleated surfaces and extending them over Margulis tubes, a homotopy equivalent map from the model manifold to $M_0$ is constructed.
This can be modified to a uniform bi-Lipschitz map which is called {\em a model map} to $M_0$.
See  Minsky \cite{Mi} and Brock-Canary-Minsky \cite{BCM} for more details.
There is an alternative approach to constructing model manifolds by Bowditch \cite{Bow1}, \cite{Bow2} and \cite{Bow3}.
%More explanation?
\subsection{The boundaries at infinity of curve complexes}
\label{at infinity}
It was proved by Masur-Minsky \cite{MaMi1} that $\CC(S)$ is a Gromov hyperbolic space with respect to the path metric defined by setting every edge to have the unit length.
For a Gromov hyperbolic space, its boundary at infinity can be defined as a topological space.
(Refer for instance to Coornaert-Delzant-Papadopoulos \cite{CDP}.)
Klarreich in \cite{Kl} showed that the  boundary at infinity of $\CC(S)$ is the space of ending laminations: that is, the space of arational unmeasured laminations with topology induced from $\mathcal{UML}(S)$.
This space is denoted by $\EL(S)$.

We shall show the following lemma, which is an easy consequence of the definition of the topology of $\CC(\Sigma) \cup \EL(\Sigma)$.
\begin{lemma}
\label{endpoints converge}
Let $\{g_i\}$ be a sequence of geodesics in $\CC(\Sigma)$ converging to a geodesic ray $g_\infty$ uniformly on every compact set.
Then the last vertex of $g_i$ converges to the endpoint at infinity of $g_\infty$ with respect to the topology of $\CC(\Sigma) \cup \EL(\Sigma)$.
\end{lemma}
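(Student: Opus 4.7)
The plan is to reduce the statement to a Gromov-boundary computation via Klarreich's theorem. That theorem identifies $\EL(\Sigma)$ with the Gromov boundary $\partial_\infty \CC(\Sigma)$, so the endpoint at infinity $\lambda_\infty$ of the ray $g_\infty$ is the equivalence class of the sequence of its vertices $\{g_\infty(n)\}_n$, and a sequence $v_i\in \CC_0(\Sigma)$ converges to $\lambda_\infty$ in $\CC(\Sigma)\cup \EL(\Sigma)$ precisely when $d(x_0,v_i)\to \infty$ and $(v_i\mid g_\infty(n_i))_{x_0}\to \infty$ for some (equivalently, every) choice of basepoint $x_0$ and of integers $n_i\to \infty$.

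I would fix the basepoint $x_0:=g_\infty(0)$. Because the vertex set of $\CC(\Sigma)$ is discrete in the $\CC(\Sigma)$-metric, uniform convergence on every compact subset of $[0,\infty)$ of unit-speed simplicial geodesics translates into the combinatorial statement that, for every integer $R\ge 0$, there exists $I(R)$ so that for all $i\ge I(R)$ the length $L_i$ of $g_i$ is at least $R$ and $g_i(k)=g_\infty(k)$ for every integer $0\le k\le R$. In particular $L_i\to \infty$, so $d(x_0,v_i)=L_i\to \infty$, which settles one half of the boundary criterion.

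For the Gromov product, fix $R$ and $i\ge I(R)$. Then $g_\infty(R)$ lies on the geodesic $g_i$ at distance $R$ from $x_0$ and at distance $L_i-R$ from $v_i$, so the Gromov product $(v_i\mid g_\infty(R))_{x_0}$ equals $R$ on the nose. Choosing any sequence $R_i\to \infty$ with $i\ge I(R_i)$ yields $(v_i\mid g_\infty(R_i))_{x_0}=R_i\to \infty$, which is exactly the Gromov-boundary condition for $v_i\to \lambda_\infty$ in $\CC(\Sigma)\cup \EL(\Sigma)$.

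I do not expect a genuine obstacle here: the content is simply that $g_i$ and $g_\infty$ share arbitrarily long initial segments from $x_0$, so by Gromov hyperbolicity their endpoints at infinity must coincide, and the only technical point to keep straight is the translation between the topological notion of uniform convergence on compact sets for simplicial geodesics and the combinatorial notion of sharing long initial segments, which is immediate from the discreteness of the vertex set.
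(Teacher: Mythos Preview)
Your argument is correct and uses the same underlying idea as the paper --- the Gromov-product criterion for convergence in $\CC(\Sigma)\cup\partial_\infty\CC(\Sigma)$ together with Klarreich's identification of $\partial_\infty\CC(\Sigma)$ with $\EL(\Sigma)$ --- but your execution is more direct. The paper proceeds in two stages: first it shows $(w_i\mid w_j)_v\to\infty$ so that $\{w_i\}$ subconverges to \emph{some} boundary point $|\mu|$, and then argues by contradiction that $|\mu|$ must equal the endpoint $|\lambda|$ of $g_\infty$, using that a shared vertex $v_i\in g_i\cap g_\infty$ tending to $|\lambda|$ would force $(v_i\mid w_i)_v$ to be bounded if $|\mu|\neq|\lambda|$. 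You bypass both the subsequence step and the contradiction by computing $(w_i\mid g_\infty(R))_{x_0}=R$ exactly, which immediately gives convergence to the correct boundary point. Your version is cleaner; the paper's version has the slight advantage of making explicit the intermediate fact that $\{w_i\}$ is a Gromov sequence, but nothing in the paper seems to use that separately.
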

\begin{proof}
Let $\lambda$ be a measured lamination whose support is the endpoint at infinity of $g_\infty$.
We can assume that all the $g_i$ have the same initial vertex, which we denote by $v$.
Let $w_i$ be the last vertex of $g_i$.
Since the length of $g_i$ goes to infinity, the distance between $v$ and $w_i$ goes to infinity.
On the other hand, since $\{g_i\}$ converges to $g_\infty$ on every compact set, there is a number $n_{i,j}$ going to $\infty$ such that the first $n_{i,j}$ simplices of $g_i$ and $g_j$ are the same.
Since $(w_i|w_j)_v \geq n_{i,j}$, we see that $(w_i|w_j)_v$ goes to $\infty$ as $i, j \rightarrow \infty$.
Therefore,  $\{w_i\}$ converges to some ending lamination after passing to a subsequence.
By the definition of the topology on $\CC(\Sigma) \cup \EL(\Sigma)$, there is a measured lamination $\mu$ and 
 positive real numbers $r_i$ such that $\{r_i w_i\}$ converges to $\mu$.

We need to show that $|\mu|=|\lambda|$.
Suppose not.
Since $\{g_i\}$ converges to $g_\infty$ uniformly on every compact set, we can take a simplex $v_i \in g_\infty$ which is also contained in $g_i$ tending to $|\lambda|$ in $\CC(\Sigma) \cup \EL(\Sigma)$.
Since $|\lambda|$ and $|\mu|$ are distinct points on the boundary at infinity, we have $\limsup_{i \rightarrow \infty} (v_i|w_i)_v < \infty$.
This contradicts the facts that both $v_i$ and $w_i$ lie on the same geodesic $g_i$ and that both $d(v,v_i)$ and $d(v,w_i)$ go to $\infty$.
\end{proof}

\section{The  main results}
\label{main results}
In this section, we shall state our main theorems.
% and describe the settings of their proofs.
%The first theorem deals with the case when the limit laminations  of the conformal structures on the top and at the bottom share a component which is not a simple closed curve.
%\subsection{Divergence of quasi-Fuchsian groups}
\subsection{End invariants of limit groups}
We shall first state a theorem showing that for a limit of quasi-Fuchsian groups, the limit laminations of upper conformal structures at infinity appear as  ending laminations of upper ends whereas the limit of lower ones appear as ending laminations of lower ends.

\begin{theorem}
\label{+-side --side}
Let $\{(m_i,n_i)\}$ be a sequence in $\mathcal T(S) \times \mathcal T(\bar S)$ such that $\{qf(m_i,n_i)\}$ converges to $(\Gamma, \psi)$ in $AH(S)$.
Let $[\mu^+]$ and $[\mu^-]$ be projective laminations which are limits of $\{m_i\}$ and $\{n_i\}$ in the Thurston compactification of the Teichm\"{u}ller space passing to subsequences.
Then every component of $|\mu^+|$ that is not a simple closed curve  is the ending lamination of an upper end of $(\hyperbolic^3/\Gamma)_0$  whereas every component of $|\mu^-|$ that is not a simple closed curve is the ending lamination of a lower end.
%
%Moreover every simple closed curve in $|\mu^+|$ that is not contained in $|\mu^-|$ is an upper parabolic curve.
%In the same way, every simple closed curve in $|\mu^-|$ that is not contained in $|\mu^+|$ is  a lower parabolic curve.
\end{theorem}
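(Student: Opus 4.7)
The plan is to deduce Theorem~\ref{+-side --side} from Theorem~\ref{main limit laminations}, which identifies the ending laminations of the upper (resp.\ lower) ends of $(\hyperbolic^3/\Gamma)_0$ with the non-simple-closed-curve components of the Hausdorff limit $L^+$ (resp.\ $L^-$) of the shortest pants decompositions $P_i^+$ of $m_i$ (resp.\ $P_i^-$ of $n_i$), and identifies cores of upper and lower parabolic loci with certain simple closed curve components of $L^+$ and $L^-$. Thus the task reduces to comparing the Hausdorff limits $L^\pm$ (extracted after passing to a subsequence) with the supports $|\mu^\pm|$ of the Thurston limits of $\{m_i\}$ and $\{n_i\}$.

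First I would establish the inclusion $|\mu^+| \subset L^+$, with the symmetric statement $|\mu^-| \subset L^-$ holding by the same argument. By Bers' theorem each pants decomposition $P_i^+$ has total $m_i$-length bounded by a universal constant $B(S)$. Since $m_i \to [\mu^+]$ projectively, any simple closed curve $c$ with $i(c,\mu^+)>0$ satisfies $\ell_{m_i}(c) \to \infty$, so no such curve occurs in $P_i^+$ for large $i$; consequently every leaf of $L^+$ is disjoint from the leaves of $|\mu^+|$ after geodesizing in a fixed reference structure. To obtain the reverse inclusion, fix a non-simple-closed-curve component $\lambda$ of $|\mu^+|$ with minimal supporting surface $\Sigma$. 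The restriction of $m_i$ to $\Sigma$ diverges projectively to a lamination filling $\Sigma$ with support $\lambda$. The portions of $P_i^+$ lying in $\Sigma$ consist of arcs of bounded $m_i|_\Sigma$-length with endpoints on $\partial \Sigma$; closing these up with boundary arcs of $\partial \Sigma$ produces bounded-length simple closed curves in $\Sigma$, which in a projectively divergent sequence converging to a filling $\lambda$ can only Hausdorff-accumulate onto $\lambda$. Hence $\lambda \subset L^+$.

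Combined with Theorem~\ref{main limit laminations}, this yields that every non-simple-closed-curve component of $|\mu^+|$ (resp.\ $|\mu^-|$) is an ending lamination of an upper (resp.\ lower) end of $(\hyperbolic^3/\Gamma)_0$. For the second assertion, let $c$ be a simple closed curve component of $|\mu^+| \setminus |\mu^-|$. The argument above shows $c \in L^+$, and applying it to $\{n_i\}$ gives $c \notin L^-$. In the construction of the model manifold from Theorem~\ref{main limit laminations}, a simple closed curve lying in $L^+$ but not in $L^-$ is recorded by the upper resolution but not by the lower, so after filling in Margulis tubes it becomes the core curve of an upper parabolic locus. The mirror argument handles simple closed curves in $|\mu^-| \setminus |\mu^+|$.

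The principal difficulty is the inclusion $|\mu^+| \subset L^+$: while disjointness of $L^+$ from $|\mu^+|$ is an immediate consequence of the Bers bound and projective convergence in the Thurston compactification, producing the opposite inclusion requires a careful restriction of the hyperbolic structures to the minimal supporting surface $\Sigma$, control over how the bounded-length arcs of $P_i^+ \cap \Sigma$ close up through $\partial \Sigma$ to genuine simple closed curves of $\Sigma$, and the observation that such curves in a sequence projectively converging to a filling lamination necessarily Hausdorff-accumulate onto that lamination.
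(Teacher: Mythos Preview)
Your overall strategy --- reducing to Theorem~\ref{main limit laminations} via the inclusion of the Thurston limit into the Hausdorff limit of shortest pants decompositions --- is exactly the paper's route. The paper packages the comparison as Lemma~\ref{limit of pants} (any bounded-length pants sequence Hausdorff-accumulates onto every component of the Thurston limit) and then invokes Corollary~\ref{Thurston limit}. Your argument for the inclusion in the non-simple-closed-curve case is workable in spirit but the details are loose: the restriction of Thurston convergence to a subsurface $\Sigma$ is not automatic, and ``closing up arcs with boundary arcs of $\partial\Sigma$'' need not produce bounded-length curves when $\partial\Sigma$ itself has unbounded $m_i$-length. The paper avoids this by arguing by contradiction: if $\lambda$ were not a minimal component of $L^+$ then $\lambda$ would cross $L^+$, and one extracts from $P_i^+\cap\Sigma$ a weighted limit $\gamma$ with $i(\mu_0,\gamma)>0$, forcing $\len_{m_i}(P_i^+)\to\infty$.

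The genuine gap is in your treatment of the second paragraph. You assert that $c\in|\mu^+|\setminus|\mu^-|$ implies $c\notin L^-$, but the inclusion you have established runs the other way: $|\mu^-|\subset L^-$, not $L^-\subset|\mu^-|$. The Hausdorff limit of a pants sequence is typically much larger than the support of the Thurston limit, so $c$ may well sit in $L^-$. More seriously, Theorem~\ref{main limit laminations} does not say that a simple closed curve in $L^+\setminus L^-$ is an upper parabolic core; its statement about parabolic loci is only the constraint that no upper parabolic locus crosses a minimal component of $\nu^+$ transversely. Your sentence about the curve being ``recorded by the upper resolution but not by the lower'' is not something Theorem~\ref{main limit laminations} provides. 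The paper handles this part differently: first, by continuity of the length function (Lemma~\ref{mu is ending}), any simple-closed-curve component $c$ of $|\mu^+|$ is parabolic in $\Gamma$, with no side information yet; second, a separate case analysis in the proof of Corollary~\ref{Thurston limit} (cases (1)--(3), using the model manifold $\mathbf M$ and the standard algebraic immersion) rules out $c$ being a \emph{lower} parabolic curve. You will need both of those ingredients to close the argument.
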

%Simple closed curves contained both $|\mu^+|$ and $|\mu^-|$ will be dealt with in Theorem \ref{generalised Ito}.

Theorem \ref{+-side --side} will be obtained by combining the following theorem with a simple lemma regarding the Thurston compactification of Teichm\"{u}ller space.

\medskip

\begin{theorem}
\label{main limit laminations}
In the same setting as in Theorem \ref{+-side --side},
let $c_{m_i}$ and $c_{n_i}$ be shortest pants decompositions of $(S,m_i)$ and $(S,n_i)$ respectively.
Let $\nu^-, \nu^+$ be  the Hausdorff limits of $\{c_{m_i}\}$ and $\{c_{n_i}\}$ respectively after passing to subsequences.
Then the minimal components of $\nu^+$ that are not simple closed curves coincide with the ending laminations of upper  ends of $(\hyperbolic^3/\Gamma)_0$.
Moreover, every upper  parabolic curve is contained in  $\nu^+$ .
Similarly the minimal components of  $\nu^-$ that are not simple closed curves coincide with the ending laminations of   lower  ends of $(\hyperbolic/\Gamma)_0$, and every lower  parabolic curve is contained in  $\nu^-$.

Conversely every simple closed curve contained in either $\nu^-$ or $\nu^+$  that has isolated leaves spiralling around it is a parabolic curve.
Every such simple closed curve in $\nu^+$ that is not contained in $\nu^-$ is an upper parabolic curve whereas every such simple closed curve in $\nu^-$ that is not contained in $\nu^+$ is a lower parabolic curve.
\end{theorem}

\subsection{Divergence theorems}
We shall next state our theorems on divergence of quasi-Fuchsian groups, where we shall give sufficient conditions for sequences of quasi-Fuchsian groups to diverge.
%, which are expressed in terms of the limits of the coordinates in the Thurston compactification of the Teichm\"{u}ller space, in three different settings.
\begin{theorem}
\label{main}
Let $\{(m_i, n_i)\}$ be a sequence in $\mathcal{T}(S) \times \mathcal{T}(\bar S)$ satisfying the following conditions.
\begin{enumerate}
\item
 $\{m_i\}$ converges to a projective lamination $[\mu^-] \in \PML(S)$ whereas $\{n_i\}$ converges to $[\mu^+] \in \PML(S)$ in the Thurston compactification of the Teichm\"{u}ller space.
 \item
 There are components $\mu_0^+$ of   $\mu^-$ and $\mu_0^-$ of $\mu^+$ which are not weighted simple closed curves and have the  minimal supporting surfaces sharing at least one boundary component up to isotopy.
  \end{enumerate}
 Then the sequence  $\{qf(m_i, n_i)\} \subset QF(S)$ diverges in $AH(S)$.
 \end{theorem}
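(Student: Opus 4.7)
My plan is to argue by contradiction: assume $\{qf(m_i, n_i)\}$ converges along a subsequence to some $(\Gamma, \psi) \in AH(S)$, and derive a contradiction from the shared-boundary hypothesis.

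The first step is to extract the end-invariant structure of $(\Gamma, \psi)$ from the Thurston limits $[\mu^-]$ and $[\mu^+]$. By Theorem \ref{+-side --side}, the non-simple-closed-curve component $\mu_0^+$ of $\mu^-$ is the ending lamination of a simply degenerate upper end of $(\hyperbolic^3/\Gamma)_0$ whose supporting subsurface in the compact core is precisely the minimal supporting surface $\Sigma^+$; symmetrically $\mu_0^-$ is the ending lamination of a simply degenerate lower end, supported on $\Sigma^-$. Since the boundary components of the supporting subsurface of a simply degenerate end are exactly the cores of the non-boundary parabolic loci adjacent to that end, the hypothesis that $\Sigma^+$ and $\Sigma^-$ share a boundary component $c$ forces $c$ to be the core of a non-boundary parabolic locus on the upper side of the compact core (via $\Sigma^+$) \emph{and} on the lower side (via $\Sigma^-$). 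In other words $c$ represents a $\integers$-cusp of $\hyperbolic^3/\Gamma$ whose horoball cross-section cuts through the compact core.

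The next step is to pass, after further subsequence selection, to the Hausdorff limits $\nu^-$ and $\nu^+$ of the shortest pants decompositions $\{c_{m_i}\}$ and $\{c_{n_i}\}$, and to apply Theorem \ref{main limit laminations}. Since $m_i \to [\mu^-]$ in the Thurston compactification and $\mu_0^+$ is an arational component supported on $\Sigma^+$, any sequence of simple closed curves inside $\Sigma^+$ approximating $\mu_0^+$ in $\PML(\Sigma^+)$ has uniformly bounded hyperbolic length on $(S, m_i)$ for large $i$ and therefore enters $c_{m_i}$; consequently $|\mu_0^+|$ is a minimal sublamination of $\nu^-$. Theorem \ref{main limit laminations} then identifies $\mu_0^+$ as the ending lamination of a lower simply degenerate end of $(\hyperbolic^3/\Gamma)_0$, so $\mu_0^+$ is simultaneously the ending lamination of simply degenerate ends on \emph{both} sides of the compact core, each supported on $\Sigma^+$; the same applies symmetrically to $\mu_0^-$ and $\Sigma^-$.

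The final step is to use this doubled end-invariant structure, together with the $\integers$-cusp through the core at $c$, to produce the contradiction. The Masur--Minsky hierarchy $h_\Gamma$ for the limit would have to contain infinite tight geodesics supported on both $\Sigma^+$ and $\Sigma^-$, accumulating on $\mu_0^+$ and $\mu_0^-$ respectively in both forward and backward directions, with the main geodesic containing a simplex including $c$ through which all four subordinate infinite geodesics branch off. I would show that such a structure cannot arise as a geometric limit of the hierarchies of $qf(m_i, n_i)$: tracking the slice--resolution structure of the approximating hierarchies under the geometric-limit analysis of Ohshika--Soma \cite{OhSo}, the terminal and initial markings $\mu_{n_i}$ and $\mu_{m_i}$ determined by the Thurston convergence cannot simultaneously accommodate the forward-and-backward infinite subordinate geodesics on both $\Sigma^+$ and $\Sigma^-$. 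Formalising this incompatibility is where I expect the main technical effort to lie; the rest of the argument is in effect a consistency check between Theorems \ref{+-side --side} and \ref{main limit laminations}.
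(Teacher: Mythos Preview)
Your first paragraph is essentially the paper's entire proof, but you fail to recognise it as such and then drift into unnecessary and muddled territory.

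Once you have established, via Theorem \ref{+-side --side}, that $\mu_0^+$ is the ending lamination of a simply degenerate end on one side of the core surface and $\mu_0^-$ on the other, and that the shared frontier curve $c$ is therefore the core of a non-boundary parabolic locus on \emph{both} $S\times\{1\}$ and $S\times\{0\}$, you are done. A single parabolic conjugacy class $\psi(c)$ corresponds to exactly one $\integers$-cusp of $\hyperbolic^3/\Gamma$, hence to exactly one annular component of the parabolic locus $P$ in the relative compact core $C\cong S\times[0,1]$; that component lies either in $S\times\{0\}$ or in $S\times\{1\}$, never both. This is precisely the paper's contradiction: ``no two distinct cusps have homotopic core curves.'' Your rephrasing as ``a $\integers$-cusp whose horoball cross-section cuts through the compact core'' obscures the point; no such configuration exists in a Kleinian surface group.

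Your second and third paragraphs are therefore superfluous, and the second contains a genuine error. Theorems \ref{+-side --side} and \ref{main limit laminations} are mutually consistent: whichever side the former assigns to $\mu_0^+$, the latter assigns the same side (one is derived from the other via Lemma \ref{limit of pants}). You cannot extract ``upper'' from one theorem and ``lower'' from the other for the same component; your claim that $\mu_0^+$ is the ending lamination of simply degenerate ends on both sides does not follow. The hierarchy-incompatibility programme in your third paragraph is neither needed nor well-posed here; the machinery of \cite{OhSo} was used to \emph{prove} Theorem \ref{+-side --side}, and once that theorem is in hand the deduction of Theorem \ref{main} is the two-line argument above.
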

 
Theorem \ref{main} follows rather easily from Theorem \ref{+-side --side}.
If we use Theorem \ref{main limit laminations} instead of Theorem \ref{+-side --side}, we get the following.

\begin{theorem}
\label{modified main}
Let $\{m_i\}$ and $\{n_i\}$ be sequences  in $\mathcal{T}(S)$ and $\mathcal{T}(\bar S)$ without convergent subsequences, and let $c_{m_i}$ and $c_{n_i}$ be shortest pants decomposition of the hyperbolic surfaces $(S,m_i)$ and $(S,n_i)$ respectively.
Suppose that $c_{m_i}$ and $c_{n_i}$ converge to geodesic laminations $\mu^-$ and $\mu^+$ in the Hausdorff topology respectively.
Suppose that there are minimal components $\mu^-_0$ of $\mu^-$ and $\mu_0^+$ of $\mu^+$ which are not simple closed curves and have minimal supporting surfaces sharing at least one boundary component up to isotopy.
Then $\{qf(m_i, n_i)\} \subset QF(S)$ diverges in $AH(S)$.
\end{theorem}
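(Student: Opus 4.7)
The plan is to proceed by contradiction, reducing everything to Theorem \ref{main limit laminations}. Suppose that some subsequence of $\{qf(m_i,n_i)\}$ converges algebraically in $AH(S)$ to a limit $(\Gamma,\psi)$. Passing to this subsequence preserves the Hausdorff convergences $c_{m_i}\to\mu^-$ and $c_{n_i}\to\mu^+$, so Theorem \ref{main limit laminations} applies and yields: every minimal component of $\mu^+$ that is not a simple closed curve is the ending lamination of an upper end of $(\hyperbolic^3/\Gamma)_0$, and every minimal component of $\mu^-$ that is not a simple closed curve is the ending lamination of a lower end.

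Apply this to the components $\mu_0^\pm$ supplied by hypothesis. Then $\mu_0^+$ is the ending lamination of some upper end $e^+$, which faces a component $F^+$ of $(S\times\{1\})\setminus P$, and $\mu_0^-$ is the ending lamination of some lower end $e^-$, which faces a component $F^-$ of $(S\times\{0\})\setminus P$. Since the ending lamination of a simply degenerate end fills the subsurface it supports, the minimal supporting surfaces satisfy $\Sigma_0^+=F^+$ and $\Sigma_0^-=F^-$ under the usual identification of $S\times\{0,1\}$ with $S$.

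The shared boundary component $c$ from the hypothesis is an essential non-peripheral simple closed curve on $S$ that is a frontier component of both $F^+$ and $F^-$. Since the frontier of $F^+$ in $S\times\{1\}$ consists of core curves of upper parabolic loci, $c$ must be the core of an upper parabolic locus; symmetrically, $c$ is the core of a lower parabolic locus. Consequently $\psi([c])\in\Gamma$ is parabolic, and its conjugacy class determines a single cusp of $\hyperbolic^3/\Gamma$, hence a single component of $P$ in the compact core $C\cong S\times[0,1]$. By the paper's convention, each non-boundary component of $P$ lies either entirely in $S\times\{1\}$ or entirely in $S\times\{0\}$, so a single such locus cannot be simultaneously upper and lower. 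This contradiction shows that no subsequence of $\{qf(m_i,n_i)\}$ can converge, hence the sequence diverges in $AH(S)$.

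The main step requiring care is the identification $\Sigma_0^\pm=F^\pm$, which rests on the fact built into the Bonahon--Minsky framework that the ending lamination of a simply degenerate end is arational in, hence fills, the subsurface facing that end. Once this is granted, Theorem \ref{main limit laminations} does essentially all the work: the shared-frontier hypothesis immediately forces the contradictory double-sided parabolic locus, and no further invocation of hierarchies or model manifolds is needed beyond what is already packaged into that theorem.
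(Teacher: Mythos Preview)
Your proof is correct and follows essentially the same approach as the paper: the paper states that Theorem~\ref{modified main} is obtained from the proof of Theorem~\ref{main} by replacing the appeal to Theorem~\ref{+-side --side} with Theorem~\ref{main limit laminations}, and your argument does exactly this, deriving the same contradiction that the shared frontier curve $c$ would have to be the core of both an upper and a lower parabolic locus. Your explicit justification of the identification $\Sigma_0^\pm=F^\pm$ via arationality of ending laminations is a welcome elaboration of a step the paper leaves implicit.
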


In the setting of these two theorems above, the case when $\mu_0^-$ and $\mu_0^+$ have the same support is most interesting.
In fact, if they do not, it is much easier to prove the theorems just by using the continuity of length function in hyperbolic manifolds and the fact that on the ending lamination of an end $e$ of the non-cuspidal part  is maximal on a frontier component of a relative compact core facing $e$. 
Also the assumption  that $\mu^-_0$ and $\mu^+_0$ are not simple closed curves is essential.
  In the case when $\mu^-$ and $\mu^+$ share  simple closed curves, the construction of Anderson-Canary \cite{AC} gives an example of convergent sequence.
  Still, we can show the following theorem. 
  \begin{theorem}
 \label{scc case}
 Let $\mu^-$ and $\mu^+$ be two measured laminations on $S$ such that the  components shared by  $|\mu^-|$ and $|\mu^+|$ are all simple closed curves or there are no components shared by $|\mu^-|$ and $|\mu^+|$.
 %, which we denote by $c_1, \dots , c_r$.
% \begin{enumerate}
% \item
% Suppose that none of $c_1, \dots , c_r$ lies on the boundary of the minimal supporting surface  of a component  of  $\mu^-$ or $\mu^+$.
% Then there is a sequence $\{(m_i,n_i)\}$ in $\teich(S) \times \teich(\bar S)$ with convergent $qf(m_i,n_i)$ such that $m_i$ converges $[\bar\mu^-]$ and $n_i$ converges to $[\bar\mu^+]$ and $|\bar \mu^-|=|\mu^-|, |\bar\mu^+|=|\mu^+|$ in the Thurston compactification.
% Moreover, if $|\mu^+|=c_1 \cup \dots \cup c_r$, we choose $\{(m_i,n_i)\}$ so that  $qf(m_i,n_i)$ converges exotically to a b-group.
% \item
Suppose that 
either
\begin{enumerate}
\item
there is a boundary component of the minimal supporting surface of a non-simple closed curve component of $\mu^+$ which is contained in $|\mu^-|$ up to isotopy, or
%or the boundary of the minimal supporting surface of a non-simple closed curve component of $\mu^-$, up to isotopy, or
\item
there is a boundary  component of the minimal supporting surface of a non-simple closed curve component of $\mu^-$ which is contained in $|\mu^+|$ up to isotopy.
%or the boundary of the minimal supporting surface of a non-simple closed curve component of $\mu^+$, up to isotopy.
\end{enumerate}
%at least one of $c_1, \dots , c_r$ lies on the boundary of the minimal supporting surface of a component of either $\mu^-$ or $\mu^+$.
Then for every $\{m_i \in \teich(S)\}$ converging to $[\mu^-]$ and $\{n_i \in \teich(S)\}$ converging to $[\mu^+]$ in the Thurston compactification of the Teichm\"{u}ller space, the sequence  $\{qf(m_i,n_i)\} \subset QF(S)$ diverges in $AH(S)$.
 \end{theorem}
 
% The first part of this theorem is just a consequence of the construction of Anderson-Canary.
% In this part, we ignore the weights on $c_1, \dots , c_r$ given by $\mu^\mp$.
% To prove the second part, we need to analyse the structure of geometric limit assuming $qf(m_i,n_i)$ converges, seeking a contradiction.

In the case when  a simple closed curve component of $|\mu^+|$ which does not lie on the boundary of minimal supporting surface of non-simple closed curve component, up to isotopy, is shared by $|\mu^-|$ as the same kind of component, we need to take into accounts the weights on $c_1, \dots , c_r$, as was done in Ito \cite{Ito} in the case of once-punctured torus groups.
 
 \begin{theorem}
 \label{generalised Ito}
 Consider, as in Theorem \ref{scc case}, sequences $\{m_i\}$ and $\{n_i\}$ converging to $[\mu^-]$ and $[\mu^+]$ respectively, and suppose that their supports share only  simple closed curves $c_1, \dots , c_r$.
 Suppose that none of $c_1, \dots, c_r$ is isotopic into the boundary of the minimal supporting surface  of a component of $\mu^-$ or $\mu^+$.
 Then $\{qf(m_i, n_i)\}$ converges after taking a subsequence only if the following conditions are satisfied.
 \begin{enumerate}
 \item For each $c_j$ among $c_1, \dots, c_r$, neither $\len_{m_i}(c_j)$ nor $\len_{n_i}(c_j)$ goes to $0$.
 \item There are sequences of integers $\{p_i^1\}, \dots , \{p_i^r\}, \{q_i^1\}, \dots , \{q_i^r\}$
 % and constants $a,b \in \integers$ 
 such that the following hold after passing to a subsequence:
 \begin{enumerate}
 \item If $|\mu^-| \setminus \cup_{j=1}^r c_j $ is non-empty, then  $(\tau_{c_1}^{p_i^1} \circ \dots \circ \tau_{c_r}^{p_i^r})^*(m_i)$ converges to $[\mu^- \setminus \cup_{j=1}^r w_j c_j ]$ in the Thurston compactification, where $w_j$ is the transverse measure on $c_j$ which  $\mu^-$ defines and $\tau_{c_j}$ denotes the Dehn twist around $c_j$.
 Otherwise, $(\tau_{c_1}^{p_i^1} \circ \dots \circ \tau_{c_r}^{p_i^r})^*(m_i)$ either stays in a compact set of the Teichm\"{u}ller space or converges to a projective lamination $[\nu^-]$ which contains none of  $c_1, \dots, c_r$ as leaves.
% In the latter case, the minimal supporting surface of each component of $\nu^-$ is also disjoint from $c_1, \dots, c_r$.
 \item In the same way, if $|\mu^+| \setminus \cup_{j=1}^r c_j$ is non-empty, $(\tau_{c_1}^{q_i^1} \circ \dots \circ \tau_{c_r}^{q_i^r})^*(n_i)$ converges to $[\mu^+ \setminus \cup_{j=1}^r v_j c_j]$ in the Thurston compactification, where $v_j$ is the transverse measure on $c_j$ which $\mu^+$ defines.
Otherwise,  $(\tau_{c_1}^{q_i^1} \circ \dots \circ \tau_{c_r}^{q_i^r})^*(n_i)$ either stays in a compact set of the Teichm\"{u}ller space or converges in the Thurston compactification to a projective lamination  $[\nu^+]$  which contains none of $c_1, \dots , c_r$ as a leaf.
% In the latter case, the minimal supporting surface of each component of $\nu^+$ is also disjoint from $c_1, \dots, c_r$.
%Dec 2013 push-forward was changed to pull-back.
 \item There exist $a_j \in \integers\ (j=1, \dots , r)$ 
 with $a_j \neq 0,-1$ and $k_i^j \in \integers$ with $|k_i^j| \rightarrow \infty$
 %added 2013
 such that $p_i^j=k_i^ja_j$ and $q_i^j=k_i^j(a_j+1)$ for every $j=1, \dots , r$ and large $i$.
 \end{enumerate}
 \end{enumerate}
 If the sequence really converges, then $c_j$ is an upper parabolic curve of the algebraic limit if $a_i>0$, and  a lower parabolic curve otherwise.
 
Conversely, let $a_j\in \integers \, (j=1, \dots , r)$ be any number, and $\mu^-,\mu^+$  measured laminations whose supports share only (possibly empty) simple closed curves $c_1, \dots , c_r$ and which satisfy the following conditions.
\begin{enumerate}[\indent\  (1*)]
\item
The laminations $\mu^-$ and $\mu^+$ do not have non-simple-closed-curve components  whose minimal supporting surfaces share a boundary component up to isotopy.
\item
In the case when both $\mu^-$ and $\mu^+$ are connected and the minimal supporting surfaces of $\mu^-$ and $\mu^+$ are the entire surface $S$, the supports $|\mu^-|$ and $|\mu^+|$ do not coincide.
\item No simple closed curve in  $|\mu^-|$  is isotopic into the boundary of the minimal supporting surface of a non-simple-closed-curve component of $\mu^+$.
In the same way, no simple closed curve in $|\mu^+|$ is isotopic into the boundary of the minimal supporting surface of a non-simple-closed-curve component of $\mu^-$.
\end{enumerate}
Then, there is a sequence of $\{(m_i,n_i)\}$ in $\teich(S) \times \teich(\bar S)$ with algebraically convergent $\{qf(m_i,n_i)\}$  such that $\{m_i\}$ converges to $[\mu^-]$ and $\{n_i\}$ converges to $[\mu^+]$  in the Thurston compactification, and the two conditions (1) and (2) above are satisfied.
 \end{theorem}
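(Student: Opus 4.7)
\emph{Strategy.} The plan is to reduce both directions of the theorem to Theorem \ref{main limit laminations}, supplemented by a careful analysis of annular subsurface projections around each shared simple closed curve $c_j$. Since a shared $c_j$ can correspond either to a parabolic locus of the limit group or to a short non-degenerate closed geodesic, the combinatorial twist parameters between $m_i$ and $n_i$ must be read off from the hierarchies and matched across the two sides.

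\emph{Necessity.} Assume $\{qf(m_i,n_i)\}$ converges to $(\Gamma,\psi)$ after passing to a subsequence. For condition (1), argue by contradiction: if $\len_{m_i}(c_j)\to 0$, then the modulus of an embedded annulus around $c_j$ in $(S,m_i)$ blows up, so $c_j$ appears in the Hausdorff limit $\mu^-$ of the shortest pants decompositions and is forced to be a lower parabolic locus of $(\Gamma,\psi)$; Theorem \ref{main limit laminations}, combined with the hypothesis that $c_j$ lies on the boundary of no minimal supporting surface of a component of $\mu^-$ or $\mu^+$, then contradicts the structure of the end of $(\hyperbolic^3/\Gamma)_0$ facing $c_j$. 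For condition (2), choose $p_i^j$ so that the annular projection of $\base(\mu_i^-)$ to $\CC(A_{c_j})$ stays bounded; this forces the twisted surface $(\tau_{c_1}^{p_i^1}\circ\cdots\circ\tau_{c_r}^{p_i^r})_*(m_i)$ to converge in the Thurston compactification to the stated projective lamination, and analogously for $q_i^j$. The algebraic relation $(a_j+1)p_i^j=a_j q_i^j$ is then extracted from the Margulis tube $T_j$ around the geodesic representative of $c_j$ in $\hyperbolic^3/\Gamma$: the meridional slope of $T_j$ in the limit is a rational function of $p_i^j$ and $q_i^j$ modulo the real length of $c_j$, and algebraic convergence forces this slope to stabilise, yielding the linear relation. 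The integer $a_j$ records whether the limiting tube opens onto an upper or lower parabolic locus, which accounts for the sign dichotomy in the final assertion.

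\emph{Sufficiency and main obstacle.} For the converse, fix base points $m_0^-\in\teich(S)$ and $n_0^+\in\teich(\bar S)$ whose shortest pants decompositions meet each $c_j$ transversely in a standard way and realise the prescribed $[\nu^\pm]$ on the complementary subsurfaces. Define $m_i,n_i$ by the Dehn twist formulae in (2) with $(a_j+1)p_i^j=a_j q_i^j$ and $|p_i^j|\to\infty$. Hypotheses $(1^*)$--$(3^*)$ exclude exactly the divergence configurations of Theorems \ref{main} and \ref{scc case}, so all proper subsurface projections along the hierarchies associated with $qf(m_i,n_i)$ remain uniformly bounded; applying the model manifold machinery of Minsky \cite{Mi} together with the geometric-limit analysis of Ohshika--Soma \cite{OhSo}, one concludes that $\{qf(m_i,n_i)\}$ subconverges in $AH(S)$. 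The main obstacle is the derivation of the precise linear relation $(a_j+1)p_i^j=a_j q_i^j$ in the necessity direction: this requires a quantitative identification, inside the model manifold, of the modular parameter of the Margulis tube around $c_j^*$ in terms of the asymmetric twist data on the two markings, together with an argument that this parameter must stabilise for algebraic convergence. Pinning down this identification rigorously---particularly when $c_j$ sits adjacent to subsurfaces facing geometrically infinite ends on one side and geometrically finite ends on the other---will be the technical heart of the proof.
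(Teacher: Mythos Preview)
Your necessity argument for condition (1) is on the right track but the contradiction is not the one you describe. The paper's mechanism is: by Corollary \ref{Thurston limit}, each shared $c_j$ must be \emph{both} an upper and a lower algebraic parabolic curve, which by Lemma \ref{position of S} forces the standard algebraic immersion $g'$ in the model $\mathbf M$ of the \emph{geometric} limit to wrap around a torus boundary component $T_j$. If $\len_{n_i}(c_j)\to 0$, the upper boundary block is pinched along $c_j$, so $g'(c_j)$ is homotopic into an open annulus component of $\partial\mathbf M$ rather than a torus; hence $g'$ does not wrap and $c_j$ cannot be lower, contradicting $c_j\subset|\mu^-|$. The hypothesis on minimal supporting surfaces plays no role here.

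For condition (2) there is a genuine gap. You locate the relation $(a_j+1)p_i^j=a_jq_i^j$ in a ``Margulis tube around $c_j^*$ in $\hyperbolic^3/\Gamma$'' and argue that its meridional slope must stabilise. But in the algebraic limit $c_j$ is parabolic---there is no closed geodesic $c_j^*$ and no such tube. The integer $a_j$ is not a modular parameter: it is the wrapping number of $g'(S)$ around the torus boundary $T_j$ in the model of the \emph{geometric} limit. The paper pulls $T_j$ back to a Margulis tube $V_j(i)$ in $\mathbf M_i$ with compressing curve $k_i[l_j(i)]+[m_j(i)]$, $|k_i|\to\infty$, and then observes that unwrapping $g'_i$ through $V_j(i)$ shifts the annular projection by $k_i$ at each pass; comparing the curves $\gamma^\pm(i)$ in the blocks above and below $T_j$ yields $p_i^j=-k_ia_j$ and $q_i^j=-k_i(a_j+1)$ directly. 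Your ``slope stabilisation'' heuristic does not produce this.

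The existence direction has a more serious problem. You argue that $(1^*)$--$(3^*)$ rule out the divergence configurations of Theorems \ref{main} and \ref{scc case}, and that bounded subsurface projections plus the model machinery then give subconvergence. But those theorems give \emph{sufficient} conditions for divergence; their failure does not imply convergence, and bounded projections alone do not furnish a limit group. The paper instead follows the Anderson--Canary construction: build a geometrically finite group $\Gamma_0$ whose quotient is $S\times[0,1]$ with the curves $c_j\times\{1/2\}$ drilled out, deform it by earthquakes along the non-closed components of $\mu^\pm$ (strong convergence of the drilled manifolds via \cite{OhI}), and then perform hyperbolic Dehn fillings along slopes $-k[l_j]+[m_j]$ to obtain quasi-Fuchsian groups $(G_k,\phi_k)$; a diagonal argument gives algebraic convergence to the cover of $N_\infty$ associated to $(g_\infty)_*\pi_1(S)$, with the twist exponents computed from the filling slopes and the wrapping numbers $a_j$. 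You should replace your compactness sketch with this explicit construction.
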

 
 The latter half of this theorem  shows that {\em as sufficient conditions for divergence expressed in term of the limits of the conformal structures in the Thurston compactification, Theorems \ref{main} and \ref{scc case} together with the main theorem of \cite{OhG} are best possible}.
 
%As sufficient conditions for divergence expressed in term of the limits of the conformal structures in the Thurston compactification, Theorems \ref{main}, \ref{scc case}, and \ref{generalised Ito}  together with the main theorem of \cite{OhG} are best possible.
%In fact, we can show the following by using Thurston \cite{Th2} and Ohshika \cite{OhI}.
%
%\begin{mproposition}
%\label{converse}
%Let $\lambda$ and $\mu$ be measured laminations on $S$ such that 
%there are no components $\lambda_0$ of $\lambda$ and $\mu_0$ of $\mu$ whose minimal supporting surfaces have the same boundary component.
%In the case when the minimal supporting surfaces of $\lambda$ and $\mu$ are the entire $S$, we further assume that their supports do not coincide.
%Then there exists a sequence of quasi-Fuchsian groups $\{qf(m_i,n_i)\}$ which converges in $AH(S)$  such that $\{m_i\}$ converges to $[\lambda]$ and $\{n_i\}$ converges to $[\mu]$ in the Thurston compactification of the Teichm\"{u}ller space.
%\end{mproposition}

 \subsection{Non-existence of exotic convergence}
The assumptions in  Theorem \ref{scc case}  is related to the fact that such a sequence as in the statement cannot converge exotically to a b-group.
 The condition that none of $c_1, \dots , c_r$ lies on the boundary of the supporting surface of a component of $\mu^-$ or $\mu^+$ is essential for the exotic convergence.
We can prove the following related results.
(Recall that a non-peripheral parabolic locus is said to be isolated if it does not touch a simply degenerate end.)
 
 \begin{theorem}
 \label{no exotic convergence}
 Let $G$ be a b-group without isolated parabolic locus.
 %such that every  $\integers$-cusp of $\hyperbolic^3/G$ touches a geometrically infinite end of $(\hyperbolic^3/G)_0$.
 Then there is no sequence of quasi-Fuchsian groups exotically converging to $G$.
 \end{theorem}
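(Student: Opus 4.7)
The plan is to argue by contradiction. Suppose $\{qf(m_i,n_i)\}$ converges exotically to $G$. Since both $\{m_i\}$ and $\{n_i\}$ leave every compact set of the respective Teichm\"{u}ller spaces, I pass to a subsequence so that $\{m_i\}$ converges to some $[\mu^-] \in \PML(S)$ and $\{n_i\}$ converges to some $[\mu^+] \in \PML(\bar S)$ in the Thurston compactifications.

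Next, I will exploit the structure of $G$ as a b-group. Being a b-group means $G$ lies on the boundary of some Bers slice $\mathcal{B}(m_0)$; equivalently, the lower side of $(\hyperbolic^3/G)_0$ is a single geometrically finite end with conformal structure $m_0$, and in particular contains no simply degenerate subsurface and no non-boundary parabolic locus. Applying Theorem \ref{+-side --side} to the lower side, neither non-simple-closed-curve components of $|\mu^-|$ (which would be lower ending laminations) nor simple closed curves in $|\mu^-| \setminus |\mu^+|$ (which would be core curves of lower parabolic loci) can exist. Consequently $|\mu^-|$ is a disjoint union of simple closed curves $c_1,\dots,c_r$, all of which are shared with $|\mu^+|$, and $r \geq 1$ because $\{m_i\}$ escapes every compact set.

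Now I combine Theorems \ref{scc case} and \ref{generalised Ito} with the no-isolated-parabolic-locus hypothesis. If any $c_j$ lay on the boundary of the minimal supporting surface of a component of $\mu^-$ or $\mu^+$, Theorem \ref{scc case} would force divergence, contradicting the convergence assumption; hence no $c_j$ does. This places us in the hypothesis of Theorem \ref{generalised Ito}, whose ``only if'' part forces each $c_j$ to be the core curve of some parabolic locus of $G$. The absence of non-boundary parabolic loci on the lower side then pins each $c_j$ down as a core of an upper parabolic locus. By the no-isolated-parabolic-locus hypothesis, this upper parabolic locus abuts a simply degenerate upper subsurface $\Sigma_j$ with $c_j \subset \partial \Sigma_j$, and by Theorem \ref{+-side --side} the ending lamination $\lambda_j$ of the corresponding end is a non-simple-closed-curve component of $|\mu^+|$ whose minimal supporting surface is precisely $\Sigma_j$. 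Thus $c_j$ does lie on the boundary of the minimal supporting surface of a component of $\mu^+$, contradicting the preceding step.

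The main subtle point, I expect, will be ensuring that the ending lamination $\lambda_j$ of the simply degenerate subsurface $\Sigma_j$ actually appears as a component of $\mu^+$ in the Thurston compactification, rather than merely as a non-simple-closed-curve minimal component of the Hausdorff limit $\nu^+$ of shortest pants decompositions of $n_i$ (which Theorem \ref{main limit laminations} guarantees). If the one-sided form of Theorem \ref{+-side --side} is not immediately enough, I would supplement it by combining Theorem \ref{main limit laminations} with a Bers-type length inequality: since $c_j$ is forced to be parabolic in $G$ and $\Sigma_j$ supports a simply degenerate end, the upper conformal structures $n_i$ must pinch both $c_j$ and curves converging to $\lambda_j$, so that $\lambda_j \subseteq |\mu^+|$. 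Once this detail is settled, the remainder is a clean two-step contradiction: Theorem \ref{scc case} pushes each shared curve $c_j$ off every boundary of a minimal supporting surface of a component of $\mu^\pm$, while the b-group structure together with the no-isolated-parabolic hypothesis pulls $c_j$ onto precisely such a boundary.
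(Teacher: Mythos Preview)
Your argument has a genuine gap at the step asserting $\lambda_j \subseteq |\mu^+|$. Theorem~\ref{+-side --side} only states that each non-closed-curve component of $|\mu^+|$ is an upper ending lamination; it does \emph{not} assert the converse, and the converse can fail: if $\len_{n_i}(c_j)\to 0$ much faster than $n_i|_{\Sigma_j}$ degenerates toward $\lambda_j$, the Thurston limit $[\mu^+]$ records only $c_j$ and misses $\lambda_j$ entirely. Theorem~\ref{main limit laminations} does give the two-sided statement, but for the Hausdorff limit $\nu^+$ of shortest pants decompositions, and Lemma~\ref{limit of pants} only places the components of $|\mu^+|$ among the minimal components of $\nu^+$, not conversely. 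Your Bers-inequality patch shows at best that curves approximating $\lambda_j$ become short in $(S,n_i)$; it does not force $\lambda_j$ to survive in the projective limit, so Theorem~\ref{scc case} cannot be invoked as written.

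Your overall strategy can be rescued by a shorter route that avoids $\lambda_j$ altogether. From $c_j\in|\mu^-|\cap|\mu^+|$, Corollary~\ref{Thurston limit} makes $c_j$ simultaneously an upper and a lower algebraic parabolic curve of $\mathbf M$; by the definitions in \S\ref{algebraic limit} this forces $g'$ to wrap around a torus boundary component with longitude $c_j$, and Lemma~\ref{isolated} then makes the corresponding parabolic locus isolated, contradicting the hypothesis. The paper's own proof is different and more direct: working in the model $\mathbf M$ of the geometric limit, Lemma~\ref{isolated} rules out any torus around which $g'$ could wrap, and the b-group hypothesis rules out lower algebraic simply degenerate ends and lower parabolic curves; hence below $g'(S)$ there is a single geometrically finite end filling $S\times\{0\}$, the lower convex-core boundary lies at bounded distance, and Sullivan's theorem bounds $\{m_i\}$ in $\mathcal T(S)$, contradicting exotic convergence. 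This bypasses the Thurston-compactification analysis entirely.
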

 \subsection{Self-bumping}
As the results of McMullen \cite{Mc}, Bromberg \cite{Br} and Magid \cite{Mg} suggest, the singularities of $AH(S)$ which are found thus far are all related to the construction of Kerckhoff-Thurston \cite{KT} and Anderson-Canary \cite{AC}.
The following results show that convergence to geometrically infinite groups in $AH(S)$ without isolated parabolic loci is quite different from the situation for regular b-groups where $QF(S)$ bumps itself.
 \begin{theorem}
 \label{bumping theorem}
 Let $\Gamma$ be a  geometrically infinite group  with isomorphism $\psi : \pi_1(S) \rightarrow \Gamma$ in $AH(S)$.
 Suppose that $\Gamma$ does not have an isolated parabolic locus.
Let $\{(m_i, n_i)\}$ and $\{(m_i',n_i')\}$ be two sequences in $\teich(S) \times \teich(\bar S)$ such that both $\{qf(m_i, n_i)\}$ and $\{qf(m_i',n_i')\}$ converge to $(\Gamma, \psi)$.
Then for any neighbourhood $U$ of the quasi-conformal deformation space $QH(\Gamma, \psi)$ of $(\Gamma, \psi)$ in $AH(S)$, we can take  $i_0$ so that if $i > i_0$, then there is an arc $\alpha_i$ in $U \cap QF(S)$ connecting $qf(m_i,n_i)$ with $qf(m_i',n_i')$.
In the case when $\Gamma$ is a b-group whose lower conformal structure at infinity is $m_0$, we can take $\alpha_i$ satisfying further the following condition.
For any neighbourhood $V$ of $m_0$ in $\teich(S)$, we can take $i_0$ so that for any $i > i_0$, the arc $\alpha_i$ is also contained in $qf(V \times \mathcal T(\bar S))$.
\end{theorem}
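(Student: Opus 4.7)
The strategy is to combine the non-exotic-convergence theorem (Theorem \ref{no exotic convergence}) with Minsky's model-manifold machinery and its geometric-limit extension developed in Ohshika-Soma \cite{OhSo}, in order to build the arc $\alpha_i$ explicitly as a concatenation of two controlled paths in $QF(S)$.

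\textbf{Step 1: Reduction via non-exotic convergence.} Since $\Gamma$ has no isolated parabolic locus, every cusp of $\hyperbolic^3/\Gamma$ is attached to a simply degenerate end. Applying Theorem \ref{no exotic convergence} at each geometrically finite end of $(\hyperbolic^3/\Gamma)_0$, the restrictions of $m_i, n_i$ (and of $m_i', n_i'$) to the subsurfaces of $S$ facing geometrically finite ends converge in the Teichm\"{u}ller spaces of those subsurfaces to the conformal structures at infinity of $\Gamma$ at the corresponding ends. The remaining components of $m_i$ and $n_i$ diverge, with asymptotic Hausdorff behaviour carried by the ending laminations of the simply degenerate ends by Theorem \ref{main limit laminations}. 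In particular, $(m_i,n_i)$ and $(m_i',n_i')$ have asymptotically the same \emph{shape} in $\teich(S) \times \teich(\bar S)$, differing essentially only by the finite-dimensional freedom of $QH(\Gamma,\psi)$.

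\textbf{Step 2: Construction of the arc.} For large $i$, I would take $\alpha_i$ to be the concatenation of two paths. The first path keeps the components of $(m_i,n_i)$ facing simply degenerate ends fixed and moves the components facing geometrically finite ends from their values in $(m_i,n_i)$ to their values in $(m_i',n_i')$, along a geodesic in the product of the corresponding Teichm\"{u}ller spaces. By Step 1, both endpoints of this motion lie in an arbitrarily small preassigned neighbourhood of the limit data, so the image in $QF(S)$ stays close to $QH(\Gamma,\psi)$. The second path keeps the geometrically finite components at the values of $(m_i',n_i')$ and interpolates the divergent components from those of $(m_i,n_i)$ to those of $(m_i',n_i')$. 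This is the harder part: I would realise it by advancing the resolution of the hierarchy $h_{qf(m_i,n_i)}$ one elementary forward move at a time toward a resolution of $h_{qf(m_i',n_i')}$, choosing at each step a quasi-Fuchsian group whose hierarchy matches $h_\Gamma$ on an increasingly large initial subcomplex. The absence of isolated parabolic loci is crucial here, because such loci are precisely what would introduce branching and winding obstructions of Anderson--Canary type, obstructing a continuous interpolation.

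\textbf{Step 3: Verification via bi-Lipschitz model maps.} Use the uniform bi-Lipschitz model map (Minsky \cite{Mi}, Brock-Canary-Minsky \cite{BCM}) to translate closeness of hierarchies into geometric closeness of the quotient hyperbolic $3$-manifolds. For large $i$ the model of every group along $\alpha_i$ coincides with the model of $\Gamma$ on a compact subcomplex whose size tends to infinity, and the bi-Lipschitz constants tend to $1$ on compacta. With an appropriate choice of base frames this forces convergence in the pointed geometric topology, which (together with markings controlled by the resolution) implies that the whole arc lies in $U$ for $i \geq i_0$. The main obstacle is verifying that the advancement of hierarchies in Step 2 can be realised by a continuous family of quasi-Fuchsian groups; this requires showing that each elementary forward move corresponds to a path in $QF(S)$ whose models remain controlled, a step where the geometric-limit analysis from \cite{OhSo} combined with the absence of isolated parabolic loci plays the decisive role.

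\textbf{The b-group refinement.} When $\Gamma$ is a b-group with lower conformal structure $m_0$, the only geometrically finite parameter is $m_0$, and by Step 1 we have $m_i, m_i' \in V$ for any preassigned neighbourhood $V$ of $m_0$ once $i$ is large. Choose a path in $V$ from $m_i$ to $m_i'$ and take its image under $qf(\,\cdot\,, n_i)$ as the first half of $\alpha_i$; the second half, produced by Step 2 with lower coordinate fixed at $m_i' \in V$, then automatically lies in $qf(V \times \teich(\bar S))$.
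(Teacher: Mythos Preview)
Your proposal shares the right high-level intuition---use hierarchies and resolutions to interpolate, and exploit the absence of isolated parabolics to rule out Anderson--Canary winding---but it has two genuine gaps that the paper's proof addresses with specific machinery you have omitted.

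First, your Step~1 misapplies Theorem~\ref{no exotic convergence}. That theorem is stated only for b-groups, where one entire end of $(\hyperbolic^3/\Gamma)_0$ faces all of $S$; it does not give, and cannot directly give, convergence of ``restrictions of $m_i$, $n_i$ to subsurfaces facing geometrically finite ends''. A conformal structure $m_i$ on $S$ does not decompose naturally into independent pieces over the end-decomposition of $\Gamma$, so the phrase ``keeps the components of $(m_i,n_i)$ facing simply degenerate ends fixed'' in Step~2 is not well-defined as stated. The paper never attempts such a decomposition of the input conformal structures.

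Second, and more seriously, your Step~2 misses the paper's key reduction. The paper does \emph{not} try to interpolate directly between the hierarchies $h_{qf(m_i,n_i)}$ and $h_{qf(m_i',n_i')}$; instead it first deforms each sequence separately to a \emph{strongly} convergent one (Proposition~\ref{deformation to strong one}). This is done by building modified model manifolds $\mathbf M_i'$ via the drilling theorem of Bromberg and Brock--Bromberg (Lemma~\ref{modified models}), in which the algebraic locus is horizontal and all simply degenerate bricks are aligned at a common level. Only then do resolutions of hierarchies on these bricks yield a well-controlled arc: each slice in a resolution determines a marking, the marking determines a conformal structure, and consecutive structures are joined by bounded-length Teichm\"uller segments. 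Without the drilling step you have no mechanism to align the bricks, and without the reduction to strong convergence you cannot control what happens to the geometric limit along your interpolation---in particular, extra cusps could appear. Your sentence ``advancing the resolution of the hierarchy $h_{qf(m_i,n_i)}$ \dots\ toward a resolution of $h_{qf(m_i',n_i')}$'' also does not parse: these are hierarchies with different initial and terminal markings on $S$, and one does not resolve into the other. The paper instead connects the two strongly-convergent endpoints by a tight geodesic in $\CC(\Sigma^j)$ for each simply-degenerate subsurface $\Sigma^j$, using hyperbolicity of the curve complex to ensure every intermediate vertex also converges to the ending lamination $\lambda_j$.
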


We shall then get a corollary as follows.
\begin{mcorollary}
\label{no self-bumping}
 In the setting of Theorem \ref{bumping theorem}, suppose furthermore  that each component of $\Omega_\Gamma/\Gamma$ that is not homeomorphic to $S$ is a thrice-punctured sphere.
Then $QF(S)$ does not bump itself at $(\Gamma,\psi)$, and in particular $AH(S)$ is locally connected at $(\Gamma,\psi)$.
  \end{mcorollary}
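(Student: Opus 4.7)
The plan is to deduce Corollary \ref{no self-bumping} from Theorem \ref{bumping theorem} (combined with Theorem \ref{no exotic convergence}) by first determining the shape of $QH(\Gamma,\psi)$ under the extra hypothesis. Since every component of $\Omega_\Gamma/\Gamma$ that is not homeomorphic to $S$ is a thrice-punctured sphere, it contributes nothing to the Teichm\"uller parametrisation; and since $\Gamma$ is geometrically infinite, at most one component of $\Omega_\Gamma/\Gamma$ is homeomorphic to $S$. Hence $QH(\Gamma,\psi)$ is either the singleton $\{(\Gamma,\psi)\}$ (when $\Gamma$ is totally degenerate and $\Omega_\Gamma=\emptyset$) or a single copy of $\teich(S)$ (when $\Gamma$ is a b-group, with parameter the lower conformal structure $m_0$).

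Assume for contradiction that $QF(S)$ bumps itself at $(\Gamma,\psi)$. Then there is a neighbourhood $V$ of $(\Gamma,\psi)$ and sequences $\{x_k=qf(m_k,n_k)\}$, $\{y_k=qf(m_k',n_k')\}$ in $V\cap QF(S)$, both converging to $(\Gamma,\psi)$, lying in distinct components of $V\cap QF(S)$. In the totally degenerate case $V$ is itself a neighbourhood of $QH(\Gamma,\psi)=\{(\Gamma,\psi)\}$, so Theorem \ref{bumping theorem} directly yields an arc $\alpha_k\subset V\cap QF(S)$ joining $x_k$ to $y_k$ for large $k$, contradicting the separation.

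In the b-group case $V$ is typically not a neighbourhood of the whole $\teich(S)$-copy $QH(\Gamma,\psi)$, so one has to invoke the refined form of Theorem \ref{bumping theorem} which permits $\alpha_k$ to be chosen in $qf(V_0\times\teich(\bar S))$ for any prescribed neighbourhood $V_0$ of $m_0$. The key reduction to establish is the following nesting statement: for every neighbourhood $V$ of $(\Gamma,\psi)$ there exist a neighbourhood $U$ of $QH(\Gamma,\psi)$ and a neighbourhood $V_0$ of $m_0$ in $\teich(S)$ such that
\[U\cap qf(V_0\times\teich(\bar S))\subset V.\]
Supposing the inclusion fails along shrinking $U_n$, $V_{0,n}$, one extracts a sequence $z_n=qf(\tilde m_n,\tilde n_n)$ with $\tilde m_n\in V_{0,n}\to m_0$, the distance from $z_n$ to $QH(\Gamma,\psi)$ tending to $0$, and $z_n\notin V$. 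Theorem \ref{no exotic convergence} prevents $\{z_n\}$ from escaping every compact subset of $AH(S)$ (its lower conformal structure converges to $m_0$), so along a subsequence $z_n\to z_\infty\in QH(\Gamma,\psi)$; continuity of the lower conformal structure across the b-group boundary then forces $z_\infty=(\Gamma,\psi)\in V$, the required contradiction. Combined with the refined Theorem \ref{bumping theorem}, this nesting claim produces arcs $\alpha_k\subset V\cap QF(S)$ for large $k$, again contradicting the disconnection.

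Finally, local connectedness of $AH(S)$ at $(\Gamma,\psi)$ follows because $QF(S)$ is dense in $AH(S)$ by the density conjecture cited in the introduction: a small neighbourhood $U$ of $(\Gamma,\psi)$ in $AH(S)$ coincides with the closure of $U\cap QF(S)$, and the closure of a connected set is connected. I anticipate the main obstacle being the nesting claim in the b-group case, which genuinely uses both the absence of exotic convergence (Theorem \ref{no exotic convergence}) and the continuity of the lower conformal structure at b-groups on the boundary of $QF(S)$.
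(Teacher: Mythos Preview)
Your overall strategy matches the paper's: apply Theorem~\ref{bumping theorem}, split into the quasi-conformally rigid case (where $QH(\Gamma,\psi)=\{(\Gamma,\psi)\}$ and the conclusion is immediate) and the b-group case. You are right that in the b-group case one needs to pass from ``arc in a small neighbourhood of $QH(\Gamma,\psi)$ with lower conformal structure near $m_0$'' to ``arc in a small neighbourhood of $(\Gamma,\psi)$''; the paper simply asserts this, while you try to prove it via your nesting claim.

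Your argument for the nesting claim has two genuine problems. First, the compactness of $\{z_n\}$ does not come from Theorem~\ref{no exotic convergence}; that theorem goes in the opposite direction (convergence to a b-group forces bounded lower structure). What you need is Bers' relative compactness: since $\tilde m_n\to m_0$ lies in a compact set of $\teich(S)$, the groups $qf(\tilde m_n,\tilde n_n)$ lie in a relatively compact family in $AH(S)$. Second, and more seriously, the inference ``$d(z_n,QH(\Gamma,\psi))\to 0$ and $z_n\to z_\infty$, hence $z_\infty\in QH(\Gamma,\psi)$'' fails because $QH(\Gamma,\psi)\cong\teich(S)$ is \emph{not} closed in $AH(S)$: a sequence in $QH$ whose lower conformal structures leave every compact set of $\teich(S)$ can converge to a group with extra lower degeneration, which lies outside $QH$. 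To repair this you must use both pieces of information at once: from $\tilde m_n\to m_0$ and continuity of the lower conformal structure (this is what the proof of Theorem~\ref{no exotic convergence} actually establishes, via convergence of the lower convex-core boundary and Sullivan's theorem) you get that $z_\infty$ has a geometrically finite lower end with structure $m_0$; from $w_n\to z_\infty$ with $w_n\in QH$ and Theorem~\ref{generalised limit laminations} you get that the upper ending laminations and parabolic loci of $\Gamma$ persist in $z_\infty$; finally the hypothesis that the upper components of $\Omega_\Gamma/\Gamma$ are thrice-punctured spheres leaves no room for additional upper invariants, forcing $z_\infty=(\Gamma,\psi)$.

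Your local-connectedness conclusion is essentially right but stated imprecisely: the clean argument is that if $U$ is open with $U\cap QF(S)$ connected, then any clopen decomposition $U=A\sqcup B$ restricts to a clopen decomposition of $U\cap QF(S)$, and density of $QF(S)$ makes both pieces nonempty, contradicting connectedness; hence $U$ itself is connected.
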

 
 We can generalise this corollary by dropping the assumption that there are no isolated parabolic loci.
The same result has been obtained by  substantially different methods in Brock-Bromberg-Canary-Minsky \cite{BBCM}; see also Canary \cite{CaInd}.
Also, a related result has been obtained by  Anderson-Lecuire \cite{AL}.
 
\begin{mcorollary}
\label{no self-bumping with isolated}
Let $\Gamma$ be a group on the boundary of $QF(S)$ with isomorphism $\psi: \pi_1(S) \rightarrow \Gamma$.
This time we allow $\Gamma$ to have isolated parabolic loci.
\begin{enumerate}
\item
If every component of $\Omega_\Gamma/\Gamma$ is a thrice-punctured sphere, then $QF(S)$ does not bump itself at $(\Gamma, \psi)$.
\item
If $\Gamma$ is a b-group and every component of $\Omega_\Gamma/\Gamma$ corresponding to upper ends of $(\hyperbolic^3/\Gamma)_0$ is a thrice-punctured sphere, then the Bers slice containing $(\Gamma, \psi)$ on the boundary does not bump itself.
\end{enumerate}
\end{mcorollary}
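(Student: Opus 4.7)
The plan is to extend the proof of Theorem \ref{bumping theorem} and Corollary \ref{no self-bumping} to allow isolated parabolic loci by exploiting the rigidity forced by the thrice-punctured sphere hypothesis. Fix neighborhoods $U \subset V$ of $(\Gamma,\psi)$ and two sequences $\{qf(m_i,n_i)\}, \{qf(m_i',n_i')\}$ converging to $(\Gamma,\psi)$; it suffices to connect $qf(m_i,n_i)$ to $qf(m_i',n_i')$ by arcs lying in $U \cap QF(S)$ in case (1), and in $U \cap \mathcal{B}(m_0)$ in case (2), for all large $i$.

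The first step would be to analyze how the two sequences approach each isolated parabolic locus. Using Theorem \ref{main limit laminations} combined with the classification in Theorem \ref{generalised Ito}, the discrepancy between the two approximations near an isolated parabolic core curve $c_j$ is encoded by Dehn twist exponents $p_i^j, q_i^j$ (and corresponding primed exponents) constrained by the relation $(a_j+1)p_i^j = a_j q_i^j$, with the integer $a_j$ determined by $(\Gamma,\psi)$. The thrice-punctured-sphere rigidity of every end adjacent to an isolated parabolic locus rules out any quasi-conformal variation of the conformal structure on these ends, so after passing to a subsequence I may normalize both sequences so that $(m_i,n_i)$ and $(m_i',n_i')$ have matching twist data, up to a uniformly bounded error, around every isolated parabolic locus.

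Second, I would construct the connecting arc in two stages. Stage one interpolates between $qf(m_i,n_i)$ and an intermediate quasi-Fuchsian structure $qf(\tilde m_i,\tilde n_i)$ whose twist data exactly agree with those of $qf(m_i',n_i')$; since the adjacent rigid ends admit no quasi-conformal deformation, this interpolation rearranges only a uniformly bounded region of the model manifold in the boundary blocks attached to the isolated cusps, and the bi-Lipschitz control for model maps keeps the entire path inside $U$. Stage two invokes Theorem \ref{bumping theorem} applied to the non-isolated parabolic loci, whose adjacent ends are simply degenerate by definition, so the hypothesis of that theorem holds; this connects $qf(\tilde m_i,\tilde n_i)$ with $qf(m_i',n_i')$ by an arc in $U \cap QF(S)$. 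For case (2), one keeps the lower conformal structure in a prescribed neighborhood of $m_0$ throughout: stage one preserves it automatically because the lower side of the b-group already has rigid adjacent structure at the isolated cusps, and stage two inherits this property from the Bers-slice refinement of Theorem \ref{bumping theorem}. Once the arcs exist, the two corollary statements then follow as in the proof of Corollary \ref{no self-bumping}.

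The main obstacle is stage one: one must verify that matching twist data near an isolated parabolic locus can be achieved by a quasi-Fuchsian path that stays close to $(\Gamma,\psi)$ in $AH(S)$. This is a local question on the model manifold about rerouting a rank-one Margulis tube through combinatorially different boundary blocks, and the thrice-punctured-sphere rigidity is precisely what tames it: the adjacent boundary blocks are determined up to isomorphism, so the rerouting amounts to a bounded modification of the model, and the corresponding quasi-Fuchsian groups remain within $U$ by the uniform bi-Lipschitz property of the model map. Once this local step is in place, the global construction assembles the two stages into the desired arc, and the remainder of the argument parallels Corollary \ref{no self-bumping}.
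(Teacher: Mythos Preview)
Your stage-two step does not work as stated. Theorem \ref{bumping theorem} has as a hypothesis that $\Gamma$ itself has no isolated parabolic locus; this is a property of the limit group, not of the approximating sequences, and it does not become satisfied just because you have normalised twist data in stage one. You cannot ``apply the theorem to the non-isolated parabolic loci'' --- the proof of Theorem \ref{bumping theorem} uses the absence of isolated parabolics globally (via Lemma \ref{isolated}) to guarantee that the standard algebraic immersion $g'$ is horizontal in the model of the geometric limit, and this is what makes the whole deformation in Proposition \ref{deformation to strong one} go through. With isolated parabolics present you must reopen that proof, not invoke it as a black box.

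Your stage-one step is also built on a misreading. The integers $a_j$ and the twist relation $(a_j+1)p_i^j=a_jq_i^j$ in Theorem \ref{generalised Ito} arise precisely when $g'$ \emph{does} wrap around a torus boundary component of the model; but under the thrice-punctured-sphere hypothesis this wrapping is impossible. Indeed, the paper's proof shows directly that if every component of $\Omega_\Gamma/\Gamma$ is a thrice-punctured sphere (case (1)), then for any upper isolated parabolic core $c$ there is either a lower parabolic $d$ or a lower ending lamination $\lambda$ with $i(c,d)>0$ or $i(c,\lambda)>0$, and either forces a boundary component or an algebraic simply degenerate brick below $g'(S)$ that is incompatible with $g'$ going around the torus containing $c$. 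In the b-group case (2) the lower end being a full copy of $S$ already rules out wrapping. So there is no twist discrepancy to normalise, and your stage one is vacuous. What actually needs to be done is to rerun the construction of Lemma \ref{modified models} with additional drilled tubes $\mathcal U^\pm$ along the isolated parabolic curves, and to modify the map $m$ from markings to Teichm\"uller space so that the lengths of these curves are held fixed (hence tend to $0$) along the entire arc; this keeps the isolated parabolics parabolic in every limit along the arc and replaces the appeal to Theorem \ref{bumping theorem} by a direct verification of Claims \ref{strong limit} and \ref{any limit} in the modified setting.
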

 
% Bromberg-Brock-Canary-Minsky \cite{BBCM} (see also Canary \cite{CaInd}) showed that for surface Kleinian groups or acylindrical Kleinian groups, the deformation space does not bump itself at a quasi-conformally rigid  group. 
% 
% Our corollary deals with two types of Kleinian groups:
% one is geometrically infinite b-groups  and the other is doubly degenerate groups.
% It should be noted that the case of doubly degenerate groups is a special case of this result of Bromberg-Brock-Canary-Minsky.
% \subsection{Self-bumping}
% By applying Theorem \ref{main}, we shall get the following necessary condition  for b-groups to be points where  $QF(S)$ bumps itself.
% 
% \begin{theorem}
% \label{bumping theorem}
% Let $\Gamma$ be a  geometrically infinite b-group with $S \times \{-1\}$ facing a geometrically finite end, which does not have accidental cusps not touching geometrically infinite ends.
% Then $AH(S)$ does not bump itself at $G$.
% \end{theorem}
% 

\subsection{General Kleinian surface groups}
We can generalise Theorems \ref{main limit laminations} and  \ref{main} to sequences of Kleinian surface groups which may not be quasi-Fuchsian.

Let $G$ be a Kleinian surface group with marking $\phi: \pi_1(S) \rightarrow G$, and set $M=\hyperbolic^3/G$.
The marking $\phi$ determines a homeomorphism $h: S \times \reals \rightarrow M$.
Let $P_+$ be the upper parabolic locus on $S$.
We consider all the upper ends of the non-cuspidal part $M_0$.
For a geometrically finite end, we consider its minimal pants decomposition, and for a simply degenerate end, we consider its ending lamination.
Take the union of all these curves and laminations together with  core curves of $P_+$, and denote it by $e_+$.
In the same way, we define $e_-$ for the lower ends.
We call $e_+$ and $e_-$ the upper and the lower {\em generalised shortest pants decompositions} respectively.

We now state a generalisation of Theorem \ref{main limit laminations}
\begin{theorem}
\label{generalised limit laminations}
Let $\{(G_i, \phi_i)\}$ be a sequence of Kleinian surface groups which have upper and lower generalised  shortest pants decompositions  $e(i)_+$ and $e(i)_-$.
Suppose that $\{(G_i,\phi_i)\}$ converges algebraically to $(\Gamma, \psi)$.
Consider the Hausdorff limit $e(\infty)_+$ of $\{e(i)_+\}$ and $e(\infty)_-$ of $\{e(i)_-\}$ after passing to  subsequences.
Then every minimal component of $e(\infty)_+$ (resp. $e(\infty)_-$)  that is not a simple closed curve is the ending lamination of an upper end (resp. a lower end) of $(\hyperbolic^3/\Gamma)_0$.
Conversely any ending lamination of an upper end (resp. a lower end) of $(\hyperbolic^3/\Gamma)_0$ is a minimal component of $e(\infty)_+$ (resp. $e(\infty)_-$).
Moreover, every upper (resp. lower) parabolic curve is contained in $e(\infty)_+$ (resp. $e(\infty)_-$). 
\end{theorem}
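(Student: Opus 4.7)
The plan is to mimic the proof of Theorem \ref{main limit laminations}, feeding the generalised markings $e(i)_\pm$ into Minsky's hierarchy construction in place of the shortest clean markings of the conformal structures at infinity. The point is that $e(i)_\pm$ as defined here is precisely a union of ending laminations of upper (lower) ends, shortest pants curves on geometrically finite upper (lower) ends, and core curves of upper (lower) parabolic loci, which is exactly the kind of terminal (initial) generalised marking that initiates a generalised hierarchy $h_{G_i}$ and hence produces a model manifold $\mathcal{M}_i$ with uniform bi-Lipschitz model map $f_i: \mathcal{M}_i \to (M_i)_0$. First I would pass to a subsequence so that $\{G_i\}$ converges geometrically to some $\hat\Gamma \supseteq \Gamma$ and $e(i)_\pm$ Hausdorff-converges to $e(\infty)_\pm$. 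By \cite{OhSo}, the $(\mathcal{M}_i, f_i)$ then admit, along a further subsequence, a geometric limit $(\mathcal{M}_\infty, f_\infty)$ where $f_\infty$ is a bi-Lipschitz model map onto $\hat M_0 = (\hyperbolic^3/\hat\Gamma)_0$.

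For the forward direction, let $\mu$ be a non-simple-closed-curve minimal component of $e(\infty)_+$, and take simple closed curves $d_k$ appearing in $e(i_k)_+$ which Hausdorff-approximate $\mu$. Each $d_k$ arises either as a shortest pants curve of a geometrically finite upper end, as a core of an upper parabolic locus, or as an approximation to an ending lamination of an upper simply degenerate end; in every case $d_k$ admits a bounded-length representative lying in the top region of $\mathcal{M}_{i_k}$. Pushing forward by $f_{i_k}$ and taking the geometric limit produces bounded-length curves in $\hat M$ exiting an upper end of $\hat M_0$. Using the standard embedding of each upper end of $(\hyperbolic^3/\Gamma)_0$ into an upper end of $\hat M_0$ with compatible end invariant, I would identify this end with an upper end of the algebraic limit and conclude that $\mu$ is contained in, and by minimality equals, its ending lamination.

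For the converse, let $\lambda$ be an ending lamination of an upper end $e$ of $(\hyperbolic^3/\Gamma)_0$, and let $c_n^*$ be closed geodesics exiting $e$ with uniformly bounded length. Algebraic convergence makes $\len_{M_i}(c_n^*)$ uniformly bounded for $i$ large. Minsky's a priori length bounds on the model manifold imply that any bounded-length curve on the upper side of $\mathcal{M}_i$ sits within uniformly bounded curve-complex distance of the terminal generalised marking in every relevant subsurface projection; a diagonal argument then places a subsequence of the $c_n$ in the Hausdorff closure of $\{e(i)_+\}$, showing that $\lambda$ is a minimal component of $e(\infty)_+$. For the transversality claim, an upper parabolic core curve $c$ has $\len_{M_i}(c) \to 0$, while any bounded-length approximation $d_k \to \nu$ of a minimal component $\nu$ of $e(\infty)_+$ that transversely met $c$ would satisfy $i(d_k,c) \to \infty$; this contradicts the collar lemma.

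The main obstacle will be the forward direction, specifically the passage from an upper end of the geometric limit $\hat M_0$ back to an upper end of the algebraic limit $(\hyperbolic^3/\Gamma)_0$: the geometric limit in general carries extra ends which do not descend to the algebraic limit, and one must verify that the bounded-length representatives of $d_k$ do not escape into such newly born ends. The ingredient that rescues the argument is that the $d_k$ are drawn from the \emph{terminal} generalised markings $e(i_k)_+$ rather than from arbitrary points of the hierarchy: combined with the description in \cite{OhSo} of which tight geodesics of $h_{G_i}$ survive into the limit hierarchy, and with Lemma \ref{endpoints converge} applied to the ending-lamination part of $e(i_k)_+$, this keeps the relevant curves visible from the algebraic limit.
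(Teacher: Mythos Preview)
Your outline has the right ambient setup (geometric limit, model manifolds, hierarchies), but the core arguments diverge from the paper in ways that leave real gaps.

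\textbf{The paper's route.} The paper does not argue via bounded-length curves in the hyperbolic manifolds. For the direction ``ending lamination of an upper algebraic end $\Rightarrow$ minimal component of $e(\infty)_+$'' it invokes Proposition~\ref{simply degenerate brick}: given the simply degenerate brick $B=\Sigma\times J$ containing the end $E$, one obtains a tight geodesic $\gamma_i\in h_i$ supported on $\Sigma$ whose last vertex (or endpoint at infinity) converges to $\lambda_E$. If $\gamma_i$ is a ray, its endpoint at infinity is literally one of the ending laminations contributing to $e(i)_+$; if $\gamma_i$ is finite, Masur--Minsky \S6 bounds $d_{\CC(\Sigma)}\bigl(\text{last vertex of }\gamma_i,\ \pi_\Sigma(e(i)_+)\bigr)$ uniformly, so $\pi_\Sigma(e(i)_+)\to\lambda_E$. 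The opposite direction and the parabolic-locus statement are then obtained by repeating verbatim the (rather elaborate) argument of Theorem~\ref{limit laminations}, which uses Corollary~\ref{non-algebraic brick} and a careful case analysis of where $g'(c)$ sits relative to torus and annulus boundary components of $\mathbf M$.

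\textbf{Where your argument breaks.} In your ``forward direction'' you want approximating simple closed curves $d_k\in e(i_k)_+$ with bounded length in $M_{i_k}$. But $e(i)_+$ is not a union of simple closed curves: its simply-degenerate pieces are arational laminations on subsurfaces, so a Hausdorff approximation to a non-closed minimal component $\mu$ of $e(\infty)_+$ need not come from curves at all. Even restricting to the pants-curve part (which does have bounded length by Bers), you still need to know that the curves escape into an \emph{algebraic} end rather than one of the extra ends of the geometric limit; the mechanism you name in your last paragraph is exactly Proposition~\ref{simply degenerate brick} and Corollary~\ref{non-algebraic brick}, so you are in effect deferring the whole proof to those. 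Your ``converse'' sketch (``Minsky's a priori bounds place bounded-length curves near the terminal marking in every subsurface projection'') is morally Proposition~\ref{simply degenerate brick} again, but stated so loosely that it does not distinguish upper from lower ends or identify the correct support $\Sigma$.

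\textbf{The collar-lemma step is wrong as written.} You assert that if $d_k\to\nu$ meets $c$ transversely then $i(d_k,c)\to\infty$; nothing forces that. What the collar lemma would give is the contrapositive: if $i(d_k,c)\geq 1$ and $\len_{M_i}(c)\to 0$ then $\len_{M_i}(d_k)\to\infty$. But you have not established that your approximants are simple closed curves of bounded length in $M_i$ (see above), so this does not close. The paper's treatment of the parabolic-locus claim (cases (a)--(c) in the proof of Theorem~\ref{limit laminations}) is genuinely more delicate: it analyses annular geodesics in $h_i$, the real part of the tube coefficient $\omega_{\mathbf M_i}$, and the interaction with the standard immersion $g'$. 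A one-line collar argument does not replace that.
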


Next we shall state a generalisation of Theorem \ref{main}.
\begin{theorem}
\label{generalised main}
Let $\{(G_i, \phi_i)\}$ be a sequence of Kleinian surface groups which have upper and lower generalised  shortest pants decompositions $e(i)_+$ and $e(i)_-$.
Let $e(\infty)_+$ and $e(\infty)_-$ be the Hausdorff limits of $\{e_+(i)\}$ and $\{e_-(i)\}$ respectively, after passing to a subsequence.
If there are minimal components $\lambda$ of $e(\infty)_-$ and $\mu$ of $e(\infty)_+$ which are not simple closed curves and whose minimal supporting surfaces share a boundary component up to isotopy, then $\{(G_i, \phi_i)\}$ diverges in $AH(S)$.
\end{theorem}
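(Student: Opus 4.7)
The plan is to argue by contradiction, paralleling the way Theorem \ref{main} is deduced from Theorem \ref{main limit laminations} earlier in the paper. Suppose $\{(G_i,\phi_i)\}$ does not diverge; then, after passing to a subsequence, it converges algebraically to some $(\Gamma,\psi) \in AH(S)$. Applying Theorem \ref{generalised limit laminations} to this convergent subsequence, the minimal non-simple-closed-curve component $\mu$ of $e(\infty)_+$ is the ending lamination of an upper end $e_+$ of $(\hyperbolic^3/\Gamma)_0$, and $\lambda$ is the ending lamination of a lower end $e_-$. Since the ending lamination of a simply degenerate end is arational on the subsurface that end faces, the minimal supporting surfaces $\Sigma_\mu$ and $\Sigma_\lambda$ coincide with the subsurfaces of $S$ facing $e_+$ and $e_-$ respectively (components of the complements in $S$ of the upper and lower parabolic loci of $\Gamma$).

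By the hypothesis of the theorem, $\Sigma_\mu$ and $\Sigma_\lambda$ share a boundary curve $c$. We may assume $c$ is non-peripheral in $S$ (the peripheral case is vacuous, since boundary parabolic loci are always present). Then $c$ must be the core curve of both an upper parabolic locus of $\Gamma$ bordering $\Sigma_\mu$ and a lower parabolic locus bordering $\Sigma_\lambda$. In particular $\psi(c)$ is a parabolic element of $\Gamma$, and both parabolic loci at $c$ are non-isolated, since each borders a simply degenerate end, namely $e_+$ or $e_-$.

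The remaining and most delicate step is to show that such a configuration is incompatible with the given algebraic convergence together with the Hausdorff convergence $e(i)_\pm \to e(\infty)_\pm$. My approach is to use the hierarchy and model-manifold machinery of Minsky \cite{Mi} together with its geometric-limit refinement from Ohshika--Soma \cite{OhSo}. The generalised shortest pants decompositions $e(i)_\pm$ determine initial and terminal generalised markings for the hierarchies $h_{G_i}$. Since the curves in $e(i)_+$ approximating $\mu$ lie in $\Sigma_\mu$, and those in $e(i)_-$ approximating $\lambda$ lie in $\Sigma_\lambda$, and both subsurfaces have $c$ on their boundary, one finds in $h_{G_i}$ tight geodesics supported on $c$ (and on subsurfaces having $c$ on their boundary) whose combined contribution forces the Margulis tube around $c$ in $\hyperbolic^3/G_i$ to elongate without bound on both its upper and lower sides as $i \to \infty$, via the uniformly bi-Lipschitz model map. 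A geometric-limit analysis as in \cite{OhSo} then shows that such a divergent rank-$1$ tube cannot have both of its sides absorbed into the algebraic limit as simply degenerate ends: at most one side can remain visible in $\Gamma$, while the other must pass into the strictly larger geometric limit $G_\infty \supsetneq \Gamma$. This contradicts the configuration around $c$ derived above, completing the proof. The main obstacle is this last geometric step, which requires carefully tracking how the two ends of an elongating rank-$1$ Margulis tube in the $G_i$ distribute between the algebraic and geometric limits; this is precisely the point at which the tools developed in \cite{OhSo} are needed.
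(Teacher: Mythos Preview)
Your first four paragraphs are exactly the paper's proof: suppose convergence, apply Theorem \ref{generalised limit laminations}, and conclude that the shared boundary curve $c$ is simultaneously the core of an upper and of a lower parabolic locus of $\Gamma$. At that point you are already done. A non-boundary parabolic locus of a Kleinian surface group is, by definition, an annulus on one side of the compact core $S\times[0,1]$; it is upper or lower, exclusively. Equivalently, the single $\integers$-cusp of $\hyperbolic^3/\Gamma$ corresponding to $\psi(c)$ meets $\partial C$ in one annulus, so there cannot be two distinct cusps (one above, one below $\Psi(S)$) with homotopic core curves. This is precisely the contradiction invoked in the proof of Theorem \ref{main}, and the paper repeats it verbatim here.

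Your final paragraph is therefore unnecessary, and in fact misdiagnoses the difficulty. There is no ``delicate step'' remaining; no further appeal to hierarchies, Margulis tubes, or the geometric limit is needed once Theorem \ref{generalised limit laminations} has been applied. The heavy model-manifold work of \cite{Mi} and \cite{OhSo} is entirely contained in the proof of Theorem \ref{generalised limit laminations} itself; Theorem \ref{generalised main} is a two-line corollary. Delete the last paragraph and replace it with the observation that a curve cannot be the core of both an upper and a lower parabolic locus.
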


\subsection{Application}
We shall briefly explain here an application of the main results, in particular of Theorem 11 and Theorem 5.2, which appears in Ohshika \cite{OhF}.
For a point $m_0 \in \teich(S)$, the subspace of $QF(S)$ in the form of $qf(\{m_0\} \times \teich(\bar S))$ is called the Bers slice based on $m_0$, and is denoted by $B_{m_0}$.
Bers proved in \cite{Be} that the closure of $B_{m_0}$ in $AH(S)$ is compact for any $m_0 \in \teich(S)$.
The closure is called the Bers compactification of the Teichm\"{u}ller space $\teich(S)$ (identified with $\teich(\bar S)$) based on $m_0$.
We denote its boundary by $\partial^B_{m_0} \teich(S)$.
Kerckhoff and Thurston proved in \cite{KT} that there are two points $m_0, m_1 \in \teich(S)$ such that the natural homeomorphism between $B_{m_0}$ and $B_{m_1}$, which is obtained by identifying them with $\teich(S)$, cannot extend continuously to a homeomorphism between their boundaries $\partial^B_{m_0} \teich(S)$ and $\partial_{m_1}^B \teich(S)$.
This implies that the action of the mapping class group $MCG(S)$ on $\teich(S)$ does not extend continuously to the Bers compactification (based on any point).

In \cite{OhF}, we considered a quotient space of the Bers boundary by collapsing each quasi-conformal deformation space lying there into a point, and showed that the mapping class group acts on this quotient space.
(According to McMullen, Thurston already considered this space and conjectured this result.)
We denote this quotient space obtained from $\partial_{m_0}^B \teich(S)$ by $\partial_{m_0}^{RB} \teich(S)$ and call it the reduced Bers boundary based on $m_0$.
Applying Theorem 11 in the present paper, we also showed that conversely every automorphism of $\partial_{m_0}^{RB}\teich(S)$ is induced from an extended mapping class.
The same kind of result was obtained for the unmeasured lamination space by Papadopoulos \cite{Papa} and Ohshika \cite{OhP},  and for the geodesic lamination space with the Thurston topology by Charitos-Papadoperakis-Papadopoulos \cite{CPP}.

\section{Models of geometric limits}
\subsection{Brick decompositions of geometric limits}
\label{brick decomposition}
In this section, we shall review the results in Ohshika-Soma \cite{OhSo} and show some facts derived from them, which are essential in our discussion.
We shall give sketches of proofs for all necessary results  so that the reader can understand them without consulting \cite{OhSo}.

Throughout this section, we assume that we have a sequence $\{(G_i, \phi_i)\}$ in $AH(S)$ converging to $(\Gamma, \psi)$, and that $\{G_i\}$ converges geometrically to $G_\infty$, which contains $\Gamma$ as a subgroup.
We do not assume that $G_i$ is quasi-Fuchsian, to make our argument work also for the proofs of Theorems \ref{generalised limit laminations} and \ref{generalised main}.
Recall that  $M_\infty=\hyperbolic^3/G_\infty$ is a Gromov-Hausdorff limit of $M_i=\hyperbolic^3/G_i$  with basepoints $y_i$ which are projections of some  point fixed in $\hyperbolic^3$.
We denote $\hyperbolic^3/\Gamma$ by $M'$.
Let $p: M' \rightarrow M_\infty$ denote the covering associated to the inclusion of $\Gamma$ to $G_\infty$.
Let $\rho_i: B_{r_i}(M_i, y_i) \rightarrow B_{K_ir_i}(M_\infty, y_\infty)$ denote an approximate isometry corresponding to the pointed Gromov convergence of $(M_i,y_i)$ to $(M_\infty, y_\infty)$.

In \cite{OhSo}, we introduced the notion of brick manifolds.
A brick manifold is a $3$-manifold constructed from \lq bricks' which are defined as follows.
We note that a brick is an entity different from a block introduced by Minsky which we explained in Preliminaries.
Still, as we shall see, they are closely related, and actually, in our settings every brick is decomposed into blocks.

\begin{definition}
\label{brick manifold}
A {\em brick} is a product interval bundle of the form $\Sigma \times J$, where $\Sigma$ is an incompressible subsurface of $S$ with $\chi(\Sigma) <0$ and $J$ is a closed or half-open interval in $[0,1]$.
We assume that $\Sigma$ is a closed subset of $S$, \ie $\Fr \Sigma$ is contained in $\Sigma$.
(Recall that we say that a (not necessarily proper) subsurface is incompressible if none of its boundary components is null-homotopic and at least one boundary component is non-peripheral, \ie if it is either essential or $S$ itself.)
Note that it may be possible that two boundary components of $\Sigma$ are parallel in $S$.
For a brick $B=\Sigma \times J$, its {\em lower front}, denoted by $\partial_-B$, is defined to be  $\Sigma \times \inf J$, and its {\em upper front}, denoted by $\partial_+B$, is defined to be  $\Sigma \times \sup J$.
When $J$ is an half-open interval, one of them may not really exist, but corresponds to an end.
In this case, it is called an {\em ideal front}.
A brick naturally admits two foliations: one is a codimension-$1$ horizontal foliation whose leaves are horizontal surfaces $\Sigma \times \{t\}$, and the other is a codimension-$2$  vertical foliation whose leaves are vertical lines $\{p\} \times J$.
We define $\xi(B)$ to be $\xi(\Sigma)$.

A brick manifold is a manifold consisting of countably many bricks, whose boundary consists of tori and open annuli.
Two bricks can intersect only at their (non-ideal) fronts in such a way that an essential  subsurface, which is possibly disconnected but none of whose components is an annulus, in the upper front of one brick  is pasted to an essential  subsurface in the lower front of the other brick.
In the case when the manifold is homeomorphic to $S \times (0,1)$, we allow two bricks intersect at their entire non-ideal fronts.
We also assume that an infinite sequence of bricks cannot accumulate inside the manifold, \ie an infinite sequence of bricks must correspond to an end of the manifold after passing to a subsequence.
\end{definition}

Since $G_i$ is a Kleinian surface group, $\hyperbolic^3/G_i$ has a bi-Lipschitz model which was constructed by Minsky \cite{Mi} and was proved to be bi-Lipschitz by Brock-Canary-Minsky \cite{BCM}.
%As was explained in Preliminaries, 
We ignore cusp neighbourhoods in the model manifolds of Minsky to make them models for the non-cuspidal parts.
Let $\mathbf M_i$ be a  model manifold for $(M_i)_0=(\hyperbolic^3/G_i)_0$ in the sense of Minsky with a bi-Lipschitz model map $f_i : \mathbf M_i \rightarrow (M_i)_0$.
Minsky's construction is based on complete hierarchies of tight geodesics which are determined by the end invariants of $M_i$, as we explained in Preliminaries.
The model manifold has decomposition into blocks and Margulis tubes, which corresponds to a resolution of a complete hierarchy.
When we talk about model manifolds $\mathbf M_i$, {\em we always assume the existence of complete hierarchies $h_i$ beforehand}, and that the manifolds are decomposed into blocks and Margulis tubes using resolutions.
The metric of model manifolds are defined as the union of  metrics on internal blocks and metrics determined by conformal structures at infinity on boundary blocks.
We should note that as was shown in \cite{Mi}, the decomposition of $\mathbf M_i$ into blocks and the metric on $\mathbf M_i$ depend only on $h_i$ and end invariants, and are independent of choices of resolutions.

We shall now see that geometric limits of algebraically convergent quasi-Fuchsian groups have also model manifolds.
The following is one of the main theorems of \cite{OhSo} which is fitted into our present situation.

\begin{othertheorem}[Ohshika-Soma \cite{OhSo}]
\label{Ohshika-Soma}
%Let $M_\infty$ be a geometric limit of hyperbolic manifolds $(M_i,y_i)$ with $y_i$ contained in the thick part of $M_i$, such that $\pi_1(M_i) \cong \pi_1(S)$ for  a hyperbolic surface $S$.
Let $\{(G_i, \phi_i)\}$ be a sequence in $AH(S)$ converging to $(\Gamma, \psi)$ with $M'=\hyperbolic^3/\Gamma$, and $M_\infty$ a geometric limit of $M_i=\hyperbolic^3/G_i$ with basepoint at $y_i$.
Then, there are a  model manifold $\mathbf{M}$ of $(M_\infty)_0$, which has a structure of brick manifold and is a geometric limit of $\mathbf M_i$ as a metric space, and a model map $f: \mathbf{M} \rightarrow (M_\infty)_0$ which is a $K$-bi-Lipschitz homeomorphism for a constant $K$ depending only on $\chi(S)$.
The model manifold $\mathbf M$ has the following properties.
\begin{enumerate}
\setcounter{enumi}{-1}
\item Each brick of $\mathbf M$ is decomposed into blocks and Margulis tubes as in Minsky's model manifolds, each of which is a limit of blocks and Margulis tubes in $\mathbf M_i$.
\item $\mathbf M$ can be embedded in $S \times [0,1]$ (with its image in $S\times (0,1)$) in such a way that the vertical and horizontal foliations of the bricks are mapped into horizontal surfaces and vertical lines of $S\times [0,1]$ respectively.
\item There is no essential properly embedded annulus in $\mathbf M$.
\item An end contained in a brick is defined to be either geometrically finite or simply degenerate. 
The model map takes  geometrically finite ends to geometrically finite ends of $(M_\infty)_0$, and simply degenerate ends  to  simply degenerate ends of $(M_\infty)_0$.
\item Every geometrically finite end of $\mathbf M$ corresponds to an incompressible subsurface of either $S \times \{0\}$ or $S \times \{1\}$.
\item An end not contained in a brick is neither geometrically finite nor simply degenerate.
For such an end, there is no half-open incompressible annulus tending to the end which is not properly homotopic into a boundary component.
We call such an end {\em wild}.
%\item $\mathbf M$ has a brick of the form $S \times J$, where $J$ is an interval containing $1/2$, and $f_{\#}(\pi_1(S \times \{t\}))$ with $t \in J$ carries the image of $\pi_1(M')$ in $\pi_1(M_\infty)$.
\item There are only countably many ends.
\item There is a $\pi_1$-injective map $g: S \rightarrow \mathbf M$ which is $\pi_1$-injective also as a map to $S \times [0,1]$, such that $(f \circ g)_\# $ coincides with $\iota \circ \psi$, where $\iota$ is the monomorphism from $\Gamma=\pi_1(M')$ to $\pi_1(M_\infty)$ induced by the inclusion of the algebraic limit $\Gamma$ into the geometric limit $\pi_1(M_\infty)$.
%\iotaの記号は要検討．
\item For any sequence of points $\{x_i \in \mathbf M\}$ tending to  an end of $\mathbf M$, its image in $S \times [0,1]$ converges, after passing to a subsequence, to a point in $S \times [0,1] \setminus \mathbf M$.
There are no  two sequences  in $\mathbf M$ tending to distinct ends of $\mathbf M$ whose images in $S \times [0,1]$ converge to the same point.
\end{enumerate}
\end{othertheorem}

Before starting the proof, we shall illustrate how a wild end as was described in (5) looks.
Suppose that an end corresponding to $\Sigma \times \{t_0\}$ with an incompressible subsurface $\Sigma$ of $S$ is wild.
Then there is a sequence of either torus cusps or simply degenerate ends (or both) whose images in $S \times [0,1]$ accumulate to $\Sigma \times \{t_0\}$.
In the case when torus cusps accumulate to $\Sigma \times \{t_0\}$, the vertical projections of its core curves to $S$ converge to an arational geodesic lamination in the Hausdorff topology.
In the case when simply degenerate ends $e_i$ accumulate to $\Sigma \times \{t_0\}$, the end $e_i$ corresponds to $\Sigma_i \times \{t_i\}$ with $t_i \rightarrow t_0$ such that $\Sigma_i$ is a subsurface of $\Sigma$ for large $i$, and each boundary component  of $\Sigma_i$ converges to the same arational lamination on $\Sigma$ in the Hausdorff topology.
The conditions that the Hausdorff limits are arational prevents the existence of an essential open annulus as described in the part (5) above.
Figure \ref{fig:wild} illustrates the case when simply degenerate ends accumulate to a wild end from below.

\begin{figure}
\scalebox{0.5}{\includegraphics{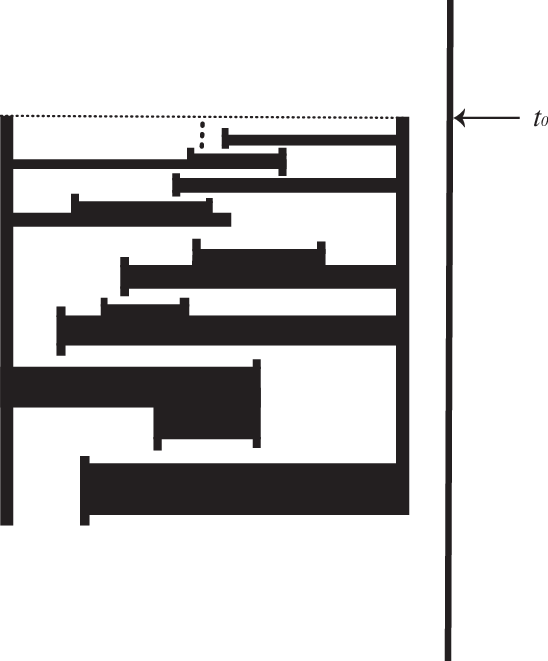}}
\caption{$\Sigma \times \{t_0\}$ is a wild end to which simply degenerate ends accumulate from below.
The vertical line at the right end denotes the coordinate $[0,1]$. Two vertically long rectangles at the left and right sides correspond to either frontier components or punctures of $\Sigma$. 
Small rectangles attached to black regions are sent into cusp neighbourhoods.
Each long horizontal side of black regions corresponds to a simply degenerate end.}
\label{fig:wild}
\end{figure}

\subsection{Proof of Theorem \ref{Ohshika-Soma}}
Although this theorem was  already proved in \cite{OhSo}, we shall give an abridged version of its proof here, using sometimes an explanation  a bit different from the one given in \cite{OhSo}.

This model manifold $\mathbf M$ is obtained as the non-cuspidal part of a geometric limit  of Minsky's model manifolds for $\hyperbolic^3/G_i$ with suitable basepoints chosen.
%For simplicity, we assume that $S$ is a closed surface here.
%The case when $S$ has punctures can be dealt with in a similar way just considering the non-cuspidal parts of $\hyperbolic^3/G_i$.
%
Let $\mathbf M_i$ be Minsky's model manifold for $(M_i)_0$ with a bi-Lipschitz model map $f_i$ as explained above.
We put a basepoint $x_i$ in $\mathbf M_i$ which is mapped to $y_i$ by $f_i$.
Recall that Minsky's model manifold is composed of blocks and tubes.
As was explained in \S \ref{hierarchy}, there are two kinds of blocks: internal blocks and boundary blocks.
An internal block is topologically homeomorphic to $\Sigma \times [0,1]$, where $\Sigma$ is either a sphere with four holes or a torus with one hole.
Geometrically it has two ditches, and has  form of $B=\Sigma \times [0,1] \setminus (N_\alpha \times [0,1/4] \cup N_\beta \times [3/4, 1])$, where $N_\alpha, N_\beta$ are annular neighbourhoods of simple closed curves $\alpha, \beta$ on $\Sigma$ with $i(\alpha, \beta)=2$ if $\Sigma$ is a sphere with four holes and $i(\alpha, \beta)=1$ if $\Sigma$ is a torus with one hole.
We should also recall that the isometry type of $B$ is unique once we fix $\Sigma$ to be a sphere with four holes or a torus with one hole.
A boundary block corresponds to a geometrically finite end of $(\hyperbolic^3/G_i)_0$ as was explained in \S \ref{model}.
We also have an embedding of $\mathbf M_i$ into $S \times [0,1]$ such that the product structure of each block coincides with that of $S \times [0,1]$, in particular each horizontal surface in a block is mapped into a horizontal surface of the form $S \times \{t\}$.
%ここに標準的なembeddingを指定しておく．
Recall each $\Sigma$ is an incompressible subsurface of $S$.
For each $\Sigma$, we fix an embedding of $\Sigma$ into $S$ by fixing some hyperbolic metric on $S$ and making each boundary  component of $\Sigma$  geodesic if no two boundary components of $\Sigma$ are parallel on $S$, and a simple closed curve with small constant geodesic curvature otherwise.
By this way, the vertical projection of $B$ into $S$ is determined.
We note that this hyperbolic metric on $S$ has nothing to do with the metric which we endow on model manifolds.

After gluing blocks in accordance with the information given by a hierarchy of tight geodesics associated to $G_i$, we get a 3-manifold whose boundary consists of two surfaces homeomorphic to $S$ and countably many tori.
We denote by $\mathbf M_i[0]$ this manifold before filling Margulis tubes, which is a subset of $\mathbf M_i$.
For each torus boundary, we fill in a Margulis tube, which is a solid torus whose isometry type is determined by the flat structure and the marking on the boundary torus which is determined by the hierarchy.

Now, we shall see that a geometric limit of these model manifolds serves as a model manifold of $(\hyperbolic^3/G_\infty)_0$ once cusp neighbourhoods are removed.
\begin{lemma}
\label{model limit}
Put a basepoint $x_i$ in $\mathbf M_i[0]$.
Then $\{(\mathbf M_i, x_i)\}$ converges geometrically to a $3$-manifold $\hat{\mathbf M}$ consisting of internal and boundary blocks, Margulis tubes, and cusp neighbourhoods, after passing to a subsequence.
By removing the cusp neighbourhoods, we get a brick manifold $\mathbf M$.
The bi-Lipschitz model map $f_i :\mathbf M_i \rightarrow (\hyperbolic^3/G_i)_0$ also converges geometrically to a bi-Lipschitz map whose restriction to $\mathbf M$ is a bi-Lipschitz model map to $(\hyperbolic^3/G_\infty)_0$.
\end{lemma}
\begin{proof}
For internal blocks, their isometry types depend only on whether $\Sigma$ is a sphere with four holes or a torus with one hole.
Therefore, their geometric limits are also internal blocks.
%As for boundary blocks, 
%we can see that their geometric limit is also a boundary block (or a union of boundary blocks) although it may have more than one components.
%A boundary block in Minsky's model is constructed as follows.
%Let $F$ be an incompressible subsurface of $S$, which assumed to be a closed set, and let $c_1, \dots, c_k$ be disjoint essential simple closed curves on $F$ decomposing $F$ into pairs of pants.
We now turn to boundary blocks.
Since topologically a boundary block is homeomorphic to $\Sigma \times [0,\infty)$ or $\Sigma \times (-\infty, 0]$ for an incompressible subsurface $\Sigma$ of $S$, we can assume, passing to a subsequence, $\Sigma$ does not depend on $i$.
A boundary block as we explained in \S\ref{model} has a metric uniquely determined by the conformal structure at infinity of the corresponding geometrically finite end.
%, with its marking forgotten.
In the case when the conformal structures at infinity are bounded (in the moduli space) as $i$ varies, by taking a subsequence, we can assume that the conformal structures converge in the moduli space, and  it is evident that the geometric limit of boundary blocks is again a boundary block corresponding to the limit conformal structure at infinity. 

Now we consider the situation where the hyperbolic metric $m_0^i$ on the geometrically finite end corresponding to $\Sigma \times \{\infty\}$ varies with $i$ and goes to an end in the moduli space.
Let $c_1, \dots , c_p$ be a shortest pants decomposition of $(\Sigma, m_0^i)$.
Since $\{m_0^i\}$ is unbounded in the moduli space,  the lengths of some of $c_1, \dots , c_p$ with respect $m_0^i$ go to $0$ as $i \rightarrow \infty$.
By renumbering $c_1, \dots , c_p$, we can assume $c_1, \dots , c_q$ to be the curves whose lengths go to $0$.
If the length of $c_k$ with respect to $m_0^i$ goes to $0$, then the modulus of the annulus $A(c_k)$, whose boundaries we assumed to have length $\epsilon_2$ in \cref{boundary block}, goes to $\infty$, and hence in a geometric limit, the surface $\Sigma$ is cut along $c_k$.
If we put a basepoint on $\Sigma \times \{-1\} \setminus (\cup_{k=1}^p A(c_k) \times \{-1\})$, then the boundary blocks have a geometric limit after passing to a subsequence, and the limit is identified with $(\Sigma \setminus \cup_{k=1}^q c_k) \times [-1, \infty) \setminus (\cup_{k=q+1}^p A(c_k)  \times [-1, 0)$.
By Lemma 3.4 in Minsky \cite{Mi}, there is a uniform bi-Lipschitz map (\ie with bi-Lipschitz constants independent of $i$) from this boundary block to a component of the complement of the convex core of any hyperbolic structure on $\Sigma \times \reals$ whose conformal structure at infinity is $m_0^i$, with tubular neighbourhoods of closed geodesics corresponding to the pants decomposition $c_1, \dots , c_p$ removed.
This implies that the geometric limit of  boundary blocks is again a boundary block after its intersection with $\epsilon_0$-cusp neighbourhoods are removed.

Next we turn to Margulis tubes.
Let $V_i$ be a Margulis tube in the model manifold $\mathbf M_i$.
We can parametrise $\mathbf M_i$ by $S \times [0,1]$ in such a way that $V_i$ corresponds to $A_i \times [s,t]$ for some essential annulus $A_i$ on $S$.
Each Margulis tube $V_i$ has a coefficient $\omega_{\mathbf M_i}(V_i)$ which is defined as follows.
The boundary of the tube $\partial V_i$ has a flat metric induced from the metric of $\mathbf M_i[0]$ determined by the blocks.
We can give a marking (longitude-meridian system) $(\alpha_i, \beta_i)$ to $\partial V_i$ by defining $\alpha_i$ to be a horizontal curve in a block and $\beta_i$ to be  $\partial(a_i \times [s,t])$ for some essential arc $a_i$ connecting the two boundary components of $A_i$.
The flat metric and the marking determine a point in the Teichm\"{u}ller space of a torus identified with $\{z \in \complexes \mid \Im z >0\}$, which we define to be $\omega_{\mathbf M_i}(V_i)$.
More concretely, we define the point $z_i=\omega_{\mathbf M_i}(V_i)$ in the upper half plane to be the point corresponding to $\partial V_i$ if $\partial V_i$ with marking $(\alpha_i, \beta_i)$ is conformal to $\complexes/(\integers+z_i\integers)$ taking $\alpha_i$ to the curve coming from the first $\integers$ and $\beta_i$ to  the curve coming from $z_i \integers$.
We note that this parametrisation is continuous with respect to the geometric topology.

By our construction of metrics on blocks  $\Im \omega_{\mathbf M_i}(V_i)$ is bounded away from $0$.
If $|\omega_{\mathbf M_i}(V_i)|$ is bounded, then passing to a subsequence, we can assume that $\{\omega_{\mathbf M_i}(V_i)\}$ converges to some number $w$, hence that $\{V_i\}$ converges to some Margulis tube whose coefficient is $w$.
If $\Re \omega_{\mathbf M_i}(V_i) \rightarrow \infty$ whereas $\Im \omega_{\mathbf M_i}(V_i)$ is bounded, then the conformal structure on a torus corresponding to $\omega_{\mathbf M_i}$ is bounded in the moduli space as $i \rightarrow \infty$.
Therefore, $\partial V_i$ converges some flat torus passing to a subsequence.
On the other hand, since $\Re \omega_{\mathbf M_i}(V_i)$ diverges, the length of the meridian for $V_i$ diverges.
This means that passing to a subsequence, $\{V_i\}$ converges to a torus cusp neighbourhood.
If $\Im \omega_{\mathbf M_i}(V_i) \rightarrow \infty$, passing to a subsequence, the conformal structure on a torus corresponding to $\omega_{\mathbf M_i}(V_i)$ diverges in the moduli space.
Since the length of longitude of $\partial V_i$ is defined to be constant, this means that $\partial V_i$ converges to an open annulus geometrically. 
This is possible only when $\{V_i\}$ converges geometrically to one or two rank-1 cusp neighbourhoods.
Since we are considering the non-cuspidal part of the geometric limit, these cusp neighbourhoods are removed and what remain are torus or annulus boundaries.
(What should be removed is $\epsilon_0$-cusp neighbourhood, whereas the limit is $\epsilon_2$-cusp neighbourhood.
This does not matter since they are constant only depending on $S$ and we can map the $\epsilon_2$-cusp neighbourhood to the $\epsilon_0$-cusp neighbourhood by a uniform bi-Lipschitz map.)

Thus we have seen that the non-cuspidal part of the geometric limit  of $\mathbf M_i$ with basepoint at $x_i$, which is defined to be  $\mathbf M$, is a union of (internal and boundary) blocks and Margulis tubes.
We denote by $\mathbf M[0]$ the part of $\mathbf M$ which is the geometric limit of $\mathbf M_i[0]$ inside $\mathbf M$.
Now, we shall see how a structure of brick decomposition of the geometric limit appears.
Since the blocks of $\mathbf M_i[0]$ are embedded and they are pasted together along horizontal surfaces, we see that the notion of horizontal surfaces (and of horizontal foliation) makes sense in $\mathbf M_i[0]$.
Taking their geometric limit, we see that $\mathbf M[0]$ also have well-defined horizontal surfaces.
Each Margulis tube of $\mathbf M_i$ is foliated by horizontal annuli.
Since Margulis tubes  filled into $\mathbf M[0]$ is a geometric limit of Margulis tubes in $\mathbf M_i$, they are also foliated by horizontal annuli.
Therefore, horizontal surfaces intersecting a Margulis tube extend to annuli in the tube to form horizontal surfaces corresponding to subsurfaces of $S$.
Taking geometric limits of these surfaces, we see that also in the geometric limit, horizontal surfaces intersecting Margulis tubes can be extended to annuli in the Margulis tubes to form subsurfaces of $S$, which we regard as horizontal surfaces.
%ここをもう少ししっかり書き，horizontal structureがSの部分曲面であることを強調する．
We consider a maximal union of vertically parallel horizontal surfaces, and define its closure to be a brick of $\mathbf M$.
It is easy to see that two bricks can intersect each other only along (a part of) horizontal boundaries, and that the intersection consists of essential subsurfaces by the fact that it is decomposed into horizontal surfaces lying in blocks and horizontal annuli in Margulis tubes.
We can also check that there is no annulus component among the intersection, since every boundary component corresponds to a cusp limit of Margulis tubes, and no two such components are homotopic.
Thus we can see that the geometric limit admits a structure of a brick manifold.
As the restriction of a geometric limit of the model maps $f_i: \mathbf M_i \rightarrow (M_i)_0$ to the non-cuspidal part, we get a model map $f_\infty : \mathbf M \rightarrow (M_\infty)_0$.
\end{proof}

Thus we have obtained $\mathbf M$ and $f_\infty$ and by construction the condition (0) holds.
We now check that the conditions (1)-(6) of the statement hold.
%, skipping details unless necessary.
Since $f_i$ is a homeomorphism and takes Margulis tubes whose cores are sufficiently short to the same kind of Margulis tubes in $\hyperbolic^3/G_i$, there is a one-to-one correspondence between the Margulis tubes with short cores of $\mathbf M_i$ and those of $(\hyperbolic^3/G_i)_0$.
Since there are no two distinct Margulis tubes in $(\hyperbolic^3/G_i)_0$ whose cores are homotopic, the same holds for Margulis tubes with short cores in $\mathbf M_i$.
The condition (2) is derived from this property since two homotopic longitudes on $\partial \mathbf M$ give rise to two Margulis tubes with homotopic very short core curves for large $i$.

Next we turn to the condition (3).
Let $B$ be a brick in $\mathbf M$, and suppose that $B$ has an end $e$.
If the end $e$ is contained in a boundary block of $\mathbf M$, we define $e$ to be geometrically finite.
Since $f_i$ takes the of every boundary block of $\mathbf M_i$ to a geometrically finite end of $M_i$ and a boundary block of $\mathbf M$ is a geometric limit of boundary blocks of $\mathbf M_i$, as was seen in the proof of Lemma \ref{model limit} above, which must also correspond to a geometrically finite end of $(M_\infty)_0$, the limit $f$ also takes $e$ to a geometrically finite end of $(M_\infty)_0$.

If $e$ in $B$ is not contained in a boundary block, we define $e$ to be simply degenerate.
Such an end appears only when there are infinitely many blocks constituting $B \cong \Sigma \times J$. 
This implies that there are infinitely many Margulis tubes tending to the end $e$.
Since each core curve of Margulis tube is homotopic to a simple closed curve on $\Sigma$, and $f$ takes such a core curve to a closed geodesic in $M_\infty$, we see that the end of $(M_\infty)_0$ corresponding to $e$ is simply degenerate.

Now we check the condition (5).
Let $e$ be an end of $\mathbf M$ which is not contained in a brick.
We need to show the following.
\begin{lemma}
\label{no open annulus}
There is no essential half-open annulus whose end tends to the end $e$.
\end{lemma}
\begin{proof}
We prove this by contradiction.
Suppose that $A$ is an essential half-open annulus tending to $e$.
Then $A$ intersects infinitely many bricks $B_n$ tending to $e$.
Since the $B_n$ are distinct, by using the condition (2), we can choose a vertical boundary of $B_n$ with core curve $c_n$ so that the $f(c_n)$ represent all distinct homotopy classes in $M_\infty$.
Moreover the distance between the basepoint $y_\infty$ and the closed geodesic homotopic to $f(c_n)$ goes to $\infty$ as $n \rightarrow \infty$.
Let $a$ be a core curve of $A$.
Then $a$ and $c_n$ can be realised as disjoint curves on a horizontal surface on which $c_n$ lies.
Pulling back this situation to $\mathbf M_i$ and $M_i$, we see that there is a pleated surface $k_n^i : S\rightarrow M_n$ homotopic to $f_i|S\times \{t\}$ which realises both $a$ and $c_n$ at the same time.
This is impossible since the distance modulo the thin part between $k_n^i(a)$ and the one representing $k_n^i(c_n)$ goes to $\infty$, which contradicts the compactness of pleated surfaces.
(When $f(a)$ represents a parabolic class, we need some more care, but essentially the same kind of argument works, for $f(a)$ is homotoped into a cusp which does not touch the end $e$ in this case.)
\end{proof}

What now remain are (1), (4), (6) and (7).

Before starting to prove them, we shall describe general settings.
First, we consider a monotone increasing exhaustion of $\mathbf M$ by connected submanifolds consisting of finitely many bricks; that is, a sequence of submanifolds $\mathbf N_1 \subset \mathbf N_2 \subset \dots$ such that $\mathbf M=\cup_n \mathbf N_n$, each $\mathbf N_n$ is a connected union of finitely many bricks in $\mathbf M$, and every brick of $\mathbf N_n$ is attached to $\mathbf N_{n-1}$ at either its upper front or lower front or both.
(See Definition \ref{brick manifold} for definitions of upper and lower fronts.)
Each compact brick of $\mathbf N_n$ is contained in the range of the approximate isometry from $\mathbf M_i$ to $\mathbf M$ for large $i$, and hence we can embed it preserving the vertical and horizontal foliations.
In the same way, for each non-compact brick $B$ of $\mathbf N_n$, its real front, which we denote by $\Sigma$, is contained in the range of the approximate isometry for large $i$.
Since $B$ is homeomorphic to either $\Sigma \times [0,1)$ or $\Sigma \times (0,1]$, by embedding its real front using the embedding of $\mathbf M_i$ and the approximate isometry, we can embed $B$ into $S\times [0,1]$, again preserving the vertical and horizontal foliations.
We can also adjust the image of its ideal front so that the closures of the images of two bricks do not intersect, \ie so that even two ideal fronts do not intersect.
%Each $\mathbf N_n$ is contained in the range of the approximate isometry from $\mathbf M_i$ to $\mathbf M$ for large $i$
%これはうそ．そのcore部分が含まれているだけ．改訂済み．
Since $\mathbf N_n$ has only finitely many bricks, by taking sufficiently large $i$, we can embed $\mathbf N_n$ into $S \times [0,1]$ using these embeddings of its bricks induced by the  embedding of  $\mathbf M_i$ into $S \times [0,1]$, keeping the horizontal and vertical foliations.
We denote this embedding of $\mathbf N_n$ by $\eta_n$.
%We can assume that  $\eta_n$ do not map distinct fronts (real or ideal) so that their images intersect each other by isotoping it keeping the horizontal and vertical foliations.

Passing to a subsequence and isotoping the $\eta_n$ vertically, we can assume that for each brick $B$ of $\mathbf M$, the horizontal level of $\eta_n(B)$ is independent of $n$ if it is defined, without changing the condition that the upper ends of the upper boundary blocks   lie on $S \times \{1\}$ and the lower ends of the lower boundary blocks lie on $S \times \{0\}$.
Here we call a boundary block of the form $\Sigma \times [s,t)$ a upper and that of the form $\Sigma \times (s,t]$ lower, and its end is called an upper end and a lower end respectively.
Now for each brick $B$ of $\mathbf M$, let $\sup B$ and $\inf B$ be the levels of $\eta_n(\partial_+B)$ and $\eta_n(\partial_- B)$ with respect to the second factor of $S \times [0,1]$ (which we call {\em horizontal levels} from now on) under an embedding $\eta_n$ for which $\eta_n(B)$ is defined.
As noted above, these are independent of $n$.
We consider the set $C=\{\sup B,\inf B \mid B \text{ are bricks in } \mathbf M\} \subset [0,1]$ and its closure $\bar C$.
We can further perturb $\eta_n$ (for all $n$ simultaneously) so that two distinct bricks share $\inf$ or $\sup$ only at $0$ and $1$.

%以下は可算個であることがわかってから．
%
%Now suppose that there is a sequence of bricks $\{B_j\}$ such that $\{\sup B_j\}$ converges to a number $t \in [0,1]$.
%Since the embedded image of each brick $\Sigma \times [s,t)$ or $\Sigma \times (s, t]$ occupies a part of $S\times [0,1]$ with volume $\Area(\Sigma) \times (t-s)$, we se that $(\sup B_j-\inf B_j) \rightarrow 0$.
%Therefore $\{\inf B_j\}$ also converges to $t$.
%If there is a brick $B$ whose  front (real or ideal) is mapped by $\eta_n$ into $S \times \{t\}$, we can perturb $\eta_n|B$ off from $S \times \{t\}$.
%If there is another sequence of bricks $\{B_j'\}$ such that $\lim_j \sup B_ji' =t$, and suppose that $\sup B_i$ converges to $t$ from below whereas $\sup B_i'$ converges from above.
%Then we can perturb $\eta_n|B_n'$ so that $\lim_i \sup B_j' >t$ holds.
%
%Thus we can assume the following by isotoping the $\eta_n$ keeping the horizontal and vertical foliations.
%\begin{assump}
%\label{simple accumulation}
%No point of $C$ is  is an accumulation point of $C$.
%%, and that to each accumulation point of $C$ points of $C$ accumulate either only from below or only from above and cannot accumulate from both sides at the same time.
%\end{assump}
%%

For each $t \in [0,1]$, we consider the surface $\Int \eta_n(\mathbf N_n) \cap S \times \{t\}$, which we regard as a subsurface of $S$ identifying $S \times \{t\}$ with $S$, and we denote it by $D_{\eta_n}(t)$.
Since bricks are embedded with  horizontal foliations preserved, $D_{\eta_n}(t)$ is an incompressible subsurface of $S$ and there are no components of $D_{\eta_n}(t)$ which are discs or annuli.
For convenience, we fix a hyperbolic structure on $S$, and we can assume that each  boundary component of $D_{\eta_n}(t)$ either  is  either is geodesic or  has a small geodesic, so that if two of such surfaces are homotopic, then they are vertically parallel to each other.
Since there are neither annuli nor discs, for any $t \in [0,1]$, the Euler characteristic of $D_{\eta_n}(t)$ is monotone non-increasing with respect to $n$.
Therefore, the homeomorphism type of $D_{\eta_n}(t)$ does not change for large $n$.
Furthermore, since there are only finitely many ways to embed a surface essentially into $S$ up to automorphisms of $S$, we can assume that there is $n_0$ such that the homeomorphism type of $(S, D_{\eta_n}(t))$ as a pair does not change for $n \geq n_0$.
We say that $D_{\eta_n}(t)$ is {\em stable} in this situation.
For stable $D_{\eta_n}(t)$, we denote its Euler characteristic, which is independent of $n$, by $\chi_\mathrm{stab}(D(t))$.

Now we start to prove (6).
Since an end either lies in a brick or is a limit of bricks, it always corresponds to a number in $\bar C$.
(Although we have not assumed that $\mathbf M$ is embedded in $S \times [0,1]$ at this stage, the horizontal level of an end makes sense because the horizontal levels of each brick are well defined.)
Moreover, since there is a uniform bound for the number of components of $D_{\eta_n}(t)$, and one can approach $S \times \{t\}$ only from two sides, from above and from below, there is a uniform bound for the number of ends corresponding to $t \in \bar C$.
Therefore, to show (6), what we need to show is the following.
\begin{lemma}
\label{countable}
The set $\bar C$ is countable.
\end{lemma}

Let $t_\infty$ be an accumulation point of $C$.
Then we can easily see the following.
We define two subsurfaces $\Sigma^-_{\eta_n}(t_\infty)$ and $\Sigma^+_{\eta_n}(t_\infty)$ of $S$ to be the sets of limit points  of $S \setminus D_{\eta_n}(x-\epsilon)$ and $S \setminus D_{\eta_n}(x+\epsilon)$ as $\epsilon \searrow 0$ respectively.
We denote the complements of $\Sigma^-_{\eta_n}(t_\infty)$ and $\Sigma^+_{\eta_n}(t_\infty)$ by $D^+_{\eta_n}(t_\infty)$ and $D^-_{\eta_n}(t_\infty)$ respectively.
\begin{claim}
\label{region}
There is $\delta >0$ such that for any $t \in [t_\infty-\delta, t_\infty) \cup (t_\infty, t_\infty+\delta]$ we have $\chi_\mathrm{stab}(D(t)) \leq \chi_\mathrm{stab}(D(t_\infty))$ and $D_{\eta_n}(t_\infty)$ is vertically isotopic into $D_{\eta_n}(t)$ in $S \times [0,1]$ for sufficiently large $n$.
Moreover, for any $t \in [t_\infty -\delta, t_\infty)$, the surface $D_{\eta_n}^-(t_\infty)$ is vertically homotopic into either $D_{\eta_n}(t)$, and for any $t \in (t_\infty, t_\infty+\delta]$, the surface $D_{\eta_n}^+(t_\infty)$  is vertically homotopic into $D_{\eta_n}(t)$.
\end{claim}
\begin{proof}
By the same reason as the argument just before the lemma,  there are only finitely many bricks of $\mathbf M$ the closures of whose images under $\eta_n$ intersect the level surface $S \times\{t_\infty\}$, which we name $\{B^{t_\infty}_j\}$.
(Again this property does not depend on $n$, and that both $\inf \eta_n(B^{t_\infty}_j)$ and $\sup \eta_n(B^{t_\infty}_j)$ are independent of $n$.)
We set  $t^+=\min_j\{ \sup B^{t_\infty}_j \mid  \sup B^{t_\infty}_j  \geq t_\infty\}$ and $t^-=\max_j\{ \inf B^{t_\infty}_j\mid  \inf B^{t_\infty}_j \leq t_\infty\}$.
Then  $\delta$ which is defined to be $\min\{t_+-t_\infty, t_\infty-t_-\}$ has the desired property.
\end{proof}
%
%Since $\eta_n(\mathbf N_n)$ is connected, letting $\delta$ smaller if necessary, we can make $S \times \{t\} \setminus D_{\eta_n}(t)$ is vertically isotopic in $S \times (0,1)$ into $S \times \{t_\infty\} \setminus D_{\eta_n}(t_\infty)$ for any $t \in (t_\infty -\delta, t_\infty) \cup (t_\infty, t_\infty+\delta)$.
%This shows that for any fixed $s$, there can be only countably many $t \in \bar C$ with $\chi_\mathrm{stab}(D(t))=s$.
\begin{proof}[Proof of Lemma \ref{countable}]
By Claim \ref{region}, for any $t \in [t_\infty-\delta, t_\infty)]$, the surface $D_{\eta_n}^-(t_\infty)$ is vertically homotopic into $D_{\eta_n}(t)$.
Moreover, if $t_\infty$ is an accumulation point from below, since $\eta_n(\mathbf N_n)$ is connected and bricks intersect only along fronts,  there is no $t \in [t_\infty -\delta, t_\infty)$ such that  $\chi_\mathrm{stab} (D_{\eta_n}(t))=\chi(D^-_{\eta_n}(t_\infty))$, and hence for any $t \in [t_\infty -\delta, t_\infty)$, we have $\chi_\mathrm{stab} (D_{\eta_n}(t)) < \chi(D^-_{\eta_n}(t_\infty))$.
Therefore if there is an accumulation point $t$ of $C$ in $[t_\infty-\delta, t_\infty)$, then $\max\{\chi(D^-_{\eta_n}(t)), \chi(D^+_{\eta_n}(t))\} < \chi(D^-_{\eta_n}(t_\infty))$ for large $n$.
The same holds when $t_\infty$ is an accumulation point from above just changing $-$ to $+$.
Repeating the same argument using \cref{region}, we can take a neighbourhood $[t-\delta_t, t+\delta_t]$ such that for any $t' \in [t-\delta_t, t+\delta_t]$, we have $\max\{\chi(D^-_{\eta_n}(t')), \chi(D^+_{\eta_n}(t\))\} < $\max\{\chi(D^-_{\eta_n}(t)), \chi(D^+_{\eta_n}(t))\}$.
Since $\max\{\chi(D^-_{\eta_n}(t)), \chi(D^+_{\eta_n}(t))\}$ is between $\chi(S)$ and $0$, in finite steps, we reach a situation where there are no accumulation points  in a neighbourhood.
Thus we have shown that there are only countably many accumulation points of $C$, 
and hence $\bar C$ is countable.
\end{proof}
Thus we have proved (6).

For later use, using (6), we modify $\eta_n$ further as follows.
If there is a brick $B$ whose  front (real or ideal) is mapped by $\eta_n$ into $S \times \{t\}$ for an accumulation point $t$ of $C$, we can perturb $\eta_n|B$ off from $S \times \{t\}$ for all $n$ at the same time since there are only countably many accumulation points
In the same way,  if there are two sequence of  bricks $\{B_j\}$ and $\{B_j'\}$ such that $\lim_j \sup B_j=t=\lim_j \inf B_j'$, and if $\sup B_i$ converges to $t$ from below whereas $\inf B_i'$ converges from above,
then by using \cref{region}, we can perturb $\eta_n|B_n'$ so that $\lim_i \sup B_j' >t$ holds.

Thus we can assume the following.
\begin{assump}
\label{simple accumulation}
No point of $C$ is  is an accumulation point of $C$.
To each accumulation point of $C$ points of $C$ accumulate either only from below or only from above and cannot accumulate from both sides at the same time.
\end{assump}

Next we turn to the most difficult one, the condition (1), whose proof was given in \S 3.1 of \cite{OhSo}.
The subtle point is that we cannot construct an embedding of $\mathbf M$ into $S \times [0,1]$ simply as a limit of the  embeddings $\eta_n$ of $\mathbf N_n$.
This is because for distinct $m$ and $n$, the images of a brick by $\eta_m$ and $\eta_n$ may be different even homotopically.
Therefore, instead of taking a limit of $\eta_n$, we construct embeddings $h_n$ of $\mathbf N_n$ inductively which stabilise if we restrict them  to each brick.
The embedding $h_n$ is not an extension of the previous $h_{n-1}$, but an extension of an embedding obtained by \lq twisting' $h_{n-1}$.
We shall use the original embeddings $\eta_n$ in a step of induction to define \lq twisting maps'.
Although $h_n$ and $\eta_n$ send each brick to the same horizontal levels, there is no simple direct relation between the two.

First we shall explain intuitively how to construct $h_n$ provided $h_{n-1}$ is already defined.
We consider a decomposition of $[0,1]$ into subintervals by setting the set of the sup and the inf of the bricks of $\mathbf N_n$ to be the dividing points.
Then we subdivide the bricks of $\mathbf N_n$ so that each brick is contained in the product of $S$ and one of the above subintervals.
%これは必要に思える．
We give an order, defined using the horizontal levels, to the set $\mathcal B_n$ of bricks (after the subdivision) which are contained in $\mathbf N_n$ but not in $\mathbf N_{n-1}$.
Let $B$ be a brick in this set, and suppose that we have already defined $h_n$ for bricks in $\mathcal B_n$ which are smaller with respect to the given order, and denote this embedding by $h_n^B$ and the union of bricks in $\mathcal B_n$ smaller than $B$ by $\mathbf B(B)$.
We cannot always extend $h_n^B$ to $B$ since even  if both $\partial_- B$ and $\partial_+ B$ are contained in $\mathbf N_{n-1} \cup \mathbf B(B)$,  their images under $h_n^B$ may not be vertically isotopic.
(If they are vertically isotopic, then we can define $h_n|B$ simply by extending $h_n^B$ so that the horizontal level of $h_n(B)$ is the same as that of $\eta_n(B)$.)
%次の文は挿入したが，要らなそう．
%Also, there is a possibility that there is a conflict  when we try to extend $h_{n-1}$ to two bricks whose horizontal levels correspond to the same interval or adjacent intervals. 
We shall avoid these difficulties by composing to $h_n^B$ what we call a solid twisting map, which is a map obtained by cutting $S \times [0,1]$ at two levels $\Sigma \times \{a\}$ and $\Sigma \times \{c\}$ with an essential subsurface $\Sigma$ of $S$, and twisting the part $\Sigma \times [a,c]$ by using a homeomorphism from $\Sigma$ to $\Sigma$ fixing the boundary.
%We consider all the bricks in $\mathbf N_n \setminus \Int \mathbf N_{n-1}$, and name them $B_1, \dots , B_s$.
%Let $\{B_k\}$ be the bricks in $\mathbf N_n \setminus \Int \mathbf N_{n-1}$, which we parametrise as $\Sigma_k \times [\inf B_k, \sup B_k]$.
%We shall define twisting maps so that after composing the maps, $h_{n-1}(\partial_- B_k)$ becomes homotopic to $h_n(\partial_- B_k)$ for every $k$.
%Let $B_-$ be the union of bricks to which $B$ is attached in a subsurface of $\partial_- B$.
%Then by composing an auto-homeomorphism of $S$ to $\eta_n$, we can assume that $h_{n-1}|B_-=\eta_n|B_-$.

Now we start a formal discussion.
We let $C_n $  be a subset of $C$ defined by  $C_n=\{\sup B', \inf B' \mid  \text{the }  B' \text{ are  bricks in }  \mathbf N_{n}\}$, and number the elements of $C_n$ as $a_1, \dots ,a_s$.
We subdivide the brick decomposition of $\mathbf N_n$ by cutting them along the level surfaces corresponding to  $S \times \{a_j\}$ for $j=1, \dots, s$ via $\eta_n$.
By this operation, the image of each brick in $\mathbf N_n$ under $\eta_n$ is contained in $S\times [a_j, a_{j+1}]$ for some $j =1, \dots, s$, where we set $a_0$ to be $0$ and $a_{s+1}$ to be $1$.
%%分割する必要があるようだが後の方が良いかも．要検討．
For each $a_j$ among $a_1, \dots, a_s$, we shall define a twisting map $\varphi_{a_j}: S \rightarrow S$, which is a homeomorphism, and a solid twisting map $\hat\varphi_{a_j}: S \times [0,1] \rightarrow S \times [0,1]$ for $j=1,\dots , s$, which is discontinuous only along at most two horizontal subsurfaces embedded in $S \times [0,1]$.
%, one of which lies on $S \times \{a_j\}$.
%revenir ici
 %and let $a_1, \dots, a_k$ be points in $C_n \cap (\inf B, \sup B]$ in the increasing order.
Depending on the location of $a_j$, the map $\hat\varphi_{a_j}$ twists either a compact submanifold of $S \times [0,1]$ above $S \times \{a_j\}$ or below $S \times \{a_j\}$ and leaves the remaining part of $S \times [0,1]$ unchanged.
To determine which region we twist, we first need to define an accumulation point of $C$ to which $a_j$ \lq belongs'.

Recall that a subsurface $D_{\eta_n}(x)$ of $S$ was defined for any $x \in [0,1]$ and $n \in \naturals$.
%We define two more subsurfaces $\Sigma^-_{\eta_n}(x)$ and $\Sigma^+_{\eta_n}(x)$ of $S$ to be the sets of limit points  of $S \setminus D_{\eta_n}(x-\epsilon)$ and $S \setminus D_{\eta_n}(x+\epsilon)$ as $\epsilon \searrow 0$ respectively.
%%, for sufficiently large $n$ for which $\chi(D_{\eta_n}(x))$ stabilises.
%We set $\Sigma_{\eta_n}(x)$ to be $\Sigma^-_{\eta_n}(x) \cup \Sigma^+_{\eta_n}(x)$.
Suppose that $d$ is an accumulation point of $C$.
By Assumption \ref{simple accumulation}, we see that $d$ is not contained in $C$ and that $C$ accumulates to $d$ either from above or from below, not from both at the same time.
We set $\Sigma_{\eta_n}(d)$ to be the complement of $D_{\eta_n}(d)$ on $S$.
By Claim \ref{region},
% if $d$ is an accumulation point of $C$ from below, 
 after passing to a subsequence of $\{\mathbf N_n\}$, we can find $\delta >0$  such that for any $n$ and $x \in [d-\delta, d+\delta]$, the subsurface $\Sigma_{\eta_m}(x)$ is vertically isotopic into $\Sigma_{\eta_n}(d)$.
Furthermore, by Assumption \ref{simple accumulation}, by taking smaller $\delta$ if necessary, we can assume the following.
\begin{assump}
\label{one side empty}
On the side from which $C$ does not accumulate, $\Sigma_{\eta_n}(x)$ is vertically parallel to $\Sigma_{\eta_n}(d)$ for every $x \in [d-\delta, d+\delta]$ and large $n$.
\end{assump}
We note that  $\delta$ above may depend on $d$ but not on $n$ if it is large enough.
Now since there are only finitely many bricks whose images under the $\eta_n$ intersect $S \times \{d\}$, by taking $\delta$ small enough, changing the order of appearance of finitely many bricks and passing to a subsequence, we can assume the following.
\begin{assump}
\label{change order}
For every accumulation point $d$ of $C$ and $\delta>0$ as above, the first brick put in $S \times (d-\delta, d)$ or $S \times (d, d+\delta)$ appears after all bricks intersecting $S \times \{d\}$, that is, if a brick $B$ with $\eta_n(B) \subset S \times (d-\delta, d) \cup S \times (d, d+\delta)$ is contained in $\mathbf N_n$, for any $B'$ with $\eta_m(B') \cap S \times \{d\} \neq \emptyset$, there is $j < n$ with $B' \subset \mathbf N_j$.
%Moreover, every brick $B$ with $\sup B=1$ is contained in $\mathbf N_1$.
\end{assump}
%Although $\delta$ depends on $d$, by the diagonal argument, we see that $m_0$ can be taken to be independent of $d$.
%Therefore passing to a subsequence, we can assume that this property holds for every $m$.

Let $\{d_i\}$ be the set of accumulation points of $C$, and for each $d_i$, we take $\delta_i$ so that %either $(d_i-\delta_i, d_i)$ (if $d_i$ is an accumulation point only from below) or $(d_i, d_i+\delta_i)$ (if $d_i$ is an accumulation point only from above) or 
$[d_i-\delta_i, d_i)\cup(d_i, d_i+\delta_i]$ has the properties of Claim \ref{region} and Assumptions \ref{one side empty} and \ref{change order}.
We  set $I_i$ to be $(d_i-\delta_i, d_i) \cup (d_i, d_i+\delta_i)$.
%分割は必要．
We note that it may be possible that two distinct $I_{i_1}$ and $I_{i_2}$ intersect, but we can choose smaller $\delta_i$ if necessary so that if $I_{i_1} \cap I_{i_2} \neq \emptyset$, then one of the two contains the other.
By cutting bricks along $S \times \{d_i-\delta_i\}$ and $S \times \{d_i+\delta_i\}$, we can assume the following:
\begin{assump}
\label{only one}
For a brick $B$, if one of its sup and inf is contained in $I_i$, then the other is contained in its closure $\bar I_i$.
\end{assump}
%%必要か？
We say that a point $a \in C$ {\em belongs to} an accumulation point $d_i$ if $a$ is contained in $I_i$ and if $I_i$ is the smallest with respect to the inclusion among the intervals $I_i$ containing $a$.
We say that a brick $B$ belongs to $d_i$ if $\bar I_i$ is the smallest among the intervals $\{\bar I_i\}$ that contains both $\sup B$ and $\inf B$.

Let $a$ be a point in $C_n \setminus\{0,1\}$.
%Recall that we perturbed $\eta_n$ so that there is only one brick $B$ having $a$ as $\inf$ or $\sup$.
We say that a point $a \in C_n$ belonging to $d_i$ is an {\em lower twisting point }when $a$ lies in $(d_i-\delta_i, d_i)$, and a {\em upper twisting point} otherwise.
%We regard a point in $C_n$ belonging to no accumulation point also as a lower twisting point and as belonging to $1$.
%とりあえず削除
Since there are only finitely many points of $C$ which are contained in none of the $I_i$, by passing to a subsequence, we can assume the following.
\begin{assump}
If a brick $B$ has $\inf$ and $\sup$ neither of which contained in any $I_i$, then $B$ is contained in $\mathbf N_1$.
\end{assump}

Twisting maps (or solid twisting maps) at lower twisting points, which we shall define below, will be called lower (solid) twisting maps, and those at upper twisting points upper (solid) twisting maps. 
We shall  define a new embedding $h_n$ for  $\mathbf N_n$ for every $n \in \naturals$ inductively, starting from $h_1$, which is set to be $\eta_1$.
The embedding $h_n$ will be defined to each brick with the same $\sup$ and $\inf$ as the original embedding $\eta_n$, but our construction is based on the induction, and $\eta_n$ will be used only to define twisting maps.
%, where $m_0$ is the number which appeared in the previous paragraph.
%For an upper twisting point $a \in C_n$, let $\Sigma_{h_{n-1}}(a)$ be an essential subsurface of $S$ such that $\Sigma_{h_{n-1}}(a) \times \{a\}$ coincides with the limit of $S \times \{a_j-\epsilon\} \setminus D_{h_{n-1}}(a_j-\epsilon)$ as $\epsilon \searrow 0$.
Suppose that the embedding $h_{n-1}$ is defined on $\mathbf N_{n-1}$ in such a way that $h_{n-1}$ sends each brick of $\mathbf N_{n-1}$  to the same horizontal levels as $\eta_{n-1}$ (and hence also $\eta_n$) does.
We now start to define $h_n$ on $\mathbf N_n$.
%I_iごとに外側から定義．

Let $\mathbf B_n$ be the set of bricks contained in $\mathbf N_n$ but not in $\mathbf N_{n-1}$.
We let $C_n'$ be  a subset of $C_n$ defined by $C_n' = \{\sup B, \inf B \mid B \in \mathbf B_n \} \cap (\cup_j I_j)$.
We consider accumulation points of $C$ to which some point of $C_n'$ belongs.
Since $\mathbf N_n$ has only finitely many bricks, there are only finitely many such accumulation points.
We renumber such accumulation points and the corresponding intervals as $d_1, \dots, d_r$  and $I_1, \dots , I_r$ in such a way that if $I_{j_1}$ is (properly) contained in $I_{j_2}$ for $d_{j_1}, d_{j_2} \in \{d_1, \dots , d_r\}$, then $j_1 > j_2$.
%For later convenience, we regard $d_1\pm\delta_1, \dots, d_r\pm\delta_n$ also as elements of $C_n$ and subdivide bricks of $\mathbf N_n$ also along $S \times \{d_j \pm \delta_j\}$.
%必要か？

Starting from $I_1$, and in the order according to the subscripts,  we shall define twisting maps for each brick $B$ in $\mathbf B_n$ which belongs to $d_j$,  and  $h_n$ on $B$ after twisting the embedding defined up to that point.
We let $\mathbf N_n^{j-1}$  be the union of bricks of $\mathbf N_n$ and bricks in $\mathbf B_n$ belonging to $d_1, \dots , d_{j-1}$.
Now, assuming that we have already defined the embedding $h_n^{j-1}$ for  $\mathbf N_n^{j-1}$, we shall define an embedding $h_n^j$ for $\mathbf N_n^j$, which is not necessarily an extension of $h_n^{j-1}$.
By Assumption \ref{one side empty}, either all points of $C'_n$ belonging to $d_j$ are upper twisting points or all of them are lower twisting points.
Since the argument is quite the same for both cases, we here assume that they are lower twisting points.
Let $\{a_1, \dots , a_p\}$ be the points in $C'_n $ belonging to $d_j$, aligned in the increasing order.
%As a convention, we let $a_0$ be $d_j -\delta_j$.
%必要か．
We shall proceed again inductively to define a lower twisting map $\varphi_{a_k}$ for each $a_k$ among $a_1, \dots , a_p$ and the corresponding lower solid twisting map $\hat \varphi_{a_k}: S\times [0,1] \to S \times [0,1]$
%, which  is a homeomorphism from $S$ to $S$ fixing $D_{h_n^{j-1}}^-(d_i)$ pointwise.
%We construct  from this automorphism $\varphi_{a_k}$   a solid twisting map $\hat \varphi_{a_k}$ from $S \times [0,1]$ to $S \times [0,1]$ which is expressed as $\varphi \times \{t\}$ on $S \times \{t\}$ for  $t \in [a_k, d_j)$.
% or $t \in [a,1]$ if there is no point to which $a$ belongs,  and is the identity outside this part.

Let $a_k$ be one among $\{a_1, \dots , a_p\}$, and suppose that we have already constructed solid twisting maps $\hat \varphi_{a_1}, \dots , \hat \varphi_{a_k}$ in such a way that $\hat \varphi_{a_k} \circ \dots \circ \hat \varphi_{a_1} \circ h_{n}^{j-1}$ extends to an embedding $h_n^{j-1}(k-1)$ of the union of $\mathbf N_n^{j-1}$ and the bricks in $\mathbf B_n$ whose suprema are in $\{a_1, \dots , a_{k-1}\}$, which we denote by $\mathbf  N_n^{j-1}(k-1)$.
We let  $B_{l_1}, \dots , B_{l_t}$ be the bricks with suprema at $a_k$ which are contained in either $\mathbf B_n$ or  $\mathbf N_{n-1}$ in such a way that their images under $h_{n}^{j-1}(k-1)$ are contained in  $\Sigma_{h_n^{j-1}}(d_j) \times [0, a_k]$.
%We let $B_{l_1}, \dots , B_{l_t}$ be the bricks with their suprema $a_k$ which are either in $B_n$ or in $\mathbf N_{n-1}$ with $h_n^{j-1}(B) \subset \Sigma_{h_n^{j-1}}^-(d_i) \times [
We now define a lower twisting map  $\varphi_{a_k}: S \rightarrow S$ at $a_k$, which is supported on $\Sigma_{h_n^{j-1}(k-1)}(d_j)$ and the corresponding lower solid twisting map $\hat \varphi_{a_k}$ which is supported in $\Sigma_{h_n^{j-1}(k-1)}(d_j) \times [a_k, d_j)$.

Since both $\eta_n$ and $h_n^{j-1}(k-1)$ embed $\mathbf N_{n-1}^j(k-1)$ into $S \times [0,1]$ and they send each brick at the same horizontal level, there is a homeomorphism $\varsigma_n^j(k): h_n^{j-1}(k-1)(\mathbf N_{n-1}^j(k-1)) \to \eta_n(\mathbf N_{n-1}^j(k-1))$ preserving the horizontal levels.
Since $\eta_n$ also embeds $\mathbf N_{n-1}^j(k-1) \cup (B_{l_1}, \dots , B_{l_t})$, for each $l$ among $l_1, \dots , l_t$, its bottom $\eta_n(\partial_-B_l)$ is vertically homotopic to its top $\eta_n(\partial_+ B_l)$ if $\partial_+ B_l$ is contained in $\mathbf N_{n-1}^j(k-1)$.
Therefore there is a homeomorphism $\varphi_{a_k} : S \rightarrow S$  which is supported on $\Sigma_{h_n^{j-1}(k-1)}(d_i)$ such that $h_n^{j-1}(k-1)(\partial_- B_l)$ is vertically homotopic to $h_n^{j-1}(k-1)(\partial_+ B_l)$ if $\partial_+ B_l$ is contained in $\mathbf N_{n-1}^j(k-1)$.
We let this $\varphi_{a_k}$ be the lower twisting map at $a_k$, and define the corresponding lower solid twisting map $\hat \varphi_{a_k} : S\times [0,1] \to S \times [0,1]$  to be $(\varphi_{a_k}(x), t)$  for $(x,t) \in \Sigma_{h_n^{j-1}(k-1)}(d_i) \times [a_k, d_j)$ and the identity elsewhere.
Then $\hat \varphi_{a_k} \circ h_n^{j-1}(k-1)$ extends to an embedding of $\mathbf N_{n-1}^j(k-1) \cup (B_{l_1}, \dots , B_{l_t})$.
Repeating this construction inductively on $a_1, \dots , a_p$ first then $d_1, \dots, d_r$ next, we get an embedding $h_n$ of $\mathbf N_n$.

We shall now show that for each brick $B$ of $\mathbf M$, the restriction $h_n|B$ stabilises for large $n$.
(Here we are considering the original brick decomposition, not the subdivided one as the argument above.)

For a given brick $B$, take the smallest $m$ such that $B$ is contained in $\mathbf M_m$.
We consider twisting maps  at points  $a_i \in C_{n_i}$  for some $n_i >m$ belonging to an accumulation point $d_i$, which appear in the construction of the embeddings above.
We shall show that there are finitely many $i$ and $n_i$ for which the solid twisting map affects the embedding of $B$.
We first consider the case where  $h_m(B) \cap (S \times (a_i, d_i)) =\emptyset$ ($a_i < d_i$) or $h_m(B) \cap (S \times (d_i, a_i)) = \emptyset$ ($a_i > d_i$).
%(The conditions are independent of $m$ provided that $h_m(B)$ is defined.)
For such $a_i$, the solid twisting map does not affect the embedding of $B$ because the solid twitting map only moves $S \times (a_i, d_i)$ or $S \times (d_i, a_i)$ in this case.
Next suppose that either $h_m(B) \cap (S \times (a_i, d_i))$ or  $h_m(B) \cap (S \times (d_i, a_i))$ is non-empty.
Since the argument is the same for both cases, we only consider the case when $a_i$ lies in $(d_i-\delta_i, d_i)$ and $h_m(B) \cap (S \times (a_i, d_i)) \neq \emptyset$.
If $d_i$ is contained in $(\inf B, \sup B]$, then $h_{n-1}(B) \cap S \times \{d_i\}$ is contained in $D_{h_{n-1}}(d_i)$, which implies that $h_{n-1}(B)$ is outside the support of the solid twisting map at $a_i$.
%Therefore there are only finitely many $n$ and $a_i$ such that the solid twisting maps at $a_j$ for defining $h_n$ affect $B$ in this case.
%削除した
Therefore, we can assume that $d_i > \sup B$.
Since $a_i < \sup B < d_i$, if there are infinitely many such $a_i \in C_{n_i}$ with $n_i > m$, the sequence $\{a_i\}$ consisting of these points has an accumulation point $a_\infty \leq \sup B$.
Since $a_\infty$ is an accumulation point of $C$, there is $j$ such that $a_\infty=d_j$.
For sufficiently large $i$, the point $a_i$ is contained in $I_j$.
Since $a_i$ belongs to $d_i$, this implies that $I_i$ is contained in $I_j$.
Since $d_i-\delta_i < a_i \leq d_j=a_\infty \leq \sup B < d_i$, we see that $I_i$, which contains $(d_i-\delta_i, d_i)$ cannot be contained in $I_j$ which does not contain $d_j$.
This is a contradiction.
Thus, we have shown that there are only finitely many solid twisting maps which affect the embeddings of $B$.
Therefore, $\{h_n\}$ stabilises on each brick for large $n$, and the limit of $\{h_n\}$ is a well-defined embedding of $\mathbf M$ into $S \times [0,1]$.

The condition (4) is much easier to see.
As was shown in the proofs of (3) and (5),  each geometrically finite end appears as a geometric limit of  geometrically finite ends in  boundary blocks of the $\mathbf M_i$.
Since they can be assumed to lie on $S \times \{0,1\}$ by the embedding $h_i$ of $\mathbf M_i$ constructed above, we can see the limit also lies on $S\times \{0,1\}$.

We shall next check the condition (7).
We fix a generator system of $\pi_1(S)$ so that none of the generators has image under $\phi_i$ which converges to a parabolic element in $\Gamma$ as $i \rightarrow \infty$.
Since $\{(G_i,\phi_i)\}$ converges algebraically,  the geodesic loops representing the generators based at $y_i$ have bounded lengths as $i \rightarrow \infty$.
Therefore, we can construct a map $g_i : S \rightarrow M_i$ triangulated by ideal triangles whose image contains these geodesic loops so that $\{g_i\}$ converges geometrically to a map $\hat  g : S \rightarrow M_\infty$ which can be lifted to a map from $S$ to $M'$.
Then $\{f_i^{-1} \circ g_i\}$ also converges to $f^{-1} \circ \hat g$, which we set to be $g$.
By construction, $g$ is $\pi_1$-injective also as  a map to $S \times [0,1]$.
Moreover, since $\hat g=f\circ g$ can be lifted to $M'$ as a homotopy equivalence, we see that $(f\circ g)_\#\pi_1(S)$ corresponds to the image of $\pi_1(M')$.
%Now, since the projection of $f^{-1} \circ g_\infty (S)$ covers the entire $S$, we can that there is an embedded surface $S'$ isotopic to $S \times \{t\}$ in $S\times [0,1]$, which lies in $\mathbf M$.
%By isotoping $\mathbf M$ inside $S \times [0,1]$ without destroying the structures of horizontal and vertical foliations, we can make this surface to be the horizontal surface $S \times \{1/2\}$.
%
%%Geometrically, an interior block has the following form.
%%Consider simple closed curves $\alpha$ and $\beta$ on $\Sigma$ with $i(\alpha, \beta)=2$ if $\Sigma$ is a sphere with four holes and $i(\alpha, \beta)=1$ if $\Sigma$ is a torus with one hole.
%%Take annular neighbourhoods $N_\alpha, N_\beta$ of $\alpha, \beta$ on $\Sigma$.
%%We set $B=\Sigma \times [0,1] \setminus (N_\alpha \times [0,1/4] \cup N_\beta \times [3/4, 1])$.
%%We define a metric on $B$ by putting a flat metric on $\Sigma$ so that the boundary components, $\alpha$, $\beta$ have unit length, $\alpha$ and $\beta$ intersect orthogonally, and the widths of $N_\alpha$ and $N_\beta$ are $
%
%
%

Finally, we shall check the condition (8).
Since we chose $\eta_n$ so that no ideal front intersects another front in $S \times [0,1]$, the embedding of $\mathbf M$ which we constructed has the same property.
Since we isotoped $\eta_n$ so that there is no accumulation point of $C$ contained in $C$ itself and there is no accumulation point of $C$ to which points of $C$ accumulate from both below and above as in \cref{simple accumulation}, there is no wild end which intersects a front of a brick or another wild end.

\subsection{Block decomposition}
\label{label}
Regard $\mathbf M$ as a subset of $S \times [0,1]$ which is embedded preserving the horizontal and vertical foliations as in Theorem \ref{Ohshika-Soma}.
Then a brick of $B$ has a form $\Sigma \times J$ with respect to the parametrisation of $S\times [0,1]$.
We denote $\sup J$ by $\sup B$ and $\inf J$ by $\inf B$ as in the previous subsection.
Note that $\sup B$ is the level of the horizontal leaf on which the upper front of $B$ lies and $\inf B$ that on which the lower front of $B$ lies.
Each end of $\mathbf M$, even if it is wild, corresponds to $\Sigma \times \{t\}$ for some incompressible subsurface $\Sigma$ of $S$.
By the condition (2), every geometrically finite end is contained in either $S \times \{0\}$ or $S \times \{1\}$.
We call those contained in $S\times \{0\}$  {\em lower geometrically finite ends} and those in $S\times \{1\}$ {\em upper geometrically finite ends}.
By moving the embedding vertically if necessary, we can assume that $S \times \{0\}$ and $S \times \{1\}$ consist of a union of ends, annuli homotopic  in $S \times [0,1] \setminus \Int \mathbf M$ to the closure of open annulus boundary components of $\mathbf M$, and open annuli corresponding to punctures of $S$.
%We call the embedding of $\mathbf M$ satisfying this property and the conditions of Theorem \ref{Ohshika-Soma} a {\em proper embedding}.

Occasionally, it is  convenient to consider the complement of $\mathbf M$ in $S \times (0,1)$.
Let $C$ be a component of $S\times (0,1) \setminus \mathbf M$.
Then $\Fr C \cap C$ consists of (countably many) horizontal surfaces, each corresponding to an end of $\mathbf M$ which is either simply degenerate or wild.
(Note that by the condition (8), each component of $\Fr C \cap C$ corresponds to only one end of $\mathbf M$.)
On the other hand, $\Fr C \setminus C$ consists of either annuli or a single torus, which are boundary components of $\mathbf M$.

%There is a unique isotopy class of embedding $\iota_-: S \rightarrow \mathbf M$ homotopic to a horizontal surface such that the interior of the lower component of $\mathbf M \setminus \iota_-(S)$ is homeomorphic to $S \times (0,t_0)$.
%We call such an embedding {\em a bottom embedding}.
%Similarly, we define {\em a top embedding} $\iota_+: S \rightarrow \mathbf M$.
%These will serve as defining markings for the conformal structures at infinity.
%(See Figure \ref{top and bottom}.)

%\begin{figure}
%\scalebox{0.5}{\includegraphics{top-bottom.eps}}
%\caption{Top and bottom embeddings of $S$}
%\label{top and bottom}
%\end{figure}

We can associate to each geometrically finite end of $\mathbf M$  a marked conformal structure at infinity of the corresponding geometrically finite end of $(M_\infty)_0$.
As was shown in the proof of Lemma \ref{model limit}, each geometrically finite end is contained in a geometric limit of boundary blocks of $\mathbf M_i$.
Therefore, this conformal structure at infinity given on each geometrically finite end coincides with the conformal structure at infinity of the corresponding geometrically finite block.
Similarly, we can associate to each simply degenerate end of $\mathbf M$ the ending lamination of the corresponding simply degenerate end of $(M_\infty)_0$.
We call these conformal structures and ending laminations {\em labels}.

In general a brick manifold each of whose non-wild ends has a label, an arational lamination for a simply degenerate end, and a conformal structure at infinity for a geometrically finite end, is called  a {\em labelled brick manifold}.
We showed in \cite{OhSo} that any labelled brick manifold is decomposed into blocks in the sense of Minsky \cite{Mi} and  tubes.
%, as we shall see in the following.
%This can be done using the theory of hierarchies of tight geodesics in the curve complex due to Masur-Minsky as we shall see later.
%:Explain this in \S9 probably.
This is just a generalisation of Minsky's construction of model manifolds in \cite{Mi} based on hierarchies of tight geodesics defined in \cite{MaMi}.
In particular in the case of Kleinian surface groups, our decomposition into blocks coincides with the construction of Minsky's model manifolds completely.

In our present situation, we do not need this general theory since $\mathbf M[0]$, which is a geometric limit of $\mathbf M_[0]$, has a decomposition into blocks which is  a limit  block decomposition of $\mathbf M_i[0]$ as was stated in the part (0) of Theorem \ref{Ohshika-Soma}.
%The complement of the  tubes is denoted by $\mathbf M[0]$.
We put a metric of a Margulis tube into each tube so that the flat metric induced on the boundary coincides with that induced from the metric on $\mathbf M[0]$ determined by blocks.
As was explained in Proof of Theorem \ref{Ohshika-Soma}, each Margulis tube $V=A \times [s,t]$ has a coefficient $\omega_{\mathbf M}(V)$ lying in $\{z \in \complexes \mid \Im z >0\}$.
%The boundary of the tube $\partial V$ has a flat metric induced from the metric of $\mathbf M[0]$ determined by blocks.
%We can give a marking (longitude-meridian system) $(\alpha, \beta)$ to $\partial V$ by defining $\alpha$ to be a horizontal curve and $\beta$ to be  $\partial(a \times [s,t])$ for some essential arc $a$ connecting the two boundary components of $A$.
%The metric and the marking determine a point in the Teichm\"{u}ller space of a torus identified with $\{z \in \complexes \mid \Im z >0\}$, which we define to be $\omega_{\mathbf M}(V)$.
We define $\mathbf M[k]$ to be the complement of the tubes whose $\omega_{\mathbf M}$ have absolute value greater than or equal to $k$.
By defining a brick to be the closure of a maximal union of parallel horizontal leaves, we can define a brick decomposition of $\mathbf M_i[k]$ and $\mathbf M[k]$.
Such a brick decomposition is called the {\em standard brick decomposition}.

The following proposition was first shown in  the proof of Theorem A in \cite{OhSo} (\S 5.2).
The claims in this proposition were already proved in Theorem \ref{Ohshika-Soma} above except for the existence of labels.
The labels can be given by pulling back those on $(M_\infty)_0$ as we have just explained above.
%We shall give a sketch of a proof for the rest.
\begin{proposition}
\label{limits of models}
In the situation of \cref{Ohshika-Soma},
let $x_i \in \mathbf M_i$ be a point in $\mathbf M_i$ such that $f_i(x_i)=y_i$.
% is within uniformly bounded distance from the basepoint $y_i$ of $M_i$.
Then $(\mathbf M_i[0], x_i)$ converges to $\mathbf M[0]$  after passing to a subsequence.
The model manifolds $\mathbf M$ and $\mathbf M_i$ have  structures of labelled brick manifolds admitting  block decompositions with the following conditions.

Let  $\rho_i^{\mathbf M}$ be an approximate isometry  between $\mathbf M_i$ and the union of $\mathbf M$ and cusp neighbourhoods, which corresponds to the geometric convergence.
Then we can arrange $\rho_i^{\mathbf M}$ so that the following hold.
\begin{enumerate}
\item
For any compact set $K$ in $\mathbf M$, the restriction $\rho_i \circ f_i \circ (\rho_i^{\mathbf M})^{-1}|K$ converges to $f|K$ uniformly as $i \rightarrow \infty$.
\item
 For any block $b$ of $\mathbf M[0]$, its pull-back $(\rho_i^{\mathbf M})^{-1}(b)$ is a block in $\mathbf M_i[0]$ for large $i$.
%:Check if this change is all right.
 
% \item
%For any brick $B$ of $\mathbf M[k]$, its pull-back $(\rho_i^{\mathbf M})^{-1}(B)$ is contained in a brick of $\mathbf M_i[k]$ for large $i$.
\item $\rho_i^{\mathbf M}$ preserves the 
%vertical and 
horizontal foliations.
\end{enumerate}
\end{proposition}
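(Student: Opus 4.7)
The plan is to transport the Gromov--Hausdorff approximate isometry $\rho_i\colon B_{r_i}(M_i,y_i)\to B_{K_ir_i}(M_\infty,y_\infty)$ back to the models. Since both model maps $f\colon\mathbf M\to(M_\infty)_0$ and $f_i\colon\mathbf M_i\to(M_i)_0$ are $K$-bi-Lipschitz with $K=K(\chi(S))$, the composition $\widetilde\rho_i^{\mathbf M}:=f^{-1}\circ\rho_i\circ f_i$ is defined on a ball around $x_i$ of radius comparable to $r_i/K$ and is $K^2K_i$-bi-Lipschitz onto its image. Arzel\`a--Ascoli together with the Gromov compactness theorem then yields, after passing to a subsequence, the pointed convergence $(\mathbf M_i[k],x_i)\to(\mathbf M[k],x_\infty)$ and a candidate map. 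At this stage the multiplicative constant is only bounded; the task is to refine $\widetilde\rho_i^{\mathbf M}$ into a genuine approximate isometry whose constant tends to $1$ and which respects the block decomposition.

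To accomplish this I would exploit the rigidity of Minsky's block geometry: the metric on each internal block of $\mathbf M$ (respectively $\mathbf M_i$) is canonically determined by its topological type, while the metric on a Margulis tube is determined by the coefficient $\omega_{\mathbf M}$. Given a block $b$ of $\mathbf M[k]$, its image $\widetilde\rho_i^{\mathbf M}(b)$ lies in a region of $\mathbf M_i[k]$ that is $K^2K_i$-bi-Lipschitz equivalent to the canonical block metric on $b$; since blocks are uniformly thick and their combinatorial type is detected by the injectivity radius and the pattern of short curves on bounding split level surfaces, this region must itself be a single block $b^{(i)}$ of $\mathbf M_i[k]$ of the same combinatorial type for all large $i$. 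Replacing $\widetilde\rho_i^{\mathbf M}|b$ by the canonical isometry $b\to b^{(i)}$, and likewise aligning Margulis tubes through the convergence of their $\omega$-coefficients (a consequence of the bi-Lipschitz bound together with continuity of the coefficient in the flat boundary metric), one produces the desired $\rho_i^{\mathbf M}$ whose multiplicative distortion is controlled only by matching errors at block interfaces, which go to zero.

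The combinatorial matching underlying this construction comes from convergence of the hierarchies $h_i$ to $h_\infty$ on bounded regions: algebraic convergence $\phi_i\to\psi$ together with the uniform bi-Lipschitz models forces the initial/terminal markings and the tight-geodesic moves contributing to the resolution $\{\tau_j(G_\infty)\}$ inside a compact piece of $\mathbf M[k]$ to appear also in $\{\tau_j(G_i)\}$ for large $i$. This gives (ii) directly; (iii) follows because a brick of $\mathbf M[k]$ is an interval-union of matched blocks whose pullback sits in a single brick of $\mathbf M_i[k]$; (iv) is automatic, since the canonical block isometries carry horizontal product structures to horizontal product structures; and (i) is obtained by noting that on each block the composition $\rho_i\circ f_i\circ(\rho_i^{\mathbf M})^{-1}$ agrees with $f$ up to an error bounded by $K_i-1$ plus the Gromov--Hausdorff displacement of $\rho_i$, both tending to zero uniformly on compact sets. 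The main obstacle is the treatment of boundary blocks at geometrically finite ends and of the interfaces with wild ends of $\mathbf M$: for boundary blocks one must invoke convergence of the Ahlfors--Bers conformal structures at infinity on the relevant subsurfaces, while near a wild end one has to argue that the brick structure constructed in \cite{OhSo} is compatible on every compact subset with the block patterns of $\mathbf M_i$; both points are built into the construction of $\mathbf M$ in \cite{OhSo}, so the proposition follows by organising that construction into the four stated properties.
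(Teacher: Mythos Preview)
The paper does not prove this proposition at all: it is introduced with the sentence ``In the proof of Theorem~A in \cite{OhSo} (\S5.2), the following was also shown,'' and no further argument is given. So there is no ``paper's own proof'' to compare against; the statement is imported wholesale from Ohshika--Soma.

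Your sketch is a plausible outline of how such a result might be established, but it glosses over the central difficulty. Transporting $\rho_i$ through the bi-Lipschitz model maps gives only that $(\mathbf M_i[k],x_i)$ subconverges to \emph{some} pointed metric space; it does not by itself identify the limit with $\mathbf M[k]$, the model manifold of $(M_\infty)_0$. The latter object is \emph{constructed} in \cite{OhSo} precisely so as to be the geometric limit of the $\mathbf M_i[k]$, and that construction is where the real work lies: one must show that block decompositions, tube coefficients, and brick structures pass to the limit in a controlled way, and that the result is a labelled brick manifold admitting a bi-Lipschitz model map to $(M_\infty)_0$. Your appeal to ``rigidity of Minsky's block geometry'' and ``convergence of the hierarchies $h_i$ to $h_\infty$'' names the right ingredients, but these are outputs of the \cite{OhSo} construction rather than independent facts one can invoke; in particular, there is no a~priori hierarchy $h_\infty$ for the possibly infinitely generated geometric limit $G_\infty$. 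In short, your argument is essentially circular unless one has already carried out the construction in \cite{OhSo}, which is exactly what the paper cites.
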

\subsection{Algebraic limits in the models}
\label{algebraic limit}
%Since the algebraic limit $\Gamma$ of $\{G_i\}$ is contained in the geometric limit $G_\infty$, 
By Theorem \ref{Ohshika-Soma}-(7), there is an inclusion of $\pi_1(S)$ in $\pi_1(\mathbf M)$ corresponding to the inclusion of $\Gamma$ into $G_\infty$.
We realise this inclusion by a $\pi_1$-injective immersion $g: S \rightarrow \mathbf M$ so that $(f \circ g)_\# \pi_1(S)$ is equal to the image of $\pi_1(M')$ in $\pi_1(M_\infty)$ under the covering projection as in Theorem \ref{Ohshika-Soma}-(7).
We call such $g$ an {\em algebraic locus}.

\begin{lemma}
\label{position of S}
An algebraic locus $g$ can be homotoped to a map $g'$ as follows.
\begin{enumerate}
\item The surface $S$  is decomposed into incompressible subsurfaces $\Sigma_1, \dots , \Sigma_m$, none of which is an annulus, and (possibly empty) annuli $A_1, \dots , A_\mu$.
\item The restriction of $g'$ to $\Sigma_j$ is a horizontal embedding into $\Sigma_j \times \{t_j\} \subset \mathbf M$.
\item Each annulus $g'(A_j)$ is composed of $2n-1$ horizontal annuli and $2n$ vertical annuli for some $n \in \naturals$ and goes around a torus boundary of $\mathbf M$ $n$-times. See Figure \ref{fig:go-around}.
\end{enumerate}
\end{lemma}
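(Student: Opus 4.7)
The plan is to homotope $g$ into a normal form dictated by the brick decomposition of $\mathbf{M}$ together with its torus boundary components. I begin by homotoping $g$ so that it is transverse to every horizontal front separating adjacent bricks and to every torus boundary component of $\mathbf{M}$. Since $g$ is $\pi_1$-injective and all these surfaces are incompressible in $\mathbf{M}$ (the horizontal fronts because each brick is a product, and the tori trivially), any inessential intersection curve on $S$ bounds a disk which can be removed by a standard innermost-disk argument. The resulting preimages form a disjoint family of essential simple closed curves on $S$, along which $S$ cuts into pieces $\Sigma_1,\dots,\Sigma_m,A_1,\dots,A_n$, where the $A_\ell$ are precisely those pieces whose image meets a torus boundary component.

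Each non-torus-adjacent piece $\Sigma_j$ maps $\pi_1$-injectively and properly into a single brick $\Sigma \times J$. Because $\mathbf{M}$ contains no essential properly embedded annulus (Theorem \ref{Ohshika-Soma}(2)), a Waldhausen-type argument within the product $\Sigma \times J$, applied relative to the boundary, shows that $\Sigma_j$ may be homotoped to an embedding into some horizontal level $\Sigma_j \times \{t_j\}$. In particular $\Sigma_j$ cannot itself be an annulus, since an essential annulus in $\Sigma \times J$ would either be boundary-parallel to a torus of $\mathbf{M}$ (contradicting the fact that $\Sigma_j$ misses every torus) or would give an essential properly embedded annulus in $\mathbf{M}$, which is forbidden.

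For each torus-adjacent piece $A_\ell$, the image of its fundamental group lies in the rank-two abelian subgroup of $\pi_1(\mathbf{M})$ corresponding to the torus. Since $g$ is $\pi_1$-injective and the only non-trivial abelian subgroups of $\pi_1(S)$ are cyclic, $A_\ell$ must be an annulus, and the image generator of $\pi_1(A_\ell)$ has a well-defined longitudinal wrapping number $n$ along the torus. The annulus $g(A_\ell)$ is then put into the staircase normal form: it alternates between $2n$ vertical annular strips lying on the torus boundary and $2n-1$ horizontal annular connectors running through adjacent bricks at successive vertical levels, as in Figure \ref{fig:go-around}.

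The main obstacle is to glue these individual normal forms consistently across the cutting curves: one must arrange that the horizontal levels $t_j$ at which the $\Sigma_j$ embed match the staircase levels of the adjoining annuli along their common boundary circles. This is handled by adjusting the $t_j$ within the brick intervals (possible because each brick is a product) and invoking the absence of essential properly embedded annuli in $\mathbf{M}$ once more, to rule out any obstruction to sliding the boundary circles of the horizontalised $\Sigma_j$ to the correct levels on the neighbouring tori.
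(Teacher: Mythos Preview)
Your approach has a genuine gap: it never invokes the global fact that $\mathbf{M}$ embeds in $S \times [0,1]$ with $g$ homotopic there to the level surface $S \times \{t\}$ (Theorem~\ref{Ohshika-Soma}(1),(6)). This degree-one information is what forces the horizontal pieces to project \emph{disjointly} onto $S$ and to cover it exactly once, so that they deserve the names $\Sigma_j \times \{t_j\}$. Your local Waldhausen argument inside a single brick $\Sigma \times J$ only tells you that a piece is homotopic to some horizontal subsurface of $\Sigma$; it does not prevent two different pieces from overlapping under the vertical projection to $S$, nor does it show that the pieces together give a decomposition of $S$. The paper's proof handles this with an Euler-characteristic count together with the observation that $g(S)$ meets every vertical arc $\{x\} \times [0,1]$ with algebraic intersection number one.

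Your treatment of the torus-adjacent annuli is also off. The tori are \emph{boundary} components of $\mathbf{M}$, not internal separating surfaces; after pushing $g$ into the interior, $g(S)$ is disjoint from them, so ``transversality with the tori'' and ``cutting along their preimages'' does not produce the $A_\ell$, and in particular your claim that $\pi_1(A_\ell)$ lands in the rank-two torus subgroup is unjustified. In the paper the wrapping annuli arise differently: once $g(S)$ is made into a union of horizontal leaves and vertical annuli inside $\mathbf{M}$, the only compact regions of $S \times [0,1]$ that $g(S)$ can bound are solid tori (by the degree argument above); any such solid torus lying entirely in $\mathbf{M}$ is removed by homotopy, and each remaining one must enclose a component of $\partial \mathbf{M}$ --- exactly one torus, by the no-essential-annulus condition. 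The part of $g(S)$ bounding such a solid torus is the staircase annulus $A_j$.
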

\begin{proof}
Recall that $g$ is $\pi_1$-injective even as a map to $S\times [0,1]$.
Since every $\pi_1$-injective map from $S$ to $S \times [0,1]$ is homotopic to a horizontal surface (see Proposition 3.1 of Waldhausen \cite{Wa}), $g$ is homotopic to a horizontal surface $S \times \{t\}$ in $S \times [0,1]$.

Since $\mathbf M$ is a brick manifold, we can homotope $g$ within $\mathbf M$ so that $g(S)$ consists of horizontal leaves in bricks and vertical annuli. 
By the additivity of  Euler characteristics, we see that the sum of the Euler characteristics of the horizontal leaves is equal to $\chi(S)$.
We consider the projection of horizontal leaves to $S$.
Since $g$ is homotopic to $S \times \{t\}$ in $S \times I$, we see by the invariance of the algebraic intersection number that for each point $x \in S$ the surface $g(S)$ contains $x \times \{s\}$ for some $s \in I$. 
This implies that the horizontal leaves cannot overlap along a surface with negative Euler characteristic.
It follows that only compact regions that $g(S)$ can bound in $S \times I$ are solid tori.
If such a solid torus is contained in $\mathbf M$, we can eliminate it by a homotopy.
The only remaining possibility is that such a solid torus contains  components of $\partial \mathbf M$.
By (2) of Theorem \ref{Ohshika-Soma}, there is only one boundary component contained in each solid torus.
Thus we have reached the situation is as in our statement.
\end{proof}
%Brush up if necessary.

We call a map $g': S \rightarrow \mathbf M$ as in Lemma \ref{position of S} {\em a standard algebraic immersion}.
Recall that there is a homotopy equivalence $\Phi_i: S \rightarrow M_i$ realising $\phi_i$.
By composing the inverse of the model map, we have a homotopy equivalence from $S$ to $\mathbf M_i$, which {\em we denote by $\Phi_i^{\mathbf M}$}.

\begin{lemma}
\label{marking by g'}
Let $\rho_i^{\mathbf M}$ be an approximate isometry between $\mathbf M_i$ and $\mathbf M$ with basepoints at the thick parts as in Theorem \ref{Ohshika-Soma} and Proposition \ref{limits of models}.
For sufficiently large $i$, the immersion $(\rho_i^{\mathbf M})^{-1} \circ g'$ is homotopic to $\Phi_i^{\mathbf M}$ as a map to $\mathbf M_i$.
\end{lemma}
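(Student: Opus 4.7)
The plan is to reduce the lemma to a calculation on $\pi_1$. Both $\Phi_i^{\mathbf M}$ and $(\rho_i^{\mathbf M})^{-1}\circ g'$ are $\pi_1$-injective maps from the aspherical surface $S$ into $\mathbf M_i$, which is aspherical since it is bi-Lipschitz to $(M_i)_0 \simeq S$. By the standard classification of maps into a $K(\pi,1)$, two such maps are freely homotopic once their induced homomorphisms on $\pi_1$ agree up to conjugation, so the task reduces to verifying this on a finite generating set.

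First I would fix a finite based generating set $\alpha_1,\ldots,\alpha_n$ of $\pi_1(S)$ represented by loops $c_1,\ldots,c_n$ at a point $p\in S$. After a harmless homotopy within the standard-immersion class of Lemma \ref{position of S}, arrange that $g'(p)$ and every $g'(c_j)$ lie in a fixed compact set $K\subset \mathbf M$. Because $K$ meets only finitely many Margulis tubes of $\mathbf M$, one may choose $k$ so large that $K\subset \mathbf M[k]$. Proposition \ref{limits of models} then gives, for all $i$ large, an approximate isometry $\rho_i^{\mathbf M}:\mathbf M_i[k]\to\mathbf M[k]$ whose image contains $K$, so $(\rho_i^{\mathbf M})^{-1}\circ g'$ is well defined on all of $S$ with image in $\mathbf M_i[k]$, based near $x_i=(\rho_i^{\mathbf M})^{-1}(g'(p))$.

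Next I would compute the two induced homomorphisms on each generator. By construction $\Phi_i^{\mathbf M}$ is a homotopy equivalence inducing $\phi_i$ under the identification $\pi_1(\mathbf M_i)=G_i$. On the other hand $g'$ is an algebraic locus, so $g'_*=\psi$ into $\Gamma\subset G_\infty=\pi_1(\mathbf M)$; hence $g'(c_j)$ represents $\psi(\alpha_j)$. Lifting $\rho_i^{\mathbf M}$ to the universal covers via the isometries $q_i$ of $\hyperbolic^3$ that implement the geometric convergence $q_iG_iq_i^{-1}\to G_\infty$, the loop $(\rho_i^{\mathbf M})^{-1}(g'(c_j))$ represents an element $g_j^i\in G_i$ with $q_ig_j^iq_i^{-1}\to \psi(\alpha_j)$ in $\PSL_2\complexes$. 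Algebraic convergence simultaneously gives $q_i\phi_i(\alpha_j)q_i^{-1}\to \psi(\alpha_j)$. Therefore $q_i(g_j^i\phi_i(\alpha_j)^{-1})q_i^{-1}\to 1$; but these elements lie in $q_iG_iq_i^{-1}$, which converges geometrically to the non-elementary discrete group $G_\infty$ in which $1$ is isolated, so uniform discreteness near $1$ forces $g_j^i=\phi_i(\alpha_j)$ for all sufficiently large $i$.

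Thus the two homomorphisms $((\rho_i^{\mathbf M})^{-1}\circ g')_*$ and $(\Phi_i^{\mathbf M})_*$ agree on a generating set for large $i$, and so coincide as homomorphisms $\pi_1(S)\to G_i$, at worst up to conjugation arising from the basepoint discrepancy between $x_i$ and the basepoint used for $\Phi_i^{\mathbf M}$. The obstruction-theoretic principle invoked at the outset then produces the desired free homotopy. The main technical point I expect to work through carefully is the passage from the approximate isometry $\rho_i^{\mathbf M}$ to the universal-cover isometries $q_i$ in a way that makes the identification of $g_j^i$ with $\phi_i(\alpha_j)$ rigorous; once this correspondence between algebraic and geometric convergence is pinned down, everything else is routine.
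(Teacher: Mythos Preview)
Your argument is correct and rests on the same underlying facts as the paper's proof, but the paper packages them more efficiently. The paper observes directly that $f\circ g'\simeq\Psi$ by the definition of an algebraic locus, that $\rho_i^{-1}\circ\Psi\simeq\Phi_i$ for large $i$ (the standard consequence of algebraic convergence inside geometric convergence, which in the paper's normalisation $\phi_i\to\psi$ needs no conjugating isometries $q_i$), and then invokes condition~(1) of Proposition~\ref{limits of models}, namely $\rho_i\circ f_i\circ(\rho_i^{\mathbf M})^{-1}|K\to f|K$, to transport the homotopy through the model maps; composing with the homotopy inverse of $f_i$ gives the result in three lines. Your version unpacks exactly this into a $\pi_1$ computation on generators and a discreteness argument, and the technical point you flag at the end---relating $\rho_i^{\mathbf M}$ to the hyperbolic approximate isometries $\rho_i$---is precisely what condition~(1) supplies. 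So there is no gap, but once you invoke condition~(1) to fill it, you have essentially reproduced the paper's proof with extra scaffolding; the individual-element discreteness step is then superfluous, since uniform convergence of maps on a compact set already yields the homotopy directly.
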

\begin{proof}
By the definition of $g'$, we see that $f \circ g'$ is homotopic to $\Psi$.
Since $\rho_i^{-1} \circ \Psi$ is homotopic to $\Phi_i$ for large $i$, our lemma follows from the condition (1) of Proposition \ref{limits of models}.
\end{proof}

\begin{figure}
\scalebox{0.5}{\includegraphics{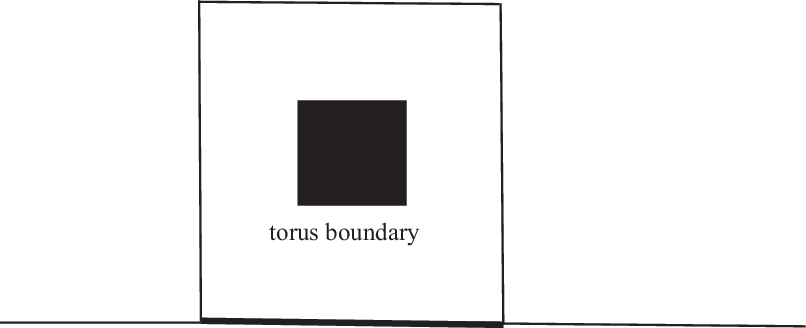}}
\caption{$g'$ going around a torus boundary component (the case when $n=1$).}
\label{fig:go-around}
\end{figure}

\begin{definition}
\label{upper/lower}
Let $e$ be a simply degenerate end of $\mathbf M$ contained in a brick $B=\Sigma \times J$.
We say that $e$ is {\em an upper algebraic end} if $J=[s, s')$ and $\Sigma \times \{s'-\epsilon\}$ is freely homotopic  in $\mathbf M$ to $g'(\Sigma)$ which lies in  a horizontal part of $g'(S)$ for sufficiently small $\epsilon >0$.
In the same way, we say that $e$ is {\em a lower algebraic end} if $J=(s, s']$ and $\Sigma \times \{s+\epsilon\}$ is freely homotopic   in $\mathbf M$ to $g'(\Sigma)$ which lies in a horizontal part $g'(S)$ for sufficiently small $\epsilon >0$.
We also call the ending lamination of an algebraic simply degenerate end algebraic, and say that an ending lamination is upper or lower depending on whether the end is upper or lower.

Similarly, a core curve of an open annulus boundary component or a longitude (\ie a horizontal curve) of a torus boundary component of $\mathbf M$ is said to be an {\em algebraic parabolic curve} if it is homotopic   in $\mathbf M$ to a simple closed curve lying on a horizontal part of $g'(S)$.
We also call its image of the vertical projection to $S$ an algebraic parabolic curve.
A parabolic curve is said to be upper or lower  in the same way as simply degenerate ends, but we should note if it lies on a torus boundary component around which $g'(S)$ goes (by this we mean that $g'(S)$ goes around the torus boundary component $k$-times with $k \neq 0, -1$), then {\em the curve is defined to be both upper and lower }at the same time.
An algebraic parabolic curve is said to be {\em isolated} if it is a core curve of an isolated parabolic locus of $M'_0$. 
\end{definition}

\begin{lemma}
\label{isolated}
Any algebraic parabolic curve lying on a torus boundary component of $\mathbf M$ is isolated.
\end{lemma}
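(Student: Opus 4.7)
The plan is to argue by contradiction, exploiting the brick structure of $\mathbf M$ together with the description of the standard algebraic immersion $g'$ in Lemma \ref{position of S}. Suppose the algebraic parabolic curve $c$ on a torus boundary component $T$ of $\mathbf M$ is not isolated. Then, by the definition recalled in Section 2.1, there is a component $\Sigma'$ of $S\times\{0,1\}\setminus P$ in the compact core of $(M')_0$ adjacent to the parabolic annulus corresponding to $c$ which faces a simply degenerate end $e'$ of $(M')_0$. Via the covering $p:M'\to M_\infty$ and the bi-Lipschitz model map $f:\mathbf M\to (M_\infty)_0$ provided by Theorem \ref{Ohshika-Soma}, the end $e'$ corresponds to an algebraic simply degenerate end of $\mathbf M$, i.e.\ a brick $B=\Sigma'\times J$ with $J$ a half-open interval, say $J=[s,s')$, whose closed front $\Sigma'\times\{s\}$ is freely homotopic (in $\mathbf M$) into the horizontal part $g'(\Sigma')$ of the standard algebraic immersion.

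Since $\Sigma'\subset g'(S)$ is adjacent in $g'(S)$ to the wrapping annulus $A$ around $T$ coming from condition (3) of Lemma \ref{position of S}, the boundary component $\gamma$ of $\Sigma'$ where this adjacency takes place lies on $T$, and so the half-open vertical annulus $\gamma\times[s,s')$ sits as a proper subset of $T$. The next step is to derive a contradiction from three ingredients: $T$ is a compact torus; $\mathbf M$ contains no essential properly embedded annulus (Theorem \ref{Ohshika-Soma}(ii)); and the wrapping of $g'(S)$ around $T$ consists of the finitely many horizontal and vertical annuli described in Lemma \ref{position of S}. Compactness of $T$ forces the limit circle $\gamma\times\{s'\}$ to be covered by the vertical annulus of some other brick. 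Such a brick cannot be placed directly above $B$ at $\Sigma'$, because the ideal front $\Sigma'\times\{s'\}$ of $B$ admits no brick above it. Hence there must exist a brick $B'=\Sigma''\times J''$ on a different subsurface $\Sigma''$ with $\gamma\subset\partial\Sigma''$ and $\gamma\times J''$ lying on $T$ with $s'\in J''$. I would show that the configuration of $B$, $B'$ and the wrapping annulus $A$ near the level $s'$ at $\gamma$ then produces a properly embedded essential annulus in $\mathbf M$, contradicting Theorem \ref{Ohshika-Soma}(ii).

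The main obstacle is carrying out the last step rigorously: one needs a detailed case analysis tracking the possible positions of $B'$ relative to $B$ and to the wrapping annulus $A$, and verifying in each case that an essential annulus appears in $\mathbf M$, or equivalently that the embedded wrapping of $g'(S)$ around the compact torus $T$ must break down. The no-essential-annulus property of Theorem \ref{Ohshika-Soma} is precisely the feature which forbids a simply degenerate end of $\mathbf M$ from meeting the torus boundary component $T$ transversely, and hence prevents a non-isolated configuration for an algebraic parabolic curve lying on $T$.
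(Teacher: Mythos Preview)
Your setup is correct, but the argument stalls at exactly the point you flag, and the paper's proof shows that the ``detailed case analysis'' you anticipate is unnecessary. The clean observation you are missing is this: once you have the simply degenerate brick $B=\Sigma'\times[s,s')$ in $\mathbf M$ with $\gamma\subset\Fr\Sigma'$, the vertical annulus $\gamma\times[s,s')$ lies on a boundary component of $\mathbf M$ that has an \emph{end} at level $s'$ (because the ideal front $\Sigma'\times\{s'\}$ is a simply degenerate end of $\mathbf M$, and the boundary component touches it there). A boundary component with an end is an open annulus, not a torus. So you obtain an open annulus boundary component $A$ of $\mathbf M$ whose core curve is homotopic to $c$.

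Now finish with property (ii) of Theorem~\ref{Ohshika-Soma}: since $c$ lies on the torus $T$ and is also homotopic into the open annulus $A$, and $T\neq A$, one gets an essential properly embedded annulus in $\mathbf M$ joining $T$ to $A$, contradicting (ii). Equivalently, no two distinct boundary components of $\mathbf M$ can carry homotopic essential curves. This replaces your entire case analysis on the configuration of $B$, $B'$, and the wrapping annulus; there is no need to track how $g'$ winds around $T$ or to locate a second brick $B'$.

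In summary: rather than trying to fit the half-open annulus $\gamma\times[s,s')$ into the compact torus $T$ and analysing what happens at the limit level, observe directly that the boundary component containing $\gamma\times[s,s')$ is forced to be an open annulus, and then invoke the no-essential-annulus property once.
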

\begin{proof}
Let $c$ be an algebraic parabolic curve lying on a torus boundary component of $\mathbf M$.
Let $P$ be a parabolic locus of a relative core $C$ of $(M')_0$ into which the lift of $f(c)$ to $M'$ is homotopic.
Suppose, seeking a contradiction, that $P$ is not isolated.
Then, there is a simply degenerate end $e$ touching the $\integers$-cusp corresponding to $P$.
%ここに章番号を
By Thurston's covering theorem (see \cite{Th} and Canary \cite{CaT}) together with the argument in the proof of  Lemma 2.3 in  \cite{OhQ}, there is a neighbourhood $U$ of $e$ which is projected in to $M_\infty$ homeomorphically by the covering projection.
This implies that $\mathbf M$ has a simply degenerate brick which touches an open annulus boundary of $\mathbf M$ into which $c$ is homotopic.
Since no two distinct boundary components of $\mathbf M$ have homotopic essential closed curves by Theorem \ref{Ohshika-Soma}-(2), this contradicts the assumption that $c$ lies on a torus boundary.
\end{proof}

\begin{lemma}
\label{ends of M'}
The algebraic simply degenerate ends of $\mathbf M$  correspond one-to-one to the simply degenerate ends of $M_0'$ by mapping them by $f$ and lifting them to $M'_0$.
The upper (resp. lower) ends correspond  to upper (resp. lower) ends of $M'_0$.
\end{lemma}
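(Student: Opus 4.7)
The plan is to use the algebraic locus $g':S\to\mathbf M$ as the bridge between simply degenerate ends of $\mathbf M$ and of $M_0'$, combined with the bi-Lipschitz model map $f$ and the covering $p:M'\to M_\infty$; for the inverse direction I would invoke Thurston's covering theorem.

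First I would carry out the direction from $\mathbf M$ to $M_0'$. Let $e$ be an upper algebraic simply degenerate end of $\mathbf M$ contained in a brick $B=\Sigma\times[s,s')$, so that by definition $\Sigma\times\{s'-\epsilon\}$ is freely homotopic in $\mathbf M$ to the horizontal piece $g'(\Sigma)\subset g'(S)$. In particular $\pi_1(B)=\pi_1(\Sigma)$ lies inside the image of $\pi_1(M')$ in $\pi_1(M_\infty)$. Since $f$ is a bi-Lipschitz homeomorphism, $f(B)$ is a neighbourhood in $(M_\infty)_0$ of a simply degenerate end $e_\infty$. The inclusion $\pi_1(f(B))\subset\pi_1(M')$ implies that $f(B)$ lifts through $p$ to a single-sheeted copy $\tilde U\subset M_0'$ based at the lift $\tilde g'(\Sigma)$. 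Hence $\tilde U$ is a neighbourhood in $M_0'$ of a simply degenerate end $e'$ whose ending lamination equals the label of $e$.

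For the converse, let $e'$ be a simply degenerate end of $M_0'$ with standard neighbourhood $U'\cong\Sigma'\times[0,\infty)$. By Thurston's covering theorem (Canary \cite{CaT}) applied to $p:M'\to M_\infty$, the restriction $p|_{U'}$ is finite-to-one onto a neighbourhood of a simply degenerate end $e_\infty$ of $(M_\infty)_0$. I would argue that this restriction is in fact one-to-one: the ending lamination of $e'$ uniquely determines its supporting subsurface of $S$, and since $M_0'\cong S\times(0,1)$ is a product, distinct simply degenerate ends of $M_0'$ carry distinct subsurfaces (the upper/lower distinction separates those that share the subsurface), so no two ends of $M_0'$ can cover the same end of $M_\infty$. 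Setting $e:=f^{-1}(e_\infty)$, I would then verify that $e$ is an algebraic end by observing that the bounding horizontal surface of $f^{-1}(\tilde U)$ in $\mathbf M$ is freely homotopic to $g'(\Sigma')$: this is because the surface bounding $U'$ is freely homotopic in $M'$ to $\tilde g'(\Sigma')$, both bounding the same end in $M_0'\cong S\times(0,1)$, and this homotopy descends via $p$ to $\mathbf M$.

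Finally I would verify the upper/lower correspondence. By Theorem \ref{Ohshika-Soma}(i) the model $\mathbf M$ embeds horizontally in $S\times[0,1]$, and by Lemma \ref{position of S} the image $g'(S)$ is a union of horizontal pieces together with vertical annuli wrapping around torus boundary components. Upper algebraic ends sit in bricks positioned above the horizontal portion of $g'(S)$, and lower ones below; the bi-Lipschitz identification via $f$ followed by the lift via $p^{-1}$ preserves this vertical stratification once $M_0'$ is identified with $S\times(0,1)$ via $\psi$, so upper algebraic ends go to upper ends of $M_0'$ and lower to lower. The main obstacle is the surjectivity step, in particular ruling out a genuinely multi-to-one restriction of $p$ near $e'$; a secondary technical point is handling algebraic ends whose brick meets a torus boundary component of $\mathbf M$, which can be dealt with by appealing to Lemma \ref{isolated}.
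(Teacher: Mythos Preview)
Your approach is essentially the same as the paper's: use the model map $f$ and the covering $p$ to push algebraic ends of $\mathbf M$ to ends of $M_0'$, and use Thurston's covering theorem for the converse. Two points need correction.

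First, your worry about $p$ being genuinely finite-to-one on a neighbourhood of $e'$ is misplaced, and the argument you give (distinct ends of $M_0'$ carry distinct subsurfaces) addresses the wrong question: it speaks to injectivity of the end-correspondence, not to the local degree of $p$ on a single end neighbourhood. In the present setting the covering theorem already yields a proper \emph{embedding}: either $\Gamma=G_\infty$ and $p$ is the identity, or $\Gamma$ has infinite index in $G_\infty$, in which case the cover is infinite and the alternative in Thurston--Canary forces $p$ to be one-to-one near $e'$. The paper simply asserts ``proper embedding'' for this reason.

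Second, in the converse direction you show that the horizontal surface $\Sigma\times\{t\}$ in the brick $B_e$ is freely homotopic to $g'(\Sigma')$, but the definition of an algebraic end requires that $g'(\Sigma')$ lie in a \emph{horizontal part} of $g'(S)$. This is not automatic: a priori $\Sigma'$ could meet one of the annuli $A_k$ around which $g'$ wraps. The paper closes this by invoking Lemma~\ref{position of S}: a horizontal $\Sigma\times\{t\}$ can be freely homotopic in $\mathbf M$ to $g'(\Sigma')$ only when $g'(\Sigma')$ is horizontal. Your appeal to Lemma~\ref{isolated} here is a red herring; that lemma concerns parabolic curves on torus boundaries and is not what is needed.
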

\begin{proof}
Consider a simply degenerate end corresponding to the upper ideal front of a brick $B=\Sigma \times [s,s')$ of $\mathbf M$.
By the definition of the model map $f$,  there is an infinite sequence of horizontal surfaces $\Sigma \times \{t_j\}$ in $B$ tending to $\Sigma \times \{s'\}$ which are mapped to a sequence of pleated surfaces $f(\Sigma \times \{t_j\})$ tending to  the corresponding simply degenerate end $e$ of $(M_\infty)_0$.
Since $\Sigma \times \{t\} \in B$ is freely homotopic into $g'(S)$, the pleated surfaces $f(\Sigma \times \{t_j\})$ lift to pleated surfaces $\tilde f_i$ tending to an end of $M'_0$.

Since the model map $f$ has degree $1$ with respect to the orientation of $(M_\infty)_0$, the end $e$ is situated above $f \circ g'(S)$.
Lifting this to $M'_0$, we see that the end to which the $\tilde f_i$ tend  is an upper end.
Similarly, we can show that if the simply degenerate brick $B$ has the form $\Sigma \times (s,s']$, then the corresponding end of $M'_0$ is a lower end.

Conversely, suppose that $e'$ is a simply degenerate end of $M'_0$.
By Thurston's covering theorem and some argument applying it
(\cite{Th}, \cite{CaT} and the argument in the proof of Lemma 2.3 of \cite{OhQ}), there is a neighbourhood $E$ of $e'$ such that $p|E$ is a proper embedding into $(M_\infty)_0$.
Let $\bar e$ denote the simply degenerate end of $(M_\infty)_0$ contained in $p(E)$.
Then, there is a simply degenerate end $\hat e$ of $\mathbf M$ which is sent to $\bar e$ by $f$.
Since $e'$ is simply degenerate, there is an essential subsurface $\Sigma$ of $S$ and a sequence of pleated surfaces $h_i: \Sigma \rightarrow M'_0$ taking $\partial \Sigma$ into $\partial M'_0$ which tend to $e'$.
Their projections $p \circ h_i$ are pleated surfaces tending to $\bar e$.
This implies that the end $\hat e$ is contained in a simply degenerate brick $B_e \cong \Sigma \times J$, where $J$ is a half-open interval.
Since $f(\Sigma \times \{t\})$ is homotopic to $p \circ h_i$ and $p \circ h_i$ is homotopic into $f \circ g'(S)$, we see that $\Sigma \times \{t\}$ is freely homotopic to $g'(\Sigma)$.
By Lemma \ref{position of S}, this is possible only when $\Sigma \times \{t\}$ is homotopic into a horizontal part of $g'(S)$, and we see that the end $\hat e$ is algebraic.
%We can show that if $e'$ is an upper end the so is $\hat e$ by the same argument as above.
\end{proof}

\subsection{Simply degenerate ends in limit models}
\label{proof of prop}
As was shown in Theorem \ref{Ohshika-Soma}, except for the geometrically finite  ends lying on $S \times \{0,1\}$, all the tame ends of $\mathbf M$ are simply degenerate.
Next we shall see how simply degenerate ends in the model manifold $\mathbf M$ are approximated in $\mathbf M_i$.
Recall that the model manifold $\mathbf M_i$ corresponds to a hierarchy $h_i$ of tight geodesics.
Recall also that we have a homeomorphism $\hat \Phi_i : S \times (0,1) \rightarrow M_i$ inducing $\phi_i$ between the fundamental groups.
This determines an embedding $\iota_i=f_i^{-1} \circ \hat \Phi_i$ of the standard $S \times (0,1)$ into $S \times [0,1]$ in which $\mathbf M_i$ is embedded.
We identify the standard $S \times [0,1]$ and $S \times [0,1]$ in which $\mathbf M_i$ is embedded so that this $\iota_i$ becomes an inclusion.
In other words, by this identification, the model map $f_i$ is homotopic to $\hat \Phi_i$ if regarded as a map from $S \times (0,1)$.
We identify two $S \times [0,1]$ in which  $\mathbf M_{i_1}$ and $\mathbf M_{i_2}$ are embedded respectively for every pair $i_1, i_2$ using $\iota_{i_1}$ and $\iota_{i_2}$.
%We also identify the standard $S \times [0,1]$ with $S \times [0,1]$ in which $\mathbf M$ is embedded so that in $S\times J$ appearing in (6) of Theorem \ref{Ohshika-Soma}, the map $f_\#$ is the same as $\psi$.
%Check if this is really necessary later.

We fix a complete marking $\mu$ on $S$ once and for all.
For a domain $X$ in $S$, by considering a component of $\pi_X(\base (\mu))$, we can define a basepoint in $\CC(X)$.
We call this basepoint the {\em basepoint determined by  $\mu$}.

\begin{proposition}
\label{simply degenerate brick}
Let $B=\Sigma \times J$ be a simply degenerate brick in $\mathbf M$ whose end $e$ is algebraic.
Then passing to a subsequence there
%are a resolution $\{\sigma_k^i\}$ of the hierarchy $h_i$ and 
is a tight geodesic $\gamma_i$  contained in the hierarchy  $h_i$ (which was used to construct $\mathbf M_i$) as follows.
\begin{enumerate}
\item
The support of $\gamma_i$ is $\Int \Sigma$.
\item
Either  all $\gamma_i$ are geodesic rays, or they are finite geodesics whose lengths go to $\infty$ as $i \rightarrow \infty$.
%\item 
%There exist constants $n_0$ and $L_0$ independent of $i$ such that any geodesic in $h_i$ that is subordinate to $\gamma_i$ at a simplex $v_i$ whose distance from the endpoint is greater than $n_0$ has length less than $L_0$.
%\item The tube union corresponding to $\gamma_i$ lies in a brick $B_i$ which contains $(\rho_i
\item
In the case when the $\gamma_i$ are geodesic rays, their endpoints at infinity converge to the ending lamination of $e$ in $\EL(\Int \Sigma)$ as $i \rightarrow \infty$, and $\gamma_i$ contains a simplex whose distance from the basepoint determined by $\mu$ is bounded as $i \rightarrow \infty$.
\item
Suppose that the $\gamma_i$ are finite geodesics.
In the case when the end $e$ is upper, 
the last vertex  of $\gamma_i$ converges to the ending lamination of $e$ as $i \rightarrow \infty$.
%, whereas the first vertices stay in a bounded distance from a basepoint in $\CC(\Int \Sigma)$ determined by $\mu$.
In the case when the end is lower, the first vertex of $\gamma_i$ converges to the ending lamination as $i \rightarrow \infty$.
%, whereas the last vertices stay in a bounded distance from the basepoint determined by $\mu$.
In both cases, $\gamma_i$ contains a simplex whose distance from the basepoint determined by $\mu$ is bounded as $i \rightarrow \infty$.
%\item
%Let $g(i)$ be a subset of $\{\sigma_k^i\}$ consisting of slices containing 
%The steps lying between a slice containing $x_i$ and the 
\end{enumerate}
\end{proposition}

Before starting the proof of Proposition \ref{simply degenerate brick}, we shall show the following lemma which is similar to Lemma 6.2 in Masur-Minsky \cite{MaMi}.
\begin{lemma}
\label{distance implies support}
There are constants $M$ and $P$ depending only on $\xi(S)$ with the following property.
Let $h$ be a  hierarchy of tight geodesics on $S$, and $D$ a domain (\ie an open incompressible subsurface) of $S$.
Suppose that there are two vertices $v, w$ of $\CC(D)$ which are contained in simplices  of tight geodesics constituting $h$ such that $d_{\CC(D)}(v,w) > M$.
Then there is a tight geodesic in $h$ supported on $D$ which contains simplices $s_v$ and $s_w$ such that $d_{\CC(D)}(v, s_v) \leq P$ and $d_{\CC(D)}(w, s_w) \leq P$. 
\end{lemma}
\begin{proof}
Lemma 6.2 in Masur-Minsky \cite{MaMi} says that there is a constant $M'$ depending only on $\xi(S)$ such that for if $d_{\CC(D)}(I(H), T(H)) >M'$ for some hierarchy $H$ on $S$, then $D$ supports a geodesic in $H$.
Our lemma can be proved by repeating the argument of the proof Masur-Minsky's lemma or modifying the hierarchy $h$ so that we can apply Masur-Minsky's lemma.
We shall explain the latter here.

We set $M=M'+2$ and let $v,w$ be vertices in $\CC(D)$ as are given in the statement.
Let $\sigma_v$ and $\sigma_w$ be simplices on geodesics of $h$ containing $v$ and $w$ respectively.
We construct a resolution of $h$, which we denote by $\tau=\{\tau_j\}$, where $\tau_j$ is a slice and $j$ ranges in an interval in $\integers$.
By the definition of resolutions, there are slices $\tau_{j_v}$ containing $\sigma_v$ and $\tau_{j_w}$ containing $\sigma_w$.
%Set $J_v=\{j  \mid \tau_j \text{ contains } \sigma_v\}$ and $J_w=\{j \mid \tau_j \text{ contains } \sigma_w\}$.
%By 
Since $d_{\CC(D)}(v,w) >2$, they have non-zero intersection number, and hence cannot appear in the same slice, which implies that $j_v \neq j_w$.
By interchanging $v$ and $w$ if necessary, we can assume that $j_v < j_w$.

Now, we consider a subsequence of the resolution defined to be $\tau'=\{\tau_j\}_{j_v \leq j \leq j_w}$.
By the definition of elementary moves in a resolution (see \S 5 in \cite{MaMi}), for any geodesic $g$ in $h$, the simplices $v$ on $g$ such that $(g,v)$ is contained in slices of $\tau'$ form a contiguous subset of the set of simplices on $g$.
We denote this subset by $V(g)$, and a subgeodesic of $g$ consisting of simplices contained in $V(g)$ by $g'$, which might be empty.
Moreover, if $g_1 \subord (g,v)$ or $(g,v) \supord g_1$ and $V(g_1) \neq \emptyset$, then $(g,v)$ appears in some slice in $\tau'$ by the definition of slices.
Therefore, the set of geodesics $\{g'\}_{g \in h}$ with the relation of subordination inherited from $h$ forms a hierarchy on $S$ by setting its initial marking to be $\tau_{j_v}$ and its terminal marking to be $\tau_{j_w}$.
(Strictly speaking, a geodesic of this hierarchy may have the first simplex or the last simplex which may not be vertices.
This does not affect the argument for proving Lemma 6.2.)
Let $h'$ denote this hierarchy.
Then we have $d_{\CC(D)}(I(h'), T(h'))=d_{\CC(D)}(\tau_{j_v}, \tau_{j_w}) \geq d_{\CC(D)}(v,w)-2 > M'$.
Now, by applying Masur-Minsky's lemma for $h'$ and $D$, we see that  $D$ supports a geodesic $g'_D$ in $h'$.
Since $g'_D$ is a subgeodesic of a geodesic in $h$, we see that $D$ supports a geodesic in $h$.

Now we shall show the existence of simplices $s_v, s_w$ in $g_D$ with the condition given in the statement.
Lemma 6.1 of \cite{MaMi} implies that there is a constant $M_1$ depending only $\xi(S)$ which bounds both $d_{\CC(D)}(I(h'), I(g_D))$ and $d_{\CC(D)}(T(h'), T(g_D))$.
We set $P$ in the statement to be $M_1+1$.
%If $\sigma_v$ or $\sigma_w$ appears in $g_D$, then by letting $s_v$ be $\sigma_v$ or $s_w$ be $\sigma_w$ and we are done.
%Suppose that $\sigma_v$ does not appear in $g_D$.
Let $v_D$ be the first simplex of $g'_D$ and $w_D$ the last simplex of $g_D$.
Since $\sigma_v$ is contained in $I(h')$, we have $d_{\CC(D)}(\sigma_v, v_D) \leq M_1+1$.
Since $v_D$ is a simplex on $g_D$, this gives a bound as we wanted for $v$.
In the same way, we have $d_{\CC(D)}(\sigma_w, w_D) \leq M_1+1$.
This complete the proof.
\end{proof}
\begin{proof}[Proof of Proposition \ref{simply degenerate brick}]
Recall that $\{(\mathbf M_i, x_i)\}$ converges to the union of $(\mathbf M, x_\infty)$ and cusp neighbourhoods.
We denote by $\rho_i^{\mathbf M}$ a $(K_i, r_i)$-approximate isometry associated to this convergence with domain $B_{r_i}(\mathbf M_i, x_i)$ as before.
The intersection of the range of $\rho_i$ and $\mathbf M$ is the $K_ir_i$-ball centred at $x_\infty$, which we denote by $B_{K_ir_i}(\mathbf M, x_\infty)$.

Since $B$ is assumed to be simply degenerate brick, there is a sequence of  Margulis tubes $T_1, T_2, \dots $ appearing in the block decomposition of $\mathbf M$, which tend to the end of $B$ and whose core curves projected into $\Sigma$, which we denote by $c_1, c_2, \dots $, converge to the ending lamination for the end.
Passing to a subsequence, we can assume that $d_{\CC(\Sigma)}(c_{j_1}, c_{j_2}) \geq |j_1-j_2|$ for any $j_1, j_2 \in \naturals$.
For any $n \in \naturals$, we can take $i_0 \in \naturals$ such that for any $i \geq i_0$, the ball $B_{K_i r_i}(\mathbf M, x_\infty)$ contains all tubes $T_1, \dots , T_n$.
%  $\rho_i(T_1), \dots , \rho_i(T_n)$ are Margulis tubes of $\mathbf M_i$ whose core curves are homotopic to $c_1, \dots , c_n$ which are regarded as lying on $S \times \{1/2\}$. 
Since the end of $B$ is algebraic, the core curve of $(\rho_i^{\mathbf M})^{-1}(T_j)$ is homotopic in $\mathbf M_i$ to $c_j$ regarded as lying on $S \times \{1/2\}$.
Since $(\rho_i^{\mathbf M})^{-1}(T_1), \dots , (\rho_i^{\mathbf M})^{-1}(T_n)$ are Margulis tubes of $\mathbf M_i$, the curves $c_1, \dots , c_n$ are contained in simplices of geodesics constituting the hierarchy $h_i$.
Since $d_{\CC(\Sigma)}(c_1, c_n) \geq n$, by letting $n$ be greater than $M$ given in Lemma \ref{distance implies support}, we see that $\Sigma$ supports a geodesic in $h_i$, which we define to be $\gamma_i$.
Then the part (1) holds automatically, and moreover by passing to a subsequence, we can assume either all the $\gamma_i$ are finite geodesics or all of them are geodesic rays.

By \cref{distance implies support}, passing to a subsequence again,  we can assume that $\gamma_i$ contains a simplex $s_i$ with $d_{\CC(\Sigma)}(c_i, s_i) \leq P$.
Since $\{c_i\}$ tends to the ending lamination of $e$ as $i \rightarrow \infty$, if the $\gamma_i$ are finite geodesics then $\{\gamma_i\}$ converges to a geodesic ray which tends to the ending lamination of $e$, which implies that the last vertex of $\gamma_i$ converges to the ending lamination of $e$.
Suppose that passing to a subsequence all the $\gamma_i$ are geodesic rays.
Then again since $\gamma_i$ contains $s_i$ tending to the ending lamination, by the hyperbolicity of $\CC(\Sigma)$ the endpoint of $\gamma_i$ converges to the ending lamination of $e$.
In both cases, since $\gamma_i$ contains $s_1$ for large $i$ with $d_{\CC(\Sigma)}(c_1, s_1) \leq P$ by \cref{distance implies support}, we see that it contains a simplex with its distance from the basepoint determined by $\mu$ bounded as $i \rightarrow \infty$.
\end{proof}
In the proof of Proposition \ref{simply degenerate brick}, we used the assumption that the end in $B$ is algebraic only to show that the support of $\gamma_i$ is $\Sigma$.
Even in the case when the end in $B$ may not be algebraic, the argument above shows that we still have a geodesic as $\gamma_i$ in $h_i$, and its support is the preimage of $\Sigma$, which may depend on $i$.
Thus we get the following corollary.

\begin{corollary}
\label{non-algebraic brick}
Let $B = \Sigma \times J$ be a simply degenerate brick of $\mathbf M$.
Let $\mathcal V$ be the union of all boundary components of $\mathbf M$ touching the vertical boundary of $B$, and $\mathcal V_i$ the union of Margulis tubes corresponding to $(\rho_i^{\mathbf M})^{-1}(\mathcal V \cap B_{K_i r_i}(\mathbf M, x_\infty))$.
Then, there is a geodesic $\gamma_i$ in $h_i$ satisfying the following conditions.
\begin{enumerate}
\item
For sufficiently large $i$, the preimage $(\rho_i^{\mathbf M})^{-1}(B)$ is contained in a brick $B_i=\Sigma_i \times J_i$ in the standard brick decomposition of $\mathbf M_i \setminus \mathcal V_i$.
\item
The geodesic $\gamma_i$ is supported on $\Sigma_i$.
Passing to a subsequence, we can assume that all $\gamma_i$ either have finite lengths or are geodesic rays.
\item
If the $\gamma_i$ have finite lengths, their lengths go to $\infty$ as $i \rightarrow \infty$.
\item
Let $\parreal B$ be the real front of $B$.
Let $k_i: \Sigma \rightarrow \Sigma_i$ be a homeomorphism induced from  $(\rho_i^{\mathbf M})^{-1}|\parreal B$.
If $\gamma_i$ has finite length, for the last vertex $v_i$ of $\gamma_i$, its image $k_i^{-1}(v_i)$ on $\Sigma$ converges to the ending lamination of the simply degenerate end of $B$.
If $\gamma_i$ is a ray, then for the endpoint $e_i$ of $\gamma_i$ at infinity, $k_i^{-1}(e_i)$ converges to the ending lamination of the end in $B$ in $\EL(\Sigma)$.
\end{enumerate}
\end{corollary}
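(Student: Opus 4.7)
The plan is to re-examine the proof of Proposition \ref{simply degenerate brick} and identify precisely where the hypothesis that the end of $B$ is algebraic was used, then substitute a subsurface $\Sigma_i$ depending on $i$ for the fixed $\Sigma$. First, I would define $B_i$ as the brick in the standard brick decomposition of $\mathbf{M}_i \setminus \mathcal{V}_i$ that contains $(\rho_i^{\mathbf{M}})^{-1}(B)$; this is well defined because $\rho_i^{\mathbf{M}}$ preserves horizontal foliations by condition (4) of Proposition \ref{limits of models}. Writing $B_i = \Sigma_i \times J_i$ establishes (1), and the homeomorphism $k_i : \Sigma \to \Sigma_i$ of (4) arises by restricting $(\rho_i^{\mathbf{M}})^{-1}$ to the real front $\parreal B$.

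Next, Lemma \ref{structure in brick} goes through word-for-word with $B_i$ in its original role, producing a $4$-complete generalised hierarchy $h(B_i)$ whose main geodesic penetrates $B_i$. The only place where algebraicity entered the proof of Proposition \ref{simply degenerate brick} was the identification $D(\gamma^g) = \Int \Sigma$ for the first penetrating tube union: the argument with $\mathcal{V}(\Fr D(g))$ showed that these frontier curves lie along the vertical boundary of $B_i$, and algebraicity was then used to identify this boundary with $\Fr \Sigma$. Dropping algebraicity, the same argument still identifies $D(\gamma^g)$ with $\Int \Sigma_i$; the subsurface merely need not be constant in $i$. Lemma \ref{main gets longer} is a local argument inside $B$ about counting slices of resolutions through a fixed block, with estimates coming from Margulis-tube geometric limits, and this neither invokes nor needs algebraicity, so it applies verbatim to give (3). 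By $4$-completeness of $h_i$, there is $\gamma_i \in h_i$ supported on $\Sigma_i$ containing the main geodesic of $h(B_i)$ as a subgeodesic, yielding (2).

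For (4), since either $\gamma_i$ is a ray or its length tends to infinity, its first vertex coincides with that of the main geodesic of $h(B_i)$ for large $i$, so the last vertex (or endpoint at infinity) of $\gamma_i$ agrees with that of $g_{h(B_i)}$. Transporting to $\Sigma$ via $k_i^{-1}$ converts the end-of-proof argument of Proposition \ref{simply degenerate brick} into a statement on the fixed surface $\Sigma$, and then Lemma \ref{endpoints converge} delivers convergence to the ending lamination of the end of $B$ in $\EL(\Sigma)$.

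The main obstacle is the bookkeeping around $k_i$: one must verify that a homotopy in $B_i$ between a vertex of $\gamma_i$ and a core curve of a Margulis tube can, within the domain of the approximate isometry $\rho_i^{\mathbf{M}}$, be pushed forward into $B$ and then projected vertically so as to produce a coherent sequence of curves on the fixed surface $\Sigma$. Once this transfer is confirmed, the pulled-back curves $k_i^{-1}(v_i)$ (or $k_i^{-1}(e_i)$) on $\Sigma$ behave exactly as in the algebraic case: the Margulis-tube core curves accumulate on the ending lamination of the simply degenerate end of $B$, and Lemma \ref{endpoints converge} closes the argument.
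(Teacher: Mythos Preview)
Your proposal is correct and follows essentially the same approach as the paper. The paper's own proof is just the short paragraph introducing Section 4.4, which says exactly what you say: the hypothesis that the end of $B$ is algebraic was used in the proof of Proposition \ref{simply degenerate brick} only to identify the support of $\gamma_i$ with the fixed surface $\Sigma$ (via the homotopy $\Sigma_i \simeq \Phi_i^{\mathbf M}(\Sigma)$ coming from $g'$), and without this hypothesis the rest of the argument goes through verbatim with the support being the possibly $i$-dependent preimage $\Sigma_i$; your more detailed bookkeeping with $k_i$ is a faithful unpacking of this.
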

%???geodesic??????????????????????????
\section{Limits of end invariants and ends of models}
In this section, we consider the situation where $\{(G_i, \phi_i)=qf(m_i,n_i)\}$ converges to $(\Gamma, \psi)$ in $AH(S)$ and $\{G_i\}$ converges geometrically to $G_\infty$.
We assume that $\{m_i\}$ converges to $[\mu^-]$  and $\{n_i\}$ converges to $[\mu^+]$ in the Thurston compactification of the Teichm\"{u}ller space.
Let $\Sigma^-$ and $\Sigma^+$ be the boundary components of the convex core of $M_i=\hyperbolic^3/G_i$ facing the upper and lower ends respectively.
We shall first recall the following fact, which follows from the continuity of length function.

 \begin{lemma}
\label{mu is ending}
Let $\nu$ be a component of either $\mu^-$ or $\mu^+$.
If $\nu$ is a weighted simple closed curve, then $\Psi(|\nu|)$ represents a parabolic class of $\Gamma$.
Otherwise, its image $\Psi(\nu)$ represents the ending lamination for an end of $M'_0$.
\end{lemma}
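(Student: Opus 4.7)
The plan is to combine Bers' inequality with the continuity of the hyperbolic length function on $AH(S)\times\ML(S)$ to show $\ell_\Gamma(\nu)=0$ for each component $\nu$ of $\mu^\pm$, and then invoke a standard dichotomy: a weighted simple closed curve of zero length corresponds to a parabolic element, whereas a non-simple-closed-curve minimal component of zero length must coincide with an ending lamination of a simply degenerate end by a theorem of Bonahon.

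Concretely, Bers' inequality for quasi-Fuchsian groups (extended from weighted multicurves to $\ML(S)$ by continuity) yields $\ell_{G_i}(\lambda)\leq 2\,\ell_{m_i}(\lambda)$ for every $\lambda\in\ML(S)$, and symmetrically for $n_i$. The convergence $m_i\to[\mu^-]$ in the Thurston compactification, reinterpreted via Bonahon's geodesic currents, yields scalars $\lambda_i>0$ with $\lambda_i L_{m_i}\to L_{\mu^-}$, so that $\lambda_i\,\ell_{m_i}(\mu)\to i(\mu^-,\mu)$ for all $\mu\in\ML(S)$. Applying this with $\mu=\nu$, a component of $\mu^-$: since $\nu\subset|\mu^-|$, one has $i(\mu^-,\nu)=0$, so $\lambda_i\,\ell_{m_i}(\nu)\to 0$ and by Bers $\lambda_i\,\ell_{G_i}(\nu)\to 0$. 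Continuity of the length function on $AH(S)\times\ML(S)$ (Brock) gives $\ell_{G_i}(\nu)\to\ell_\Gamma(\nu)$; combining carefully with the preceding asymptotics forces $\ell_\Gamma(\nu)=0$. For $\nu=w\gamma$ a weighted simple closed curve, this yields $\ell_\Gamma(\gamma)=0$, so $\psi(\gamma)$ is parabolic and $\Psi(|\nu|)$ represents a parabolic class. For a minimal non-simple-closed-curve component $\nu$, Bonahon's realization theorem forces $|\nu|$ to coincide with the ending lamination of some simply degenerate end of $M'_0=(\hyperbolic^3/\Gamma)_0$, and the analogous argument on the $n_i$-side handles components of $\mu^+$.

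The main obstacle is the step concluding $\ell_\Gamma(\nu)=0$: the statement $\lambda_i\,\ell_{G_i}(\nu)\to 0$ is trivially true whenever $\lambda_i\to 0$ and $\ell_{G_i}(\nu)$ stays bounded, and Bers' inequality alone yields only $\ell_\Gamma(\nu)\leq 2\,\liminf_i\ell_{m_i}(\nu)$, which need not vanish---witness the Anderson--Canary Dehn-twist example, where $\ell_{m_i}(\gamma)$ remains constant while $\psi(\gamma)$ becomes parabolic. Overcoming this presumably requires exploiting both $m_i$ and $n_i$ symmetrically (e.g.\ when $\nu$ appears as a shared component), Minsky-type short-curve estimates relating $\ell_{G_i}(\nu)$ to subsurface projections from both end invariants, or a pleated-surface realization argument incompatible with the direction of degeneration encoded in $m_i\to[\mu^-]$. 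This is the delicate point that the author's appeal to ``continuity of the length function'' is meant to resolve.
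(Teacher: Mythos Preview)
You have correctly diagnosed the gap in your own argument: from $\lambda_i\,\ell_{G_i}(\nu)\to 0$ with $\lambda_i\to 0$ one cannot conclude $\ell_\Gamma(\nu)=0$, and Bers' inequality applied to the \emph{fixed} lamination $\nu$ yields only $\ell_\Gamma(\nu)\leq 2\liminf_i\ell_{m_i}(\nu)$, which need not vanish. Your proposed remedies (short-curve estimates from both end invariants, pleated-surface arguments) are more elaborate than what is actually needed.

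The paper's resolution is to let the lamination vary along with the metric. Thurston's theorem on the compactification of Teichm\"uller space (Theorem~2.2 of \cite{Th2}; proofs in \cite{FLP}, \cite{Ths}) asserts that if $m_i\to[\mu^-]$, then there exist weighted simple closed curves $r_i\gamma_i\to\mu^-$ in $\ML(S)$ with $\len_{m_i}(r_i\gamma_i)\to 0$---the \emph{actual}, unrescaled length. Bers' inequality then gives $\len_{\Sigma_i^-}(r_i\gamma_i)\to 0$, and Brock's continuity of the length function on $AH(S)\times\ML(S)$ \cite{Br}, applied to the jointly convergent pair $\bigl((G_i,\phi_i),\,r_i\gamma_i\bigr)\to\bigl((\Gamma,\psi),\,\mu^-\bigr)$, yields $\len_{M'}(\Psi(\mu^-))=0$. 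Since length is additive over components, each component $\nu$ of $\mu^-$ has length zero in $M'$, and now your dichotomy applies: a weighted simple closed curve of zero length represents a parabolic class, and a non-simple-closed-curve minimal component of zero length is an ending lamination. The idea you missed is simply to approximate $\mu^-$ by curves \emph{adapted to the moving metrics} $m_i$ rather than to hold $\nu$ fixed; this is precisely what makes the ``continuity of the length function'' invocation work.
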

\begin{proof}
This is just a combination of Thurston's theorem and the continuity of the length function proved by Brock \cite{Br} in general form.
We can assume that $\nu$ is a component of $\mu^-$ since the argument for the case of $\mu^+$ is exactly the same.
Thurston's Theorem 2.2  in \cite{Th2} (whose proof can be found in \cite{FLP} and  \cite{Ths}) shows that there is a sequence of simple closed curve $r_i \gamma_i$ converging to $\mu^+$ such that $\len_{m_i}(r_i \gamma_i)$ goes to $0$.
By Bers' inequality \cite{Be}, this implies that $\len_{\Sigma_i^-}(r_i \gamma_i)$ also goes to $0$.
By the continuity of the length function with respect to the algebraic topology (see Brock \cite{Br}), we have $\len_{M'} \Psi(\mu^-)=0$, which means every component of $\Psi(\mu^-)$  represents either a parabolic class or an ending lamination.
\end{proof}

%In Lemma \ref{mu is ending} it was proved that each component of the limits of $\{m_i\}$ and $\{n_i\}$ in the Thurston compactification either represents a parabolic class or an ending lamination.
We shall refine this lemma in Corollary \ref{Thurston limit} to show that in $\mathbf M$ the components of  the limit of $\{m_i\}$ appear as lower algebraic parabolic curves or  lower algebraic ending laminations,  whereas those of $\{n_i\}$ appear as upper algebraic parabolic curves or upper algebraic ending laminations.
(See Definition \ref{upper/lower} for the definitions of these terms.)
%We call an algebraic parabolic locus in $\mathbf M$ isolated if it corresponds to an isolated parabolic locus of $M'$.
%
\begin{othertheorem}
\label{limit laminations}
Let $c_{m_i}$ and $c_{n_i}$ constitute shortest (hyperbolic) pants decompositions of $(S,m_i)$ and $(S,n_i)$ respectively.
Let $\nu^-$ and $\nu^+$ be  the Hausdorff limits of $\{c_{m_i}\}$ and $\{c_{n_i}\}$ respectively.
Then the minimal components of $\nu^+$ that are not simple closed curves coincide with the upper algebraic ending laminations of $\mathbf M$.
Moreover, every upper algebraic parabolic curve of $\mathbf M$, regarded as a curve on $S$,  is contained in  $\nu^+$.
Similarly the minimal components of  $\nu^-$ that are not simple closed curves coincide with the lower algebraic ending laminations of $\mathbf M$, and every lower algebraic parabolic curve is contained in  $\nu^-$.
\end{othertheorem}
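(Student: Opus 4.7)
The plan is to combine Proposition \ref{simply degenerate brick} and Corollary \ref{non-algebraic brick} with the observation that the curves in $c_{n_i}$ and $c_{m_i}$ arise as last and first vertices of distinguished geodesics of the hierarchy $h_i$ underlying $\mathbf M_i$. The hierarchy $h_i$ has initial marking with base curves $c_{m_i}$ and terminal marking with base curves $c_{n_i}$; descending the subordination tree from the last vertex of the main geodesic $g_{h_i}$, an easy induction shows that the last vertex of every geodesic in $h_i$ that is forward subordinate to $g_{h_i}$ at its last vertex lies in $c_{n_i}$. A symmetric statement holds for first vertices and $c_{m_i}$. We focus on the upper side; the lower case is parallel.

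For the inclusion $\{$ending laminations of upper algebraic ends$\}\subseteq\{$non-simple-closed-curve minimal components of $\nu^+\}$, fix an upper algebraic simply degenerate end in a brick $B=\Sigma\times[s,s')$ of $\mathbf M$ with ending lamination $\lambda$. Proposition \ref{simply degenerate brick} supplies $\gamma_i\in h_i$ supported on $\Int\Sigma$ whose last vertex $v_i$ (or, in the ray case, endpoint at infinity) converges to $\lambda$. Because $B$ lies above $g'(S)$, the construction of $\gamma_i$ via tube unions penetrating the pull-back brick $B_i\subset\mathbf M_i$ forces the subordination chain from $\gamma_i$ up to $g_{h_i}$ to run through terminal simplices only, so $v_i\in c_{n_i}$. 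Then $v_i\to\lambda$ in the Hausdorff topology on $S$ places $\lambda\subseteq\nu^+$, and minimality of $\lambda$ makes it a minimal component of $\nu^+$. In the ray case the same conclusion follows by taking longer and longer initial segments of $\gamma_i$ whose terminal simplices still lie in $c_{n_i}$.

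For the reverse inclusion, let $\lambda$ be a minimal non-simple-closed-curve component of $\nu^+$ and choose $v_i\in c_{n_i}$ with $v_i\to\lambda$ in the Hausdorff topology. Each $v_i$ is the last vertex of a unique chain $g_{h_i}=g^{(i)}_0\supord g^{(i)}_1\supord\cdots\supord g^{(i)}_{k_i}$ of direct forward subordinations at terminal simplices, and the depth $k_i$ is bounded in terms of $\xi(S)$. After passing to a subsequence the combinatorial type of this chain stabilises, so the supporting surface $\Sigma^{(\infty)}\subset S$ of the bottom geodesic $\gamma_i:=g^{(i)}_{k_i}$ is independent of $i$. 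Because $\lambda$ is not a simple closed curve, $v_i$ takes infinitely many values, so the length of $\gamma_i$ tends to infinity. Corollary \ref{non-algebraic brick} applied to $\gamma_i$ then produces a simply degenerate brick of $\mathbf M$ with ending lamination $\lambda$, and Proposition \ref{limits of models} identifies this brick as lying above $g'(S)$ (because all tube unions in its assembly sit in the terminal-simplex subordination tower of $h_i$), so its end is upper algebraic.

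The parabolic statement is obtained as follows: an upper algebraic parabolic curve $c$ corresponds to a Margulis tube $V_c\subset\mathbf M_i$ with $|\omega_{\mathbf M_i}(V_c)|\to\infty$ lying above the lifted algebraic locus, so by Minsky's identification of short curves with large-coefficient tubes the length $\len_{n_i}(c)$ tends to $0$ and $c$ belongs to $c_{n_i}$ for all large $i$. Since Hausdorff limits of pairwise disjoint simple geodesic systems have pairwise disjoint components, $c$ cannot cross any minimal component of $\nu^+$ transversely. The principal obstacle is the reverse inclusion of the third paragraph: one must control the stabilisation of the subordination chain terminating at $v_i$ and then extract a legitimate brick of $\mathbf M$ from the elongating geodesics $\gamma_i$ via Corollary \ref{non-algebraic brick}, together with Proposition \ref{limits of models} to verify the upper-algebraic property.
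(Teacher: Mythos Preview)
Your proposal has several genuine gaps.

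\textbf{First inclusion.} The assertion that the last vertex $v_i$ of $\gamma_i$ literally belongs to $c_{n_i}$ is not justified and is generally false. Your argument requires that the subordination chain from $\gamma_i$ up to $g_{h_i}$ pass only through terminal simplices, which would force $\Fr\Sigma$ (and all intermediate domain frontiers) to be curves of $c_{n_i}$. But $\Fr\Sigma$ consists of algebraic parabolic curves: these are short in the 3-manifold $M_i$, not in the conformal structure $n_i$, and there is no reason they appear in the shortest pants decomposition for $n_i$. Proposition~\ref{simply degenerate brick} gives no information about where in $h_i$ the geodesic $\gamma_i$ sits relative to terminal simplices. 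The paper avoids this issue entirely: instead of claiming $v_i\in c_{n_i}$, it invokes Theorem~3.1 and \S6 of Masur--Minsky to get a uniform bound on $d_{\CC(\Sigma)}(v_i,\pi_\Sigma(c_{n_i}))$, and deduces that $\pi_\Sigma(c_{n_i})\to\lambda_e$ in $\UML(\Sigma)$, which suffices.

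\textbf{Reverse inclusion.} You apply Corollary~\ref{non-algebraic brick} in the wrong direction. That corollary takes as input a simply degenerate brick of $\mathbf M$ and outputs a geodesic $\gamma_i$ in $h_i$; it does not manufacture a brick from an elongating sequence of geodesics. There is no mechanism in your argument that produces a simply degenerate end of $\mathbf M$ from the chain $(g^{(i)}_k)$. The paper proceeds differently: it takes the complement $F$ of the minimal supporting surfaces of the upper ending laminations (together with the upper parabolic loci), and shows directly that any minimal component of $\nu^+$ contained in $F$ must be a simple closed curve. This is done by analysing, case by case, what kind of end of $\mathbf M$ the piece $g'(F)$ (or $g'(F\cap\Sigma)$ for suitable $\Sigma$) can be pushed into, using Corollary~\ref{non-algebraic brick} in its correct direction.

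\textbf{Parabolic statement.} The implication ``$|\omega_{\mathbf M_i}(V_c)|\to\infty$ hence $\len_{n_i}(c)\to0$'' is false. A curve can be short in $M_i$ without being short in either boundary conformal structure; this is precisely what happens when $g'(c)$ lies on a torus boundary component of $\mathbf M$ (the Anderson--Canary phenomenon). The paper must treat three separate cases --- torus boundary, annulus boundary touching a geometrically finite end, annulus boundary touching only degenerate/wild ends --- and in the torus case uses an analysis of annular subsurface projections and Dehn-twist growth rather than any length bound on $(S,n_i)$. Your argument covers only the second of these cases.
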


\begin{proof}
%We first consider the case when $c$ is  a component of $|\mu|$ which is not a simple closed curve.
%By Lemmata \ref{mu is ending} and \ref{ends of M'}, we see that there is an algebraic simply degenerate end $e$ of $\mathbf M$ which has $c$ as the ending lamination.
%What remains to show is that $e$ is an upper end.
%We suppose that $e$ is a lower end and shall show that this leads to a contradiction.
%
%
We shall only deal with $\nu^+$.
The argument for $\nu^-$ is obtained only by turning $\mathbf M$ upside down.
Let $h_i$ be a hierarchy corresponding to $qf(m_i,n_i)$, and consider the model manifold $\mathbf M_i$ such that $\mathbf M_i[k]$ converges geometrically to $\mathbf M[k]$ as before.
We regard $\mathbf M$ as being embedded in $S \times [0,1]$ as usual.

We shall first show that any upper algebraic ending lamination of $\mathbf M$ is a minimal component of $\nu^+$.
Let $B=\Sigma \times [s,t)$ be a algebraic simply degenerate brick of $\mathbf M$ containing an end $e$.
By Proposition \ref{simply degenerate brick}, the hierarchy $h_i$ contains a geodesic $\gamma_i$ supported on $\Sigma$ whose last vertex converges to the ending lamination $\lambda_e$ of $e$ in $\UML(\Sigma)$.
Now, as was shown in \S 6 in Masur-Minsky \cite{MaMi} using Theorem 3.1 in the same paper, the distance between the last vertex of $\gamma_i$ and the projection of the terminal marking $T(h_i)$ of $h_i$ to $\Sigma$ is uniformly bounded.
In particular, for the shortest pants decomposition $c_{n_i}$, which consists of the base curves of $T(h_i)$, its projection to $\Sigma$ converges to $\lambda_e$ in $\UML(\Sigma)$.
Since the Hausdorff limit of $c_{n_i}|\Sigma$ contains the limit of the projection of $c_{n_i}$ in $\UML(\Sigma)$, this shows that any upper algebraic ending lamination  is contained in $\nu^+$.

Secondly, we shall show that every upper algebraic parabolic curve is contained in $\nu^+$.
Let $c$ be an upper algebraic parabolic curve on $S$, and denote a standard algebraic immersion by $g' : S \to \mathbf M$.
There are three cases which we have to consider.
The first is the case (a) when $g'(c)$ is homotopic to a curve on a torus boundary of $\mathbf M$; the second is the case (b) when $g'(c)$ is homotopic to a core curve of  an open annulus boundary component of $\mathbf M$ at least one of whose ends touches a geometrically finite end; and the third is the case (c) when $g'(c)$ is homotopic to a core curve of an open annulus boundary component whose ends touch only simply degenerate or wild ends.
%wild end\UTF{0082}\UTF{00CC}\UTF{008F}\UTF{00EA}\UTF{008D}\UTF{0087}\UTF{0082}\UTF{00E0}\UTF{008D}l\UTF{0082}\UTF{0160}\UTF{0082}\UTF{00E6}\UTF{0081}D

(a) We first consider the case when the curve $g'(c)$ is homotopic into a torus component $T$ of $\partial \mathbf M$.
%Consider a torus boundary component $T$ of $\mathbf M$ containing the curve representing the upper algebraic isolated parabolic locus.
For later use, we state here the result of  the case (a) as a claim, taking into account also the case of $\nu^-$.
\begin{claim}
\label{case a}
Let $c$ be a simple closed curve on $S$ such that $g'(c)$ is homotopic into a torus boundary component of $\mathbf M$.
Then $c$ is a minimal component of $\nu^+$ if $c$ is an upper parabolic curve and is a minimal component of $\nu_-$ if $c$ is a lower parabolic curve.
\end{claim}

\begin{proof}
Let $V_i$ be the Margulis tube bounded by $(\rho_i^{\mathbf M})^{-1}(T)$ in $\mathbf M_i$ for large $i$.
Its boundary $\partial V_i$ has a marked flat structure which is parametrised by $\omega_{\mathbf M_i}(V_i)$ as was explained in the proof of Lemma \ref{model limit}.
The real part of $\omega_{\mathbf M_i}(V_i)$ corresponds to the difference of the marking on the top horizontal annulus and that of the bottom horizonal annulus, hence to the length of the tight geodesic of $h_i$ supported on an annulus on $S$ homotopic to the vertical projection of  the horizontal annulus of $V_i$.
Since $\partial V_i$ converges to the boundary of a torus cusp neighbourhood,  $\Re \omega_{\mathbf M_i}(V_i)$ goes to $\infty$ whereas the imaginary part is bounded as $i \rightarrow \infty$.
Let $c_i$ be a simple closed curve on $S$ whose image by $\Phi^{\mathbf M}_i$ is homotopic to the longitude of $V_i$.
Since the longitude of $\partial V_i$ converges to that of $T$ which is homotopic to the image of a simple closed curve under $g'$, the homotopy class of $c_i$ is independent of $i$ for large $i$.
Therefore, by taking a subsequence, we can assume that $c_i$ is constantly $c$.
Let $A$ be an annulus on $S$ which is a regular neighbourhood of $c$.
Since $\Re \omega_{\mathbf M_i}(V_i) \rightarrow \infty$, as was explained above, there is a geodesic $\gamma_i$ in $h_i$ supported on $A$ whose length goes to $\infty$ as $i \rightarrow \infty$.
Let $a(i)$ and $b(i)$ be the first and  last vertices of $\gamma_i$, and let $n(a)_i$ and $n(b)_i$ be the (signed) numbers of times  $a(i)$ and $b(i)$ respectively go around $c$ compared to the transversal of the marking determined by $\Phi^{\mathbf M}_i$.
Then $|n(b)_i-n(a)_i|$ goes to $\infty$ as $i \rightarrow \infty$.
We set $n(i)$ to be $n(b)_i-n(a)_i$.

By the definition of hierarchy, there is a vertex  of a geodesic $g_i$  in $h_i$  with $\xi(D(g_i))=4$ which represents $c$ (and is denoted also by $c$), satisfying $\pi_A(\pre(c))=a(i)$ and $\pi_A(\suc(c))=b(i)$.
As was shown above,  the distance between $\pi_A(\pre(c))$ and $\pi_A(\suc(c))$ goes to $\infty$.
Since these $\pre(c)$ and $\suc(c)$ may depend on $i$, we denote $\pre(c)$ in $g_i$ by $v_i$ and $\suc(c)$ in $g_i$ by $w_i$.

%We need to divide the argument into two subcases: (A) one is when the lengths of the geodesics corresponding to $v_i$ and $w_i$ do not go to $0$, and (B) the other is when one of them goes to $0$.

Since there is an elementary move  changing $v_i$ to $c$, there is a block $b_i$ in $\mathbf M_i$ realising this elementary move by the definition of model manifold by Minsky \cite{Mi}.
Let $U_i$ be the Margulis tube in $\mathbf M_i$ whose core curve represents $v_i$, and $u_i$ a  horizontal longitude on $\partial U_i$.
Recall that the block decomposition of $\mathbf M_i$ converges geometrically to that of $\mathbf M$ as $i \rightarrow \infty$.
Therefore, the block $b_i$ can be pushed forward to a block $b_\infty$ in $\mathbf M$ for large $i$, and hence there is either a Margulis tube or a torus  boundary in $\mathbf M$ whose core curve or longitude, which we denote by $u_\infty$, is homotopic to $\rho_i^{\mathbf M}(u_i)$ for every large $i$.
First suppose that $g'$ does not go around $T$.
Then, since $c$ is upper parabolic,  $g'$ can be homotoped so that it passes   $b_\infty$ horizontally, and consequently, there is a simple closed curve $u$ on $S$ such that $g'(u)$ is homotopic to $u_\infty$.
By pulling back this to $\mathbf M_i$, we see $\Phi^{\mathbf M}_i(u)$ is homotopic to $\pre(c)$ for large $i$.
This means that $n(a)_i$ is bounded as $i \rightarrow \infty$, and hence $|n(b)_i|$ goes to $\infty$.

We next consider the  case when $g'$ goes $k$-times around $T$ for a positive integer $k$.
(Since $c$ is an upper parabolic locus, $g'$ cannot go around $T$ in the negative direction.)
In this case, to homotope $(\rho^{\mathbf M}_i)^{-1} \circ g'$ to an immersion $g''_i$ which does not go around $V_i$, we have to make it pass $k$ times through $V_i$ in the upward direction.
This $g''_i$ converges geometrically to a surface which passes $b_\infty$ horizontally, and therefore, in the same argument as the previous paragraph,
 there is a simple closed curve $u$ on $S$ as above such that $g''_i(u)$ is homotopic to $u_i$.
%we can homotope $(\rho^{\mathbf M}_i)^{-1} \circ g'(u)$ to $u_i$ after passing  $k$-times through $V_i$.
%Passing $V_i$ upward is counted as one time, whereas passing it downward is counted as $-1$ time.
Recall, as we explained in the proof of \cref{model limit}, the meridian $\beta_i$ of $V_i$ realises a homotopy between the markings below and above, and hence as the surface passes $V_i$ in the positive direction, the transverse of $c$ is twisted $n(i)$-times.
This implies that the $kn(i)$-time Dehn twist of $u_i$ around $c$ represents the constant homotopy class $u$ for large $i$.
Therefore, $|n(a)_i|$ grows in the order of $|kn(i)|$, and $|n(b)_i|$ does in the order of $|(k+1)n(i)|$ in this case.

In either case, we see that $|n(b)_i|$ goes to $\infty$.
Therefore, by \S6 of \cite{MaMi} again, the projection of the shortest pants decomposition $c_{n_i}$ to $\CC(A)$ also goes around $n'(i)$ times around $c$ with $n'(i) \rightarrow \infty$.
This shows that the Hausdorff limit $\nu^+$ of $\{c_{n_i}\}$ contains $c$ as a minimal component.
\end{proof}

(b) Next we consider the second case when $g'(c)$ is homotopic to a core curve of an open annulus boundary component $T$ which touches a geometrically finite end $e$ of $\mathbf M$.
Then we shall prove the following claim, which will also be used later.
\begin{claim}
\label{homotopic into gf}
Let $\Sigma$ be an incompressible subsurface of $S$ such that $\Sigma \times \{1\}$ corresponds to a geometrically finite end of $\mathbf M$ with conformal structure $n_\Sigma$.
Let $a$ be a simple closed curve on $S$ such that $g'(a)$ is homotopic to a curve $a'$ on $\Sigma \times \{1\}$.
Then the hyperbolic length $\length_{n_i}(a)$ converges to that of $a'$ with respect to $n_\Sigma$.
In particular, if $a'$ is peripheral, then $\length_{n_i}(a)$ goes to $0$.
\end{claim}
%What we need to show is the following.
%\begin{claim}
%\label{case b}
%Suppose that $c$ is a simple closed curve on $S$ such that $g'(c)$ is homotopic into an open annulus boundary component of $\mathbf M$ which touches a geometrically finite end.
%Then $c$ is a minimal component of $\nu^+$ if $c$ is an upper parabolic curve and is a minimal component of $\nu^-$ if $c$ is a lower parabolic curve.
%\end{claim}
%%claimにする場合は上に書く．
\begin{proof}
%The end $e$ corresponds to $\Sigma \times \{1\}$ for some essential subsurface $\Sigma$ of $S$ and has some conformal structure $n_\Sigma$.
%Let $c$ be a simple closed curve on $S$ such that $g'(c)$ is homotopic to $p$ in $\mathbf M$.
By the definition of conformal structures on the geometrically finite bricks of $\mathbf M$ in  \S \ref{label}, there is a subsurface $\Sigma_i$ in $S$ such that $(\Sigma_i,n_i)$ converges to $(\Sigma, n_\Sigma)$ geometrically.
Since $g'$ is an algebraic locus, for a simple closed curve $a'$ of $\Sigma$ homotopic to $g'(a)$ is pulled back to a curve  on $\Sigma_i$ homotopic to  $\Phi^{\mathbf M}_i(a)$.
This implies the hyperbolic length of $a$ with respect to $n_i$ converges to that with respect to $n_\Sigma$.
\end{proof}

This implies that the length of $c$ with respect to $n_i$ goes to $0$, and the pants decomposition $c_{n_i}$ must contain $c$.
Therefore $c$ is contained in the Hausdorff limit of $\{c_{n_i}\}$ also in this case.

%%%substantial revision in March 2011
(c) Now, we consider the third case when $g'(c)$ is homotopic to a core curve of an open annulus boundary component $T$ whose ends touch only  simply degenerate or wild ends.
%\begin{claim}
%\label{case c}
%Suppose that $c$ is a simple closed curve on $S$ such that $g'(c)$ is homotopic into an open annulus boundary component of $\mathbf M$ which touches only simply degenerate or wild ends.
%Then $c$ is a minimal component of $\nu^+$ if $c$ is an upper parabolic curve and is a minimal component of $\nu^-$ if $c$ is a lower parabolic curve.
%\end{claim}
%\begin{proof}
Suppose, seeking a contradiction, that there is a minimal component $d$ of $\nu^+$ intersecting $c$ transversely.
By Claim \ref{case a}, we see that $d$ cannot intersect an upper algebraic parabolic curve on $S$ whose image by $g'$ is homotopic into a torus boundary component.
We shall first show the following claim.
\begin{claim}
\label{obstructing subsurface}
There is either a simply degenerate end or a horizontal annulus on a torus boundary component, corresponding to $\Sigma \times \{t\}$  lying above $g'(S)$ and an incompressible subsurface $F$ of $S$ containing $d$ such that $g'(F \cap \Sigma)$ is vertically parallel into $\Sigma \times \{t-\epsilon\}$ for any small $\epsilon >0$, and $d$ intersects $\Sigma$ essentially on $F$.
(Here $F \cap \Sigma$ is assumed to have no inessential intersection.)
%This can be achieved by making their frontiers geodesic with respect to some fixed hyperbolic metric on $S$.)
\end{claim}
\begin{proof}
Suppose first that there is an upper algebraic simply degenerate end corresponding to $\Sigma \times \{t\}$ such that $\Sigma$ intersects $d$ essentially.
(This is equivalent to saying that $d$ intersects the minimal supporting surface of an upper ending lamination essentially.)
Then, by letting $F$ be the entire $S$, the condition above holds.

Suppose next that $d$ can be homotoped so as to be disjoint from any minimal supporting surface of the upper ending lamination (\ie from the surface  $\Sigma$ for any upper algebraic simply degenerate ends as described above).
Let $F$ be a component of the complement in $S$ of the union of the minimal supporting surfaces of the upper algebraic ending laminations and annular neighbourhoods of parabolic curves of the case (a) above (\ie those homotopic into a torus boundary component), which contains $d$.
Since $c$, as well as $d$,  cannot intersect the minimal supporting surface of an upper algebraic ending lamination and a parabolic curve of (a) above, $c$ is also contained in $F$.
Consider  a simply degenerate end or a lower horizontal annulus of a torus boundary corresponding to $\Sigma' \times \{t\}$ above $g'(S)$ such that $\Sigma'$ intersects $d$ essentially.
We can see that such an end or a torus exists since the boundary component into which $g'(c)$ is homotopic touches either a simply degenerate end or a wild end and there is no essential half-open annulus tending to a wild end.
We take a lowest one among such $\Sigma' \times \{t\}$,  and denote it by $\Sigma \times \{t\}$.
Then $(\Sigma \cap F) \times \{t-\epsilon\}$ is homotopic to $g'(\Sigma \cap F)$ in $\mathbf M$ for any small positive $\epsilon$.
(See Figure \ref{fig:lowest}.)
Thus we get  subsurfaces $F$ and $\Sigma$ as we desired.
\end{proof}

%Let $F'$ be a maximal essential subsurface of $F$ containing $d$ such that $g'(F')$ is homotopic in $\mathbf M$ into $\Sigma \times \{t\}$.
%This surface exists because $g'(d)$ can be homotoped into $\Sigma \times \{t\}$.
Suppose now that  $\Sigma \times \{t\}$ corresponds to a simply degenerate end, and denote it by $e$.
Let $\lambda$ be the ending lamination of the end $e$, and $B$ the brick of $\mathbf M$ containing $e$.
By Corollary \ref{non-algebraic brick}, there are bricks $B_i\cong \Sigma_i \times J_i$  containing $(\rho_i^{\mathbf M})^{-1}(B)$  and geodesics $\gamma_i$  supported on $\Sigma_i$  whose lengths go to $\infty$ as $i \rightarrow \infty$.
The approximate isometry induces a homeomorphism $f_i: \Sigma \rightarrow \Sigma_i$.
 Also, we know that for the last vertex $t_i$ of $\gamma_i$, its image $(f_i)^{-1}(t_i)$ converge to the ending lamination $\lambda$.
By the same argument as before using \S 6 of \cite{MaMi},  $\{(f_i)^{-1}\pi_{\Sigma_i}(c_{m_i})\}$  converges to a geodesic lamination containing $\lambda$.
We state this as a claim for later use.
\begin{claim}
\label{simply degenerate case}
Suppose that $\Sigma \times \{t\}$ corresponds to a simply degenerate end.
Then there is a homeomorphism $f_i : \Sigma \to \Sigma_i$ induced from an approximate isometry between $\mathbf M$ and $\mathbf M_i$, and 
the sequence of simple closed curves $\{(f_i)^{-1}\pi_{\Sigma_i}(c_{m_i})\}$  converges to a geodesic lamination containing $\lambda$.
\end{claim}

Since $g'(F\cap \Sigma)$ is vertically homotopic into $\Sigma \times \{t-\epsilon\}$ in $\mathbf M$ for any small positive $\epsilon$, we see that $f_i|(F \cap \Sigma)$ can be isotoped so that it is  the identity map if restricted to $F \cap \Sigma$.
Since $\lambda$ is arational in $\Sigma$, and $\Sigma$ intersects $d$ essentially,  we see that $\lambda$ intersects $d$ essentially.
On the other hand, by \cref{simply degenerate case}, $\{f_i^{-1}(c_{m_i})\}$ converges to a geodesic lamination containing $\lambda$ whereas $\{c_{m_i}\}$ converges to $\nu$ containing $d$, both in the Hausdorff topology.
Considering the fact that $f_i|F\cap \Sigma$ can be assumed to be the identity, we see that this is impossible.

Next suppose that $\Sigma \times \{t\}$ is a horizontal annulus lying  on a torus boundary component $T$.
Let $\Sigma_i$ be the lower horizonal annulus on $\rho_i^{-1}(T)$.
Then  by the same argument as in the proof of \cref{case a}, we see that   $\pi_{\Sigma_i}(c_{n_i})$ spirals around the core curve of $\Sigma_i$ more and more as $i \rightarrow \infty$.
Since $(\Sigma \cap F) \times \{t-\epsilon\}$ is homotopic to $g'(\Sigma \cap F)$ and $\Sigma \cap F$ intersects $d$ essentially, this shows that the Hausdorff limit of $\{c_{n_i}\}$ intersects $d$ transversely.
This is a contradiction.
Thus we have shown that an upper algebraic parabolic curve cannot intersect a minimal component of $\nu^+$ transversely.
Since $\nu^+$ is a Hausdorff limit of pants decomposition, every closed geodesic either intersects $\nu^+$ transversely or is contained in $\nu^+$. 
Therefore implies that any upper algebraic curve is contained in $\nu^+$.

\begin{figure}
\scalebox{0.6}{\includegraphics{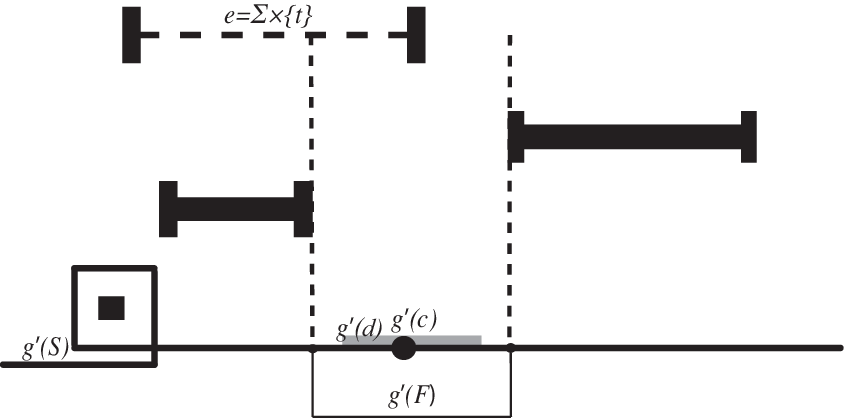}}
\caption{The definition of $F$ and the lowest end}
\label{fig:lowest}

\end{figure}

\medskip
Thus we have shown that both the upper algebraic ending laminations and the upper algebraic parabolic curves are contained in $\nu^+$.
To complete the proof, it remains to show that every minimal component of $\nu^+$ that is not a simple closed curve is an upper algebraic ending lamination.
For that we have only to show that $\nu^+$ has no minimal component that is not a compact leaf and is disjoint from the minimal supporting surfaces of the upper algebraic ending laminations up to isotopy.
We recall that no minimal component of $\nu^+$ intersects an upper parabolic locus regarded as lying on $S$ by the results of the cases (a)-(c) above.
%Indeed, since the ending lamination is arational in its support, which we denote by $\Sigma$,  and each component of $\Fr \Sigma_j$ is an upper algebraic parabolic locus, this shows that 
% a minimal component of $\mu$ that is not a compact leaf is the ending lamination of an upper simply degenerate end.
 Let  $\Sigma_1, \dots , \Sigma_{j_0}$ be the minimal supporting surfaces of the upper algebraic ending laminations.
 %(We take these so that their frontiers are totally geodesic, or have very small geodesic curvature when two distinct frontier components of one of them are homotopic to each other, with respect to the fixed hyperbolic metric on $S$.)
  Let $F$ be a component of  the complement of the union of $\cup_{j=1}^{j_0} \Sigma_j$ and all upper parabolic loci.
  %, which we take to be geodesics or have very small geodesic curvature.
 What we have to show is that every minimal component $d$ of $\nu^+$ contained in $F$ is a simple closed curve.
 
The argument is quite similar to the proof of case (c).
Before dealing with the general situation, we begin with considering the special case when $g'(F)$ is homotopic into an end lying above $g'(S)$.
The end cannot be wild since a wild end has no essential open annulus tending to the end.
Hence the end is either geometrically finite or simply degenerate.
Then there is an incompressible subsurface $\Sigma$ of $S$ containing $F$ such that  $g'(F)$ is homotopic into an end corresponding to $\Sigma \times \{s\}$ (\ie into $\Sigma \times \{s-\epsilon\}$ for any small positive $\epsilon$), where $s=1$ if the end is geometrically finite.
We first consider the case when $s=1$ and $S' \times \{1\}$ is geometrically finite.
As shown in \cref{homotopic into gf},  the surface $\Sigma \times \{1\}$ has a hyperbolic metric $n_\infty$ which is a geometric limit of $(S, n_i)$ with base point lying in the thick part of $(\Sigma_i, n_i|\Sigma_i)$ for some subsurface $\Sigma_i$ homeomorphic to $\Sigma$.
Since $F \times \{1-\epsilon\}$ in $\mathbf M$ is homotopic to $g'(F)$, we see that $\Sigma_i$ contains $F$ up to isotopy for large $i$ and that $n_i$ induces a hyperbolic structure $n_i|F$ on $F$ with geodesic boundary, which converges to $n_\infty|F$ preserving the markings.
Note that  $\nu^+|F$ is a Hausdorff limit of $c_{n_i}|F$ and $d$ is contained in it.
Since $n_i|F$ converges to $n_\infty|F$, we see that  any minimal component  of the Hausdorff limit $\nu^+$ of $\{c_{n_i}\}$  contained in $F$ must be a compact leaf.

Next suppose that there is a  simply degenerate end $e$ of $\mathbf M$ of the form $\Sigma \times \{s\}$ with ending lamination $\lambda$, lying above $g'(S)$ into which $g'(F)$ is homotopic.
%Remark fev 2011:\UTF{0082}Â±\UTF{0082}Â±\UTF{0082}\UTF{00C5}wild end\UTF{0082}\UTF{00C9}homotopic\UTF{0082}\UTF{00C8}\UTF{008F}\UTF{00EA}\UTF{008D}\UTF{0087}\UTF{0082}\UTF{00AA}\UTF{0094}\UTF{00B2}\UTF{0082}\UTF{00AF}\UTF{0082}\UTF{00C4}\UTF{0082}Â¢\UTF{0082}\UTF{00E9}\UTF{0081}D
%\UTF{0082}Â±\UTF{0082}\UTF{00CC}\UTF{008F}\UTF{00EA}\UTF{008D}\UTF{0087}\UTF{0082}\UTF{00E0}Hausdorff limit\UTF{0082}\UTF{00CD}arc\UTF{0082}\UTF{00CE}\UTF{0082}\UTF{00A9}\UTF{0082}\UTF{00E8}\UTF{0082}\UTF{00C5}\UTF{0082}\UTF{00A0}\UTF{0082}\UTF{00E9}\UTF{0082}Â±\UTF{0082}\UTF{00C6}\UTF{0082}\UTF{00AA}\UTF{008E}\UTF{0160}\UTF{0082}\UTF{00B9}\UTF{0082}\UTF{00E9}\UTF{0081}Dreduced Bers\UTF{0082}\UTF{00CC}\UTF{008B}c\UTF{0098}_\UTF{0082}\UTF{00F0}\UTF{008E}Q\UTF{008F}\UTF{00C6}\UTF{0082}\UTF{00B9}\UTF{0082}\UTF{00E6}\UTF{0081}D
Then this end cannot be algebraic since $F$ lies in the complement of the minimal supporting surfaces of upper algebraic ending laminations up to isotopy.
This implies that $F$ is a proper subsurface of $\Sigma$ up to isotopy.
By \cref{simply degenerate case}, there is a homeomorphism $f_i : \Sigma \to \Sigma_i$ for a subsurface $\Sigma_i$ of $S$, and $\{f_i^{-1}(\pi_{S'_i}(c_{n_i}))\}$ converges to  a geodesic lamination containing $\lambda$.
%Let $B$ be a simply degenerate brick containing $S' \times \{s\}$, and $\lambda_e$ the ending lamination of $e$.
%Since $S' \times \{s\}$ is not algebraic, $F$ is an essential proper subsurface of $S'$.
%Now, by  Corollary \ref{non-algebraic brick}, there is a brick of the form $S'_i \times [s_i, t_i]$   in $\hat {\mathbf M}_i$ containing $(\rho_i^{\mathbf M})^{-1}(B \cap B_{K_ir_i}(\mathbf M, x_\infty))$ in which a tube union corresponding to a geodesic $\gamma_i$ is put.
%(Recall that $\hat {\mathbf M}_i$ is the complement of $\Int \mathbf V_i$.)
%Also, for a homeomorphism $f_i: S' \rightarrow S'_i$ induced from $(\rho_i^{\mathbf M})^{-1}|\parreal B$, the image of the last vertex 
%% or the lamination corresponding to the endpoint at infinity 
%of $\gamma_i$ under $f_i^{-1}$ converges to the ending lamination $\lambda_e$.
%By the same argument as before using \S 6 of \cite{MaMi}, this implies that $f_i^{-1}(\pi_{S'_i}(c_{n_i}))$ converges to $\lambda_e$.
Now, since $g'(F)$ is homotopic into $\Sigma \times \{s\}$ and $\lambda$ is arational in $\Sigma$, this shows that the Hausdorff limit of $\{c_{n_i}|F\}$ consists only of arcs.
Therefore, $F$ cannot contain $d$, contradicting our assumption.
In general, as shown in \cref{obstructing subsurface}, there is a simply degenerate end or a horizontal annulus on a torus boundary component, corresponding to $\Sigma \times \{t\}$ situated above $g'(S)$, such that $g'(F\cap \Sigma)$ is homotopic into $\Sigma \times \{t-\epsilon\}$ in $\mathbf M$ for any small positive $\epsilon$, and such that $F \cap \Sigma$ intersects $d$ essentially.
(See Figure \ref{fig:lowest} again.)
Suppose first that $\Sigma \times \{t\}$ corresponds to a simply degenerate end $e$.
Then as in \cref{simply degenerate case}, the Hausdorff limit of $c_{n_i}|(F \cap \Sigma)$ must contain the restriction of the ending lamination $\lambda$ of $e$ to $F$.
Since $F \cap \Sigma$ intersects $d$ essentially, $\lambda$ is arational in $\Sigma$, and $\Sigma \times \{t\}$ is not algebraic, this shows that the Hausdorff limit of $c_{n_i}|(F \cap \Sigma)$ intersects $d$ transversely.
This is a contradiction.

Next suppose that $\Sigma \times \{t\}$ lies on a torus boundary component $T$.
Then each component of $\Sigma \cap F$ is a strip.
As in the proof of  \cref{case a}, for each component $\Delta$ of $\Sigma \cap F$, the Hausdorff limit of $c_{n_i}|F$ has a leaf running to join the two components of $\Fr_\Sigma \Delta$.
Since $F \cap \Sigma$ intersects $d$ essentially on $F$, each component of $d \cap \Sigma \cap F$ joint two boundary components of $\Sigma$.
This implies that the Hausdorff limit of $c_{n_i}$ intersects $d$ transversely, which is a contradiction again.
Thus we have shown that every minimal component of the Hausdorff limit of $\{c_{n_i}\}$ contained in $F$ is a compact leaf.
This completes the proof of Theorem \ref{limit laminations}.
\end{proof}

Theorem \ref{main limit laminations} in \S\ref{main results} is obtained as a corollary of Theorem \ref{limit laminations}, as follows.

\begin{proof}[Proof of Theorem \ref{main limit laminations}]
Each simply degenerate end of $\mathbf M$ is mapped to that of $(M_\infty)_0$.
Let $p: M' \rightarrow M_\infty$ be a covering associated to the inclusion of the algebraic limit $\Gamma$ into the geometric limit $G_\infty$.
By the covering theorem of Thurston \cite{Th} and Canary \cite{CaT} with the argument in Lemma 2.3 of \cite{OhQ}, each simply degenerate end of $(M')_0$ has a neighbourhood which is mapped homeomorphically to a neighbourhood of a simply degenerate end of $(M_\infty)_0$.
Furthermore the ending lamination of an end of $(M')_0$ is identified with that of the corresponding end of $(M_\infty)_0$ by $p$, which follows immediately by the definition of ending laminations.
Therefore the algebraic simply degenerate ends of $\mathbf M$ correspond to simply degenerate ends of $(M')_0$ one-to-one preserving the ending laminations.
It is also obvious that upper (resp. lower) ends of $\mathbf M$ correspond to upper (resp. lower) ends of $(M')_0$.
Similarly the algebraic parabolic curves correspond to the core curves of the parabolic loci of $M'$.
Therefore, Theorem \ref{limit laminations} implies the statement of \cref{main limit laminations} except for the last paragraph.

Suppose that $c$ is a simple closed curve in $\nu^-$ or $\nu^+$ that has isolated leaves around it  is a parabolic curve.
We assume that $c$ is contained in $\nu_-$ for simplicity.
Then there is a component $d_i$ of $c_{m_i}$ spiralling around $c$ more and more as $i \to \infty$.
Since the length of $d_i$ is bounded, by the continuity of length function this shows that the geodesic length of $\psi(c)$ is $0$, and hence $c$ is a parabolic curve.
Suppose further that $c$ is not contained in $\nu_-$.
Then by \cref{limit lamination}, $c$ cannot be a lower parabolic curve of $\mathbf M$, which means in particular that $c$ is an upper parabolic curve of $\mathbf M$.
Furthermore this implies that even if $g'(c)$ is homotopic into a torus boundary component $T$ of $\mathbf M$, the algebraic locus $g'(S)$ cannot go around $T$.
Therefore, $c$ is an upper parabolic curve of $M'$. 
\end{proof}

\begin{lemma}
\label{limit of pants}
Let $\{g_i\}$ be a sequence in the Teichm\"{u}ller space $\mathcal T(S)$ which converges to a projective lamination $[\mu]$ in the Thurston compactification.
Let $c_i$ be a  pants decomposition on $(S,g_i)$ whose total length is uniformly bounded independently of $i$.
Then the Hausdorff limit of any subsequence of  $\{c_i\}$ contains all the components of $|\mu|$  as minimal components.
\end{lemma}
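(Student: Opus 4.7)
The plan is to combine Thurston's characterization of convergence in the Thurston compactification via length functions with the collar lemma and the pants-decomposition constraint. By Thurston's theorem (cf. \cite{Th2} and \cite{FLP}), there are positive scalars $\epsilon_i \to 0$ with $\epsilon_i \len_{g_i}(\cdot) \to i(\mu,\cdot)$ uniformly on compact subsets of $\ML(S)$. Passing to subsequences, I may assume that $\{c_i\}$ converges to a geodesic lamination $\lambda$ in the Hausdorff topology and, using compactness of $\PML(S)$ after normalizing by a fixed filling collection of simple closed curves on $S$, that there exist scalars $s_i > 0$ bounded above by a uniform constant with $s_i c_i \to \chi \in \ML(S)\setminus\{0\}$; the standard compatibility between $\ML$-convergence and Hausdorff convergence of supports gives $|\chi|\subseteq \lambda$. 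Applying the uniform convergence on the compact set $\{s_i c_i\}\cup\{\chi\}$ yields
\[
\epsilon_i \len_{g_i}(s_i c_i)=\epsilon_i s_i \len_{g_i}(c_i)\leq \epsilon_i s_i L \longrightarrow 0
\]
while simultaneously $\epsilon_i \len_{g_i}(s_i c_i)\to i(\mu,\chi)$, so $i(\mu,\chi)=0$. Hence every minimal component of $\mu$ is either contained in $|\chi|$ (and therefore in $\lambda$) or disjoint from $|\chi|$; the latter case is handled below according to whether the component is a simple closed curve.

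For a minimal component $\nu$ of $\mu$ whose support is not a simple closed curve, let $\Sigma$ be its minimal supporting surface, so that $\xi(\Sigma)\geq 4$ and $\nu$ fills $\Sigma$. Because $c_i$ is a pants decomposition of $S$, it must contain at least $\xi(\Sigma)-3\geq 1$ simple closed curves essential and non-peripheral in the interior of $\Sigma$. The other minimal components of $\mu$ have supports disjoint from $|\nu|$, and hence from the interior of $\Sigma$ (the complement of $|\nu|$ in $\Sigma$ being a union of disks), so $i(\mu,\alpha)=w_\nu\, i(\nu,\alpha)$ for every $\alpha\in \ML(\Sigma)$, where $w_\nu$ is the weight of $\nu$ in $\mu$. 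Repeating the normalization-and-limit argument of the previous paragraph applied to these essential-in-$\Sigma$ components of $c_i$ inside $\ML(\Sigma)$, I obtain a non-zero $\eta\in \ML(\Sigma)$ with $i(\nu,\eta)=0$; a short auxiliary argument (showing that winding around a peripheral curve of $\Sigma$ would force that curve to be short and hence a component of $c_i$, contradicting disjointness of distinct pants-decomposition components) rules out $|\eta|$ being purely peripheral, so by arationality of $\nu$ on $\Sigma$ and minimality of $\nu$ we conclude $|\eta|=|\nu|$. Since $|\eta|$ lies in the Hausdorff limit of these essential-in-$\Sigma$ curves, $|\nu|\subseteq\lambda$.

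The remaining case, a simple closed curve component $\gamma$ of $\mu$ disjoint from $|\chi|$, is the main obstacle. If $\gamma\in c_i$ for infinitely many $i$ there is nothing to prove, so assume $\gamma\notin c_i$ for all large $i$; the pants-decomposition property then forces some component $c_i^*$ of $c_i$ with $i(c_i^*,\gamma)\geq 1$. The collar lemma combined with $\len_{g_i}(c_i^*)\leq L$ bounds $i(c_i^*,\gamma)$ uniformly and forces $\len_{g_i}(\gamma)$ to stay bounded below. A constant subsequence $c_i^*\equiv c^*$ is impossible, for such a fixed curve crossing $\gamma$ would satisfy $\epsilon_i\len_{g_i}(c^*)\to w_\gamma\, i(c^*,\gamma)>0$ and so have unbounded length; hence $c_i^*$ diverges projectively. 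Extracting $r_i c_i^*\to \zeta\neq 0$ with $r_i\to 0^+$ and repeating the zero-intersection argument gives $i(\gamma,\zeta)=0$. To conclude $\gamma\subseteq|\zeta|$, and therefore $\gamma\subseteq\lambda$, I rule out projective divergence of $c_i^*$ in any direction not involving $\gamma$: if $c_i^*$ were accumulating projectively on a measured lamination with support disjoint from $\gamma$, the bounded-length hypothesis would force any simple closed curves around which $c_i^*$ winds to be short in $g_i$ and hence components of $c_i$ by the short-curve argument, contradicting the fact that $c_i^*$ is a distinct component of the same pants decomposition winding toward them. Making this combinatorial-metric dichotomy precise, especially when the hypothetical limit direction is not itself a single closed curve, is the hard part of the argument.
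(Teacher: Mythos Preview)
Your first paragraph is fine and is indeed the natural opening move: normalising $c_i$ in $\ML(S)$, passing to a limit $\chi$, and using $\epsilon_i\len_{g_i}(s_ic_i)\to i(\mu,\chi)$ together with $\epsilon_i s_i L\to 0$ to get $i(\mu,\chi)=0$ is correct and gives $|\chi|\subset\lambda$.

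The second paragraph, however, contains a genuine error. The assertion that a pants decomposition $c_i$ of $S$ must contain at least $\xi(\Sigma)-3$ simple closed curves essential and non-peripheral in $\Int\Sigma$ is false. Take $S=S_{0,5}$ with punctures $1,\dots,5$, the pants decomposition $\{c_1,c_2\}$ where $c_1$ bounds a disc about $\{1,2\}$ and $c_2$ one about $\{4,5\}$, and let $\Sigma$ be the four-holed sphere bounded by a curve $\gamma$ enclosing $\{1,4\}$. Then $\xi(\Sigma)=4$, yet both $c_1$ and $c_2$ cross $\gamma$, so no pants curve lies in $\Sigma$ at all. In general $c_i\cap\Sigma$ can consist entirely of essential arcs, and your argument, which needs a nonzero element of $\ML(\Sigma)$ arising from closed components of $c_i$, breaks down. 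The paper avoids this by working with the full intersection $c_i\cap\Sigma$ as a system of arcs and curves: one takes a weak limit $\gamma$ of $r_i(c_i\cap\Sigma)$ in the space of weighted arcs-and-curves on $\Sigma$, observes that either $\gamma$ is a lamination in $\Int\Sigma$ or a nonempty union of essential arcs, and in both cases $i(\mu_0,\gamma)>0$ (arationality of $\mu_0$ forces positive intersection with any lamination of different support, and any essential arc crosses a filling lamination), which forces $\len_{g_i}(c_i)\to\infty$.

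Your third paragraph is, as you yourself flag, incomplete, and I do not see how to close it along the lines you suggest. Knowing only $i(\gamma,\zeta)=0$ for the projective limit $\zeta$ of a single crossing component $c_i^*$ does not pin down $\gamma\subset|\zeta|$; the sketch about ``winding forces shortness forces membership in $c_i$'' does not survive the case where $|\zeta|$ is an arational lamination on some subsurface disjoint from $\gamma$, since there are no short curves being wound around. The paper's argument for the simple-closed-curve component $\mu_0$ is instead a direct annular estimate: one takes a collar $A_i(\mu_0)$ and analyses the shortest essential arc $\alpha_i$ in it. Because $[\mu_0]$ appears in the Thurston limit, $\alpha_i$ must twist more and more around $\mu_0$; if $\lambda$ did not contain $\mu_0$ then the twisting of $c_i\cap A_i(\mu_0)$ stays bounded, so the relative twist between $c_i$ and $\alpha_i$ diverges, forcing $\len_{g_i}(c_i|A_i(\mu_0))\to\infty$ (separately in the three regimes $\len_{g_i}(\mu_0)\to 0$, bounded, or $\to\infty$). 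This is both shorter and avoids the combinatorial dichotomy you are trying to set up.
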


\begin{proof}
Let $\lambda$ be the Hausdorff limit of a convergent subsequence of $\{c_i\}$.
Since $c_i$ is pants decomposition of $S$, every measured lamination  on $S$ except for the components of $c_i$ intersects $c_i$ essentially.
Therefore every measured lamination on $S$ other than those contained in  $\lambda$ intersects $\lambda$ essentially.
Let $\mu_0$ be a component of $\mu$, and suppose that its support is not a minimal component of $\lambda$.
Then $\mu_0$ must intersect $\lambda$ transversely.

We shall first consider the case when $\mu_0$ is not a simple closed curve.
Let $\Sigma$ be the minimal supporting surface of $\mu_0$.
We consider a sequence of essential arcs and simple closed curves $c_i \cap \Sigma$ on $\Sigma$.
Note that $c_i \cap \Sigma$ converges to $\lambda \cap \Sigma$ with respect to the Hausdorff topology, which is non-empty.
If $\lambda \cap \Sigma$ has a minimal component contained in $\Int \Sigma$, then there is a sequence of positive numbers $r_i$ going to $0$ such that $r_i c_i \cap \Sigma$ converges  to a measured lamination $\gamma$ in $\Sigma$.
Otherwise we can find a bounded sequence of positive numbers $r_i$ such that $\{r_i c_i\}$ converges to a non-empty union $\gamma$ of essential arcs.
(The limit is taken in the space of weighted essential curves in $\Sigma$ with the weak topology of transverse measures.)
In either case, let $R$ be $\max_{i} r_i$.
Now, $\length_{g_i}(c_i) \geq r_i\length_{g_i}(c_i)/R$, where the right hand goes to $\infty$ since  $i(\mu_0, \gamma)>0$ and an arc with non-zero intersection with the Thurston limit has length going to $\infty$ if we consider hyperbolic structures on $\Sigma$ with geodesic boundaries.
This implies that $\length_{g_i}(c_i)$ must also go to $\infty$, which is a contradiction.

Next suppose that $\mu_0$ is a simple closed curve.
If the length of $\mu_0$ with respect to $g_i$ goes to $0$, then we can take an annular neighbourhood $A_i(\mu_0)$ of $\mu_0$ whose width (with respect to $n_i$) goes to $\infty$ as $i \rightarrow \infty$.
Since $\lambda$ intersects $\mu_0$ essentially, $c_i$ passes through $A_i(\mu_0)$ for large $i$.
This implies the length of $c_i$ in $(S,g_i)$ goes to $\infty$, which is a contradiction.
Next suppose that the length of $\mu_0$ is bounded from both above and below by positive constants.
Then we can take an annular neighbourhood $A_i(\mu_0)$ whose width is bounded away from $0$.
Consider the shortest essential arc $\alpha_i$ in $A_i(\mu_0)$.
Since $\mu_0$ is contained in the limit lamination $[\mu]$ of $\{g_i\}$, the shortest arc $\alpha_i$ must spirals around $\mu_0$ more and more as $i \rightarrow \infty$.
Since $\lambda$ does not contain $\mu_0$, the number of spiralling of $c_i$ around $\mu_0$ is bounded.
This means twisting number between $\alpha_i$ and $c_i|A_i(\mu_0)$  goes to $\infty$.
Therefore, the length of $c_i|A_i(\mu_0)$ goes to $\infty$ as $i \rightarrow \infty$, which is a contradiction.
In the case when the length of $\mu_0$ goes to $\infty$, we take $A_i(\mu_0)$ whose width goes to $0$ as $i \rightarrow \infty$.
Also in this case, the shortest essential arc $\alpha_i$ spirals around $\mu_0$ more and more as $i \rightarrow \infty$.
Since the twisting number between $\alpha_i$ and $c_i|A_i(\mu_0)$ goes to $\infty$ also in this case, we see that the length of $c_i|A_i(\mu_0)$ goes to $\infty$.
This is a contradiction.
%As above, we take $r_i$ so that $r_i c_i$ converges to arcs wi
%Again, since $\lambda$ intersects $\mu_0$ essentially,  the total length of $c_i$ goes to $\infty$ by the definition of the Thurston compactification.
%This contradicts our assumption that the lengths of the $c_i$ are bounded.
%
%
%Thus we have shown that every  component of $|\mu|$ is contained in  $\lambda$, which completes the proof.
\end{proof}

Combining this lemma with Theorem \ref{limit laminations}, we get the following corollary.

\begin{corollary}
\label{Thurston limit}
In the setting of Theorem \ref{main limit laminations}, let $[\mu^-]$ and $[\mu^+]$ be   projective laminations to which $\{m_i\}$ and $\{n_i\}$ converge  in the Thurston compactification of $\mathcal T(S)$ after taking subsequences.
Then each minimal component of $|\mu^+|$ is either an upper algebraic ending lamination  or an upper algebraic parabolic curve of $\mathbf M$.
Similarly, each component of $|\mu^-|$ is either  the ending lamination of a lower algebraic simply degenerate end or a lower algebraic parabolic curve of $\mathbf M$.
\end{corollary}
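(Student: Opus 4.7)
The plan is to combine three tools: Lemma \ref{limit of pants} to convert Thurston convergence into Hausdorff convergence of shortest pants decompositions, Theorem \ref{limit laminations} to identify the non-simple-closed-curve minimal components, and Lemma \ref{mu is ending} together with the geometry of the model manifold to handle the simple closed curve components.

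First I would invoke Bers' theorem to obtain a uniform upper bound on the total hyperbolic length of the shortest pants decomposition $c_{n_i}$ of $(S,n_i)$, and similarly for $c_{m_i}$ of $(S,m_i)$. After passing to a subsequence, I may assume $\{c_{n_i}\}$ converges in the Hausdorff topology to a geodesic lamination $\nu^+$ and $\{c_{m_i}\}$ converges to $\nu^-$. Applying Lemma \ref{limit of pants} to the sequence $\{n_i\}$ with its shortest pants decompositions shows that every component of $|\mu^+|$ appears as a minimal component of $\nu^+$, and likewise every component of $|\mu^-|$ is a minimal component of $\nu^-$. For each non-simple-closed-curve minimal component of $|\mu^+|$ (respectively $|\mu^-|$), Theorem \ref{limit laminations} then immediately provides the desired identification with the ending lamination of an upper (respectively lower) algebraic simply degenerate end of $\mathbf M$.

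The remaining and principal step is to show that each simple closed curve component $c$ of $|\mu^+|$ is an upper algebraic parabolic curve of $\mathbf M$. By Lemma \ref{mu is ending}, the image $\Psi(c)$ is parabolic in $\Gamma$, so $c$ corresponds to a parabolic locus in $M'_0$; by Thurston's covering theorem together with the correspondence used in the proof of Theorem \ref{main limit laminations}, this locus lifts to an algebraic parabolic curve $c^*$ of $\mathbf M$.

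I expect the main obstacle to be verifying that $c^*$ is upper rather than purely lower. To this end I would argue that, since $c$ is a simple closed curve component of the Hausdorff limit $\nu^+$, it must coincide with a curve in the shortest pants decomposition $c_{n_i}$ for sufficiently large $i$, and hence be a base curve of the terminal marking $T(h_i)$ of the hierarchy $h_i$. Consequently the corresponding Margulis tube in $\mathbf M_i$ sits in the upper region of the model, above $\Phi_i^{\mathbf M}(S)$. Transporting via the approximate isometry $\rho_i^{\mathbf M}$ of Proposition \ref{limits of models}, this tube maps into the portion of $\mathbf M$ lying above $g'(S)$, so the corresponding boundary component of $\mathbf M$ is either a torus (in which case $c^*$ is both upper and lower by the paper's convention) or an open annulus touching upper ends; in either case $c^*$ is upper as required. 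The argument for simple closed curve components of $|\mu^-|$ is entirely symmetric.
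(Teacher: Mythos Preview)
Your treatment of the non-simple-closed-curve components is correct and matches the paper: Lemma~\ref{limit of pants} plus Theorem~\ref{limit laminations} does the job. The gap is in the simple closed curve case, specifically in your argument that $c$ is \emph{upper}.

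The assertion ``since $c$ is a simple closed curve component of the Hausdorff limit $\nu^+$, it must coincide with a curve in the shortest pants decomposition $c_{n_i}$ for sufficiently large $i$'' is false. A simple closed curve can appear as a minimal component of the Hausdorff limit of pants decompositions without ever being one of the pants curves: the curves of $c_{n_i}$ may spiral more and more around $c$ (for instance under iterated Dehn twisting), forcing $c$ into the Hausdorff limit while $c\notin c_{n_i}$ for any $i$. Consequently you cannot conclude that $c$ is a base curve of $T(h_i)$. Even if you could, the next step---that the associated Margulis tube lies ``above $\Phi_i^{\mathbf M}(S)$'' and that this survives the approximate isometry to place the corresponding boundary component of $\mathbf M$ above $g'(S)$---is not justified: being a base curve of the terminal marking does not automatically give a Margulis tube at all, and the vertical position of such a tube relative to $g'(S)$ requires real work.

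The paper's proof of this step is quite different. It argues by contradiction: assume $c$ is \emph{not} an upper algebraic parabolic curve, and then analyses three cases according to what lies above $g'(c)$ in $\mathbf M$ (a non-algebraic simply degenerate end into which a domain containing $c$ is homotopic; a geometrically finite end; or an upper algebraic parabolic curve intersecting $c$). In each case one derives a contradiction using Corollary~\ref{non-algebraic brick}, boundedness of lengths in $(S,n_i)$, or the already-proved clause of Theorem~\ref{limit laminations} that upper parabolic curves cannot cross minimal components of $\nu^+$. You should replace your direct attempt with this case analysis.
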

\begin{proof}
As usual, we shall only deal with $|\mu^+|$.
%Let $c_i$ be the shortest pants decomposition for $(S, n_i)$, and denote its Hausdorff limit, after passing to a subsequence, by $\nu^+$.
%Then by Lemma \ref{limit of pants}, every minimal component of $|\mu^+|$ is contained in $\nu^+$.
%By Theorem \ref{limit laminations} every component of $\nu^+$ that is not a simple closed curve is an upper ending lamination, and hence so is every component of $|\mu^+|$ that is not a simple closed curve.
%Let $c$ be a component of $|\mu^+|$ which  is a simple closed curve, which is contained in $\nu^+$.
%Then by Lemma \ref{mu is ending}, $\psi(c)$ is parabolic.
%
Each component of $|\mu^+|$ that is not a simple closed curve is the ending lamination of an upper simply degenerate end by Theorem \ref{limit laminations} and Lemma \ref{limit of pants}.
Let $c$ be a component of $|\mu^+|$ which  is a simple closed curve.
Then by Lemma \ref{mu is ending}, $\psi(c)$ is parabolic.
Therefore $c$ is an algebraic parabolic locus in $\mathbf M$.
It remains to show that $c$ is upper.

Suppose that $c$ is not upper, seeking  a contradiction.
This assumption implies, in particular, that if $c$ is isolated and is homotopic to a curve lying on a torus boundary $T$ of $\mathbf M$, then the standard algebraic immersion $g'$ does not go around $T$ by our definition of the upperness.
Since there is no essential half-open annulus tending to a wild end, there are only three possibilities for the curve $c$: (1) the first  is when $c$ lies  in a domain $F$ of $S$ as a non-peripheral curve and $g'(F)$ is homotopic into some simply degenerate end above $g'(S)$, (2) the second is when there exists $F$ containing $c$ as above such that $g'(F)$ is homotopic into geometrically finite end, lying on $S \times \{1\}$, and (3) the third is when there are a domain $F$ as above and either a simply degenerate end or a horizontal annulus on a torus boundary component above $g'(S)$, corresponding to $\Sigma \times \{t\}$, such that $c$ intersects $\Sigma$ essentially and $g'(F \cap \Sigma)$ can be homotoped into $\Sigma \times \{t-\epsilon\}$ for arbitrarily small $\epsilon >0$.

(1) Suppose that $g'(F)$ is homotopic into a simply degenerate end $e$ corresponding to $\Sigma \times \{t\}$.
Then its  ending lamination $\lambda$ intersects $c$ essentially.
%We shall argue as in the proof of Theorem \ref{limit laminations}, where we dealt with the case when $g'(\Sigma')$ is homotopic into a simply degenerate end.
%Let $B=S' \times [s,t)$ be a simply degenerate brick of $\mathbf M$ containing $e$.
%By Corollary  \ref{non-algebraic brick}, there are a brick $B_i=S'_i \times [s_i,t_i]$  in $\hat {\mathbf M}_i$ containing $(\rho_i^{\mathbf M})^{-1}(B \cap B_{K_ir_i}(\mathbf M, x_\infty))$ and a geodesic $\gamma_i$ in a hierarchy $h_i$ supported on $S'_i$ which induces a tube union in $B_i$.
As in  \cref{simply degenerate case},  there are bricks $B_i \cong \Sigma_i \times J_i$ and a homeomorphism $f_i : \Sigma \rightarrow \Sigma_i$ induced from the approximate isometry between $\mathbf M$ and $\mathbf M_i$, such that $\{f_i^{-1}(c_{n_i}|\Sigma_i)\}$ converges to a geodesic lamination containing $\lambda$ in the Hausdorff topology.
%induced from $(\rho_i^{\mathbf M})^{-1}|\parreal B$, such that for the last vertex $v_i$ of $\gamma_i$ its image $f_i^{-1}(v_i)$ converges to $\lambda_e$.
Let $A(c)$ be an annulus with core curve $c$.
Since $g'(F)$ is homotopic into $e$, we see that $f_i|F$ is isotopic to the identity.
In particular, we can assume that $f_i|A(c)$ is the identity.
Therefore $c_{n_i}|A(c)$ regarded as a vertex of $\CC(A(c))$ converges a vertex represented by $\lambda|A(c)$.
%Therefore $v_i|A(c)=f_i^{-1}(v_i)|A(c)$ converges to some vertex in $\CC(A(c))$, corresponding to $\pi_{A(c)}(\lambda_e)$.
%This implies, again by the argument of \S 6 of Masur-Minsky \cite{MaMi}, that $\{c_{n_i}|A(c)\}$ converges to some vertex in $\CC(A(c))$ after passing to a subsequence.
On the other hand, by Lemma \ref{limit of pants}, $\{c_{n_i}\}$ must converge to a lamination containing $c$ in the Hausdorff topology; hence $c_{n_i}|A(c)$ diverges in $\CC(A(c))$.
This is a contradiction. 

%Since $\mathbf M$ is a geometric limit of $\{\mathbf M_i\}$, for sufficiently large $i$, there is a brick $B_i$ of the form $\Sigma(\lambda) \times [s,t']$ such that if we put a basepoint $x_i$ at uniformly bounded distance from the bottom boundary, then $(B_i, x_i)$ converges geometrically to $B$.
%
%Recall that each brick of the model manifold $\mathbf M_i$ has a hierarchy of tight geodesics.
%(See  \cite{OhSo}.)
%Let $h_i$ be the hierarchy on $B_i$, which consisting of tight geodesics on domains in $\Sigma(\lambda)$.
%Let $\gamma(h_i)$ be the main geodesic of $h_i$.
%Since $B$ is a simply degenerate brick, the length of $\gamma(h_i)$ goes to $\infty$.
%Let $t(\gamma(h_i))$ the terminal marking of $\gamma(h_i)$.

%Since we have already proved this corollary for minimal components of $|\mu_+|$ which are not simple closed curves, we see that there is no upper algebraic simply degenerate ends whose ending laminations intersect $c$.
%Also, since $c$ is not an upper parabolic curve, for any upper algebraic simply degenerate end, the support of its ending lamination $T(\lambda)$ cannot contain $c$ on its frontier either.
(2) In the second case, $g'(F)$ is homotopic into an upper geometrically finite end of $\mathbf M$.
Take a simple closed curve $\delta$ on $F$ intersecting $c$ essentially.
Since $g'(\delta)$ is homotopic to a curve in an upper geometrically finite end, by \cref{homotopic into gf}, $\length_{m_i}(\delta)$ is bounded as $i \rightarrow \infty$.
%, which is a geometric limit of $(S, m_i)$ with base point in $F$, we see that  $\length_{m_i}(\delta)$ is bounded as $i \rightarrow \infty$.
This contradicts the assumption that $c$ is contained in $\mu^+$.
%Thus we have shown that $c$ must be an upper algebraic parabolic curve.
%The same argument works for minimal components of $\mu_-$ which are simple closed curves, and we can show that they must be lower algebraic parabolic curves.

(3) Now we turn to the third case.
Suppose first that $\Sigma \times \{t\}$ is a simply degenerate end $e$ in a brick $B\cong \Sigma \times J$ with ending lamination $\lambda$.
Then, as in the case (1), by \cref{simply degenerate case}, there is a homeomorphism $f_i : \Sigma \to \Sigma_i$ induced from the approximate isometry between $\mathbf M$ and $\mathbf M_i$ such that the Hausdorff limit of $f_i^{-1}(c_{n_i}|\Sigma_i)$ contains $\lambda$.
%as in the proof of Theorem \ref{limit laminations}, for a homeomorphism $f_i: \Sigma \rightarrow \Sigma_i$ fixing induced by the approximate isometry between the brick $B_i\cong \Sigma_i \times J_i$ in $\mathbb M_i$ and $B$ in $\mathbb M$, the Hausdorff limit of $f_i^{-1}(c_{n_i}|\Sigma_i)$ contains the ending lamination of $e$, which we denote by $\lambda_e$.
%On the other hand, by Lemma \ref{limit of pants}, $c$ is a minimal component of the Hausdorff limit of $\{c_{n_i}\}$.
Since $F \cap \Sigma$ is homotopic into $\Sigma \times \{t-\epsilon\}$, and $f_i$ fixes $F \cap \Sigma$, we see that the Hausdorff limit of $f_i^{-1}(c_{n_i}|\Sigma_i)$ contains $c \cap \Sigma$.
This is a contradiction since $c$ intersects $\lambda$ transversely.
The same kind of argument works also for the case when $\Sigma \times \{t\}$ is a horizontal annulus on a torus boundary using \cref{case a} instead of \cref{simply degenerate case}.
%On the other hand, since $c$ is homotopic into $\Sigma \times \{t-\epsilon\}$, and $c$ is a minimal component of the Hausdorff limit of $c_{n_i}$ by Lemma \ref{limit of pants}, we see that the Hausdorff limit of $f_i^{-1}(c_{n_i}|\Sigma_i)$ contains $c \cap \Sigma$.
%(3) In the third case, let $d$ be  an upper algebraic parabolic curve with $i(c,d) >0$.
%Recall that $c$ is a minimal component of the Hausdorff limit $\nu^+$ of $\{c_{m_i}\}$ by Lemma \ref{limit of pants}.
%Therefore, $d$ intersects a minimal component of $\nu^+$ transversely in this situation, contradicting Theorem \ref{limit laminations}.
%Thus we have completed the proof.
%(3) In the third case, either (a) there is an upper isolated algebraic parabolic locus with core curve $d$ intersecting $c$ essentially, or (b) there is an algebraic upper simply degenerate end $e$ whose ending lamination intersects $c$ essentially.
%The case (3)-(b) can be dealt with in the same way as the case (1), and we get a contradiction.
%In the case (3)-(a), by Theorem \ref{limit laminations} every upper isolated parabolic locus has core curve contained in the limit $\nu_+$.
%This means that there is a component $d$ of $\nu_+$ intersecting $c$ of $|\mu^+|$ essentially.
%This contradicts Lemma \ref{limit of pants}.
\end{proof}

\section{Proofs of Theorem \ref{+-side --side}, Theorem \ref{main} and Theorem \ref{scc case}}
We can now prove Theorem \ref{+-side --side}, Theorem \ref{main} and Theorem \ref{scc case} making use of our results obtained in the previous section.

\subsection{Proof of Theorem \ref{+-side --side}}
We consider the geometric limit $M_\infty$ of $M_i$ and the model $\mathbf M$ of its non-cuspidal part as before.
By the definition of algebraic simply degenerate ends of $\mathbf M$, they are mapped by the model map $f$ to simply degenerate ends of $(M_\infty)_0$ which lift to those of the algebraic limit $(M')_0$.
Upper ends among them are mapped to those lifted to upper ends of $(M')_0$, and lower ones to those lifted to lower ends of $(M')_0$.
Now, by Corollary \ref{Thurston limit}, every component of $|\mu^+|$ that is not a simple closed curve is an  upper algebraic ending lamination.
Therefore, it is the ending lamination of a upper simply degenerate end of $(M')_0$.
The same argument works for $|\mu^-|$.

The second paragraph of the statement also follows immediately from Corollary \ref{Thurston limit}.
\subsection{Proof of Theorem \ref{main}}
Suppose, seeking a contradiction, that $\{qf(m_i, n_i)\}$ as in the statement of Theorem \ref{main} converges after taking a subsequence.
Then by Theorem \ref{+-side --side}, $\mu_0^+$ is an ending lamination of an upper end of $(M')_0$ and $\mu_0^-$ is that of a lower end of $(M')_0$.
Let $\Sigma^+$ and  $\Sigma^-$ be the minimal supporting surfaces of $\mu_0^+$ and $\mu_0^-$ respectively, which were assumed to share at least one boundary component $c$.
Since $c$ lies on the boundary of both $\Sigma^-$ and $\Sigma^+$, it represents a $\integers$-cusp both above and below $\Psi(S)$.
This is impossible since no two distinct cusps have homotopic core curves.

\subsection{Proof of Theorem \ref{scc case}}
%boundary componentsを共有する場合はすでに出ているので，削除．
%Now we turn to Theorem \ref{scc case}, where we deal with sequences in $\teich(S) \times \teich(\bar S)$  whose coordinates converge to projective laminations sharing  simple closed curve components.
As in the statement, let $\mu^-$ and $\mu^+$ be two measured laminations on $S$ and suppose that there is a boundary component $c$ of a non-simple closed curve component of the minimal supporting surface of a non-simple closed curve component $\mu_0$ of $\mu^+$ which is  contained up to isotopy in either  $|\mu^-|$ 
%or the boundary  of the minimal supporting surface of a non-simple closed curve component $\mu_1$ of $\mu^-$,  
as in the part (1) of the statement.
We can argue in the same way also when the part (2) of the statement holds just by exchanging $+$ and $-$.
%whose supports  share only simple closed curves $c_1, \dots , c_r$.
%By our assumption, there is a component $\mu_0$ of either $\mu^+$ or $\mu^-$ whose minimal supporting surface $\Sigma(\mu_0)$ has some $c_j$ among $c_1, \dots , c_r$ on its frontier.
%We can assume that $\mu_0$ is a component of $\mu^+$ since the other case can be deal with in the same way just turning everything upside down.
%
%By Corollary \ref{Thurston limit}, $\mu_0$ is an upper ending lamination of $\mathbf M$, hence $c$ is an upper parabolic curve.
%% a curve $c_j$ shared by $\mu^-$ and $\mu^+$ is both upper and lower algebraic parabolic curve.
%On the other hand, by 

Suppose, seeking a contradiction, that $\{qf(m_i,n_i)\}$ converges passing to a subsequence.
By Corollary \ref{Thurston limit}, there is an upper algebraic simply degenerate end of $\mathbf M$ in the form $\Sigma(\mu_0) \times \{t\}$ with $\Sigma(\mu_0)$ the minimal supporting surface of $\mu_0$, which has $|\mu_0|$ as the ending lamination.
This implies that there is a boundary component of $\mathbf M$ which is an open annulus with core curve homotopic to $c$,  and one of whose ends tends to this simply degenerate end.
By Lemmata \ref{isolated} and \ref{ends of M'}, this shows that $c$ is an upper algebraic parabolic curve of $\mathbf M$ and moreover that  it cannot be a lower algebraic parabolic curve at the same time since $c$ does not lie on a torus boundary component.
In the same way, by \cref{Thurston limit}, if $c$ is contained in $|\mu_1|$ or lies on the boundary of minimal supporting of $\mu_1$ up to isotopy, it must be a lower algebraic parabolic curve of $\mathbf M$.
This is a contradiction.
%On the other hand, if $c$ is a component of $|\mu^-|$, then by Corollary \ref{Thurston limit} again, it is a lower algebraic parabolic curve.
Thus we have completed  proof of Theorem \ref{scc case}.

\section{Proof of Theorem \ref{generalised Ito}}
\subsection{Necessity}
We shall first show that the condition (1) is necessary.
Suppose, on the contrary, that there is $c_j$ among $c_1, \dots , c_r$ such that $\len_{n_i}(c_j)$ goes to $0$ whereas $\{(G_i,\phi_i)=qf(m_i,n_i)\}$ converges.
(The argument for the case when $\len_{m_i}(c_j)$ goes to $0$ is quite the same.)
Let $G_\infty$ be the geometric limit of a subsequence of $\{G_i\}$ as before, and set $M_i=\hyperbolic^3/G_i$ and $M_\infty=\hyperbolic^3/G_\infty$.
Consider the model manifold $\mathbf M$ of $(M_\infty)_0$.
By Corollary \ref{Thurston limit}, $c_j$ is an upper algebraic parabolic curve of $\mathbf M$.
Let $g': S \rightarrow \mathbf M$ be a standard algebraic immersion.
Since $\len_{n_i}(c_j) \rightarrow 0$, the boundary blocks of $\mathbf M_i$ corresponding to the upper boundary are pinched along an annulus with core curve $c_j$, and the one in the geometric limit is split along the annulus.
By pushing forward this core curve and $\Phi_i(c_j)$ to $M_\infty$ and pull it back to $\mathbf M$, we get a homotopy between a core curve of an open annulus component of $\partial \mathbf M$ and  $g'(c_j)$.
By Lemma \ref{position of S}, this shows that $g'(S)$ cannot go around a torus boundary component whose longitude corresponds to $c_j$.
In particular, $c_j$ cannot be a lower algebraic parabolic curve of $\mathbf M$.
This contradicts, by way of Corollary \ref{Thurston limit}, the fact that $|\mu_-|$ also contains $c_j$.
This completes the proof of the necessity of the condition (1).

Next we turn to showing the necessity of the condition (2).
By Corollary \ref{Thurston limit} again, we see that if $\{(G_i,\phi_i)\}$ converges algebraically, each of $c_1, \dots , c_r$ must be both upper and lower algebraic parabolic curves of $\mathbf M$.
By Lemma \ref{position of S}, this is possible only when $g'(S)$ goes around a torus boundary component $T_j$ of $\mathbf M$ whose longitude is homotopic to $g'(c_j)$ for each $j=1, \dots , r$.
Suppose that $g'(S)$ goes $a_j$ times around $T_j$ for $a_j \in \integers \setminus \{0\}$, where we define the counter-clockwise rotation in Figure \ref{fig:go-around} (when viewed from the right to the left) to be the positive direction.
As before, we define the condition $a_j=0$ means that $g'(S)$ passes below $T_j$, and
if $g'(S)$ passes above $T_j$ not going around it, we define $a_j$ to be $-1$.
Since $g'(S)$ goes around $T_j$, we have $a_j \neq 0, -1$.
%added 2013

Let $\mathbf M_i$ be a model manifold of $(M_i)_0$.
Since $\mathbf M_i[0]$ converges to $\mathbf M[0]$ geometrically, there is a torus boundary  $T_j(i)$  of $\mathbf M_i[0]$ which is mapped to $T_j$ by the approximate isometry $\rho_i^{\mathbf M}$.
The torus $T_j(i)$ consists of two horizontal annuli and two vertical annuli.
We choose a meridian-longitude system of $T_j$ in such a way that the longitude $l_j$ lies on a horizontal annulus and the meridian $m_j$ is shortest among all the simple closed curves on $T_j$ intersecting the longitude at one point.
We choose orientations on $l_i$ and $m_i$ so that the three-dimensional orientation  determined by the frame formed by $l_i, m_i$ and the normal vector of $T_j$ pointing inward coincides with that of $\mathbf M$.
By pulling back this system using the approximate isometry $\rho_i^{\mathbf M}$ between  $\mathbf M_i[0]$ and $\mathbf M[0]$, we get an oriented longitude $l_j(i)$ and an oriented meridian $m_j(i)$ on $T_j(i)$.
There is a Margulis tube $V_j(i)$ attached to $T_j(i)$ in $\mathbf M_i$.
%Since $T_j(i)$ converges to a torus boundary , we see that   $\Re \omega_{\mathbf M_i}(V_j(i)) \rightarrow \infty$ whereas $\Im \omega_{\mathbf M_i}(V_j(i))$ is bounded.
The compressing curve of $V_i(j)$ intersects the longitude $l_j(i)$ only at one point.
Therefore we can express the homology class of the compressing curve as $k_i^j[l_j(i)]+[m_j(i)]$ if we choose an orientation on the compressing curve.
Since $T_j(i)$ converges geometrically to a torus boundary component of $\mathbf M$, we have $|k_i^j| \rightarrow \infty$.

%Let $h_i$ be a hierarchy of tight geodesics for $M_i$.
%%Since $\len_{n_i}(c_j)$ goes to $0$, we can assume that the terminal marking $v_\infty(i)$ of $h_i$ contains $c_j$ as a component for each $j$.
%Let $I(h_i)$ and $T(h_i)$ be the initial and terminal markings of $h_i$.
%As was explained above, we can assume that $c_j$ is contained in both $\base( I(h_i))$ and $\base (T(h_i))$.
%Let $\gamma_j^I$ and $\gamma_j^T$ be transversals of $c_j$ in $I(h_i)$ and $T(h_i)$ respectively. 
%Since $c_j$ is a base simple closed curve of the initial and terminal markings, there is a geodesic $\gamma_j(i)$ in $h_i$ supported on $c_j$.
%Since $c_j$ is a longitude of a torus boundary $T_j$ of $\mathbf M$, we see that $\length(\gamma_j(i))$ goes to $\infty$, whereas $\Im \omega_{h_i}(c_j)$ is bounded.

Fix some $j$,  and consider $c_j$.
In $\mathbf M$, there is a block $B^+$ intersecting $T_j$ by an annulus $A^+$ containing the upper horizontal  annulus of $T_j$ in the middle.
Similarly, there is a block $B^-$ intersecting $T_j$ by an annulus $A^-$ containing the lower horizontal annulus of $T_j$ in the middle.
One or both of these may be  boundary blocks.
We shall only consider the case when $a_j >0$, \ie the case when $g'(S)$ goes around $T_j$ counter-clockwise in Figure \ref{fig:go-around} if it proceeds from left to right.
Since $c_j$ is an algebraic parabolic curve, the standard immersion can be homotoped to pass through both $B^-$ and $B^+$.

Now, consider a simple closed curve $\gamma^-$ on the {\em lower} horizontal annulus of $B^-$ which is homotopic to  a core curve  if $B^-$ is an internal block.
When $B^-$ is a boundary block, we consider horizontal upper boundary components of $B^-$ adjacent to $A^-$.
%If $c_j$ is separating, there are two such surfaces, whose union we denote by $\Delta^-$, and if $c_j$ is non-separating there is only one such surface, which we denote by $\Delta^-$.
If there are two such surfaces, we denote their union by $\Delta^-$, and if there is only one such surface, we denote it by $\Delta^-$.
We take a simple closed curve $\gamma^-$ which lies in $A^- \cup \Delta^-$ and intersects the core curve of $A^-$ at two points when $\Delta^-$ consists of two components and at one point when $\Delta^-$ is connected.
We define $\gamma^+$ in the same way.
By pulling back $\gamma^+$ and $\gamma^-$ by $(\rho_i^{\mathbf M})^{-1}$, we get  simple closed curves $\gamma^+(i)$ and $\gamma^-(i)$, which are horizontal except for vertical parts passing through vertical annuli contained in $A^-$ or $A^+$.
Using the vertical projection to $S$ in $\mathbf M_i$, we regard $\gamma^+(i)$ and $\gamma^-(i)$ also as curves on $S$.

Let $g'_i: S \rightarrow \mathbf M_i$ be a pull-back of the standard immersion $g'$ obtained by composing $(\rho_i^{\mathbf M})^{-1}$.
We consider to homotope $g'_i$ to unwrap it around $T_j(i)$ and make the surface lie under $T_j(i)$, by making it pass  $a_j$ times through $V_j(i)$.
Let $g''_i$ be a surface obtained by modifying the part of $g'_i$ going around $T_j(i)$ to a horizontal annulus and giving a natural marking coming from the structure of $S \times I$, which is equal to a marking determined by a pull-back of a horizontal surface in $\mathbf M$ obtained by removing the parts of $g'$ going around torus boundaries.
Note that $g''_i$ and $g'_i$ are not homotopic as maps because of the difference of markings.
(This means that $g''_i$ is not homotopic to $\Phi^{\mathbf M}_i$.)

Recall that the homology class of compressing curves for $V_j(i)$ is expressed as  $k_i^j[l_j(i)]+[m_j(i)]$.
We fix an orientation on the compressing disc whose normal vector points towards the orientation given on $l_j(i)$.
Recall that the torus $T_j(i)$ consists of four annuli; two horizontal annuli, the lower annulus and the upper annulus, which are expressed as $A_j(i) \times \{u\}$ and $A_j(i) \times \{v\}$ with respect to the inclusion of $\mathbf M_i$ into $S \times [0,1]$, and two vertical annuli $\partial A_j(i) \times [u,v]$.
Consider an essential simple arc $\alpha: [0,1] \to A_j(i)$  such that its image in the lower annulus $\alpha_u(t)=(\alpha(t), u)$ is a part of the meridian: $m_j(i) \cap (A_j(i) \times \{u\})$.
Then by considering the homotopy class of a compressing curve as above, we see that $\alpha_u$ is homotopic (relative to the endpoints) to an arc obtained by joining a vertical arc expressed as $\alpha(0) \times [u,v]$ oriented upward, a horizontal arc $\alpha_v(t)=(\alpha(t),v)$, another horizontal arc representing $-k_i^j[l_j(i)] \times \{v\}$, and another vertical arc expressed as $\alpha(1) \times [u,v]$ oriented downward.
Therefore, each time $g'_i(S)$ passes through $V_j(i)$ in the positive direction, a curve $\delta$ on $S$ intersecting $c_j$ is twisted $-k_i^j$-times around $c_j$.
Here  the positive direction is the direction to which a horizontal surface below $V_j(i)$ passes to one above $V_j(i)$.
We fix such a transverse orientation for immersions of $S$.
If $g'_i(S)$ passes through $V_j(i)$ in the negative direction $\delta$ is twisted $k_i^j$-times around $c_j$.
(It goes $|k_i^j|$-times around $c_i$ in the  direction of the right hand Dehn twist if $k_i^j>0$ and in the opposite  direction if $k_i^j <0$.)
Since $\gamma^-(i)$ is homotopic to $g''_i(\gamma^-)$, we see that $g'_i(\tau^{k_i^ja_j}_{c_j}(\gamma^-))$ is homotopic to $\gamma^-(i)$.
Similarly, since $\gamma^+(i)$ is homotopic to $g''_i(\tau^{k_i^j}(\gamma^+))$, we see that $g'_i(\tau^{k_i^j(a_j+1)}_{c_j}(\gamma^+))$ is homotopic to $\gamma^+(i)$.

%Check signs...
%一般にlongitudeの向きとcompressing discの向きをしっかりと決める．

Let $h_i$ be a hierarchy of tight geodesics for $\mathbf M_i$ as before.
Since $V_j(i)$ appears as a Margulis tube in $\mathbf M_i$, we see that an annular neighbourhood $A(c_j)$ of $c_j$ supports a geodesic $h_i(c_j)$ in $h_i$.
By the definition of gluing blocks in Minsky \cite{Mi}, we see that $\pi_{A(c_j)}(\gamma^-(i))$ is the initial marking and $\pi_{A(c_j)}(\gamma^+(i))$ is the terminal marking of $h_i(c_j)$ if both $B^-$ and $B^+$ are internal blocks.
By \S 6 of Masur-Minsky \cite{MaMi}, we see that $\pi_{A(c_j)}(I(h_i))$ is in a uniformly  bounded distance from $\pi_{A(c_j)}(\gamma^-(i))$ and $\pi_{A(c_j)}(T(h_i))$ is in a  uniformly bounded distance from $\pi_{A(c_j)}(\gamma^+(i))$.
Even when $B^-$ or $B^+$ is a boundary block, we have the same properties: for, since the length  of $\gamma^-(i)$ with respect to $m_i$ or that of $\gamma^+(i)$ with respect to $n_i$ is bounded, its projection to $A(c_j)$ is within uniformly bounded distance from those of $I(h_i)$ or $T(h_i)$.
Since $g'_i$ is homotopic to $\Phi_i$ for large $i$ and $\mathbf M_i$ is identified with a subset of $S \times (0,1)$ using $\Phi_i$ as a marking, we see that $\pi_{A(c_j)}(I(h_i))$ is within a uniformly bounded distance from $\pi_{A(c_j)}(\tau^{k_i^ja_j}_{c_j}(\gamma^-))$, whereas $\pi_{A(c_j)}(T(h_i))$ is within a bounded distance from $\pi_{A(c_j)}(\tau^{k_i^j(a_j+1)}_{c_j}(\gamma^+))$.
%It follows that if we perform $k_i^ja_i$-time Dehn twist around $c_j$ for $m_i$ and $k_i^j(a_i+1)$-time Dehn twist around $c_j$ for $n_i$, the length of $\gamma$ is bounded for both $m_i$ and $c_i$.
If we consider the pulled-back metrics $(\tau^{k_i^ja_j}_{c_j})^* m_i$ and $(\tau^{k_i^j(a_j+1)}_{c_j})^* n_i$ instead of $m_i$ and $n_i$, the initial and the terminal markings are twisted around $c_i$ by $-k_ia_j$ times and $-k_i^j(a_j+1)$ times respectively.
Therefore for the quasi-Fuchsian representation $qf((\tau^{k_i^ja_j}_{c_j})^* m_i, (\tau^{k_i^j(a_j+1)}_{c_j})^* n_i)$, the tight geodesic supported on $c_j$ has length bounded as $i \to \infty$.
%This shows that for $qf((\tau^{-k_i^ja_j}_{c_j})^* m_i, (\tau^{-k_i^j(a_j+1)}_{c_j})^* n_i)$, the projection $\pi_{A(c_j)}(\gamma^+)$ is within a bounded distance from the projection of the terminal marking and $\pi_{A(c_j)}(\gamma^-)$ is within a bounded distance from that of the initial marking.
Therefore,  the shortest pants decompositions  of $(S, (\tau^{k_i^ja_j}_{c_j})^* m_i)$ and $(S, (\tau^{k_i^j(a_j+1)}_{c_j})^* n_i)$ have Hausdorff limits which do not spiral around $c_j$.
Since the lengths of $c_j$ with respect to $(\tau^{k_i^ja_j}_{c_j})^* m_i$  and $(\tau^{k_i^j(a_j+1)}_{c_j})^* n_i$ do not go to $0$, by the proof of \cref{limit of pants}, this implies that the limits of $(\tau^{k_i^ja_j}_{c_j})^* m_i$ and $(\tau^{k_i^j(a_j+1)}_{c_j})^* n_i$ in the Thurston compactification of $\mathcal T(S)$ do not contain $c_j$ as a leaf.
We repeat the same argument for every $c_j$, and let $p_i^j$ and $q_i^j$ be $k_i^ja_j$ and $k_i^j(a_j+1)$ respectively.
%We note that $k_i^j$ appearing in the statement should be taken to be $-k_i^j$ for this $k_i^j$
This completes the proof of the necessity.

If $a_j > 0$, \ie  $g'$ goes around the torus containing $c_j$ in the counter-clockwise in Figure \ref{fig:go-around} as it proceeds from left to right, then the torus lies in the positive direction viewed from $g'(S)$.
Therefore $f(c_j)$ is lifted to a curve lying above the core surface obtained the lift of  $f \circ g'$ in $M'$.
This shows that $c_j$ is a core curve of an upper parabolic locus if $a_j > 0$.
We can argue in the same way also when $a_j <0$.

\subsection{Existence}
We shall next show that the existence of limits of quasi-Fuchsian groups satisfying the conditions (1) and (2).
Our construction just follows the argument of Anderson-Canary \cite{AC}.

We first construct a geometrically finite Kleinian group $\Gamma_0$ such that $N=\hyperbolic^3/\Gamma_0$ is homeomorphic to the complement of $c_j \times \{1/2\}\, (j=1, \dots, r)$ in $S \times (0,1)$, and the conformal structures corresponding to the ends $S \times \{0\}$ and $S \times \{1\}$ are the same point $m_0 \in \mathcal T(S)$.
(Here we identify $S$ with $S \times \{0\}$ and $S \times \{1\}$ by the natural inclusions.)
We consider an immersion $g_0: S \rightarrow N_0$ which is in the standard form in the sense of Lemma \ref{position of S}, and wraps $a_j$ times around each $c_j \times \{1/2\}$ counted counter-clockwise as it proceeds from left to right in Figure \ref{fig:go-around} when we identify $N_0$ with its embedding in $S \times [0,1]$.

Next, we consider a quasi-conformal deformation of $\Gamma_0$.
Let $\mu_1^-, \dots , \mu_s^-$ and $\mu_1^+, \dots \mu^+_t$ be the components of $\mu^-$ and $\mu^+$ that are not  shared simple closed curves.
We consider the minimal supporting surfaces $\Sigma(\mu_1^-), \dots, \Sigma(\mu_s^-)$ of $\mu_1^-, \dots , \mu_s^-$ and isotope them so that if two boundary components are isotopic, they coincide.
We move the supporting surfaces of $\mu_1^+, \dots \mu^+_t$ in the same way.
Let $d_1^-, \dots, d_\sigma^-$ be the simple closed curve components of $|\mu^-|$ which are not shared by $|\mu^+|$, and $d_1^+, \dots, d_\tau^+$ those of $|\mu^+|$ not shared by $|\mu^-|$.
We let $e_1^-, \dots, e_q^-$ be the frontier components of $\cup_{j=1}^s \Sigma(\mu_j^-)$ which do not appear in $d_1^-, \dots, d_\sigma^-$, and in the same way, we let $e_1^+, \dots , e_r^+$ be the frontier components of $\cup_{j=1}^t \Sigma(\mu_j^+)$ which do not appear in $d_1^+, \dots , d_\tau^+$.
%In the same way, we define simple closed curves $d_1^+, \dots, d_\tau^+$ for the minimal supporting surface $\Sigma(\mu_1^+), \dots, \Sigma(\mu_t^+)$ of $\mu_1^+, \dots, \mu_t^+$.
We define a Kleinian group $\Gamma_k$ for $k \in \naturals$ to be the one obtained by quasi-conformally deforming the conformal structures $m_0$ on $S \times \{0\}$ by the earthquake with respect to $k(\cup_{j=1}^s \mu^-_j)$ and pinching along $d_1^-, \dots, d_\sigma^-; e_1^-, \dots , e_q^-$ so that the $\epsilon_0$-thin part around each of $d_1^-, \dots , d_\sigma^-$ has height $k$ whereas that around each of $e_1^-, \dots , e_q^-$ has height $\sqrt{k}$; and $m_0$ on $S \times \{1\}$ by the earthquake with respect to $k(\cup_{j=1}^t \mu_j^+)$ and pinching along $d_1^+, \dots, d_\tau^+; e_1^+, \dots , e_r^+$ in the same way.
%For each component $\Sigma'$ of $S \setminus (\cup_{j=1}^s \Sigma(\mu_j^-))$, 
%
The pinching is performed so that the conformal structures on $S \setminus ( (\cup_{j=1}^s \Sigma(\mu_j^-)) \cup d_1^- \cup \dots \cup d_\sigma^- )$  and  $S \setminus ( (\cup_{j=1}^t \Sigma(\mu_j^+)) \cup d_1^+\cup \dots \cup d_\tau^+ )$ do not change.
Let $N_k$ be $\hyperbolic^3/\Gamma_k$ and $h_k : N \rightarrow N_k$ a natural homeomorphism derived from the quasi-conformal deformation.
We regard $N_k$ also as embedded in $S \times [0,1]$ in such a way that the images of drilled out curves lie on $S \times \{1/2\}$, and the natural identification of this $S \times [0,1]$ with the one in which $N$ is embedded is compatible with $h_k$.
Then we get an immersion $g_k : S \rightarrow N_k$ which is defined to be the composition $h_k \circ g_0$.

In $S \times [0,1]$ where  $N$ is embedded, we regard $\mu^+$ as lying on $S \times \{1\}$ and $\mu^-$ as lying on $S \times \{0\}$.
Then  every essential annulus in $S \times [0,1]$ either intersects a torus cusp  or $\mu^+ \cup \mu^-$, or has  a boundary component contained  as a non-peripheral curve in a component of either $S\times \{0\} \setminus \mu^-$ or $S \times \{1\} \setminus \mu^+$, where the conformal structure is not deformed, by the conditions (1*), (2*), and (3*).
Therefore, by the main theorem of \cite{OhI}, we see that $N_k$ with marking determined by $h_k$ converges algebraically to a hyperbolic 3-manifold $N_\infty=\hyperbolic^3/\Gamma_\infty$ with a homeomorphism $h_\infty: N \rightarrow N_\infty$.
The laminations $\mu^-_1, \dots, \mu^-_s; \mu^+_1, \dots , \mu^+_t$ represent ending laminations of simply degenerate ends of $N_\infty$.
Since we pinched all frontier curves of $S \setminus (\cup_{j=1}^s \Sigma(\mu_j^-) \cup  d_1^- \cup \dots \cup d_\sigma^- )$ and of $S \setminus  (\cup_{j=1}^t \Sigma(\mu_j^+)  \cup d_1^+\cup \dots \cup d_\tau^+ )$, by Lemma 3 of \cite{Ab} each component $\Sigma^f$ of $S \setminus (\cup_{j=1}^s \Sigma(\mu_j^-) \cup  d_1^- \cup \dots \cup d_\sigma^- )$ or $S \setminus  (\cup_{j=1}^t \Sigma(\mu_j^+)  \cup d_1^+\cup \dots \cup d_\tau^+ )$ appears as a surface with punctures on the boundary at infinity of  $N_\infty$.
On the other hand, the same surface at infinity also appears in the geometric limit  since a subregion of a component of $\Omega_{G_i}$ invariant under $\pi_1(\Sigma^f)$ converges to the corresponding component of $\Omega_\Gamma$ in the sense of Carathéodory.
(See the proof of Proposition 4.2 of J{\o}rgensen-Marden \cite{JM}.))
This shows that each geometrically finite end of $(\hyperbolic^3/\Gamma_\infty)_0$ has a neighbourhood which is projected homeomorphically into the geometric limit.
 The covering theorem of Thurston and Canary (\cite{Th}, \cite{CaT}) and the argument of Lemma 2.3 \cite{OhQ} implies that every geometrically infinite end also has a neighbourhood descending homeomorphically to the geometric limit. 
Therefore the convergence is strong.
Let $g_\infty : S \rightarrow N_\infty$ be an immersion which is defined to be $h_\infty \circ g_0$.

Let $l_j$ and $m_j$ be respectively a longitude which lies in a tubular neighbourhood of $c_j \times \{1/2\}$ lying on a level surface along $c_j \times \{1/2\}$, and any meridian intersecting $l_j$ at one point.
Let $m_j(k)$ and $l_j(k)$  be a meridian and a longitude in $N_k$ obtained by pulling back $m_j$ and $l_j$ using approximate isometries.
We orient them so that the coordinate system $l_j-m_j$ determines an orientation of a torus around $c_j \times \{1/2\}$ whose normal vectors point to the inside of $N_\infty$.
Now, we consider a  Dehn filling of $N_k$ such that the compressing disc is attached along a curve represented by $k[l_j(k)]+[m_j(k)]$.
Since $N_k$ converges geometrically to $N_\infty$, we see, by passing to a subsequence, that the filling corresponding to $k[l_j(k)]+[m_j(k)]$ gives rise to a convex cocompact hyperbolic structure.
(A general version of hyperbolic Dehn surgery theorem, which applies to this case, was proved by Bromberg \cite{BromH}.)
%Anderson-Canary, Bromberg, Comarなど適当なreferenceをあげる．
%We can make $\{n_k\}$ a monotone increasing sequence going to $\infty$.
We define $M_k$ to be thus obtained geometrically finite hyperbolic $3$-manifold, which is homeomorphic to $S \times (0,1)$.
We let $G_k$ be the corresponding quasi-Fuchsian group, and $\phi_k : \pi_1(S) \rightarrow \pi_1(M_k)$ an isomorphism derived from the pull-back of $g_k$ by an approximate isometry between $N_k$ and $M_k$.
By the same argument as in \cite{AC}, we see that the conformal structure  at infinity of $M_k$ on the end corresponding to $S \times \{0\}$, denoted by $m_k$,  is  obtained by performing the $-ka_j$-time (right-hand) Dehn twist around the $c_j$, the earthquake along $k(\mu_1^-\cup\dots \cup\mu_s^-)$ and pinching along $d_1^-, \dots , d_\sigma^-; e_1^-, \dots, e_q^-$ from $m_0$, and that on the end corresponding to $S \times \{1\}$, denoted by $n_k$, by performing the $-k(a_j+1)$-Dehn twist around the $c_j$, the earthquake along $k(\mu_1^+\cup  \dots \cup\mu_t^+)$ and pinching along $d_1^+, \dots , d_\tau^+; e_1^+, \dots , e_r^+$.
(Note that the deformation by $-ka_j$-time Dehn twist is the same as the pull-back by $ka_j$-time Dehn twist.)
By Kerckhoff's cosine formula (see Corollary 3.4 of \cite{Ke} and also \S2.2 of \cite{OhE}) we see the contribution of the part of $\mu_j^-$ or $\mu_j^+$ to the growth of the length for a curve $\gamma$ traversing it is asymptotically the same as $ki(\gamma , \mu_j^-)$ or $ki(\gamma, \mu_j^+)$ as $k \rightarrow \infty$.
Therefore the divergence transverse to $\mu_j^-$ (resp. $\mu_j^+$) has the same order as that of $d_1^-, \dots, d_\sigma^-$ (resp. $d_1^+, \dots , d_\tau^+$), whereas that transverse to $e_1^-, \dots , e_q^-$ (resp. $e_1^-, \dots , e_r^+$) has lower order.
This shows that the limits in the Thurston compactification of the conformal structures $m_k$ and $n_k$ are $[\mu^-]$ and $[\mu^+]$ respectively, and that those of $(\tau^{-ka_1}_{c_1} \circ \dots \circ \tau^{-ka_r}_{c_r})^*(m_k)$ and $(\tau^{-k(a_1+1)}_{c_1} \circ \dots \circ \tau^{-k(a_r+1)}_{c_r})^*(n_k)$ are $[\mu^-_1 \cup\dots \cup \mu^-_s \cup d_1^- \cup \dots \cup d_\sigma^-]$ and $[\mu^+_1 \cup \dots \cup \mu^+_t \cup d_1^+ \cup \dots \cup d_\tau^+]$ respectively.
In the case when one of these latter two projective laminations is empty, the corresponding conformal structures stay in a compact set of the Teichm\"{u}ller space. 

By the diagonal argument, we see that $\{(G_k,\phi_k)\}$ converges algebraically to a subgroup of $\Gamma_\infty$ corresponding to the covering of $N_\infty$ associated to $(g_\infty)_* \pi_1(S)$.
Thus we have obtained a sequence of quasi-Fuchsian groups $\{(G_k, \phi_k)\}$ as we wanted.

For the example which  we have constructed above, if either $\mu^-$ or $\mu^+$ consists only of $c_1, \dots , c_r$, then $\{(\tau^{-ka_1}_{c_1} \circ \dots \circ \tau^{-ka_r}_{c_r})^*(m_k)\}$ or $\{(\tau^{-k(a_1+1)}_{c_1} \circ \dots \circ \tau^{-k(a_r+1)}_{c_r})^*(n_k)\}$ stays in a compact set of the Teichm\"{u}ller space.
We can also make it converge to a projective lamination $\nu^-$ or $\nu^+$ not containing $c_1, \dots, c_r$ as leaves, by composing the earthquake along $\sqrt{k} \nu^-$ or $\sqrt{k} \nu^+$.

\section{Non-existence of exotic convergence}
We shall prove Theorem \ref{no exotic convergence} in this section.

Let $\Gamma$ be a b-group as in the statement and $\psi: \pi_1(S) \rightarrow \Gamma$ an isomorphism giving the marking.
Let $\{(G_i, \phi_i)\}$ be a sequence of quasi-Fuchsian groups converging to $(\Gamma, \psi)$.
What we need to show is that the conformal structures at infinity of the bottom ideal boundaries of the $M_i=\hyperbolic^3/G_i$ are bounded in the Teichm\"{u}ller space then.

Let $M_\infty$ be a geometric limit (of a subsequence) of $M_i$ with basepoint coming from a fixed basepoint in $\hyperbolic^3$ as usual.
Let $\mathbf M$ be a model manifold of $(M_\infty)_0$ with a model map $f_\infty : \mathbf M \rightarrow (M_\infty)_0$.
Let $g' : S \rightarrow \mathbf M$ be a standard algebraic immersion.
By our assumption, there is no isolated algebraic parabolic loci in $\mathbf M$.
This implies by Lemma \ref{isolated} that there is no torus boundary around which $g'$ can go.
If there is an algebraic simply degenerate end below $g'(S)$, it is mapped to a simply degenerate end of $(M_\infty)_0$ which is lifted to a lower simply degenerate end of $(\hyperbolic^3/\Gamma)_0$.
This is a contradiction since $\Gamma$ is a b-group.
Similarly, there is no lower algebraic parabolic locus.
Since $g'$ does not go around a torus boundary component, an end closest to $g'(S)$ among those below $g'(S)$ must be algebraic.
These imply that the only possible end below $g'(S)$ is a geometrically finite end corresponding to the entire $S \times \{0\}$.

The diameter of the manifold cobounded by $f_\infty \circ g'(S)$ and the lower boundary of the convex core in $(M_\infty)_0$ is finite since the manifold cobounded by $g'(S)$ and $S \times \{0\}$ in $\mathbf M$ has finite diameter.
This cobordism in $(M_\infty)_0$ can be pulled back to $(M_i)_0$ for large $i$.
Note that there is no closed geodesic in $(M_\infty)_0$ below the lower boundary of the convex core.
This implies that the distance from the lower boundary of the convex core of $M_i$ and the pull-back of the cobordism must go to $0$.
It follows that the lower boundary of the convex core of $M_i$ converges geometrically to that of $M_\infty$.
Since the cobordism above gives a marking on the lower boundary of the convex core homotopic to $g'$, the marked hyperbolic structure on the lower boundary component of  the convex core of  $M_i$ converges to that of $M_\infty$.
This shows that the lower conformal structure at infinity of $M_i$ is bounded in the Teichm\"{u}ller space as $i \rightarrow \infty$ by Sullivan's theorem (see Epstein-Marden \cite{EM}).

\section{Self-bumping}
\label{sec:self-bumping}
In this section, we shall prove Theorem \ref{bumping theorem} and Corollary \ref{no self-bumping}.
For that, we shall show that for $\{qf(m_i,n_i)\}$ as in the statement of Theorem \ref{bumping theorem}, there is a continuous deformation to a strong convergent sequence whose algebraic limit is a quasi-conformal deformation of $\Gamma$.
%This will be done by rearranging the resolution of the hierarchy $h_i$ for $qf(m_i, n_i)$ so that it contains a special slice which gives rise to a strong convergent sequence of quasi-Fuchsian groups.
This will be done by using a model manifold of $qf(m_i,n_i)$ with a special property, which we shall construct in Lemma \ref{modified models}.
This model does not come from a hierarchy of tight geodesics $h_i$ as before, but its geometric limit can be better understood.
Let us state the existence of a deformation as a proposition.

\begin{proposition}
\label{deformation to strong one}
We consider quasi-Fuchsian groups $qf(m_i,n_i)$ and their algebraic limit $(\Gamma, \psi)$ as in Theorem \ref{bumping theorem}.
Let $c_1, \dots , c_s$ be upper parabolic curves and $c'_1, \dots , c_t'$  lower parabolic curves on $S$ of $(\Gamma, \psi)$.
Then, passing to a subsequence of $\{(m_i, n_i)\}$,  there is an arc $\alpha_i:[0,1] \rightarrow QF(S)$ with the following properties.
Let $(\bar m_i, \bar n_i)$ denote a point in $\mathcal T(S) \times \mathcal T(\bar S)$ such that $qf(\bar m_i, \bar n_i)=\alpha_i(1)$.
\begin{enumerate}
\item $\alpha_i(0)=qf(m_i,n_i)$.
\item $\{\alpha_i(1)=qf(\bar m_i, \bar n_i)\}$ converges strongly to a quasi-conformal deformation $(\Gamma', \psi')$ of $(\Gamma, \psi)$ as $i \rightarrow \infty$.
\item The length of each of $c_1, \dots , c_s$ with respect to $\bar n_i$ goes to $0$, and the length of each of $c'_1, \dots , c'_t$ with respect to $\bar m_i$ also goes to $0$ as $i \rightarrow \infty$.
\item In the case when $\Gamma$ is a b-group, the lower conformal structure at infinity of $\alpha_i(t)$ is constant with respect to $t$, for every $i$.
\item For any neighbourhood $U$ in $AH(S)$ of the quasi-conformal deformation space $QH(\Gamma, \psi)$, there exists $i_0$ such that for $i > i_0$, the arc $\alpha_i$ is contained in $U$.

\end{enumerate}

\end{proposition}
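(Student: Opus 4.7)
The plan is to realise $\alpha_i$ as a Fenchel--Nielsen pinching path, using the structure of the modified model manifolds $\mathbf M_i$ of $qf(m_i,n_i)$ provided by Lemma~\ref{modified models}. By Lemma~\ref{position of S} and Proposition~\ref{simply degenerate brick} applied to the geometric limit, each parabolic curve $c_j$ of $(\Gamma,\psi)$ corresponds, via pull-back through the approximate isometry $\rho_i^{\mathbf M}$, to a Margulis tube $V_j(i)$ in $\mathbf M_i$ whose coefficient $\omega_{\mathbf M_i}(V_j(i))$ diverges as $i\to\infty$, and the tube sits on the side of the algebraic locus $g'(S)$ dictated by whether $c_j$ is upper or lower. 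The hypothesis that $\Gamma$ has no isolated parabolic locus guarantees, via Lemma~\ref{isolated} and the analysis of Lemma~\ref{ends of M'}, that on the side of $V_j(i)$ away from $g'(S)$ the tube is bordered exclusively by blocks belonging to simply degenerate or wild ends of the geometric model; in the b-group case every $c_j$ is upper, so the lower boundary block of $\mathbf M_i$ is structurally untouched.

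I would construct the arc as follows. Choose pants decompositions of $S$ containing $c_1,\dots,c_s$ and $c'_1,\dots,c'_t$ respectively, and let $n_i(t)\in\teich(\bar S)$ be the Fenchel--Nielsen path starting at $n_i$ that monotonically shrinks the lengths of $c_1,\dots,c_s$ to $0$ as $t\to1$ while holding the remaining Fenchel--Nielsen parameters fixed. Define $m_i(t)\in\teich(S)$ analogously for $c'_1,\dots,c'_t$, taking $m_i(t)\equiv m_i$ in the b-group case. Setting $\alpha_i(t)=qf(m_i(t),n_i(t))$ yields a continuous arc in $QF(S)$ with $\alpha_i(0)=qf(m_i,n_i)$ and $(\bar m_i,\bar n_i)=(m_i(1),n_i(1))$ satisfying properties (1), (3) and (4) by construction. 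The strong convergence required in (2) is obtained by applying Theorem~\ref{generalised limit laminations} to $\{qf(\bar m_i,\bar n_i)\}$: the pinching converts the divergent Margulis tubes $V_j(i)$ into genuine $\integers$-cusps in the model of $qf(\bar m_i,\bar n_i)$, so the geometric limit collapses onto the algebraic limit, which by the ending lamination theorem is a quasi-conformal deformation $(\Gamma',\psi')$ of $(\Gamma,\psi)$.

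The crux of the proposition is condition (5). I would argue that restricted to a fixed compact core carrying $g'(S)$ inside $\mathbf M_i$, the Brock--Canary--Minsky bi-Lipschitz model metric of $\alpha_i(t)$ varies by an amount that tends to zero as $i\to\infty$, uniformly in $t\in[0,1]$. The reason is that the Fenchel--Nielsen pinching only affects blocks adjacent to $V_j(i)$ (and $V'_k(i)$), and by the non-isolated hypothesis these tubes are separated from $g'(S)$ by block towers whose depth goes to infinity with $i$, since each such tower must accommodate the approach to a simply degenerate or wild end of $\mathbf M$. Via the Minsky model map this uniform metric control translates into a uniform bound on the distance of $\phi_i(t)\circ\psi^{-1}$ from the identity in the representation variety, confining $\alpha_i$ to any preassigned neighbourhood $U$ of $QH(\Gamma,\psi)$ once $i$ is large enough.

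The main obstacle will be precisely this uniformity across the deformation family: one has to organise the bi-Lipschitz model maps of the $\mathbf M_i(t)$ into a family that agrees with $f_i$ on a compact neighbourhood of $g'(S)$ with constants independent of $t$ and converging to the correct constants as $i\to\infty$. This is exactly where the non-isolated hypothesis is indispensable — it prevents the pinching from propagating through a boundary block touching $g'(S)$, which would alter the algebraic representation in the uncontrolled way underlying the Anderson--Canary exotic convergence phenomenon and would make condition (5) fail.
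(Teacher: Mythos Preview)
Your approach is genuinely different from the paper's and has a real gap.

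The paper does \emph{not} use Fenchel--Nielsen pinching. Instead, it builds the modified models $\mathbf M_i'$ of Lemma~\ref{modified models}, reads off complete hierarchies on the top and bottom bricks $\hat B_i,\bar B_i$ and on the simply degenerate bricks $B_i^j$, and then walks step by step through a \emph{resolution} of these hierarchies. Each step of the resolution is an elementary move of clean markings, and to each marking $\mu$ the paper assigns a conformal structure $m(\mu)$ with that marking shortest. The arc $\alpha_i$ is the concatenation of short Teichm\"uller segments joining these successive $m(\mu)$'s, terminating at the marking associated to a split level surface at depth $\mathbf n_i$ in each $B_i^j$. The point of this construction is that for every $t$ one has an \emph{explicit} bi-Lipschitz model of $qf(\alpha_i^-(t),\alpha_i^+(t))$: it is literally the submanifold of $\mathbf M_i'$ cobounded by the two split level surfaces corresponding to the current markings, with new boundary blocks glued on. Claims~\ref{strong limit} and~\ref{any limit} then verify (2) and (5) by analysing the geometric limits of these explicit models.

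Your pinching argument breaks at the justification of (5). The assertion that the Margulis tubes $V_j(i)$ corresponding to the parabolic curves are ``separated from $g'(S)$ by block towers whose depth goes to infinity'' is backwards: in the model $\mathbf M'$ the parabolic annulus for $c_j$ sits on the vertical boundary of the simply degenerate brick $B^j$ at the level of its \emph{real} front, which is exactly where the horizontal algebraic locus $g'(S)$ lies (condition (4) of Lemma~\ref{modified models}). The deep tower lies on the far side, between the tube and the ideal front of $B^j$, not between the tube and $g'(S)$. So the tube is at bounded distance from $g'(S)$, and there is no buffer absorbing the effect of the pinching. Moreover, pinching $c_j$ in $n_i$ alters the \emph{terminal marking} of the hierarchy $h_i$, and you have given no mechanism to prevent this change from propagating through the top brick $\hat B_i$ down to the level of $g'(S)$ and altering the representation there. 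This is precisely the kind of uncontrolled propagation that the paper's resolution-walking construction is designed to circumvent: by truncating at carefully chosen split level surfaces, the paper guarantees that the part of the model near $g'(S)$ is \emph{identical} along the entire arc, not merely close. Your invocation of Theorem~\ref{generalised limit laminations} for (2) is also insufficient, since that result identifies ending laminations of an algebraic limit but says nothing about coincidence of algebraic and geometric limits.
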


\subsection{Proof of Proposition \ref{deformation to strong one}}
Our basic strategy for the proof of this proposition is as follows.
We first consider to deform continuously the model manifold of the geometric limit of $\{qf(m_i,n_i)\}$ to that of the strong limit of $\{qf(\bar m_i, \bar n_i)\}$.
Corresponding to this, we get a continuous deformation of the model manifold of $qf(m_i,n_i)$ to that of $qf(\bar m_i, \bar n_i)$.
This will give rise to an arc $\alpha_i$ as desired.

Set $(G_i,\phi_i)=qf(m_i,n_i)$ and $M_i=\hyperbolic^3/G_i$.
%We can assume as before that $\{\phi_i\}$ converges as representations.
We consider the geometric limit $M_\infty$ of $M_i$ with basepoints coming from some fixed basepoint in $\hyperbolic^3$.
Let $\mathbf M_i$ be a bi-Lipschitz model of $(M_i)_0$ with a model map $f_i$ and $\mathbf M$ that of $(M_\infty)_0$ as before.
Let $g': S \rightarrow \mathbf M$ be a standard algebraic immersion as in Lemma \ref{position of S}.
Let $E^1, \dots , E^p$ be the algebraic simply degenerate ends of $\mathbf M$.
Recall that $\mathbf M_i$ converges geometrically to the union of $\mathbf M$ and cusp neighbourhoods, and $\mathbf M_i$ corresponds to a hierarchy of tight geodesics $h_i$ determined by $G_i$.
%Recall that we have a hierarchy of tight geodesics $h_i$ corresponding to $G_i$ and it gives rise to a model manifold $\mathbf M_i$ which converges geometrically to the union of $\mathbf M$ and cusp neighbourhoods.
%{\em We suppose that one of these ends, $E_1$ does not lie on $S \times \{0, 1\}$.}
%We shall consider to deform $\{(G_i, \phi_i)\}$ to make $E_1$ lie on $S \times \{0,1\}$.
%We shall consider to deform $\{(G_i,\phi_i)\}$ to make all of  these ends lie on $S 
%\times 
%\{0\} \cup S \times \{1\}$ in the new geometric limit.

%\begin{lemma}
%\label{expunging ends}
%%Suppose that $E_1$ is a simply degenerate end of $\mathbf M$ which does not lie on $S \times \{0,1\}$.
%There is an arc $\beta_i: [0,1] \rightarrow QF(S)$ with the following properties.
%\begin{enumerate}
%\item $\beta_i(0)=qf(m_i,n_i)$.
%\item $\beta_i$ converges uniformly in $AH(S)$ to an arc $\beta_\infty$ lying in the quasi-conformal deformation space of $(\Gamma, \psi)$.
%\item Let $\mathbf M'$ be a bi-Lipschitz model manifold of the  geometric limit of $\{\beta_i(1)\}$.
%Then, the end of $\mathbf M'$ corresponding to $E_1, \dots, E_p$ lies in $S \times \{0,1\}$ with respect to a proper embedding of $\mathbf M'$ into $S \times [0,1]$.
%\end{enumerate}
%\end{lemma}
%
%\begin{proof}
We renumber $E^1, \dots , E^p$ so that $E^1, \dots , E^q$ are upper ends whereas $E^{q+1}, \dots , E^p$ are lower.
(It is possible that $q=0$ or $q=p$.)
We let $\Sigma^j$ be a subsurface of $S$ such that $E^j$ is contained in a brick $B^j$ of the form $\Sigma^j \times J^j$.
Since we assumed that $\Gamma$ has no isolated parabolic loci, each of $c_1, \dots , c_s$ is homotopic to a component of $\Fr \Sigma^j$ for $j=1, \dots, q$ and each of $c_1', \dots, c'_t$ is homotopic  to a component of $\Fr \Sigma^j$ for $j=q+1, \dots , p$.

We shall show that we can modify model manifolds $\mathbf M_i$ of $M_i$ to $\mathbf M_i'$ so that  in its geometric limit $\mathbf M'$ the ends $E^1, \dots , E^q$ lie on the same horizontal levels, and the same holds for  $E^{q+1}, \dots , E^p$.
This manifold $\mathbf M_i'$ will be constructed by removing product regions from $S \times (0,1)$ which should converge to neighbourhoods of these ends, and considering the standard brick decomposition.
We shall  construct a uniform Lipschitz map from $\mathbf M_i'$ to $M_i$ in \cref{Lipschitz} just by taking split level surfaces to pleated surfaces as in the construction of model manifolds by Minsky \cite{Mi}.
We need some more argument to show this map can be homotoped to a uniform bi-Lipschitz homeomorphism in \cref{bi-Lipschitz}.

In the following argument, we shall only describe the case when both $q>1$ and $p>q$; that is both upper algebraic simply degenerate ends and lower algebraic simply degenerate ends exist.
When one of these does not exist, we can modify the argument below easily just regarding it as an empty set.

%We first note the following which is just a consequence of Proposition \ref{simply degenerate brick}-(1).
%\begin{claim}
%\label{passing sigma}
%For sufficiently large $i$, there is a geodesic $\gamma_i^j$ in $h_i$ supported on $\Sigma^j$ for every $j$.
%\end{claim}
%
%Let $v^j_i$ be the first vertex of $\gamma_i^j$ for $j=1, \dots , q$, and the last vertex of $\gamma_i^j$ for $j=q+1, \dots , p$.
%Let $\sigma^-$ be the slice of $h_i$ 

\begin{lemma}
\label{modified models}
There are uniform bi-Lipschitz model manifolds $\mathbf M'_i$ for $(M_i)_0$ and $\mathbf M'$ for $(M_\infty)_0$, both of which are embedded in $S \times [0,1]$ preserving horizontal and vertical foliations, and have the following properties. 
\begin{enumerate}
\item We can choose basepoints in the thick parts so that $\mathbf M'_i[0]$ converges geometrically to $\mathbf M'[0]$ with these basepoints.
\item Under the same choice of the basepoints, $\mathbf M'_i$ converges geometrically to the union of $\mathbf M'$ and cusp neighbourhoods, which are geometric limits of Margulis tubes.
\item There is a homeomorphism from $\mathbf M$ to $\mathbf M'$ taking an algebraic locus of $\mathbf M$ to that of $\mathbf M'$.
(See \S \ref{algebraic limit} for the definition of algebraic locus.)
\item
We use the same symbol $E^j\, (j=1,\dots , p)$ to denote the end of $\mathbf M'$ corresponding to $E^j$ of $\mathbf M$.
Then, $E^j$  is contained in a brick $\bar B^j=\Sigma^j \times J^j$ of $\mathbf M'$ such that $\inf \bar B^1=\dots =\inf \bar B^q, \sup \bar B^1 =\dots =\sup \bar B^q, \sup \bar B^{q+1}=\dots =\sup \bar B^p,$ and $\inf \bar B^{q+1}=\dots =\inf \bar B^p$, with $\inf \bar B^1 > \sup \bar B^p$ unless $q=0$ or $p=q$.
\item An algebraic locus in $\mathbf M'$ can be homotoped to a horizontal surface lying between the horizontal levels of $\sup \bar B^p$ and $\inf \bar B^1$.
%there are two horizontal surfaces in $\mathbf M'$, one containing the ends corresponding to $E_1, \dots , E_q$ and the other containing those corresponding to  $E_{q+1}, \dots , E_p$.
\end{enumerate}
\end{lemma}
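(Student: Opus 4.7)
The plan is to re-embed the abstract labelled brick manifolds underlying $\mathbf M$ and each $\mathbf M_i$ so that the chosen ends sit at common levels, exploiting the fact that the bi-Lipschitz model structure depends only on the abstract block–tube decomposition together with the labels, not on the specific embedding into $S \times [0,1]$. A crucial preliminary observation is that under the hypothesis of Theorem \ref{bumping theorem}, Lemma \ref{isolated} forbids algebraic parabolic curves on torus boundary components of $\mathbf M$, so by Lemma \ref{position of S} the standard algebraic immersion $g' : S \to \mathbf M$ is already a horizontal embedding at some level $t_0 \in (0,1)$. Consequently we do not need to unwrap $g'$ around any tori, and can simply arrange for the algebraic locus to sit on a single horizontal slice of $\mathbf M'$.

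I would fix levels $t_- < t_0 < t_+$ in $(0,1)$ and build $\mathbf M'$ as follows. For every upper algebraic end $E^j$ ($j\leq q$), the brick $\Sigma^j \times J^j$ of $\mathbf M$ containing it has an ideal top front; I re-embed it as $\Sigma^j \times [a_j, t_+)$ so that all upper ideal fronts lie at the common level $t_+$, and analogously place lower ideal fronts at $t_-$. The remaining bricks of $\mathbf M$ lying above (resp.\ below) $g'(S)$ are vertically reparametrised to fit into $(t_0,t_+)$ (resp.\ $(t_-,t_0)$) by a piecewise-linear rescaling of vertical coordinates that preserves horizontal and vertical foliations as well as all brick-adjacency, block and Margulis tube decompositions. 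The assumption of no isolated parabolic loci, combined with Corollary \ref{Thurston limit}, ensures that every $c_j$ lies on the frontier of some $\Sigma^i$ with $i\leq q$ and every $c_k'$ on the frontier of some $\Sigma^i$ with $i>q$, so the open-annulus boundary components carrying these curves attach naturally along the vertical sides of $\bar B^1 \cup \dots \cup \bar B^q$ and $\bar B^{q+1} \cup \dots \cup \bar B^p$ respectively. For $\mathbf M_i'$ I would transport this re-embedding via the approximate isometry $\rho_i^{\mathbf M}$ on the geometric convergence region, and arrange the remaining portion of $\mathbf M_i$, which corresponds to the upper and lower geometrically finite ends of $M_i$, to fill the slabs $S \times [t_+,1]$ and $S \times [0,t_-]$. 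Proposition \ref{limits of models} supplies, for each $\bar B^j$, a pullback brick in $\mathbf M_i$ containing $(\rho_i^{\mathbf M})^{-1}$ of the portion of $\bar B^j$ inside the geometric convergence region, and we stretch this pullback brick vertically so that its top lies at height $b_j(i) \to t_+$ (resp.\ bottom at $a_j(i) \to t_-$) as $i \to \infty$.

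The main obstacle is to verify condition (1), namely that $\mathbf M_i'[k]$ still converges geometrically to $\mathbf M'[k]$ after the re-embedding. The re-embeddings induce uniformly bi-Lipschitz homeomorphisms $\mathbf M \to \mathbf M'$ and $\mathbf M_i \to \mathbf M_i'$, so composing the original approximate isometries with these homeomorphisms produces new approximate isometries whose bi-Lipschitz constants tend to one; the domains of definition still exhaust $\mathbf M'[k]$ because the vertical stretching applied near the upper and lower boundaries of $\mathbf M_i'$ pushes the untouched region toward the slabs $[t_+,1]$ and $[0,t_-]$, which become disjoint from any fixed compact subset of $\mathbf M'[k]$ as $i\to\infty$. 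Condition (2) holds because the re-embedding is the identity on the abstract labelled brick manifold, so algebraic loci are preserved. Condition (3) holds by construction. Condition (4) follows from the fact that $g'$ is already a horizontal embedding in $\mathbf M$ at level $t_0$ and the re-embedding preserves horizontal foliations, so the algebraic locus in $\mathbf M'$ is homotopic to the horizontal surface $S \times \{t_0\}$, which lies strictly between $\sup \bar B^p$ and $\inf \bar B^1$ by our choice of $t_0 \in (t_-,t_+)$.
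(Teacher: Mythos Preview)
Your approach has a genuine gap in achieving condition (3). A re-embedding of $\mathbf M$ into $S\times[0,1]$ that preserves horizontal and vertical foliations is rigidly constrained by the front-gluing combinatorics of the brick decomposition: whenever two bricks are glued along a common front, the vertical reparametrisations on them must agree at that level. In particular, if two upper algebraic bricks $B^1=\Sigma^1\times[s_1,t_1)$ and $B^2=\Sigma^2\times[s_2,t_2)$ have real fronts at distinct levels $s_1<s_2$, and there is an intermediate brick $B^3$ above part of $B^1$'s lower front and below $B^2$'s lower front (a configuration that does occur when $\mathbf M$ has non-algebraic ends or additional structure above $g'(S)$), then monotonicity forces $\inf\bar B^2>\inf\bar B^1$ under any such re-embedding. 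You can freely equalise the \emph{ideal} fronts, but not the real ones; the paper in fact does the former by a re-embedding at the very end, while the latter requires a different mechanism.

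The paper's proof takes a substantially different route and does not re-embed $\mathbf M$ at all. Instead it constructs a \emph{new} model: one places the solid tori $\mathcal T^\pm$ at fixed heights in $S\times[0,1]$, obtaining a five-level brick manifold $\bar{\mathbf M}$, and then invokes the drilling theorem of Bromberg and Brock--Bromberg. Drilling the short geodesics $c_1^i,\dots,c_s^i,{c_1^i}',\dots,{c_t^i}'$ out of $M_i$ yields a geometrically finite manifold $\check M_i$ for which the five-level $\check{\mathbf M}_i'$ is a uniform bi-Lipschitz model; filling Margulis tubes back in gives $\mathbf M_i'$, and its geometric limit $\mathbf M'$ inherits the alignment $\inf\bar B^1=\dots=\inf\bar B^q$ automatically from the five-level structure. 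The drilling theorem is the essential missing ingredient in your argument. Moreover, the explicit five-level structure of $\check{\mathbf M}_i'$ (with its bricks $\bar B_i$, $\hat B_i$, $B_i^j$) is used throughout the subsequent proof of Proposition~\ref{deformation to strong one} to build the resolutions $\tau(\bar B_i)$, $\tau(\hat B_i)$, $\tau(B_i^j)$; a mere re-embedding of the original $\mathbf M_i$ would not provide this.
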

\begin{proof}
For each component $c$ of $\Fr \Sigma^j$ for $\Sigma^j$ among $\Sigma^1, \dots, \Sigma^q$, which were defined above, we consider a solid torus $V(c)$ in $S \times (0,1)$ which has the form of $A(c) \times J_c$, where $A(c)$ is an annulus with core curve $c$ and $J_c$ is a closed interval in $[0,1]$ such that $\min J_c=\inf B^j$ and $\max J_c = \sup B^j$, where $B^j$ is a brick in $\mathbf M$ as described above.
Let $\bar {\mathcal T}^+$ be the union of  the $V(c)$ for all frontier components $c$ of $\Sigma^1, \dots , \Sigma^q$.
Even if a curve $c$ is homotopic to frontier components of two distinct $\Sigma^j$ and $\Sigma^{j'}$, we take only one solid torus.
In such a case, we take $J_c$ so that $\sup J_c = \min\{\sup B^j , \sup B^{j'}\}$, and $\inf J_c = \min\{\inf B^j, \inf B^{j'}\}$.
%(Note that $\inf B^j=\inf B^{j'}$ always holds in such a case.)
In the same way, we take $\bar{\mathcal T}^-$ for $\Sigma^{q+1}, \dots , \Sigma^p$.

Recall that there is an approximate isometry $\rho_i^{\mathbf M}$ between $\mathbf M_i[0]$ and $\mathbf M[0]$.
For each component $T$ of $\bar{\mathcal T}^+$ or $\bar{\mathcal T}^-$, the preimage of its boundary $(\rho_i^{\mathbf M})^{-1}(\partial T \cap \mathbf M[0])$ lies on a boundary component  of $\mathbf M_i[0]$, which bounds a solid torus in $\mathbf M_i$.
We denote the union of such solid tori for $\bar{\mathcal T}^+$ by $\bar{\mathcal T}^+_i$, and that for $\bar{\mathcal T}^-$ by $\bar{\mathcal T}^-_i$.
Although $(\rho_i^{\mathbf M})^{-1}$ is not defined on the entire $S \times (0,1)$, since $\rho_i^{\mathbf M}$ takes the algebraic locus $g'$ to an immersion homotopic to $\Phi_i$, it induces a homeomorphism $\varrho_i$ between $S\times (0,1) \setminus (\bar{\mathcal T}^+ \cup \bar{\mathcal T}^-)$ and $\mathbf M_i \setminus (\bar{\mathcal T}^+_i \cup \bar{\mathcal T}^-_i)$.
The model map $f_i$ takes $\bar{\mathcal T}_i^+$ to   Margulis tubes in $M_i$, which we denote by $V_1^i, \dots , V_s^i$ and $\bar{\mathcal T}^-_i$ to other Margulis tubes which we denote by ${V_1^i}', \dots , {V_t^i}'$, whose  core geodesics   $c_1^i, \dots , c_s^i; {c_1^i}', \dots , {c_t^i}'$ correspond to $\Phi_i(c_1), \dots, \Phi_i(c_s); \Phi_i(c_1') \dots , \Phi_i(c_t')$.
Note that the length of each of $c_1^i, \dots , c_s^i; {c_1^i}', \dots, {c_t^i}'$ goes to $0$ as $i \rightarrow \infty$.
 Since $f_i$ is a homeomorphism, $f_i \circ \varrho_i$ is also a homeomorphism between $S \times (0,1) \setminus (\bar{\mathcal T}^+ \cup \bar{\mathcal T}^-)$ and $M_i \setminus (V_1^i \cup \dots \cup V_s^i \cup {V_1^1}' \cup \dots \cup {V_t^i}')$.

Now for each  $V(c)=A(c) \times J_c$ in $\bar{\mathcal T}^+$, we let $V'(c)$ be $A(c) \times [5/8,3/4]$ in $S \times [0,1]$, and denote the union of such solid tori by $\mathcal T^+$.
Similarly, for each $V(c)$ in $\bar{\mathcal T}^-$, we let $V'(c)$ be $A(c) \times[1/4,3/8]$, and denote the union of such solid tori by $\mathcal T^-$.
Then, we see that there is a homeomorphism from $S \times (0,1) \setminus (\bar{\mathcal T}^- \cup \bar{\mathcal T}^+)$, in which $\mathbf M$ is embedded, to $S \times (0,1) \setminus (\mathcal T^- \cup \mathcal T^+)$ taking a standard algebraic immersion $g'$ to $S \times \{1/2\}$ since $\mathbf M$ has no torus boundary around which $g'$ can go (Lemma \ref{isolated}) and $\bar{\mathcal T}^-$ lies in the lower component of $S \times [0,1] \setminus g'(S)$ whereas $\bar{\mathcal T}^+$ lies in the upper component.

We let a new brick manifold $\bar{\mathbf  M}$ be the one obtained by the standard brick decomposition of   $S \times (0,1) \setminus (\mathcal T^- \cup \mathcal T^+)$.
% where all the components of $\mathcal T^-$  have the same $\sup$ and $\inf$,  so are the components of $\mathcal T^-$, and $\sup \mathcal T^- < \inf \mathcal T^+$.
Then $\bar {\mathbf M}$ consists of bricks lying on five levels: the top one $S \times [3/4,1)$, those touching $\mathcal T^+$ along vertical boundaries, the middle one $S \times [3/8,5/8]$, those touching $\mathcal T^-$ along vertical boundaries, and the bottom one $S \times (0,1/4]$.
For each $i$, we define a labelled brick manifold $\check{\mathbf M}_i$ to be the one obtained by 
%attaching  two geometrically finite bricks (\ie boundary blocks) $S \times (0,1/8]$ and $S \times [7/8,1)$, one on the top and the other at the bottom, to $\bar{\mathbf M}$, and 
giving the conformal structures $m_i$ to the bottom and $n_i$ to the top.

Corresponding to $\check{\mathbf M}_i$, we shall construct a geometrically finite hyperbolic manifold  from $M_i$ using the drilling theorem of Bromberg \cite{BromL} and Brock-Bromberg \cite{BB}.
%Recall that by $f_i \circ (\rho_i^{\mathbf M})^{-1}$, each boundary component of $\bar{\mathcal T}^+$ and $\bar{\mathcal T}^-$ is mapped to the boundary of a Margulis tube in $M_i$.
%%We denote the Margulis tubes in $M_i$ corresponding to $\bar{\mathcal T}^+$ by $V_1^i, \dots , V_s^i$ and those corresponding to $\bar{\mathcal T}^-$ by ${V_1^i}', \dots , {V_t^i}'$, whose  core geodesics   $c_1^i, \dots , c_s^i; {c_1^i}', \dots , {c_t^i}'$ correspond to $\Phi_i(c_1), \dots, \Phi_i(c_s); \Phi_i(c_1') \dots , \Phi_i(c_t')$.
%%Note that the length of each of $c_1^i, \dots , c_s^i; {c_1^i}', \dots, {c_t^i}'$ goes to $0$ as $i \rightarrow \infty$.
%Moreover, for sufficiently large $i$, the map $f_i \circ (\rho_i^{\mathbf M})^{-1}|S \times (0,1) \setminus (\bar{\mathcal T}^+ \cup \bar{\mathcal T}^-)$ is isotopic to a homeomorphism to $M_i \setminus (V_1^i \cup \dots \cup V_s^i \cup {V_1^1}' \cup \dots \cup {V_t^i}')$.
First, recall that we have closed geodesics $c_1^i, \dots , c_s^i; {c_1^i}', \dots , {c_t^i}'$ in $M_i$, which are core curves of Margulis tubes.
The complement of the core curves $\check M_i =M_i \setminus (\cup_{j=1}^s c_j^i \cup \cup_{j=1}^t {c_j^i}')$, regarded as a 3-manifold topologically, admits a geometrically finite hyperbolic structure with conformal structures $m_i$ at the bottom and $n_i$ at the top, by Thurston's uniformisation theorem.
%Therefore, for sufficiently large $i$, we can apply the drilling theorem to see that for some $K$ independent of $i$, there is a $K$-bi-Lipschitz homeomorphism $\check f_i$ between $M_i \setminus (\cup_{j=1}^s V_j^i \cup \cup_{j=1}^t {V_j^i}')$ and $(\check M_i)_0$.
Since the lengths of $c_1^i, \dots , c_s^i; {c_1^i}', \dots , {c_t^i}'$ go to $0$ as $i \rightarrow \infty$, we can apply the drilling theorem to see that there are a constant $K$ independent of $i$ and a $K$-bi-Lipschitz homeomorphism $\check f_i$ between $M_i \setminus (\cup_{j=1}^s V_j^i \cup \cup_{j=1}^t {V_j^i}')$ and $(\check M_i)_0$.
Composing this with  the homeomorphism $f_i \circ \varrho_i$ from $S \times (0,1) \setminus (\bar{\mathcal T}^- \cup \bar{\mathcal T}^+)$ to $M_i \setminus (\cup_{j=1}^s V_j^i \cup \cup_{j=1}^t {V_j^i}')$ and the one from $\check{\mathbf M}_i$ to $S \times (0,1) \setminus (\bar{\mathcal T}^- \cup \bar{\mathcal T}^+)$ described above, we get a homeomorphism from $\check{\mathbf M}_i$ to $(\check M_i)_0$.

Now, we shall show that for sufficiently large $i$,  this homeomorphism is taken to be a $K'$-bi-Lipschitz homeomorphism for a constant $K'$ independent of $i$.
%Our manifold $\check{\mathbf M}'_i$  has a structure of standard brick manifold, consisting of the bottom geometrically finite brick corresponding to $S \times (0, 1/4]$; lower middle  $p$ bricks of the form $\Sigma^1 \times [1/4, ]$
Any labelled brick manifold has a decomposition into blocks and geometrically finite bricks,  obtained by putting  unions of solid tori corresponding to tight geodesics, which we call {\em tube unions}, as we mentioned in \S\ref{label} and can be found in \S3 of \cite{OhSo}.
%This was proved in \S 3 of \cite{OhSo} in the general form, but in our specific situation for $\check{\mathbf M}'_i$, this decomposition is rather simple and we can show its existence as follows.
We now show how this decomposition is obtained in our specific situation.
We first take shortest pants decompositions of $(S, m_i)$ and $(S, n_i)$, and denote their decomposing curves by $e_1^-, \dots, e_{3g-3}^-$ and $e^+_1, \dots, e_{3g-3}^+$ respectively, where $g$ denotes the genus of $S$.
We put tubes $V^-_1, \dots , V^-_{3g-3}$, each of which is  bounded by two horizontal annuli and two vertical annuli, into the bottom brick corresponding to $S \times (0, 1/4]$ so that $\inf V^-_j=1/16, \sup V^-_j=3/16$ for every $j=1, \dots, 3g-3$ and the vertical projection of a core curve of $V^-_j$ is $e^-_j$.
In the same way we put $V^+_1, \dots, V^+_{3g-3}$ into the top brick corresponding to $S \times [3/4,1)$ so that $\inf V^+_j=13/16, \sup V^+_j=15/16$ and the vertical projection of a core curve of $V^+_j$ is $e^+_j$.

We remove the interiors of these tubes from $\check{\mathbf M}_i$ and regard the remaining manifold $\check{\mathbf M}_i^1$ as a labelled brick manifold by considering the standard brick decomposition and keeping the conformal structures at infinity $m_i$ and $n_i$.
The bricks corresponding to $S \times (0,1/16]$ and $S \times [15/16, 1)$ turn into geometrically finite bricks.
For each brick $B=\Sigma \times J$ of $\check{\mathbf M}_i^1$, where $\Sigma$ is either $S$ or one of $\Sigma^1, \dots , \Sigma^p$, such that $J$ is one of $[3/16,1/4], [1/4, 3/8], [3/8, 5/8], [5/8,3/4],$ and  $[3/4, 13/16]$,  we consider $\mathcal A^-=\partial_-B \cap \partial \check{\mathbf M}_i^1$ and $\mathcal A^+=\partial_+B \cap \partial \check{\mathbf M}_i^1$, both of which consist of disjoint horizontal annuli.
We consider a tight geodesic $g_B$ in $\CC(\Sigma)$ by setting $I(g_B)$ to be the vertical projections of  core curves of  $\mathcal A^-$, and $T(g_B)$ to be the vertical projections of core curves of $\mathcal A^+$.
We put a tube union into $B$ corresponding to $g_B$, \ie for each simplex $s$ of $g_B$, we put a union $V_s$ of solid tori bounded by two horizontal annuli and two vertical annuli, whose core curves are vertically homotopic to $s$, in such a way that $\inf V_{v_I}=\inf B$ for the initial vertex $v_I$, $\sup V_{v_T}=\sup B$ for the terminal vertex $V_T$, and $\inf V_s=\sup V_{\pre{s}}$ when $\xi(B) > 4$, whereas $\inf V_s > \sup V_{\pre{s}}$ if $\xi(S)=4$.
If two tubes contained in different bricks have  core curves  homotopic to each other without touching other tubes, we fuse them into one by putting a tube between the two.
We remove the interior of this tube union from $\check{\mathbf M}_i^1$ for every $B$ as above and get a brick manifold $\check{\mathbf M}_i^2$ by considering the standard brick decomposition.
We repeat the same construction for every brick of $\check{\mathbf M}_i^1$ that is neither a geometrically finite brick nor with $\xi(B)=3$, and remove the interiors of the tube unions to get a new brick manifold $\check{\mathbf M}_i^2$.
We stop this process when we reach the situation where every brick $B$ either is geometrically finite or $\xi(B)=3$ or $\xi(B)=4$ but $g_B$ has length $1$.
We denote the brick manifold at the final stage by $\check{\mathbf M}_i[0]$.
By cutting every brick of $\check{\mathbf M}_i[0]$ with $\xi=3$ into halves and paste them to  bricks above it and below it, we get a decomposition of $\check{\mathbf M}_i[0]$ into blocks in the sense of Minsky.

%Each front of a brick of a labelled brick manifold either has a label or intersects a boundary component of the manifold.
%The label is either a conformal structure or an ending lamination.
%If it is an ending lamination, we let it be the initial or terminal generalised marking.
%For a conformal structure, we take a shortest pants decomposition  for the corresponding hyperbolic metric,  put tubes having these curves as core curves at the middle of the brick, and divide the brick into two by cutting along the horizontal leaf at the middle.
%The one containing the end will become a geometrically finite brick.
%If the front intersects boundary components, we let the union of core curves of the intersections, taken one on each component, be the initial or terminal marking.
%We consider a tight geodesic connecting the initial generalised marking on the lower front to the terminal generalised marking on the upper front.
%We then put a tube union corresponding to that geodesic.
%If there are two curves which are homotopic to each other, one lying above and the other lying below, we put a tube so that two tubes above and below are merged.
%Removing the interiors of the tubes, we get a new brick manifolds.
%Then, we repeat the same process.
%%If we have homotopic tubes during the process, we merge them into one by connecting them using another tube between them as was explained in \S \ref{proof of prop}.
%It is easy to see that after repeating the process finitely many times, we get a decomposition into blocks.

We decompose $\check{\mathbf M}_i[0]$ into blocks  in this way, and put a metric on $\check{\mathbf M}_i[0]$ defined by  them, \ie from the standard metric on each block.
Next we attach a Margulis tube to each boundary component of  $\check{\mathbf M}_i[0]$ except for those corresponding to $\mathcal T^-$ and $\mathcal T^+$ in the same way as was explained in the proof of Lemma \ref{model limit}.
%Theorem 4.1 in \cite{OhSo} shows that  we have an $L$-bi-Lipschitz model map $f_i': \check{\mathbf M}_i' \rightarrow (\check M_i)_0$ with $L$ independent of $i$, which is obtained by isotoping a homeomorphism between them given as above.
%(We note that in the present case, this can be also obtained by applying the argument of Brock-Canary-Minsky \cite{BCM} directly, using the fact that each component of $\Fr \Sigma_j$ must appear as a curve on the main tight geodesic of their construction for large $i$ since the length of such a curve in $M_i$ goes to $0$ as $i \rightarrow \infty$.
%If we choose this argument, the uniformity of the bi-Lipschitz constant can be shown using the fact that the model manifolds are embedded in $S \times [0,1]$.)
For this model manifold, we have the following:
\begin{claim}
\label{Lipschitz}
There are a constant $K_0'$ depending only on $\xi(S)$ and a $K_0'$-Lipschitz map $\bar f_i' : \check{\mathbf M}_i\to (\check M_i)_0$ which takes every split level surface to a pleated surface realising the closed geodesics corresponding to  core curves of Margulis tubes which the split level surface touches.
\end{claim}
\begin{proof}
%Since the entire manifold $\check{\mathbf M}_i$ is embedded in $S \times (0,1)$, from the block decomposition of $\check{\mathbf M}_i[0]$, we can construct split level surfaces as in Minsky's construction in \cite{Mi}.
Recall that by construction, $\check{\mathbf M}_i$ consists of the following parts: two geometrically finite bricks, which are also boundary blocks, at the top and the bottom; the second bottom part, the second top part, and the middle part each of which is homeomorphic to $S \times I$; and the parts homeomorphic  to $\Sigma^j \times I$  for $j=1, \dots, p$.
The decomposition into blocks constructed above induces a hierarchy of tight geodesics in each of these parts, except for the two geometrically finite bricks.
In particular, by  Lemma 7.9 of Minsky \cite{Mi}, the closed geodesics in $M_i$ corresponding to core curves of Margulis tubes in $\check{\mathbf M}_i$ have lengths bounded by a constant $D$ depending only on $\xi(S)$, for $\Sigma^j$ is also a subsurface of $S$.
We then construct a map taking every split level surface to the corresponding pleated surface as in \cite{Mi}, and this map extends to  a Lipschitz map $\bar{f}_i': \check{\mathbf M}_i \rightarrow (\check M_i)_0$ by the technique of \lq interpolating pleated surfaces', whose Lipschitz constant depends only on $\xi(S)$ by the same argument as Minsky \cite{Mi}.
%
%As the level surface passes the horizontal level of $S\times [1/4, 3/8]$ and $S\times [5/8, 3/4]$, it becomes disconnected, but since $\chi$ of the surface is bounded below by $\chi(S)$, the upper bound for Lipschitz constants works and we see that the map taking 
%This map takes every split level surface to the corresponding pleated surface defined in \cite{Mi} extends to a $K_0$-Lipschitz map $\check{f}_i'$ from $\check{\mathbf M}_i$ to $(\check M_i)_0$ for some constant $K_0$ independent of $i$.
\end{proof}

Next we shall show that this map can be homotoped to a uniform bi-Lipschitz homeomorphism.
\begin{claim}
\label{bi-Lipschitz}
The Lipschitz map $\bar{f}_i'$ can be homotoped to a $K_0$-bi-Lipschitz homeomorphism $f'_i: \check{\mathbf M}_i \rightarrow (\check M_i)_0$, where $K_0$ depends only on $\xi(S)$.
\end{claim}
\begin{proof}
For Kleinian surface groups, it was shown in Brock-Canary-Minsky \cite{BCM} that the map is homotoped to a homeomorphism keeping the Lipschitz property, by rearranging the order of pleated surfaces to make it accord with the order of split level surfaces, and then homotoping each pleated surface to an embedding.
Some argument involving geometric limits was then used to show the resulting map is in fact uniformly bi-Lipschitz, which works without changes in our situation. 
In each of the first two processes, to guarantee the Lipschitz property of the map, it was necessary to show that we can choose a homotopy which does not pass through a Margulis tube around a very short closed geodesic.
To homotope a pleated surface to an embedding, the technique of Freedman-Hass-Scott \cite{FHS} was used, and we can use the same argument in our situation.
What we only need to check in our settings is that we can rearrange the order of pleated surfaces without passing through Margulis tubes around very short closed geodesics.

In each of the parts of $\check{\mathbf M}_i$ that we have described above except for the two geometrically finite blocks, the block decomposition of $\check{\mathbf M}_i[0]$ gives a sequence of split level surfaces $\{F_k\}$ in which $F_{k+1}$ is obtained from $F_k$ by changing two adjacent thrice-punctured spheres to another pair of thrice-punctured spheres   in such a way the step corresponds to an elementary move of slices of a hierarchy. 
In the construction of the Lipschitz map $\bar f_i'$ above, the homotopy between the pleated surface corresponding to $F_k$ and $F_{k+1}$ was chosen to move only such a four-times punctured sphere or an once-punctured torus so that intermediate surfaces have curvature less than $-1$ in the moving part.
The standard argument using the area of meridional discs in Margulis tubes implies that for sufficiently small $\epsilon>0$ depending only on $\xi(S)$, the image of $\bar f_i'$ can intersect any $\epsilon$-Margulis tube $V$ in $\check{M}_i$ only as the image of the corresponding Margulis tube in $\check{\mathbf M}_i$.

Now, we see how we can rearrange the order of pleated surfaces without passing through $\epsilon$-Margulis tubes.
Since the argument is the same for every part, we only consider the case of the second bottommost part corresponding to $S \times [3/16, 1/4]$.
The uppermost split level surface is a union of three-holed spheres corresponding to  pants decomposition of $S$ obtained by deleting  from $S \times \{1/4\}$ the tube unions which we put into $\check{\mathbf M}_i$, and each of the pairs of pants is mapped to a pleated surface which is a thrice-punctured sphere with each of its punctures lying on the axes of Margulis tubes or extends to a torus cusp.
We note that each of $c_1', \dots , c_t'$ is homotopic to a boundary component of one of these  pairs of pants.
We denote by $U(\Sigma)$ the pleated surface which is the union of (the closures of ) these thrice-punctured spheres.
Let $\bar V$ be a Margulis tube appearing in the part of $\check{\mathbf M}_i$ corresponding to $S \times [3/16, 1/4]$ below those which $S \times \{1/4\}$ passes through, and suppose that it is mapped  by $\bar f_i'$ to a Margulis tube $V$ with axis having length less than $\epsilon$ in $\check{M}_i$ which lies above $U(\Sigma)$, \ie such that there is no proper half-open arc starting from  $V$ and tending to $S \times \{0\}$ without algebraic intersection with $U(\Sigma)$. 
We can take a sub-solid torus $\bar V'$ of $\bar V$ which is mapped by $\bar f'_i$ onto the $\epsilon$-Margulis tube $V'$ in $V$.
Since each vertex can appear only once in a hierarchy and by our choice of $\epsilon$, we see that  $\bar f_i'((\check{M}_i \cap S \times (0,1/4]) \setminus \bar V')$ is disjoint from $V'$.
Therefore, $\bar f'_i|((\check{M}_i  \setminus (f'_i)^{-1}(\Int V'))$ is a proper  map to $(\check{M}_i)_0 \setminus \Int V'$.
Since $\bar V'$ lies below $S \times \{1/4\}$, its longitude can be homotoped towards the lower end corresponding to $S \times \{0\}$  without algebraic intersection with the split level surface lying on $S\times \{1/4\}$ nor $(f'_i)^{-1}(\Int V')$.
This property must be preserved by the map $\bar f'_i$, whereas the longitude of $V'$ cannot be homotoped towards the lower end without touching $U(\Sigma)$.
This is a contradiction.

Next suppose that there is a split level surface $F$ of $\check{\mathbf M}_i$ which is mapped into a pleated surface $P(F)$ above $U(\Sigma)$.
Since $F$ is homotopic to the lower end of $\check{\mathbf M}_i$, the pleated surface $P(F)$ can be homotoped below $U(\Sigma)$.
If the image of the homotopy can intersect an $\epsilon$-Margulis tube, it must be one coming from a Margulis tube lying below $F$ in $\check{\mathbf M}_i$.
Since we have already shown that such an $\epsilon$-Margulis tube cannot lie above $U(\epsilon)$, we see that $P(F)$ can be homotoped below $U(\Sigma)$ without passing through $\epsilon$-Margulis tubes.

By replacing the uppermost split level surface with any split level surface in the part corresponding to $S \times [3/16, 1/4]$ and repeating the argument above, we see that we can  homotope $\bar f_i'$ without passing $\epsilon$-Margulis tubes to make it preserve the order of split level surfaces.
This proves the most important step of the proof of our claim.

As we explained at the beginning of the proof, we can then homotope $\bar f_i'$ to make the image of each split level surface an embedding using the result of Freedman-Hass-Scott \cite{FHS}, and the entire map a homeomorphism, both without passing through $\epsilon$-Margulis tubes.
The fact that the homotopy which we constructed does not pass through $\epsilon$-Margulis tubes implies the resulting homeomorphism is again $K_0''$-Lipschitz for some $K_0''$ depending only on $\xi(S)$.
Finally, by applying the argument of \cite{BCM} involving geometric limits, we can show that the resulting map is a $K_0$-bi-Lipschitz $f_i'$ for some constant $K_0$ depending only on $\xi(S)$.
\end{proof}

Now, by composing $(\check f_i)^{-1}$ with $f_i'$, we get a $K'$-bi-Lipschitz embedding of $\check{\mathbf M}_i'$ into $M_i$, with $K'$ depending only on $K$ and $K_0$.
By filling appropriate Margulis tubes into $\check{\mathbf M}_i'$, we get a $K''$-bi-Lipschitz model manifold $\mathbf M_i'$ for $(M_i)_0$, with $K''$ independent of $i$, for sufficiently large $i$.

It remains to verify that this $\mathbf M_i'$ has a geometric limit with the desired properties.
Since $\mathbf M_i'$ has decomposition into blocks and geometrically finite bricks, and is a uniform bi-Lipschitz model for  $(M_i)_0$, by the same argument as the  proof of Theorem \ref{Ohshika-Soma} (see also \S 5 of \cite{OhSo} for the original argument), there is  a labelled brick manifold $\mathbf M'$ which is a bi-Lipschitz model manifold of $(M_\infty)_0$ such that $\mathbf M'_i$ converges to the union of $\mathbf M'$ and cusp neighbourhoods geometrically.
This shows the conditions (1) and (2).
Since both $\mathbf M$ and $\mathbf M'$ are model manifolds of the same geometric limit $(M_\infty)_0$, there is a  homeomorphism taking an algebraic locus to an algebraic locus between them, which shows the condition (3).

Since the geometric convergence of $\mathbf M_i'$ to $\mathbf M'$ preserves horizontal foliations, and the embedding of $\mathbf M'$ also preserves the horizontal levels, we see that the two horizontal annuli of each component of $\bar{\mathcal  T}^-$ lie on $S \times \{1/4\}$ and $S \times \{3/8\}$, whereas those of $\bar{\mathcal T}^+$ lies on $S\times \{5/8\}$ and $S \times \{3/4\}$.
Therefore, we have $\inf \bar B^1=\dots =\inf \bar B^q$ and $\sup \bar B^{q+1}=\dots =\sup \bar B^p$.
We can modify the embedding of $\mathbf M'$ into $S \times [0,1]$ only at these bricks to make them have  the same height and to make the condition  $\sup \bar B^1 =\dots =\sup \bar B^q, \inf \bar B^{q+1}=\dots =\inf \bar B^p$ hold.
%the bottom annuli of all the boundary components of $\mathbf M'$ corresponding to $\bar{\mathcal T}^+$ lie on the same horizontal surface $S\times \{t_1\}$,   the top annuli of all the components corresponding to $\bar{\mathcal T}^-$ lie on the same horizontal surface $S \times\{t_2\}$, and $t_1 > t_2$, with respect to the embedding of $\mathbf M'$ into $S \times [0,1]$ preserving the horizontal and vertical foliations.
%Therefore, we have $\inf \bar B^1=\dots =\inf \bar B^q, \sup \bar B^{q+1}=\dots =\sup \bar B^p$, and we can modify the embedding of $\mathbf M'$ into $S \times [0,1]$ by making the heights of these bricks sufficiently small to make the condition (3) $\sup \bar B^1 =\dots =\sup \bar B^q, \inf \bar B^{q+1}=\dots =\inf \bar B^p$ hold.
Finally, we verify the condition (5).
%Since a standard algebraic immersion was taken to $S \times \{1/2\}$ in $\bar M$, for every $i$, 
Since $E^1, \dots , E^q$ are upper ends and $E^{q+1}, \dots , E^p$ are lower ends, an algebraic locus must pass under $\bar B^1, \dots , \bar B^q$ and above $\bar B^{q+1}, \dots , \bar B^p$.
By our assumption, there are no other algebraic ends, neither torus boundary components containing  algebraic parabolic curves.
Therefore, there is no obstruction to homotope an algebraic locus to a horizontal surface in this region above $\bar B^{q+1}, \dots , \bar B^p$ and below $\bar B^1, \dots , \bar B^q$.
\end{proof}

We shall use the symbol $g'$ to denote the horizontal algebraic locus in $\mathbf M'$ as in (5) of the above lemma.
(This is the same symbol as the standard immersion in $\mathbf M$, but there is no fear of confusion since we can distinguish them by model manifolds in which they are lying.)

%In the model manifold $\mathbf M'$, let $N_1, \dots, N_q$ be a neighbourhood of $E_1, \dots , E_p$ in the form of  $\Sigma^j \times [s,t)\, (j=1,\dots , q)$ and $N_{q+1}, \dots , N_p$ those of $E_{q+1}, \dots , E_p$ in the form of  $\Sigma^j \times (s',t']\, (j=q+1, \dots , p)$, such that $\partial \Sigma^j \times [s,t)$ or $\partial \Sigma^j \times (s,t]$ lies on $\partial \mathbf M$.
%As in \S \ref{proof of prop}, we consider the union $\mathcal T$ of all boundary components intersecting $N_1, \dots , N_p$, and the union of Margulis tubes $\mathbf V_i$ in $\mathbf M_i$ whose intersection with $B_{r_i}(\mathbf M_i, x_i)$ is mapped into $\mathcal T$ by $\rho_i^{\mathbf M}$.
Take $t$ and $t'$ such that $\bar B^j=\Sigma^j \times [5/8,t)$ for $j=1, \dots ,q$ and $\bar B^j=\Sigma^j \times (s',3/8]$ for $j=q+1, \dots , p$.
Let $\rho_i^{\mathbf M'}$ denote an approximate isometry between $\mathbf M_i'$ and $\mathbf M'$ which is associated to the geometric convergence of $\mathbf M_i'$ to the union of $\mathbf M'$ and cusp neighbourhoods.
We denote by $x_i'$ and $x_\infty'$ basepoints in the thick parts of $\mathbf M_i'$ and $\mathbf M'$, which we used for the geometric convergence.
By our construction of $\mathbf M_i'$, there are bricks $B_i^j \cong \Sigma^j \times [5/8, 3/4]\ (j=1,\dots, q)$ and $B_i^j \cong \Sigma^j \times [1/4, 3/8]\ (j=q+1, \dots, p)$ of $\check{\mathbf M}_i'$ which contains  $(\rho_i^{\mathbf M'})^{-1}(\bar B^j \cap B_{K_i r_i}(\mathbf M', x_\infty))$.

%Let $h_i$ be a complete hierarchy of tight geodesics which gives rise to $\mathbf M_i$, and let $\gamma_i^j$ be the geodesic in $h_i$ supported on $\Sigma^j$, whose corresponding tube union is put in $B_i^j$ and which has length going to $\infty$.
%The existence of such a geodesic was shown in Proposition \ref{simply degenerate brick}.
%Since $\check{\mathbf M}_i$ is a labelled brick manifold, by Lemma 4.4 of \cite{OhSo}, $\Sigma^j$ supports a tight geodesic $\gamma_i^j$ and $4$-complete hierarchy $H(\gamma_i^j)$ which corresponds to tube unions giving rise to a decomposition of $B^j_i$ into blocks.
%

Now, we shall construct two sequences of markings on $S$ starting from $c_{m_i}$ and $c_{n_i}$ respectively, and a sequence of markings on $\Sigma^j$ for $j=1, \dots ,p$, in both of which  a marking advances by an elementary move.
 Recall that $\check{\mathbf M}'_i$, which we defined in the proof of Lemma \ref{modified models} above, consists of interior bricks lying on five levels, a disjoint union of the product $I$-bundles over three-holed spheres lying at two levels, and two geometrically finite bricks.
As was described there, there are tube unions in $\check{\mathbf M}'_i$ which decompose it into blocks.
%:Change the following to some explanation without referring to OhSo.
Since the hierarchies which we used for the decomposition are all complete, by perturbing tubes vertically if necessary, each real front of every brick can be assumed to intersect the tubes constituting the decomposition in such a way that the complement of the tubes in the front is a disjoint union of thrice-punctured spheres.
%%!
%In each brick, the first and the last split level surfaces are the union of thrice-punctured spheres lying on the lower and upper fronts respectively. 

We first consider the bottom interior brick corresponding to $S \times [3/16, 1/4]$.
We denote this brick by $\bar B_i$.
By our construction of tube unions in the proof of Lemma \ref{modified models}, we have a $4$-complete hierarchy $\check h(\bar B_i)$ on $S$  corresponding to tube unions in $\bar B_i$, whose initial marking is $c_{m_i}$.
In each annular component domain in $\check h(\bar B_i)$ except for those corresponding to tubes intersecting the upper front of $\bar B_i$, we can put a tight geodesic, which is uniquely determined.
The length of such a geodesic  determines the $\omega_{\check{\mathbf M}_i'}$ of  the Margulis tube which was filled in $\check{\mathbf M}_i'[0]$, attached to the corresponding  torus boundary.
For each annular component domain corresponding to a tube intersecting the upper front, we put a geodesic of length $0$ at this stage, for the terminal markings for such geodesics are not determined if we look only at $\bar B_i$.
We denote by $h(\bar B_i)$ the hierarchy obtained by adding such annular geodesics to $\check h(\bar B_i)$.
A resolution $\tau(\bar B_i)=\{\tau(\bar B_i)_k\}$ of $h(\bar B_i)$ gives rise to a sequence of split level surfaces starting from the one lying on the lower front and ending at the one lying on the upper front.
On the other hand, forward steps in the resolution $\tau(\bar B_i)$ correspond to  elementary moves of markings which can be assumed to be clean.
(See Minsky \cite{Mi}.)

The situation is quite similar for the top interior brick, which corresponds to $S \times [3/4,13/16]$, and we denote by $\hat B_i$.
As in the case of $\bar B_i$, we have a  hierarchy $h(\hat B_i)$.
Reversing the order of slices of this hierarchy, we get a resolution, which we denote by $\tau(\hat B_i)=\{\tau(\hat B_i)_k\}$, giving rise to  split level surfaces starting from the one on the upper front and ending at the one on the lower front, and a sequence of clean markings on $S$ advancing by elementary moves.
Since $\mathbf M_i'$ is obtained from $\check{\mathbf M}_i'$ by filling Margulis tubes, the split level surfaces as above for $\bar B_i$ and $\hat B_i$ can be also regarded as lying in $\mathbf M_i'$.

Next we consider a brick $B_i^j$ for $j=q+1, \dots , p$, which corresponds to $\bar B^j$ by $\rho_i^{\mathbf M'}$.
Again, we have a complete hierarchy $h(B_i^j)$ supported on $\Sigma^j$ corresponding to the decomposition of $B_i^j$ into blocks and filled-in Margulis tubes.
We can take a resolution $\tau(B_i^j)=\{\tau(B_i^j)_k\}$ starting from the restriction of the last slice of $\tau(\bar B_i)$ to $\Sigma^j$.
Similarly, for each brick $B_i^j$ for $j=1, \dots ,p$, we have a complete hierarchy $h(B_i^j)$.
We reverse the order of slices in this case, and consider a resolution $\tau(B_i^j)=\{\tau(B_i^j)_k\}$ starting from the restriction of the last slice of $\tau(\hat B_i)$.
%The slices $\tau(\bar B_i)$ and $\tau(\hat B_i)$ have pairs $(g,v)$ with annular geodesics $g$.
%We can ignore such pairs since they do not affect our construction below.
%For a pair $(g, v)$ the last slice of $\tau(\bar B_i)$ or $\tau(\hat B_i)$ with an annular geodesic $g$, we regard $v$ as the last vertex of the annular geodesic in $h(B_i^j)$ with the same support as $g$ for $j=q+1, \dots, p$, and the first vertex for $j=1, \dots ,q$.

For each $n \in \naturals$, we consider the $n$-th slice  $\tau(B_i^j)_{-n}$ counting from the last one of each $\tau(B_i^j)$ for $j=q+1, \dots , p$.
Corresponding to the slice $\tau(B_i^j)_{-n}$,  we have a split level surface $S(B_i^j)(n)$ embedded in $B_i^j$.
Taking the union of all $S(B_i^j)(n)$ for $j=q+1, \dots , p$ together with the thrice-punctured spheres lying on $\partial_+ \bar B_i \setminus (\cup_{j=p+1}^q B_i^j)$ which are parts of the last slice of $\tau(\bar B_i)$, we get a split level surface $S_i^-(n)$.
From this, we construct an {\em extended  level surface} $\hat S_i^-(n)$ as follows.

Let $\mathbf T_-$ be the union of Margulis tubes intersecting $S_i^-(n)$.
Each torus $T$  in $\partial \mathbf T_-$ is split into two annuli by cutting it  along $T \cap S_i^-(n)$, which we call the upper annulus and the lower annulus depending on their locations.
We paste the upper one to $S_i^-(n)$ for each $T$ and get a surface homeomorphic to $S$, which we define to be $\hat S_i^-(n)$.
We also regard a union of slices, taken one from each $\tau(B_i^j)\, (j=q+1, \dots ,p)$ as above, as a marking on the entire $S$ by defining its restriction to $S \setminus \cup_{j=q+1}^p \Sigma^j$ to be the marking defined by the last slice of $\tau(\bar B_i)$, and setting a  transversal of a component $c$ of $\Fr \Sigma^j$ to be the one determined by the first vertex of a geodesic in $h(\bar B_i)$ supported on an annular neighbourhood  $A(c)$ of $c$, which is uniquely determined by the Margulis tube of $\mathbf M'_i$ corresponding to $c$.
We note that every curve in the $\Fr B_i^j$ is a base curve of the last slice of $\tau(\bar B_i)$ since it is a core curve of the intersection of the upper front of $\bar B_i$ and a torus boundary component of $\check{\mathbf M}_i$.
We repeat the same construction for $B_i^j$ with $j=1, \dots ,q$, considering the $n$-th slice $\tau(B_i^j)_n$  $j=1, \dots, q$ counting from the first one, and get a split level surface $S_i^+(n)$ and an extended split level surface $\hat S_i^+(n)$, this time using lower annuli.
Also, we can regard a union of slices taken one from each $\tau(B_i^j)\, (j=1, \dots ,q)$ as a marking on the entire $S$ in the same way, by setting a transversal of each component of $\Fr \Sigma^j$ to be the last vertex this time.

Recall that in the construction of model manifolds, Minsky defined the initial and terminal markings to be the shortest markings for the conformal structures at infinity.
We consider a correspondence in the opposite direction.
We can choose some positive constant $\delta_0$ such that for any  clean marking, there is a marked conformal structure on $S$ for which the marking is shortest and in which all the curves (both base curves and transversals) of the marking have hyperbolic lengths greater than $\delta_0$.
(The existence of such a constant is easy to see since there are only finitely many configurations of clean markings up to homeomorphisms of $S$.
Recall that a clean marking is said to be shortest when its base curves constitute a shortest pants decomposition and the transversals are shortest among those obtained by Dehn twists around the base curves.)
\begin{definition}
\label{m}
For each marking $\mu$, we define the marked conformal structure $m(\mu)$ to be one for which a clean marking $\mu_0$ compatible with $\mu$ is  shortest  and such that every  curve of $\mu_0$ has hyperbolic length greater than $\delta_0$.
\end{definition}
Although there are more than one such structures, we just choose any one of them.
By using Kerckhoff's formula in \cite{Ker} for instance, we can easily see that there is a universal constant $K$ depending only on $\delta_0$ (and $S$) such that if $\mu'$ is obtained from $\mu$ by one step in the resolutions $\tau(\bar B_i)$ or $\tau(\hat B_i)$ or $\tau(B_i^j)$, which corresponds to an elementary move of markings on $S$ or $\Sigma_j$, then the Teichm\"{u}ller distance between $m(\mu)$ and $m(\mu')$ is bounded by $K$, whatever choices of $m(\mu)$ and $m(\mu')$ we make.

Now, we shall consider two sequences of markings: the first one connects the marking corresponding to first marking of $\tau(\bar B_i)$ with that corresponding to  the union of $\tau(B_i^j)_{-n}$ $(j=q+1, \dots , p)$; and the other, which proceeds in the negative direction, connects the marking corresponding to the last slice of $\tau(\hat B_i)$ with that corresponding to the union of $\tau(B_i^j)_n$ $j=1, \dots q$.
The first one is obtained by combining a sequence of markings corresponding to  $\tau(\bar B_i)$ with those corresponding to the $\tau(B_i^j)\, (j=q+1, \dots , p)$.
By our definition of markings corresponding to slices in $\tau(B_i^j)$,  if we choose the first slice from every $\tau(B_i^j)$, the corresponding marking coincides with the one corresponding to the last slice of $\tau(\bar B_i)$, where vertices on annular geodesics are set to be the first ones.
Therefore after the sequence of markings corresponding to $\tau(\bar B_i)$, we can append the one obtained by advancing slices in the $\tau(B_i^j)\, (j=q+1, \dots ,p)$, so that we proceed at each step by advancing a slice in $\tau(B_i^j)$ which is farthest from the goal, \ie the $n$-th slice counted from the last one, up to the point where all the slices are the $n$-th counting from the last one.
We denote thus obtained sequence of markings by $\mu^-(i,n)=\{\mu^-_k(i,n)\}_k$.
In the same way, we define a sequence of markings obtained by combining one corresponding to $\tau(\hat B_i)$ with those corresponding to $\tau(B_i^j)\, (j=1, \dots , q)$.
We denote this sequence by $\mu^+(i,n)=\{\mu^+_l(i,n)\}_l$.
To simplify the notation, we denote the last markings in $\mu^-(i,n)$ and $\mu^+(i,n)$ by $\mu^-_\infty(i,n)$ and $\mu^+_\infty(i,n)$ respectively.

For any $n$, if we take a sufficiently large $i$, the  component of $(\rho_i^{\mathbf M'} )^{-1}(\mathbf M' \cap B_{K_ir_i}(\mathbf M', x_\infty))$ intersecting $(\rho_i^{\mathbf M'})^{-1}(g'(S))$ contains all $\hat S^-_i(k)$ and $\hat S^+_i(k)$ with $k \leq n$, since the distance between $B_i^j$ and $(\rho_i^{\mathbf M'})^{-1}(g'(S))$ is uniformly bounded, and so are the diameters of extended  level surfaces.
Moreover making $i$ larger if necessary, we can make $\hat S^-_i(k)$ and $\hat S^+_i(k)$ with $k \leq n$ all homotopic to $(\rho_i^{\mathbf M'})^{-1}(g'(S))$ in $(\rho_i^{\mathbf M'} )^{-1}(\mathbf M' \cap B_{K_ir_i}(\mathbf M', x_\infty))$ since there is no torus boundary inside the $\bar B^j$, which implies that if we fix $k$, then the diameters of Margulis tubes intersecting essentially a homotopy between $\hat S^-_i(k)$ (or $\hat S^+_i(k)$) and  $(\rho_i^{\mathbf M'})^{-1}(g'(S))$ in $\mathbf M_i'$ are bounded independently of $i$.
Now, for each $i$, let $\mathbf n_i$ be the largest number such that both $S^-_i(k)$ and $S^+_i(k)$ with all $k \leq \mathbf n_i$ are contained in the component of  $(\rho_i^{\mathbf M'})^{-1}(\mathbf M' \cap B_{K_ir_i}(\mathbf M', x_\infty))$ intersecting $(\rho_i^{\mathbf M'})^{-1}(g'(S))$ and are homotopic to $(\rho_i^{\mathbf M'})^{-1}(g'(S))$ there.
By the above observation, we have $\mathbf n_i \rightarrow \infty$ as $i \rightarrow \infty$.
We then take another $\mathbf n_i'$ which also goes to $\infty$, such that $\mathbf n_i-\mathbf n_i'$ is positive and goes to $\infty$ as $i \rightarrow \infty$.

With this preparation in hand, we can now construct  arcs in the Teichm\"{u}ller space as follows.
Since  $\mu^-_k(i, \mathbf n_i')$ proceeds by advancing the slice among those  in $\tau(B_i^j)$ which is farthest from the $\mathbf n_i'$-th one counting from the last one, we see that for some $k=k_0$ the marking $\mu^-_k(i, \mathbf n_i')$ reaches the split level surface $S^-(\mathbf n_i)$.
Recall that for each $\mu^-_k(i, \mathbf n_i')$ in  $\mu^-(i,\mathbf n_i')$, the point $m(\mu^-_k(i, \mathbf n_i'))$ in the Teichmüller space was defined.
\begin{definition}
\label{m'}
We define a new function $m'$ by setting $m'(\mu^-_k(i, \mathbf n_i'))$ to be $m((\mu^-_k(i, \mathbf n_i'))$ for $k \leq k_0$, and for $k> k_0$ to be the hyperbolic structure obtained from  $m((\mu^-_k(i, \mathbf n_i'))$ by pinching the curves $c_1', \dots , c_t'$ so that their extremal lengths become $e^{k_0-k}$ and  the lengths of other curves in $\base(\mu^-_k(i, \mathbf n_i'))$ remain bounded below by $\delta_0$. 
\end{definition}
Starting from $\mu^-_1(i,\mathbf n_i')$, which is a shortest marking for $m_i$, we consider for each step $\mu^-_k(i,\mathbf n_i') \to \mu^-_{k+1}(i,\mathbf n_i')$ in $\mu^-(i,\mathbf n_i')$, a Teichm\"{u}ller geodesic arc connecting $m'(\mu^-_k(i,\mathbf n_i'))$ with $m'(\mu^-_{k+1}(i,\mathbf n_i'))$, and then construct a broken geodesic arc $\alpha^-(i,\mathbf n_i')$ connecting $m'(\mu^-_1(i,\mathbf n_i'))$ to $m'(\mu^-_\infty(i,\mathbf n_i'))$ by joining them.
We note that by the definition of $m'$, each constituting geodesic arc has uniformly bounded length.
Both $m'(\mu^-_1(i,\mathbf n_i'))$ and $m_i$ have $\mu^-_1(i,\mathbf n_i')$ as a shortest marking, but the hyperbolic lengths of base curves of $\mu^-_1(i,\mathbf n_i')$   in $m_i$ may be different from those in $m'(\mu_1^-(i,\mathbf n_i'))$.
We can connect $m_i$ with $m'(\mu^-_1(i,\mathbf n_i'))$ by a Teichm\"{u}ller quasi-geodesic so that $\base(\mu^-_1(i,\mathbf n_i'))$ remains a shortest pants decomposition throughout the points on the geodesic.
(The quasi-geodesic constant depends only on $S$.)
We define $\bar \alpha^-(i,\mathbf n_i')$ to be a broken quasi-geodesic arc obtained by joining this quasi-geodesic with $\alpha^-(i,\mathbf n_i')$.
%Need the convexity of horoball in the Teichm殕ler space?
In the same way, we construct a broken quasi-geodesic arc $\bar \alpha^+(i,\mathbf n_i')$, which connects $n_i$ with $m(\mu^+_\infty(i,\mathbf n_i'))$ by pinching the curves $c_1, \dots , c_s$.
In the case when either lower or upper algebraic simply degenerate ends do not exist,  we define the corresponding $\bar \alpha^-(i,\mathbf n_i')$ or $
\bar\alpha^+(i,\mathbf n_i')$ to be a constant map.

Let $\alpha^\pm_i: [0,k_i^\pm] \rightarrow \mathcal T(S)$  be broken quasi-geodesic arcs $\bar \alpha^\pm(i,\mathbf n_i)$ connecting $m'(\mu^\pm_\infty(i,\mathbf n_i'))$ with $m_i, n_i$ which are constructed as above by joining Teichm\"{u}ller geodesics and one quasi-geodesic.
%, setting $n$ above to be $\mathbf n_i$.
We parametrise $\alpha^\pm_i$
% reversing the direction of the construction above for $\bar \alpha$ 
so that  $\alpha^\pm_i| [0,1]$ is the appended quasi-geodesics connecting $m_i$ or $n_i$ with $m'(\mu_1^\pm(i, \mathbf n_i'))$, and  $\alpha^\pm_i| [s,s+1]$ for $s \in \naturals$ corresponds to the Teichm\"{u}ller geodesic constituting $\alpha^\pm_i$ which connects $m'(\mu_s^\pm(i, \mathbf n_i'))$ with $m'(\mu_{s+1}^\pm(i, \mathbf n_i'))$.
%the integral points in $[0,k_i^\pm]$  correspond to  the endpoints of the Teichm\"{u}ller geodesics.
We define $\beta_i: [0,\bar k_i] \rightarrow QF(S)$ by setting $\beta_i(t)$ to be $qf( \alpha_i^-(t),  \alpha^+_i(t))$, where $\bar k_i=\max\{k_i^+,k_i^-\}$ and assuming that the arcs $\alpha_i^\pm$ stay at the endpoint after $t$ gets out of the domain.

\begin{claim}
\label{strong limit}
A sequence of quasi-Fuchsian group $\{qf(m'(\mu_\infty^-(i,\mathbf n_i')), m'(\mu_\infty^+(i,\mathbf n_i'))\}$ converges strongly to a quasi-conformal deformation of $(\Gamma, \psi)$ as $i \rightarrow \infty$ after passing to a subsequence.
\end{claim}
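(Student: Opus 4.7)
The plan is to construct a uniform bi-Lipschitz model manifold $\tilde{\mathbf M}_i$ for $\tilde G_i := qf(m(\mu_\infty^-(i,\mathbf n_i)), m(\mu_\infty^+(i,\mathbf n_i)))$ whose bounded-twist part contains, by an embedding with constants independent of $i$, a truncation of $\mathbf M_i'$ that as $i \to \infty$ exhausts $\mathbf M'$ apart from its top and bottom geometrically finite boundary blocks, and then to extract a geometric limit and identify it with a quasi-conformal deformation of $(\Gamma,\psi)$.

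First I would set up the model. By the definition of $m(\mu)$ together with the fact that $\mu_\infty^\pm(i,\mathbf n_i)$ is a shortest marking for it, the Minsky hierarchy $\tilde h_i$ with initial and terminal markings $\mu_\infty^\pm(i,\mathbf n_i)$ contains, as subordinate pieces, the hierarchies $h(\bar B_i)$ and $h(\hat B_i)$ together with the truncations of $h(B_i^j)$ at depth $\mathbf n_i$. From Minsky's construction \cite{Mi} and the bi-Lipschitz model theorem of Brock-Canary-Minsky \cite{BCM}, the resulting model $\tilde{\mathbf M}_i$ contains a uniformly bi-Lipschitz embedded copy $\iota_i(\tilde B_i)$ of the piece $\tilde B_i \subset \mathbf M_i'$ obtained by truncating each $B_i^j$ at depth $\mathbf n_i$ from $\bar B_i$ or $\hat B_i$ and attaching boundary blocks whose conformal structures are determined by $m(\mu_\infty^\pm(i,\mathbf n_i))$. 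Setting $\tilde x_i := \iota_i((\rho_i^{\mathbf M'})^{-1}(x_\infty'))$, precompactness of pointed hyperbolic $3$-manifolds yields a geometric limit $(\tilde M_\infty,\tilde x_\infty)$ of $(\hyperbolic^3/\tilde G_i, \tilde x_i)$ after passing to a subsequence, and by the arguments of Proposition \ref{limits of models} one obtains bi-Lipschitz brick manifold models $\tilde{\mathbf M}$ with $\tilde{\mathbf M}_i[k] \to \tilde{\mathbf M}[k]$ for every $k$.

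Next I would identify the limits. Because $\mathbf n_i \to \infty$, the embedding $\iota_i \circ (\rho_i^{\mathbf M'})^{-1}$ exhausts $\mathbf M'$ minus its top and bottom boundary blocks, so in $\tilde{\mathbf M}$ one finds an isometric copy of the non-boundary part of $\mathbf M'$; the algebraic simply degenerate ends $E^1,\dots,E^p$ of $\mathbf M'$ persist as simply degenerate ends of $\tilde{\mathbf M}$, now connected through the boundary blocks inherited from the $\tilde{\mathbf M}_i$ to geometrically finite ends. The standard horizontal algebraic locus of $\tilde{\mathbf M}_i$ passes through $\tilde x_i$ and maps under the geometric convergence to a horizontal surface homotopic to $g'(S) \subset \tilde{\mathbf M}$; pushing forward the markings yields algebraic convergence of $\{\tilde G_i\}$ to a Kleinian surface group $(\Gamma',\psi')$ whose covering of $\tilde M_\infty$ has the same parabolic loci and the same ending laminations as $(\Gamma,\psi)$, so by the ending lamination theorem $(\Gamma',\psi')$ lies in $QH(\Gamma,\psi)$. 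Strong convergence then follows because every point of $\tilde{\mathbf M}$ lies within uniformly bounded distance of the image of $g'(S)$ except in the two geometrically finite boundary blocks, which are themselves $\pi_1$-carried by the algebraic locus, so the algebraic and geometric limits coincide.

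The hard part will be verifying the uniform bi-Lipschitz property of $\iota_i$. It is not enough that $\tilde h_i$ contains the appropriate truncated sub-hierarchies; one must also check that the transversals of $\mu_\infty^\pm(i,\mathbf n_i)$, within their shortest-marking realisations for $m(\mu_\infty^\pm(i,\mathbf n_i))$, position the boundary blocks of $\tilde{\mathbf M}_i$ correctly relative to the split level surfaces $\hat S_i^\pm(\mathbf n_i)$, with distortion controlled independently of $i$. The universal lower bound $\delta_0$ on lengths built into the definition of $m(\mu)$, combined with the subsurface projection bounds of \S6 of \cite{MaMi}, should supply the required transversal matching at the cut locus, but this verification is the genuinely delicate step of the argument.
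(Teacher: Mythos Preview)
Your overall architecture---build a model for the new quasi-Fuchsian group out of a truncation of $\mathbf M_i'$, pass to a geometric limit, and invoke the ending lamination theorem---matches the paper's. But there are two substantive problems.

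First, a confusion about what the truncated model contains. The split level surfaces $S_i^\pm(\mathbf n_i)$ sit inside the simply degenerate bricks $B_i^j$, so the submanifold of $\mathbf M_i'$ they cobound is the \emph{middle} piece: it contains the central brick $S\times[3/8,5/8]$ and the portions of the $B_i^j$ up to depth $\mathbf n_i$, but it does \emph{not} contain $\bar B_i$ or $\hat B_i$. Thus $\tilde h_i$ should not contain $h(\bar B_i)$ and $h(\hat B_i)$ as sub-hierarchies; those are precisely the parts being discarded. Relatedly, the paper obtains the uniform bi-Lipschitz model $\mathbf M'(\mathbf n_i)$ not by rebuilding a Minsky hierarchy from the markings $\mu_\infty^\pm(i,\mathbf n_i)$, but by invoking the drilling theorem as in Lemma~\ref{modified models}: one takes the submanifold of $\mathbf M_i'[0]$ cobounded by $S_i^-(\mathbf n_i)$ and $S_i^+(\mathbf n_i)$, attaches boundary blocks, and fills tubes. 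This entirely sidesteps the transversal-matching difficulty you flag at the end.

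Second, and more seriously, your strong convergence argument has a gap. The claim that every point of $\tilde{\mathbf M}$ outside the boundary blocks lies within bounded distance of $g'(S)$ is false: points deep in the simply degenerate bricks $B^j$ are arbitrarily far from $g'(S)$. The paper argues differently. It analyses the components $F_i$ of $\hat S_i^\pm(\mathbf n_i)$ lying outside $\cup_j B_i^j$ and the Margulis tubes $\mathbf V_i^k$, and shows that each $F_i$ converges geometrically to a surface $F_\infty$ homeomorphic to $F_i$. This step \emph{requires} the hypothesis that $\Gamma$ has no isolated parabolic locus: without it, the injectivity radius along $F_i$ could degenerate and new cusps would appear in the geometric limit, making $\pi_1$ of the limit strictly larger than $\pi_1(S)$. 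Once the $F_\infty^k$ are known to be homeomorphic to the $F_i$, the geometric limit model is obtained from $\mathbf M'$ by cutting along the $F_\infty^k$ and attaching boundary blocks, hence is homeomorphic to $S\times(0,1)$; this is what forces the algebraic and geometric limits to coincide. Your sketch never invokes the no-isolated-parabolic assumption, and without it the conclusion can fail.
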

\begin{proof}
%Let $\mathbf T_i^+$ and $\mathbf T_i^-$ be the unions of Margulis tubes which $S_-(\nu_i)$ and $S_+(\nu_i)$ touch respectively.
By our definition of the function $m'$ and the  argument in the proof of Lemma \ref{modified models}, a uniformly bi-Lipschitz model manifold for the quasi-Fuchsian group $qf(m'(\mu^-_\infty(i,\mathbf n_i')), m'(\mu^+_\infty(i,\mathbf n_i')))$, which we denote by $\mathbf M'({\mathbf n_i'})$, can be obtained from the submanifold of $\mathbf M_i'[0]$ cobounded by $S^-_i(\mathbf n_i')$ and $S^+_i(\mathbf n_i')$ by pasting boundary blocks corresponding to $m'(\mu^-_\infty(i,\mathbf n_i'))$ and $m'(\mu^+_\infty(i,\mathbf n_i'))$ respectively and filling Margulis tubes.
For large $i$, this model manifold contains $(\rho_i^{\mathbf M'})^{-1} \circ g'(S)$ and $\rho_i^{\mathbf M'} \circ g'$ is homotopic to $\Phi_i$.
It follows that for a fixed generator system of $\pi_1(S)$ with base point on $(\rho_i^{\mathbf M'})^{-1} \circ g'(S)$, the   length of the shortest closed loop in $\mathbf M'({\mathbf n_i'})$ representing each generator is bounded as $i \rightarrow \infty$, and hence that $qf(m'(\mu^-_\infty(i,\mathbf n_i')), m'(\mu^+_\infty(i,\mathbf n_i')))$ converges algebraically after passing to a subsequence.
Let $(\Gamma', \psi')$ be the algebraic limit, and denote $\hyperbolic^3/\Gamma'$ by $M_{\Gamma'}$.

Let $\mathbf M'({\mathbf n_\infty'})$ be the geometric limit  of the complement of the boundary blocks in $\mathbf M'({\mathbf n_i'})$  as $i \rightarrow \infty$.
Then $\mathbf M'({\mathbf n_\infty'})$ is embedded in $\mathbf M'$ as a submanifold, by our definition of $\mathbf M'({\mathbf n'_i})$.
For each $j=1, \dots , q$, the intersection of $\rho_i^{\mathbf M'}(S^+_i(\mathbf n_i'))$ with $\bar B^j$ goes deeper and deeper into $\bar B^j$  to the direction of the end $E_j$ as $i \rightarrow \infty$ since $\mathbf n_i' \rightarrow \infty$.
The same holds for $\rho_i^{\mathbf M'}(S^-_i(\mathbf n_i')) \cap B^j$ for $j=q+1, \dots p$.
Therefore the entire $\bar B^j$ is contained in $\mathbf M'({\mathbf n'_\infty})$ for every $j=1, \dots ,p$.

Now we look at the two boundary blocks of $\mathbf M'(\mathbf n_i')$, which we denote by $b^-_i$ and $b^+_i$, where $b^-_i$ has conformal structure at infinity $m'(\mu^-_\infty(i,\mathbf n_i'))$ and $b^+_i$ has conformal structure at infinity $m'(\mu^+_\infty(i,\mathbf n_i'))$, and consider their geometric limits.
By our definition of $m'$ and from the fact that $\mathbf n_i-\mathbf n_i' \rightarrow \infty$, we see that the hyperbolic lengths of ${c_1^i}', \dots , {c_t^i}'$ with respect to $m'(\mu^-_\infty(i,\mathbf n_i'))$ and those of $c_1^i,\dots , c_s^i$ with respect to $m'(\mu^+_\infty(i,\mathbf n_i'))$ go to $0$.
Let $F^1, \dots, F^r$ be the components of $S \setminus \cup_{k=1}^q \Sigma^k$.
Then $m'(\mu^+_\infty(i, \mathbf n_i'))$ restricted to each one of $F^1, \dots, F^r$ has pants decomposition whose lengths are bounded below by $\delta_0$, and 
 the boundary components of $F^1, \dots, F^r$ correspond to  $c_i^1, \dots , c_s^i$.
Therefore, the geometric limit of $b^+_i$ with basepoint in the part of $F^k$ is homeomorphic to $\Int F^k \times I$ with conformal structure at infinity corresponding to a complete hyperbolic structure on $\Int F^k$ making each frontier component a cusp.
The same holds for $b^-_i$ considering the components of $S \setminus \cup_{k=q+1}^p \Sigma^k$.

Therefore, the geometric limit of $\mathbf M'(\mathbf n_i')$ has upper simply degenerate ends corresponding to $\Sigma^1, \dots , \Sigma^q$, lower simply degenerate ends corresponding to $\Sigma^{q+1}, \dots , \Sigma^p$, upper geometrically finite ends corresponding to the components of $S \setminus \cup_{k=1}^q \Sigma^k$, and lower geometrically finite ends corresponding to the components of $S \setminus \cup_{k=q+1}^p \Sigma^k$, all of which are algebraic.
This shows that the geometric limit has fundamental group isomorphic to $\pi_1(S)$, and hence that the convergence is strong.
We denote a representative of this strong limit by $(\Gamma', \psi')$.
The model manifold shows that the ends of $M_{\Gamma'}$ consists of simply degenerate ends corresponding to $E^1, \dots , E^p$ and geometrically finite ends and that every parabolic locus touches one of $E^1, \dots , E^p$.
Since the ending laminations of $E^1, \dots , E^p$ are the same as those of the corresponding ends of $M'=\hyperbolic^3/\Gamma$, by the ending lamination theorem due to Brock-Canary-Minsky \cite{BCM}, we see that $(\Gamma',\psi')$ is a quasi-conformal deformation of $(\Gamma, \psi)$.
\end{proof}

\begin{claim}
\label{any limit}
For any sequence $\{t_i \in [0,\bar k_i]\}$, the sequence $\{\beta_i(t_i)\in QF(S)\}$ converges algebraically to a quasi-conformal deformation of $(\Gamma, \psi)$.
\end{claim}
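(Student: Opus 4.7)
The plan is to mimic the proof of Claim \ref{strong limit} by constructing, for each $\beta_i(t_i)$, a uniformly bi-Lipschitz model manifold that still contains the pulled-back algebraic locus $(\rho_i^{\mathbf M'})^{-1}(g'(S))$, and then reading off from the geometric limit that any subsequential algebraic limit is a quasi-conformal deformation of $(\Gamma,\psi)$.

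First I would reduce to integer parameters. Between consecutive integers $k$ and $k+1$, the arc $t \mapsto (\alpha_i^-(t),\alpha_i^+(t))$ is a Teichm\"{u}ller geodesic (or the initial quasi-geodesic segment) of uniformly bounded length $K$, so by continuity of the Ahlfors-Bers map $qf$ on bounded subsets of $\mathcal T(S) \times \mathcal T(\bar S)$, the point $\beta_i(t_i)$ sits within uniformly bounded distance in $AH(S)$ from $\beta_i(\lfloor t_i \rfloor)$. After passing to a subsequence we may therefore assume $t_i \in \integers_{\geq 0}$. For such $t_i = k$, the pair $(\alpha_i^-(k),\alpha_i^+(k))$ equals $(m(\mu^-_{k^-}(i,\mathbf n_i)),m(\mu^+_{k^+}(i,\mathbf n_i)))$ for the markings reached after $k$ elementary steps in the respective sequences.

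Next I would build a labelled brick manifold $\mathbf M_i'(k)$ for $\beta_i(k)$ by taking the portion of $\mathbf M_i'[0]$ cobounded by the extended split level surfaces $\hat S_i^-(k)$ and $\hat S_i^+(k)$ determined by these markings, attaching boundary blocks with conformal structures $m(\mu^-_{k^-}(i,\mathbf n_i))$ and $m(\mu^+_{k^+}(i,\mathbf n_i))$, and filling Margulis tubes, and then produce a uniform bi-Lipschitz model map $\mathbf M_i'(k) \to (\hyperbolic^3/\beta_i(k))_0$ via the drilling theorem of Bromberg \cite{BromL} and Brock-Bromberg \cite{BB} combined with Theorem 4.1 of \cite{OhSo}, exactly as for $k = \mathbf n_i$ in the proof of Claim \ref{strong limit}. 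The bi-Lipschitz constant depends only on $\chi(S)$ and $\delta_0$. The essential point is that $\mathbf M_i'(k)$ always contains the pulled-back algebraic locus $(\rho_i^{\mathbf M'})^{-1}(g'(S))$ for large $i$: in the embedding into $S \times [0,1]$, the surface $\hat S_i^-(k)$ lies below the algebraic locus and $\hat S_i^+(k)$ lies above it, because by part (4) of Lemma \ref{modified models} the algebraic locus is homotopic to a horizontal surface strictly between the bricks $B_i^j$ with $j \leq q$ and those with $j \geq q+1$, and because $\hat S_i^\pm(k)$ for $k \leq \mathbf n_i$ are by construction homotopic to this locus (and for $k > \mathbf n_i$ lie yet further inside the corresponding simply degenerate bricks). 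Transporting a fixed generator system of $\pi_1(S)$ based on $g'(S)$ via the bi-Lipschitz model map to $\hyperbolic^3/\beta_i(k)$, we obtain uniform upper bounds on translation lengths, so $\{\beta_i(t_i)\}$ is relatively compact in $AH(S)$.

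After extracting an algebraic subsequential limit $(\Gamma'',\psi'')$, I would identify it as a quasi-conformal deformation of $(\Gamma,\psi)$. The geometric limit of $\{\mathbf M_i'(t_i)\}$ is a sub-brick-manifold of $\mathbf M'$, cut along surfaces arising as geometric limits of the parts of $\hat S_i^\pm(t_i)$ outside the bricks $B^j$, with boundary blocks attached; it still contains every algebraic simply degenerate end $E^j$ and the algebraic locus as a horizontal leaf, so the model manifold of $\hyperbolic^3/\Gamma''$ has exactly the same upper and lower ending laminations and the same parabolic loci as that of $\hyperbolic^3/\Gamma$. By the ending lamination theorem of Brock-Canary-Minsky \cite{BCM}, $(\Gamma'',\psi'')$ is a quasi-conformal deformation of $(\Gamma,\psi)$. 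The main obstacle is controlling the containment of the algebraic locus and the uniform bi-Lipschitz constants when $t_i$ sits deep inside the initial resolutions $\tau(\bar B_i)$ or $\tau(\hat B_i)$, where the truncating level surfaces lie far from the cobordism identified in Claim \ref{strong limit} and the resulting drilled manifolds must still satisfy the hypotheses of the drilling theorem uniformly in $i$.
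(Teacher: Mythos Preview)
Your proposal is correct and follows essentially the same approach as the paper: reduce to integer parameters via the bounded length of each Teichm\"{u}ller segment, build a uniform bi-Lipschitz model for $\beta_i(t_i)$ as the submanifold of $\mathbf M_i'$ cobounded by the two extended split level surfaces with boundary blocks attached and Margulis tubes filled, observe it contains the pulled-back algebraic locus so the sequence is precompact, and then read off from the geometric limit (a submanifold of $\mathbf M'$ containing all the $B^j$) that the algebraic limit has the same ending laminations and parabolic loci as $\Gamma$, hence is a quasi-conformal deformation by the ending lamination theorem.

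The ``main obstacle'' you flag at the end is not a genuine difficulty, and the paper disposes of it in a sentence. When $t_i$ lies in the initial quasi-geodesic segment (i.e.\ $\alpha_i^+(t_i)$ is still near $n_i$, or $\alpha_i^-(t_i)$ near $m_i$), the corresponding split level surface is close to the top or bottom of $\mathbf M_i'$, so the model $\mathbf M_i'(t_i)$ contains \emph{more} of $\mathbf M_i'$ above (respectively below) the algebraic locus, not less; in particular all interior blocks on that side, and hence the algebraic simply degenerate ends, are automatically present. The drilling theorem hypotheses are unaffected because the curves being drilled are always the same $c_j^i, {c_j^i}'$ in $M_i$, whose lengths go to $0$ independently of $t_i$.
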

\begin{proof}
Recall that $\beta_i(t_i)=qf(\alpha^-_i(t_i), \alpha^+_i(t_i))$, and that  $\alpha^-$ and $\alpha^+$ are broken quasi-geodesic arcs consisting  of Teichm\"{u}ller geodesics with bounded lengths except for the first quasi-geodesics; $\alpha^\pm|[0,1]$, which may be long.

We first assume that neither $\alpha_i^-(t_i)$ nor $\alpha^+(t_i)$ lies on the first quasi-geodesics, which connect   $m'(\mu_1^-(i, \mathbf n_i))$ with and $m'(\mu_1^+(i, \mathbf n_i))$ with respectively.
In this case, since $\alpha^-_i(t_i)$ and $\alpha^+(t_i)$ lie on Teichm\"{u}ller geodesics with bounded length,  we have only to consider the case when both $\alpha^+(t_i)$ and $\alpha_i^-(t_i)$ are endpoints of Teichm\"{u}ller geodesic arcs constituting $\alpha^-$ and $\alpha^+$, \ie the case when $t_i$ is an integer.
The marking $\alpha^-(t_i)$ corresponds to either a slice in $\tau(\bar B_i)$ or a union of slices, taken one from each of $\tau(B_i^j)\, (j=q+1, \dots , p)$.
This corresponds in turn to an extended level surface $\hat S ^-_i(t_i)$.
Similarly, we have an extended level surface $\hat S^+_i(t_i)$ corresponding to $\alpha^+(t_i)$.
%By considering extended level surfaces $S_-(l_i^-)$ and $S_+(l_i^+)$ corresponding to $\tau(l_i^-)$ and $\tau(l_i^+)$ respectively, 
Then, we can see that a uniform bi-Lipschitz model manifold $\mathbf M'(\beta_i(t_i))$ for $\beta_i(t_i)$ is obtained from the submanifold of $\mathbf M_i'$ cobounded by $S^-_i(t_i)$ and $S^+_i(t_i)$ by pasting boundary blocks and filling Margulis tubes by the argument of Lemma \ref{modified models}.
Now, by the same argument as in the proof of the previous claim, we see that $\{\beta_i(t_i)\}$ converges algebraically after passing to a subsequence, and the geometric limit $\mathbf M_\infty'(\beta)$ of $\mathbf M'(\beta_i(t_i))$ contains bricks $\bar B^1, \dots , \bar B^p$.
Since the internal blocks of $\mathbf M_\infty'(\beta)$ also lie in $\mathbf M'$, the ends of the algebraic limit other than those corresponding to $E^1, \dots , E^p$ are geometrically finite and there are no extra parabolic loci.
This implies that the algebraic limit is a quasi-conformal deformation of $(\Gamma, \psi)$ by the ending lamination theorem, as in the previous claim.

Next suppose that $\alpha^+(t_i)$ lies in the first quasi-geodesic, \ie $t_i \in [0,1]$.
Then all the internal blocks of $\mathbf M_i'$ above $(\rho_i^{\mathbf M'})^{-1} \circ g'(S)$  lie also in a model manifold $\mathbf M'(\beta_i(t_i))$ for $\beta_i(t_i)=qf(\alpha_i^-(t_i), \alpha_i^+(t_i))$ defined in the same way as above.
Therefore, the algebraic ends of $\mathbf M'_i$ lie also in the model  manifold $\mathbf M'(\beta_i(t_i))$.
The above argument for showing that there is no extra parabolic loci works also in this case.

We can argue in the same way also in the case when $\alpha^-(t_i)$ lies (or both $\alpha^-(t_i)$ and $\alpha^+(t_i)$ lie) in the first quasi-geodesic.
\end{proof}

Now, we shall complete the proof of Proposition \ref{deformation to strong one}, by setting $\alpha_i(t)=\beta_i(\bar k_i t)$ for $\beta_i$ defined above, and showing thus constructed arc $\alpha_i$ satisfies the conditions in the statement.
The conditions (1) and (2) have already been shown.
Let us show (3).
By our definition of $m'$, the lengths of $c_1, \dots , c_s$ with respect to  $m'(\mu_\infty^+(i, \mathbf n_i'))$ go to $0$, and also those of $c_1', \dots , c_t'$ with respect to $m'(\mu_\infty^-(i,\mathbf n_i'))$ go to $0$.
%By our construction, the model manifold $\mathbf M_i'(\beta_i(\bar k_i))$ has Margulis tubes $\mathbf V_1^i, \dots , \mathbf V_s^i$ and ${\mathbf V_1^i}', \dots , {\mathbf V_t^i}'$ with core curves corresponding to  $c^i_1, \dots , c^i_s$ and ${c^i_1}', \dots , {c_t^i}'$.
%Because of the existence of the bricks $B_i^j$ which supports the tight geodesic $\gamma_i^j$ with length going to $\infty$, for each tube $V$ among $\mathbf V^i_1, \dots , \mathbf V^i_s; {\mathbf V_1^i}', \dots , {\mathbf V_t^i}'$, we see that $\Im \omega_{\mathbf M_i(\beta_i(\bar k_i))}(V)$ goes to $\infty$.
%Since $\mathbf V_1^i, \dots , \mathbf V_s^i$ are also attached to the upper boundary block and ${\mathbf V_1^i}', \dots , {\mathbf V_t^i}'$ to the lower boundary block and the geometric limit coincides the algebraic limit for $\alpha_i(1)=\beta_i(\bar k_i)$, this is possible only when the upper boundary block split along the curves corresponding to $c^i_1, \dots , c^i_n$ and the lower one along curves corresponding to ${c_1^i}', \dots {c_t^i}'$ in the geometric limit.
This means that the length of each of $c_1, \dots , c_s$ with respect to $\bar n_i$ and that of each of $c_1', \dots , c_t'$ with respect to $\bar m_i$ goes to $0$.

Next we turn to the condition (4).
In the case when $\Gamma$ is a b-group, there is no lower algebraic simply degenerate ends for $\mathbf M'$.
By our definition of $\beta_i$, in this case the lower conformal structure $\alpha_i^-(t)$ is the same as $m_0$ for every $t$.
%Therefore, the algebraic limit of $\beta_i(k_i)$ has the same lower conformal structure as $\Gamma$.

Finally, we verify the condition (5).
Suppose, seeking a contradiction, that the condition (5) does not hold.
Then, there exist a neighbourhood $U$ of $QH(\Gamma, \psi)$ and $t_i \in [0,k_i]$ such that $\beta_i(t_i)$ stays outside $U$ for every $i$ after passing to a subsequence.
Now, we apply Claim \ref{any limit} to see that $\beta(t_i)$ converges to a point in $QH(\Gamma, \psi)$ after passing to a subsequence.
This is a contradiction.
Thus we have completed the proof of Proposition \ref{deformation to strong one}.

\subsection{Proofs of Theorem \ref{bumping theorem} and Corollaries \ref{no self-bumping}, \ref{no self-bumping with isolated}}
\begin{proof}[Proof of Theorem \ref{bumping theorem}]
Having proved Proposition \ref{deformation to strong one}, to prove Theorem \ref{bumping theorem}, it remains to show that two sequences strongly converging to groups in the quasi-conformal deformation space of $(\Gamma, \psi)$ as constructed in the proof of Proposition \ref{deformation to strong one} can be joined by arcs in  small neighbourhoods of the deformation space, fixing the conformal structure at bottom when $\Gamma$ is a b-group.

%We go on to use the same symbols as in the proof of Proposition \ref{deformation to strong one}.
Let $\{(G_i, \phi_i)\}$ and $\{(H_i, \varphi_i)\}$ be two sequences of quasi-Fuchsian groups both of which converge algebraically to quasi-conformal deformations of $(\Gamma, \psi)$.
Let $\mathbf M_i'$ and $\mathbf N_i'$ be model manifolds for $\hyperbolic^3/G_i$ and $\hyperbolic^3/H_i$ constructed as in Lemma \ref{modified models}, converging geometrically to model manifolds $\mathbf M'$ and $\mathbf N'$ for the geometric limits of $\{G_i\}$ and $\{H_i\}$ respectively.
Since the algebraic limits of $\{(G_i, \phi_i)\}$ and $\{(H_i, \varphi_i)\}$ are quasi-conformally equivalent, $\mathbf M'$ and $\mathbf N'$ have the same number of  algebraic simply degenerate ends with the same ending laminations $\lambda_1, \dots , \lambda_p$.
As in Proposition \ref{deformation to strong one}, we renumber them so that those having $\lambda_1, \dots , \lambda_q$ as ending laminations are upper whereas the rest are lower.
In particular, we can assume that the boundary components  touching  these ends are the same open annuli in $S \times [0,1]$ for $\mathbf M'$ and $\mathbf N'$.
We denote the union of these annuli by $\mathcal T$.

Let $\{(G_i', \phi_i')=qf(\bar m_i, \bar n_i)\}$ and $\{(H_i', \varphi_i')=qf(\bar \mu_i, \bar \nu_i)\}$ be two strongly convergent sequences as constructed in Proposition \ref{deformation to strong one} for the two sequences $\{(G_i,\phi_i)\}$ and $\{(H_i, \varphi_i)\}$.
Recall that in the construction of such a sequence in Proposition  \ref{deformation to strong one}, we took a number $\mathbf n_i'$.
Note that the construction works if we take a number smaller than $\mathbf n_i'$ for each $i$ provided that it goes to $\infty$.
Therefore, we can let the number $\mathbf n_i'$ be common to both $G_i'$ and $H_i'$.
Let $\mathbf M'({\mathbf n_i'})$ and $\mathbf N'({\mathbf n_i'})$ be model manifolds for them as in the proof of \cref{deformation to strong one}.
Then both $\mathbf M'(\mathbf n_i')$ and $\mathbf N'(\mathbf n_i')$, regarded as embedded in $S \times [0,1]$, have the following properties.
There are unions of Margulis tubes $\mathbf V_i$ and $\mathbf V_i'$  coming from $\mathcal T$ in  $\mathbf M'(\mathbf n_i')$ and $\mathbf N'(\mathbf n_i')$ respectively, which can be assumed to correspond to the same union of solid tori in $S \times [0,1]$.
As in the proof of Proposition \ref{deformation to strong one}, the complements $\mathbf M'(\mathbf n_i') \setminus \mathbf V_i$ and $\mathbf N'(\mathbf n_i') \setminus \mathbf V_i'$ have decompositions into bricks among which there are ${B_i^{j}}'=\Sigma^j \times {J_i^{j}}'$ in $\mathbf M'(\mathbf n_i')$ and ${B_i^{j}}''=\Sigma^j \times {J_i^{j}}''$ in $\mathbf N'(\mathbf n_i')$ on the same side of the preimages of the standard algebraic immersions.
%Recall that they were obtained from $\check{\mathbf M}'$ and $\check{\mathbf N}_i'$ by throwing away top and bottom interior bricks, cutting bricks vertically touching $\mathbf V_i$ and $\mathbf V_i'$, and pasting boundary blocks.
The bricks ${B_i^{j}}'$ and ${B_i^{j}}''$ contain tube unions corresponding to tight geodesics ${\gamma_i^{j}}'$ and ${\gamma_i^{j}}''$ supported on $\Sigma^j$ whose lengths go to $\infty$ as $i \rightarrow \infty$.
Furthermore, for each of ${\gamma_i^{j}}'$ and ${\gamma_i^{j}}''$, one of the endpoints stays in a compact set and the other endpoint (which we denote by ${b_i^{j}}'$ for ${\gamma_i^{j}}'$ and ${b_i^{j}}''$ for ${\gamma_i^{j}}''$) goes to the ending lamination $\lambda_j$ as $i \rightarrow \infty$.
Also, the tube unions put in ${B_i^j}'$ and ${B_i^j}''$ induce hierarchies ${h_i^j}'$ and ${h_i^j}''$ on $\Sigma^j$ with main geodesics ${\gamma_i^j}'$ and ${\gamma_i^j}''$ respectively.

In the following, we only consider the case when the end of $\mathbf M'$ (hence also that of $\mathbf N'$) having $\lambda_j$ as the ending lamination is an upper end, \ie $j=1, \dots q$.
The case when it is an lower end can be dealt with just by turning everything upside down as usual.
Recall that we constructed a sequence of markings $\mu^+(i,\mathbf n_i')$ in the proof of Proposition \ref{deformation to strong one}.
We construct its counterpart  ${\mu^+}'(i,\mathbf n_i')$ for $\mathbf N'$.
As can been seen in the construction of $\mathbf M'(\mathbf n_i')$ in the proof of Proposition \ref{deformation to strong one}, the last slices of ${h_i^j}'$ and ${h_i^j}''$ correspond to the restrictions of the last terms $\mu^+_\infty (i,\mathbf n_i')$ and ${\mu^+_\infty}'(i,\mathbf n_i')$ of the sequences of markings $\mu^+(i,\mathbf n_i')$ and ${\mu^+}'(i,\mathbf n_i')$.

%We should note that by our definition of $\mathbf M_i'$ in the proof of Proposition \ref{deformation to strong one}, $\gamma_i^{j'}$ is subordinate to the last vertex of the main geodesic of the hierarchy  corresponding to $\mathbf M_i'$, which we denote by $h_i'$.
%Similarly $\gamma_i^{j''}$ is subordinate to the last vertex of the main geodesic of the hierarchy corresponding to $\mathbf M_i''$, which we denote by $h_i''$.
Since the length of the frontier of $\Sigma^j$ goes to $0$ with respect to both $\bar n_i$ and $\bar \nu_i$, we can assume that the length of each component of $\Sigma^j$ with respect to $\bar n_i$ is equal to that with respect to $\bar \nu_i$ without changing the algebraic limit and the structure of the model manifolds except for the boundary blocks, by deforming $\bar m_i$ and $\bar \nu_i$ locally in a thin annular neighbourhood of $\Sigma^j$.

Now, we connect the endpoint $b_i^{j'}$ of $\gamma_i^{j'}$ with the endpoint $b_i^{j''}$ of $\gamma_i^{j''}$ by a tight sequence $\gamma_i=\{s^i_1, \dots , s^i_{u_i}\}$ in $\CC(\Sigma^j)$.
By the hyperbolicity of $\CC(\Sigma^j)$, for any choice of an integer $v_i$ between $1$ and $u_i$, the simplex $s^i_{v_i}$ also converges to the lamination $\lambda_j$ as $i \rightarrow \infty$.
By letting $\mu^+_\infty(i, \mathbf n_i')$ and ${\mu^+_\infty}'(i,\mathbf n_i')$ be the initial and the terminal markings respectively, we can regard $\gamma_i$ as a tight geodesic, and there is a hierarchy $h(\gamma_i)$ on $\Sigma^j$ which has $\gamma_i$ as its main geodesic.
%Since  $\gamma_i^{j'}$ is subordinate to the last vertex,
% we see that $c^i_1$ is a shortest pants decomposition of $(\Sigma_i^j, \bar n_i)$ and $c^i_{u_i}$ that of $(\Sigma_i&j, \bar \nu_i)$.
%the terminal markings of the hierarchies $h_i'$ and $h_i''$ determine markings $\hat c^i_1$ and $\hat c^i_{u_i}$ with base curves $c^i_1$ and $c^i_{u_i}$, which are shortest markings on $\Sigma_i^j$ with respect to $\bar n_i$ and $\bar \nu_i$ respectively.
%Letting $\hat c^i_1$ and $\hat c^i_{u_i}$ be the initial and terminal markings of $\gamma_i$, we can construct a hierarchy $h(\gamma_i)$ with main geodesic $\gamma_i$.
Considering a resolution of this hierarchy $h(\gamma_i)$, we can connect $\mu^+_\infty(i, \mathbf n_i')$ with ${\mu^+_\infty}'(i,\mathbf n_i')$ by elementary moves of markings $\bar \mu_\infty^i(r), r=0, \dots , w_i$.
Recall that $\mu^+_\infty(i, \mathbf n_i')$ is a shortest marking for $(S, \bar n_i)$ and ${\mu^+_\infty}'(i, \mathbf n_i')$ is a shortest marking for $(S, \bar \mu_i)$ and that both of them contain every component of $\Fr \Sigma^j$.
Therefore we can connect   $\bar n_i$ with $\bar n_i|(S\setminus \Sigma^j) \cup \bar \nu_i|\Sigma^j$ by a piecewise Teichm\"{u}ller geodesic arc $\delta_i: [0,w_i] \rightarrow \mathcal T(S)$ so that for any integer $r$, the restriction  $\bar \mu_\infty^i(r)|\Sigma^j$ is a shortest marking for $\delta_i(r)|\Sigma^j$ whereas $\delta_i(r)|S\setminus \Sigma^j)=\bar n_i|(S\setminus \Sigma^j)$.
Since the base curve of $\bar \mu^i_\infty(r)|\Sigma^j$ lies among $c^i_1, \dots, c^i_{u_i}$, every $c^i_{s_i}$ converges to $\lambda_j$, and the structure of $\mathbf M_i'(\mathbf n_i')$ outside $\bar B^j$ is unchanged, we see that $qf(\bar m_i, \delta_i(t_i))$ converges to $(\Gamma, \psi)$ strongly for any sequence $\{t_i \in [0,w_i]\}$.
By the same argument as in the proof of Proposition \ref{deformation to strong one}, we see that for any neighbourhood $U$ of $QH(\Gamma, \psi)$, the arc $qf(\bar m_i,\delta_i[0,w_i])$ is contained in $U$ for large $i$.

We repeat the same procedure for each $j=1, \dots, p$.
Then, we get an arc $\alpha'_i$ connecting $qf(\bar m_i, \bar n_i)$ with $qf(\bar m_i|(S \setminus \cup_{j=q+1}^p \Sigma^j) \cup \bar \mu_i|\cup_{j=q+1}^p \Sigma^j, \bar n_i|(S \setminus \cup_{j=1}^q \Sigma^j) \cup \bar \nu_i|\cup_{j=1}^p \Sigma^j)$ such that for any neighbourhood $U$ of $QH(\Gamma, \psi)$, the arc $\alpha'_i$ is contained in $U$ for sufficiently large $i$.
Since all of the $\bar m_i|(S \setminus \cup_{j=q+1}^p \Sigma^j)$, the $\bar \mu_i|(S \setminus \cup_{j=q+1}^p \Sigma^j)$, the $\bar n_i|(S \setminus \cup_{j=1}^p \Sigma^j)$, and the $\bar \nu_i|(S \setminus \cup_{j=1}^p \Sigma^j)$ are bounded in the Teichm\"{u}ller spaces with free boundary, and the lengths of the boundary components are the same for $\bar m_i$ and $\bar \mu_i$ and for $\bar n_i$ and $\bar \nu_i$ and go to $0$ as $i \to \infty$, we can deform $qf(\bar m_i|(S \setminus \cup_{j=q+1}^p \Sigma^j) \cup \bar \mu_i|\cup_{j=q+1}^p \Sigma^j, \bar n_i|(S \setminus \cup_{j=1}^q \Sigma^j) \cup \bar \nu_i|\cup_{j=1}^p \Sigma^j)$ to $qf(\hat \mu_i, \hat \nu_i)$ by a uniformly bounded quasi-conformal deformation, where the differences between $\hat \mu_i$ and $\bar \mu_i$ and between $\hat \nu_i$ and $\bar \nu_i$ are just compositions of Fenchel-Nielsen twists around $c_1, \dots, c_s$ and $c_1' ,\dots , c'_t$ respectively.
This quasi-conformal deformation gives an arc connecting $qf(\bar m_i|(S \setminus \cup_{j=q+1}^p \Sigma^j) \cup \bar \mu_i|\cup_{j=q+1}^p \Sigma^j, \bar n_i|(S \setminus \cup_{j=1}^q \Sigma^j) \cup \bar \nu_i|\cup_{j=1}^p \Sigma^j)$ to $qf(\hat \mu_i, \hat \nu_i)$ which is contained in the neighbourhood $U$ for large $i$.

Now, we connect $\hat \mu_i$ with $\bar \mu_i$ and $\hat \nu_i$ with $\bar \nu_i$ by arcs $\boldsymbol \mu_i(x), \boldsymbol \nu_i(x)$ in the Teichm\"{u}ller space realising these compositions of Fenchel-Nielsen twists corresponding to their differences.
Then the lengths of $c_1, \dots , c_s$ with respect to $\boldsymbol \mu_i(x)$ also go to $0$ as $i \rightarrow \infty$ for any $x$, and so do the lengths of $c_1', \dots , c_t'$ with respect to $\boldsymbol \nu_i(x)$.
Therefore, by the same argument as before, we can connect $qf(\hat \mu_i, \hat \nu_i)$ with $qf(\bar \mu_i, \bar \nu_i)$ by an arc which is contained in $U$ for large $i$.

Thus, joining the arcs obtained this way, we have shown that we can connect $(G_i',\phi_i')=qf(\bar m_i,\bar n_i)$ with $(H_i',\psi_i')=qf(\bar \mu_i,\bar \nu_i)$ by an arc $\alpha_i$ in $QF(S)$ such that for any neighbourhood $U$ of $QH(\Gamma, \psi)$, the arc $\alpha_i$ is contained in $U$ large $i$.
On the other hand,  by our definition of $(\bar m_i, \bar n_i)$ and $(\bar \mu_i,\bar \nu_i)$ which uses Proposition \ref{deformation to strong one}, there are arcs with such a property connecting $(G_i,\phi_i)$ with $(G'_i,\phi_i')$ and $(H_i, \psi_i)$ with $(H_i',\psi_i')$.
Therefore, connecting these three arcs, we get an arc as we wanted. 

In the case when $\Gamma$ is a b-group, by Theorem \ref{no exotic convergence}, the lower conformal structures at infinity of both $(G_i, \phi_i)$ and $(H_i, \psi_i)$ converge to $m_0$.
Therefore, we can construct an arc as above keeping the lower conformal structures in an arbitrarily  small neighbourhood of $m_0$ for large $i$.
This shows the second paragraph of our theorem.
\end{proof}

Corollary \ref{no self-bumping} follows easily from this as follows.

\begin{proof}[Proof of Corollary \ref{no self-bumping}]
Suppose that every small neighbourhood   of $(\Gamma, \psi)$ intersects more than one component of $QF(S)$.
Then  for every small neighbourhood $U$ of $(\Gamma, \psi)$ in $AH(S)$, there are sequences $\{qf(m_i,n_i)\}$ and $\{qf(\hat m_i,\hat n_i)\}$ both converging to $(\Gamma, \psi)$ such that $qf(m_i,n_i)$ and $qf(\hat m_i,\hat n_i)$ belong to different component of $U \cap QF(S)$.
Applying Theorem \ref{bumping theorem}, we see that for any small neighbourhood $V$ of $QF(\Gamma, \psi)$ in $AH(S)$, for sufficiently large $i$, the two points $qf(m_i,n_i)$ and $qf(\hat m_i,\hat n_i)$ must be connected in $V \cap QF(S)$.
In the case when every component of $\Omega_\Gamma/\Gamma$ is a thrice-punctured sphere, this is a contradiction since $QH(\Gamma,\psi)$ consists of only $(\Gamma, \psi)$ then and we can let $V$ be $U$.

In the case when there is a component of $\Omega_\Gamma/\Gamma$ which is homeomorphic to $S$, the Kleinian group $\Gamma$ is a b-group.
Then the second paragraph of Theorem \ref{bumping theorem} shows that we can connect $qf(m_i,n_i)$ to $qf(\hat m_i,\hat n_i)$ keeping the lower conformal structure within a small neighbourhood.
This means that they can be connected in a small neighbourhood of $(\Gamma, \psi)$ in $QF(S)$.
This again is a contradiction.
\end{proof}

To get Corollary \ref{no self-bumping with isolated}, we need to review the argument of the proof of Theorem \ref{bumping theorem}.

\begin{proof}[Proof of Corollary \ref{no self-bumping with isolated}]
In Corollary \ref{no self-bumping with isolated}, we have allowed isolated parabolic loci to exist.
We consider a sequence of quasi-Fuchsian groups $\{qf(m_i, n_i)\}$  converging to $(\Gamma, \psi)$ in this generalised setting, and shall show that we can deform $\{qf(m_i, n_i)\}$ continuously to a sequence which converges strongly to $(\Gamma, \psi)$.
In the proof of Theorem \ref{bumping theorem}, we used the assumption that $\Gamma$ has no isolated parabolic loci in two places; first to show that the standard immersion $g'$ can be isotoped to a horizontal embedding, and secondly in the proofs of Claims \ref{strong limit} and \ref{any limit} to show that the limits are quasi-conformal deformations of $(\Gamma, \psi)$.
We shall show how to modify the argument in the latter part first.

(1) {\bf How to modify the proofs of Claims \ref{strong limit} and \ref{any limit}.}
In the proofs of these claims, we used the fact that all the parabolic loci of the algebraic limit $\Gamma'$ of the new sequence touch simply degenerate ends to imply that the parabolic loci of $\Gamma'$ and $\Gamma$ are the same.
In our setting now, if we do the same construction, this is not the case any more.
To preserve the parabolic loci of  algebraic limits throughout the modification of quasi-Fuchsian groups, we consider to modify the model manifolds $\mathbf M'$ and $\mathbf M'_i$ constructed in  Lemma \ref{modified models} and the function $m$ as follows.

We divide the isolated parabolic loci of $\Gamma$ into two categories.
Let $c$ be a core curve of an isolated parabolic locus $P$ of $\Gamma$.
We can regard $c$ as lying on the boundary of the model $\mathbf M'$ of the non-cuspidal part of the geometric limit $(M_\infty)_0$.
We say that the parabolic locus $P$ and its core curve $c$ are of torus type if $c$ can be regarded as lying on a torus boundary component of $\mathbf M'$ and are of annulus type if $c$ is regarded as lying on an open annulus component of $\mathbf M'$.

Let $d_1, \dots, d_u$ be  core curves of the isolated parabolic loci, not caring whether they are of torus type or annulus type.
We renumber them so that $d_1, \dots, d_v$ are upper and $d_{v+1}, \dots , d_u$ are lower.
These curves correspond to algebraic parabolic curves lying on boundary components in the original model manifold $\mathbf M$.
We take annular neighbourhoods $A(d_1), \dots , A(d_v)$ and $A(d_{v+1}), \dots, A(d_u)$ of these curves on $S$ so that both $A(d_1), \dots , A(d_v)$ and $A(d_{v+1}), \dots, A(d_u)$ are pairwise disjoint.
We set $V(d_j)=A(d_j) \times [5/8,3/4]$ for $j=1, \dots, v$ and  $V(d_j)=A(d_j) \times [1/4,3/8]$ for $j=v+1, \dots , u$.
We set $\mathcal U^+$ to be $\cup_{j=1}^v V(d_j)$ and $\mathcal U^-$ to be $\cup_{j=v+1}^u V(d_j)$.
%
%Next let $c_1', \dots , c_w'$ be core curves of the isolated parabolic loci of annulus type of $\Gamma$.
%As before, we renumber them so that $c_1', \dots , c_x'$ are upper and $c_{x+1}', \dots , c_w'$ are lower, and take disjoint annular neighbourhoods $A(c_1'), \dots , A(c_x')$ and $c_{x+1}', \dots , A(c_w')$.
%We consider $V(c_j')=A(c_j') \times [3/4,1)$ for $j=1, \dots, x$ and $V(c_j')=A(c_j) \times (0,1/4]$ for $j=x+1, \dots , w$.
%%%%Ici

We then define $\bar {\mathbf M}$ to be $S \times [1/8,7/8] \setminus (\mathcal T^- \cup \mathcal T^+ \cup \mathcal U^- \cup \mathcal U^+)$, where $\mathcal T^-$ and $\mathcal T^-$ are unions of solid tori corresponding to non-isolated parabolic curves which were defined in the proof of Lemma \ref{modified models}.
This new $\bar {\mathbf M}$ also admits standard brick decomposition having five stages although the bottom and top levels are changed to $1/8$ and $7/8$.
The bottom brick and the top brick are denoted $\bar B_i$ and $\hat B_i$ as before.
%: the bottom level consists of disjoint union of the $\Sigma \times [1/8, 1/4]$ for the components $\Sigma$ of $S \setminus \cup_{j=v+1}^u A(c_j)$, and the top level consists of the $\Sigma \times [3/4,7/8]$ for the components $\Sigma$ of $S \setminus \cup_{j=1}^v A(c_j)$.
By the same construction as in the proof of Lemma \ref{modified models}, we can construct labelled brick manifolds $\check{\mathbf M}_i$ by attaching geometrically finite blocks corresponding to $m_i$ and $n_i$, and  then model manifolds $\mathbf M_i'$ by filling in Margulis tubes.
As in the proof of Lemma \ref{modified models}, we put tube unions in $\bar B_i$ and $\hat B_i$, which are parts of block decomposition of $\check{\mathbf M}_i$, and decompose them into blocks.
These give rise to resolutions $\tau(\bar B_i)$ and $\tau(\hat B_i)$ as before.

Next we turn to the levels $[1/4,3/8]$ and $[5/8, 3/4]$.
On these levels, there lie bricks $B_i^j$, among which $B_i^1, \dots , B_i^q$ are contained in $S \times [5/8,3/4]$ and $B_i^{q+1}, \dots , B_i^p$ are contained in $S \times [1/4, 3/8]$.
For these, hierarchies $h(B_i^j)$ resolutions $\tau(B_i^j)$ are defined in the same way as before.
On the other hand, the decomposition of $\check{\mathbf M}_i$ into blocks determines a geodesic $h_i(d_j)$ supported on $A(d_j)$ for $j=1, \dots , u$.
If the length of $h_i(d_j)$ goes to $\infty$ as $i \rightarrow \infty$, then $d_j$ is of torus type, otherwise it is of annulus type.
Now, recall that for $n \in \naturals$, we defined a slice on $S$ extending the $n$-th slices on the $\tau(B_i^j)$, and then defined a sequence of markings $\mu^-(i,n)$.
In the present situation where there are isolated algebraic parabolic loci, we also have to take into account geodesics supported on the $A(d_j)$.
For each $d_j\ (j=v+1, \dots, u)$ of torus type, we take the  vertex of $h_i(d_j)$ which is the $n_0$-th counting from the last one, and denote it by $\tau(d_j)_{-n_0}$.
Starting from the last slice of $\tau(\bar B_i)$, we advance, by elementary moves, vertices on geodesics supported on $\Sigma^j\ (j=q+1, \dots ,p)$  and $A(d_j)\ (j=v+1, \dots , u)$ for $d_j$ of torus type until it gets to $\tau(B_i^j)_{-n}$ on $\Sigma^j$ and to $\tau(d_j)_{-n_0}$ on $A(d_j)$, and obtain a sequence of markings $\{\mu_-(i,n, n_0)\}$ as before.
Note that we do not move other vertices from those lying on the last slice of $\tau(\bar B_i)$.
In particular if $d_j\ (j=v+1, \dots , u)$ is of annulus type, the vertex on $A(d_j)$ remains to be the first one.
In the same way, we define $\tau(d_j)_n$ for $d_j\ (j=1, \dots v)$ of torus type and a sequence of markings $\{\mu_+(i,n, n_0)\}$.

We also need to modify the definition of the function $m$ in \cref{m}.
For each $\mu$ in $\mu^-(i, n, n_0)$, we define $m(\mu)$ to be a point in $\mathcal T(S)$  such that a clean marking compatible with $\mu$ is a shortest marking in $(S, m(\mu))$,   the length of each curve $d$ of torus type among $d_{v+1}, \dots, d_u$  is equal to  $\delta_0/d_{A(d)}(I(h_i(d)), \mu|A(d))$, where $\mu|A(d)$ is the marking on $A(d)$ determined by $\mu$, and the lengths of curves of annulus type among $d_{v+1}, \dots , d_u$ are equal to $\length_{m_i}$ of the curves.
%check, probably better to define this to be a constant?
We define $m$ for $\mu \in \mu^+(i,n)$ similarly.
Before Claim \ref{strong limit}, we introduced the number $\mathbf n_i'$ and considered the last marking in the sequence $\mu^\pm(i,\mathbf n_i')$, which we denoted by $\mu^\pm_\infty(i, \mathbf n_i')$.
In the present case, we define $\mathbf n_i'$ in the same way as before, and fix $n_0$ independently of $i$.
We define the function $m'$ in the same way as in \cref{m'} by decreasing gradually lengths of curves corresponding to non-isolated parabolic loci.
Then the rest of the construction in the proof of Proposition \ref{deformation to strong one} works without any change, and we can define  broken quasi-geodesics $\alpha_i^-$ and $\alpha^+$.

We need to show that the algebraic limits appearing Claims \ref{strong limit} and \ref{any limit} has $d_1, \dots , d_u$ as parabolic elements whatever point you choose on $\alpha^-$ or $\alpha^+$ by our definitions of $m$  and $m'$ as above.
Suppose first that $c$ is a curve in $d_1, \dots , d_u$ of annulus type.
Let $A$ be a boundary component of $\mathbf M'$ having $c$ as a core curve.
Since $c$ is an isolated curve this means that the geometrically finite block is split along $c$, which implies that either $\length_{m_i}(c) \rightarrow 0$ or $\length_{n_i}(c) \rightarrow 0$ depending on $c$ is lower or upper.
Therefore, by our modified definition of $m$ above, we see that the length of $c$ with respect to the sequences appearing Claims \ref{strong limit} and \ref{any limit} goes to $0$, and hence $c$ is parabolic in their algebraic limits.

Suppose next that $c$ is of torus type.
We first consider the algebraic limit of $qf(m'(\mu^-_\infty(i, \mathbf n_i', n_0)), m'(\mu^+(i,\mathbf n_i', n_0))$, corresponding to Claim \ref{strong limit}.
By our definition of the function $m$ (and $m'$) above, we see that if $c$ is among $d_1, \dots , d_v$, then $\length_{m'(\mu^+(i,\mathbf n_i, n_0))}(c)$ goes to $0$ as $i \rightarrow \infty$, and if it is among $d_{v+1}, \dots , d_u$, then $\length_{m'(\mu^-(i,\mathbf n_i, n_0))}(c)$ goes to $0$.
This shows that $c$ is parabolic in the algebraic limit of this sequence.

We now turn to considering the algebraic limit of the sequence $\{\beta_i(t_i)\}$, where $\beta_i(t)=qf(\alpha_i^-(t), \alpha_i^+(t))$.
Suppose that $c$ is a curve of torus type among $d_{v+1}, \dots , d_u$, and we consider the point $\alpha_i^-(t_i)$ in the Teichm\"{u}ller space.
There are essentially three cases to consider.
In the first and the second cases, we assume that $\alpha_i^-(t_i)$ lies on a Teichm\"{u}ller geodesic with endpoints $m'(\mu_1(i)), m'(\mu_2(i))$ for some adjacent slices $\mu_1(i), \mu_2(i)$ in $\mu^-(i, \mathbf n_i, n_0)$.
In the first case, we further assume that $\mu_1(i)$ contains the $k(i)$-th vertex on $h_i(c)$, where $\length(\gamma(i))-k(i) \rightarrow \infty$.
Then the same holds for $\mu_2(i)$ since they are adjacent.
Then for the hierarchy of tight geodesics connecting  for $\alpha_i^-(t_i)$ with $\alpha_i^+(t_i)$ has a geodesic supported on $A(c)$ with length at least $\length(\gamma(i))-k(i)-1$, which goes to $\infty$.
Therefore in the geometric limit $\mathbf M_\infty(\beta)$, the curve $c$ lies on a torus boundary.
This implies that  $c$ is parabolic in the algebraic limit.
In the second case, we assume on the contrary that $\length(\gamma(i))-k(i)$ is bounded, which implies $k(i) \to \infty$.
Then by our modified definition of the function $m$ (and $m'$), we have $\length_{m'(\mu_1(i))}(c)$ and $\length_{m'(\mu_2(i))}(c)$ go to $0$, which implies that $\length_{\alpha_i^-(t_i)}(c)$ also goes to $0$.
Therefore $c$ is a parabolic curve in the algebraic limit also in this case.
The third case is when $\alpha_i^-(t_i)$ lies a quasi-geodesic connecting $m_i$ with the first marking $\mu_1^-(i,\mathbf n_i, n_0)$.
Then the situation is the same as the first case and $c$ lies on a torus boundary of the geometric limit $\mathbf M_\infty(\beta)$ since the initial and the terminal markings are the same as the case of connecting $\mu_1^-(i,\mathbf n_i, n_0)$ with $\mu_1^+(i, \mathbf n_i, n_0)$ and the hierarchy of tight geodesics connecting them has a geodesic of length $h_i(c) \to \infty$ supported on $A(c)$.
Thus we have shown that $c$ is a parabolic curve in all the cases.
The same argument works also when $c$ is of torus type among $d_1, \dots , d_v$, and we can prove them to be upper parabolic of the algebraic limit of $\{\beta_i(t_i)\}$. 
We note that  $\mathbf M_\infty(\beta)$ has the geometrically infinite ends corresponding to $E^1 , \dots , E^p$ by our definition of $\alpha_i^\pm$.
This implies that $\mathbf M_\infty(\beta)$ has no isolated algebraic parabolic curves other than $d_1, \dots , d_u$.
%This immediately follows as before from the fact that we do not advance vertices outside the $\Sigma^j$ and $A(d_k) (k=1, \dots , u)$.
Therefore we also see that there are no other isolated parabolic curves other than $d_1, \dots , d_u$ in the algebraic limit of $\{\beta_i(t_i)\}$.

Thus we have shown that Claims \ref{strong limit} and \ref{any limit} holds under the assumption that $g'$ does not go around a torus boundary component.
In particular, this shows that the algebraic limit of $\beta_i(t_i)$ is a quasi-conformal deformation of $(\Gamma, \psi)$.

%\UTF{0082}テつア\UTF{0082}\UTF{00EA}\UTF{0088}\UTF{00C8}\UTF{008D}~$A(c_j)$\UTF{008F}\UTF{00E3}\UTF{0082}\UTF{00CC}geodesic\UTF{0082}\UTF{00A9}\UTF{0082}\UTF{00E7}\UTF{0082}\UTF{00CD}first vertex\UTF{0082}\UTF{00F0}\UTF{0091}I\UTF{0082}\UTF{00D4}\UTF{0082}\UTF{00E6}\UTF{0082}\UTF{20AC}\UTF{0082}\UTF{00C9}\UTF{0092}\UTF{00E8}\UTF{008B}`\UTF{0082}\UTF{00B7}\UTF{0082}\UTF{00E9}\UTF{0081}D$m$\UTF{0082}\UTF{00CD}$c_j$\UTF{0082}\UTF{00CC}\UTF{0092}\UTF{00B7}\UTF{0082}\UTF{00B3}\UTF{0082}\UTF{00AA}$m_i$\UTF{0082}\UTF{00C6}\UTF{0093}\UTF{00AF}\UTF{0082}テつカ\UTF{0082}\UTF{00E6}\UTF{0082}\UTF{20AC}\UTF{0082}\UTF{00C9}\UTF{0092}\UTF{00E8}\UTF{008B}`\UTF{0082}\UTF{00B7}\UTF{0082}\UTF{00E9}\UTF{0081}D

%Therefore, if we can show that the same holds in the situation which we are considering, then we can follow the argument of the proof of Corollary \ref{no self-bumping}.
(2) {\bf To show that $g'$ does not go around a torus boundary component.}
It remains to show that the standard immersion $g'$ does not go around a torus boundary component in $\mathbf M'$ in our setting.
We consider first the case when $\Gamma$ is a b-group.
Let $\{(G_i, \phi_i)=qf(m_0, n_i)\}$ and $\{(\bar G_i, \bar \phi_i)=qf(m_0, \bar n_i)\}$ be sequences in the Bers slice converging to $(\Gamma, \psi)$.
Let $\mathbf M'$ and $\bar{\mathbf M}'$ be model manifolds for non-cuspidal parts of geometric limits $G_\infty$ and $G_\infty'$ of $\{G_i\}$ and $\{G_i'\}$, which are non-cuspidal parts of  geometric limits of the model manifolds $\mathbf M_i'$ and $\bar{\mathbf M}_i'$ constructed as in \cref{modified models} respectively.
Let $g': S \rightarrow \mathbf M'$ and $\bar g': S \rightarrow \bar{\mathbf M}'$ be standard algebraic immersions.
Since the lower geometrically finite ends must be lifted to the algebraic limits, neither $g'$ nor $\bar g'$ can go around torus boundary components, hence homotopic to horizontal surfaces in $\mathbf M'$ and $\bar{\mathbf M}'$ respectively.
Therefore the argument of the proof of Theorem \ref{bumping theorem} works, and we see that for any neighbourhood $U$ of $(\Gamma, \psi)$, there is an arc $\alpha_i$ connecting $(G_i,\phi_i)$ and $(\bar G_i, \bar \phi_i)$ in the Bers slice $U \cap B_{m_0}$.

Next suppose that $\Gamma$ is not a b-group.
Let $\{(G_i, \phi_i) \in QF(S)\}$ be a sequence converging to $(\Gamma, \psi)$.
Again, we let $\mathbf M'$ be a model manifold of the non-cuspidal part of the geometric limit $G_\infty$, which is the non-cuspidal part of the geometric limit of $\mathbf M_i'$.
%we have only to show that in a model manifold $\mathbf M'$ of the geometric limit of $G_i$, a standard algebraic immersion does not go around a torus boundary component.
Suppose that $g'$ goes around a torus component $T$ of $\mathbf M'$ counter-clockwise.
Let $c$ be a longitude of $T$.
Then, $c$, regarded as lying on $S$, is an upper parabolic curve of $M_\Gamma=\hyperbolic^3/\Gamma$.

Since we assumed that $\Omega_\Gamma/\Gamma$ consists of thrice-punctured spheres, there is either a lower parabolic curve $d$  or an ending lamination $\lambda$ of a lower end, intersecting $c$ essentially.
If there is a lower parabolic curve $d$, then there is a boundary component $T'$ of $\mathbf M'$ whose core curve or a longitude is homotopic to $g'(d)$ and which is situated below $g'(S)$.
This is impossible since $g'$ goes around $T$ whose longitude intersects $d$ essentially on $S$.
If there is an ending lamination $\lambda$, then there is a algebraic simply degenerate end contained in $\Sigma \times (s,t]$ in $\mathbf M$ having $\lambda$ as the ending lamination.
Then $\Sigma \times \{s+\epsilon\}$ must be homotopic to $g'(\Sigma)$.
Again this is impossible since $g'$ goes around $T$ whose longitude intersects $\lambda$ essentially on $S$.
Thus, we are lead to a contradiction in both cases, and see that $g'$ cannot go around a torus boundary component counter-clockwise.
The case when $g'$ goes around a torus boundary component clockwise can be deal with in the same way just by turning everything upside down.
\end{proof}

\section{Proof of Theorems \ref{generalised limit laminations} and \ref{generalised main}}
\subsection{Proof of Theorem \ref{generalised limit laminations}}
After passing to a subsequence, $\{G_i\}$ can be assumed to converge geometrically to a Kleinian group $G_\infty$ containing $\Gamma$ as before.
This induces a pointed Gromov convergence of $(M_i, y_i)$ to $(M_\infty, y_\infty)$ with $M_\infty=\hyperbolic^3/G_\infty$.
Let $\mathbf M$ be a model manifold of $(\hyperbolic^3/G_\infty)_0$, and $\mathbf M_i$ that of $(M_i)_0=(\hyperbolic^3/G_i)_0$ as before.
Then $\mathbf M_i[0]$ converges geometrically to $\mathbf M[0]$ by taking a basepoint $x_i$ in $\mathbf M_i$ which is mapped by the model map to a point within uniformly bounded distance from the basepoint $y_i$ of $M_i$.
As in Lemma \ref{position of S}, we take a standard algebraic immersion $g' : S \rightarrow \mathbf M$.

Let $E$ be an algebraic simply degenerate end of $\mathbf M$ with ending lamination $\lambda_E$, and $B=\Sigma \times J$ a brick of $\mathbf M $ containing $E$.
We assume that $E$ is an upper end.
The case when $E$ is a lower end can be argued in the same way by just turning everything upside down as usual.
Then by Proposition \ref{simply degenerate brick}, there is a tight geodesic $\gamma_i$ in $\CC(\Sigma)$ one of whose endpoints (possibly at infinity) converges to $\lambda_E$ as $i \rightarrow \infty$.
In the case when $\gamma_i$ is a geodesic ray, its endpoint at infinity is an ending lamination of an upper end of $\mathbf M_i$, which is contained in $e_+(i)$.
Therefore, we see that $\lambda_E$ is contained in the Hausdorff limit of $e_+(i)$.

Next suppose that $\gamma_i$ is a finite geodesic.
Then, by Theorem 3.1  and \S 6 of Masur-Minsky \cite{MaMi}, the last vertex of $\gamma_i$ and  $\pi_\Sigma(e_+(i))$ are within uniformly bounded distance, and hence the endpoint of $\gamma_i$ and $\pi_\Sigma(e_+(i))$ converge to the same lamination $\lambda_E$ with respect to the topology of $\mathcal{UML}(\Sigma)$ as $i \rightarrow \infty$.
Since $\lambda_E$ is arational, we see  that the Hausdorff limit of $e_+(i)|\Sigma$ contains $\lambda_E$.

The converse can be shown by the same argument as the proof of Theorem \ref{limit laminations}.
%Also, we can show that no upper isolated parabolic locus can intersect a minimal component of $e_+(\infty)$ in the same way as in the proof of Theorem \ref{limit laminations}.
%Need to check again.

\subsection{Proof of Theorem \ref{generalised main}}
Suppose, seeking a contradiction, that a sequence $\{(G_i, \phi_i)\}$ as in the statement converges.
%Let $\lambda$ be a minimal component shared by $e_-(\infty)$ and $e_+(\infty)$ which is not a simple closed curve.
%Let $\mathbf M$ be a model of the geometric limit $(\hyperbolic^3/G_\infty)_0$ as usual.
Then by Theorem \ref{generalised limit laminations}, $\lambda$ is an ending lamination of an upper end and $\mu$ is an ending lamination of a lower end of the algebraic limit.
This implies the shared boundary component of the minimal supporting surface of $\mu$ is a parabolic locus which is both upper and lower.
This is impossible, and we have completed the proof of Theorem \ref{generalised main}.


\begin{thebibliography}{99}
\bibitem{Ab}
W.\ Abikoff, On boundaries of Teichm\"{u}ller spaces and on Kleinian groups. III. Acta Math. {\bf 134} (1975), 211--237. 
\bibitem{AC}
J.\ Anderson and R.\ Canary, Algebraic limits of Kleinian groups which rearrange the pages of a book. Invent. Math. {\bf 126} (1996), 205--214. 
\bibitem{AL}
J.\ Anderson and C.\ Lecuire, Strong convergence of Kleinian groups: the cracked eggshell, Comment. Math. Helv. {\bf 88} (2013),  813--857
\bibitem{Be}
L.\ Bers, On boundaries of Teichm\"{u}ller spaces and on Kleinian groups. I. 
Ann. of Math. (2) {\bf 91} (1970), 570--600. 
\bibitem{AB}
\bysame, Uniformization, moduli, and Kleinian groups. 
Bull. London Math. Soc. {\bf 4} (1972), 257--300
\bibitem{BoA}
F.\ Bonahon, Bouts des vari\'{e}t\'{e}s hyperboliques de dimension 3, Ann.\ of 
Math.\ {\bf 124}, (1986) 71-158.
\bibitem{Bow1} B.\ Bowditch, Geometric models for hyperbolic 3-manifolds, preprint
\bibitem{Bow2}\bysame, End invariants of hyperbolic 3-manifolds, preprint
\bibitem{Bow3} \bysame, The ending lamination theorem, preprint
\bibitem{Br}
J.\ Brock, Continuity of Thurston's length function, Geom.\ Funct.\ 
Anal.\ {\bf 10}, (2000),  741--797. 
\bibitem{BB}
\bysame and K.\ Bromberg, On the density of geometrically finite Kleinian groups. Acta Math. {\bf192} (2004),  33--93.
\bibitem{BBCL} \bysame, \bysame, R.\ Canary and C.\ Lecuire,  Convergence and divergence of Kleinian surface groups, to appear in J. Top.
\bibitem{BBCM}
\bysame, \bysame, \bysame, and Y.\ Minsky, Local topology in deformation spaces of hyperbolic 3-manifolds, Geom. Topol. {\bf 15} (2011) 1169--1224.
\bibitem{BBCM2}
\bysame, \bysame, \bysame, \bysame, Convergence properties of end invariants. Geom. Topol. {\bf 17} (2013),  2877--2922.
\bibitem{BCM}
\bysame, R.\ Canary, and Y.\ Minsky,
The classification of Kleinian surface groups, II: the ending lamination conjecture, Ann. of Math. (2) {\bf 176} (2012), 1--149
\bibitem{BromL}
K.\ Bromberg, Drilling short geodesics in hyperbolic 3-manifolds, {\em Spaces of Kleinian groups} London Math. Soc. Lecture Note Ser., {\bf 329} (2006) 1--27.
\bibitem{Brom}
\bysame, Projective structures with degenerate holonomy and the Bers density conjecture. Ann. of Math. (2) {\bf 166} (2007), no. 1, 77--93
\bibitem{Bromp}
\bysame, The space of Kleinian punctured torus groups is not locally connected,  Duke Math. J. {\bf 156} (2011), 387--427.
\bibitem{BromH}
\bysame
Hyperbolic Dehn surgery on geometrically infinite 3-manifolds, arXiv math/0009150
\bibitem{CaT}
R.\ Canary, A covering theorem for hyperbolic $3$-manifolds and its applications.  Topology  {\bf 35}  (1996),  751--778.
\bibitem{CaInd}
\bysame, Introductory bumponomics, the topology of deformation space of hyperbolic 3-manifolds, {\em Teichmüller Theory and Moduli Problem}, ed. by I. Biswas, R. Kulkarni and S. Mitra, Ramanujan Mathematical Society, (2010), 131-150
\bibitem{CPP}
C.\ Charitos, J.\ Papadoperakis and A.\ Papadopoulos,
On the homeomorphisms of the space of geodesic laminations on a hyperbolic surface, to appear in Proc. Amer. Math. Soc.
\bibitem{CDP}
M.\ Coornaert,  T.\ Delzant  and A.\ Papadopoulos,  {\em G\'{e}om\'{e}trie et th\'{e}orie des groupes, Les groupes hyperboliques de Gromov}, Lecture Notes in Mathematics, {\bf 1441} (1990) Springer-Verlag, Berlin,  x+165 pp
\bibitem{EM}
D.B.A.\ Epstein and A.\ Marden, Convex hulls in hyperbolic space, a theorem of Sullivan, and measured pleated surfaces {\em Analytical and geometric aspects of hyperbolic space}, London Math. Soc. Lecture Note Ser., {\bf 111}  (1987), 113--253.
\bibitem{FLP}
A.\ Fathi, V.\ Po\'{e}naru, et F.\ Laudenbach, Travaux de Thurston sur les surfaces,
S\'{e}minaire Orsay, Ast\'{e}risque {\bf 66--67}, (1979).
\bibitem{FHS} M. Freedman, J. Hass and P. Scott, Least area incompressible surfaces in $3$-manifolds, Invent.\ 
Math.\ {\bf 71} (1983), 609-642.
\bibitem{Har}
W.J.\ Harvey, Boundary structure of the modular group, {\em Riemann surfaces and related topics},
Ann. of Math. Stud., {\bf 97} (1981), 245--251.
\bibitem{Ito}
K.\ Ito, Convergence and divergence of Kleinian punctured torus groups, Amer. J.\ Math. {\bf 134} (2012),  861--889
\bibitem{JM}
T.\ J{\o}rgensen and A.\ Marden, Algebraic and geometric convergence of Kleinian groups. Math. Scand. {\bf 66} (1990),  47--72
\bibitem{Ker}
S.\ Kerckhoff, The asymptotic geometry of Teichm\"{u}ller space. {\em Topology} 19 (1980),  23--41.
\bibitem{Ke}
S.\ Kerckhoff, The Nielsen realization problem. Ann. of Math. (2) {\bf 117} (1983), 235--265. 
\bibitem{KT}
S.\ Kerckhoff and W.\ Thurston, Noncontinuity of the action of the modular group at Bers' boundary of Teichm\"{u}ller space. Invent. Math. {\bf 100} (1990), 25--47.
\bibitem{Kl}
E.\ Klarreich, The boundary at infinity of the curve complex and the relative Teichm\"{u}ller space, preprint
\bibitem{KS}
G.\ Kleineidam and J.\ Souto, Algebraic convergence of function groups. Comment. Math. Helv. {\bf 77} (2002),  244--269
\bibitem{L}
C.\ Lecuire, An extension of the Masur domain. Spaces of Kleinian groups,  London Math. Soc. Lecture Note Ser., {\bf 329}, Cambridge Univ. Press, Cambridge, (2006), 49--73
\bibitem{Mg}
A.\ Magid, Deformation spaces of Kleinian surface groups are not locally connected, Geometry and Tpology {\bf 16} (2012) 1247--1320.
\bibitem{Mar}
A.\ Marden, {\em Outer circles. 
An introduction to hyperbolic 3-manifolds.} Cambridge University Press, Cambridge, 2007. xviii+427 pp
\bibitem{Mar2}
A.\ Margen {\em Hyperbolic manifolds. 
An introduction in 2 and 3 dimensions.} Cambridge University Press, Cambridge, 2016. xviii+515 pp. 
\bibitem{MaMi1}
H.\ Masur and Y.\ Minsky, Geometry of the complex of curves. I. Hyperbolicity. Invent. Math. {\bf 138} (1999),  103--149
\bibitem{MaMi}
\bysame and \bysame , Geometry of the complex of curves. II. Hierarchical structure.
Geom. Funct. Anal. {\bf 10} (2000),  902--974. 
\bibitem{Mc}
C.\ McMullen, Complex earthquakes and Teichm\"{u}ller theory.  J. Amer. Math. Soc.  {\bf 11}  (1998),   283--320. 
\bibitem{Mi}
Y.\ Minsky, The classification of Kleinian surface groups I, Ann. Math. {\bf 171} (2010), 1--107
\bibitem{NS}
H. Namazi and J. Souto, Non-realizability, ending laminations and the density conjecture,  Acta Math. {\bf 209} (2012), 323--395
\bibitem{OhI}
K.\ Ohshika, Limits of geometrically tame Kleinian groups. Invent. Math. {\bf 99} (1990),  185--203.
\bibitem{OhL}
\bysame, Ending laminations and boundaries for deformation spaces of Kleinian groups. J. London Math. Soc. (2) {\bf 42} (1990),  111--121.
\bibitem{OhQ}
\bysame, Geometric behaviour of Kleinian groups on boundaries for deformation spaces.  Quart. J. Math. Oxford Ser. (2)  {\bf 43}  (1992),  97--111. 
\bibitem{OhG}
\bysame, Divergent sequences of Kleinian groups, {\em The Epstein birthday 
schrift} Geom.\ Topol.\ Monogr, {\bf 1}, Geom.\
Topol.\ , Univ.\ Warwick, Coventry, (1998), 419--450.
\bibitem{Ohp}
\bysame Realising end invariants by limits of minimally parabolic, geometrically finite groups,  Geometry and Topology {\bf 15} (2011) 827--890.
\bibitem{OhF}
\bysame
Reduced Bers boundaries of Teichmüller spaces, Ann. Fourier Inst., {\bf 64} (2014), 145--176
\bibitem{OhP}
\bysame
A note on the rigidity of unmeasured lamination spaces,  Proc. Amer. Math. Soc., {\bf 141} (2013),  4385--4389
\bibitem{OhE}
\bysame
 Compactifications of Teichmüller spaces, {\em Handbook of Teichmüller theory} Vol. IV, 235--254, IRMA Lect. Math. Theor. Phys., 19, Eur. Math. Soc., Zürich, 2014.
\bibitem{OhSo} \bysame and T.\ Soma, Geometry and topology of geometric limits, preprint, arXiv:1002.4266
\bibitem{Ot} J-P. Otal, Sur le nouage des g\'{e}od\'{e}siques dans les vari\'{e}t\'{e}s hyperboliques, C. R. Acad. Sci. Paris S\'{e}r. I Math. {\bf 320} (1995),  847--852
\bibitem{Papa} A.\ Papadopoulos, A rigidity theorem for the mapping class group action on the space of unmeasured foliations on a surface. Proc. Amer. Math. Soc. {\bf 136} (2008), 4453--4460
\bibitem{Papa2} \bysame, Actions of mapping class groups, {\em Handbook of group actions}, Advanced Lect. in Math.\ {\bf 31}, International Press (2015), 189--248
\bibitem{Su} D.\ Sullivan, On ergodic theory at infinity of an arbitrary discrete group of hyperbolic motions, {\em Riemann surfaces and related topics}, Ann. of Math. Stud., {\bf 97}, (1981), 465--496. 
\bibitem{Su2} \bysame,  Quasiconformal homeomorphisms and dynamics II. Structural stability implies
hyperbolicity for Kleinian groups, Acta Math. {\bf 155} (1985), 243--260.
\bibitem{Th} W.\ Thurston, The geometry and topology of $3$-manifolds, Princeton Univ. Lecture Notes.
\bibitem{Th2} \bysame, Hyperbolic structures on 3-manifolds II : 
Surface groups and 3-manifolds which fiber over the circle, preprint, ArXiv math.GT/9801045 
\bibitem{Ths} \bysame, Minimal stretch maps between hyperbolic 
surfaces, preprint, ArXiv Math.GT/9801039
\bibitem{Wa} F.\ Waldhausen, On irreducible 3-manifolds which are sufficiently large. Ann. of Math. (2) {\bf 87} (1968) 56--88.
\end{thebibliography}
\end{document}